\mathchardef\@tempa \count255 }
\newtheorem{theorem}{Theorem}[section]
\newtheorem{assumption}{Assumption}
\newtheorem{remark}{Remark}
\newtheorem{thm}{Theorem}[section]
\newtheorem{lem}[thm]{Lemma}
\newtheorem{prop}[thm]{Proposition}
\def \a{{\alpha}}
\def \d{{\mathrm{d}}}
\newcommand{\Om}{\Omega}
\newcommand{\cF}{\mathcal{F}}
\newcommand{\PP}{\mathbb{P}}
\newcommand{\E}{\mathbb{E}}
\newcommand{\bean}{\begin{eqnarray*}}
	\newcommand{\eean}{\end{eqnarray*}}
\newcommand{\Var}{\text{Var}}
\newcounter{bean}
\newcommand{\benuma}{\setlength{\labelwidth}{.25in}
	
	\begin{list}
		{(\alph{bean})}{\usecounter{bean}}}
	\newcommand{\eenuma}{\end{list}}
\begin{document}
	
	\title[Multilevel Monte Carlo for SVEs with fractional kernel]{Central limit theorem of Multilevel Monte Carlo Euler estimators for Stochastic Volterra equations with fractional kernels}
	\author[S. Liu]{Shanqi Liu}
	\address{School of Mathematical Science, Nanjing Normal University, Nanjing 210023, China}
	\email{shanqiliumath@126.com}
	
	\author[Y. Hu]{Yaozhong Hu}
	\address{Department of Mathematical and Statistical Sciences, University of Alberta, Edmonton T6G 2G1, Canada}
	\email{yaozhong@ualberta.ca}
	
	\author[H. Gao]{Hongjun Gao}
	\address{School of Mathematics, Southeast University, Nanjing 211189, China}
	\email{hjgao@seu.edu.cn (Corresponding author)}
	\vspace{-2cm}
	\maketitle
	\vspace{-1cm}
	\begin{abstract}
		  This paper is devoted to proving a (Lindeberg-Feller type ) central limit theorem    for the multilevel Monte Carlo estimator associated with the Euler discretization scheme  for the stochastic Volterra equations with fractional kernels $K(u)=u^{H-\frac{1}{2}}/\Gamma(H+1/2), H\in (0,1/2]$.
		
		\medskip\noindent\textbf{Keywords.} Stochastic Volterra integral equations, Multilevel Monte Carlo, fractional kernels, stable convergence, central limit theorem.
		\smallskip
		
		\noindent\textbf{AMS 2020 Subject Classifications.} 60F05; 60H20; 62F12; 65C05.
	\end{abstract}
\tableofcontents
	\section{Introduction}
	Let $(\Omega,\mathcal{F}, \mathbb{P})$ to be a  complete   probability space 
	with a filtration  $\mathcal{F}_t$, which  is a nondecreasing family of sub-$\sigma$ fields of $\mathcal{F}$ satisfying the usual conditions. 
	Motivated by an attempt to solve the fractional order 
	stochastic differential equation,   the following stochastic Volterra equation (SVE for short) on $d$-dimensional Euclidean space $\mathbb{R}^{d}$ has been studied recently  (e.g.\cite{kilbas}):  
	\begin{align}\label{Volterra eq}
		X_{t}=X_0+\int_{0}^{t}K(t-s)b(X_{s})\d s+\int_{0}^{t}K(t-s)\sigma(X_{s})\d W_s,\quad 0\le t\le T\,, 
	\end{align}
	where $X_0\in\mathbb{R}^d,K(u)=u^{H-\frac{1}{2}}/\Gamma(H+1/2), H\in (0,1/2]$, $
	W=(W_t\,, t\ge 0)$
	is an $\mathrm{m}$-dimensional Brownian motion defined on  
	$(\Om, \cF, \PP)$, and the coefficients $b:\mathbb{R}^d\to\mathbb{R}^d$, $\sigma:\mathbb{R}^d\to\mathbb{R}^d\times \mathbb{R}^q$ are  assumed to satisfy some conditions.  For the purpose of this work, we assume that
	they satisfy the following conditions.  
%
	 
	 \begin{assumption}\label{assumption 2.1}
	 	\quad There exists a positive constant $L_1$ such that 
	 	$$
	 	|b(x)-b(y) |+ |\sigma(x)-\sigma(y) |\leq L_1|x-y|\quad \hbox{for any $x,y\in\mathbb{R}^d$}\,.			
	 	$$
	 \end{assumption}                        
	Since the explicit solution of SVEs with singular kernels  is  rarely known  one has to rely on numerical methods for simulations of the solutions  of these equations. Various time-discrete numerical approximation schemes for \eqref{Volterra eq} have been investigated  in recent years. We recall
	one that we are going to carry out some further study.  
	
	One  elementary and yet  widely used numerical method for \eqref{Volterra eq} is the following Euler-Maruyama    scheme, studied by Zhang    \cite{ZX1},    Li et al.\cite{LHH2} and Richard et al.\cite{RTY}.  To describe this scheme concisely    let us   we consider only uniform partitions of the interval $[0,T]$: $\pi_n: 0=t_0<t_1<\cdots<t_{[nT]+1}=T\wedge \frac{[nT]+1}{n}$,  where $ t_i=\frac{i}{n},i=0,1,\cdots,[nT]$ (we shall consider this type of partitions throughout the paper). For every positive integer $n$,  the Euler-Maruyama    approximation   $X ^{  e, n}$  
	is given by 
	\begin{align}\label{Euler scheme}
		X ^{  e, n}_t  =&X_0+\int_{0}^{t}K(t-\frac{[ns]}{n})b(X ^{  e, n} _{\frac{[ns]}{n}})\d s+\int_{0}^{t}K(t-\frac{[ns]}{n})\sigma(X ^{  e, n}_{\frac{[ns]}{n}})\d W_s,
	\end{align}
	where $[a]$ denotes the largest integer which is less than or equal to $a$.  	      It was proved in  \cite{LHH2} and \cite{RTY}
	that there exists a positive constant $C$, independent  of  $n$,   such that 
	\begin{equation}
		\sup_{t\in [0,T]}\E[|X_t- X^{e,n}_t|^p]^{1/p}\leq C n^{-H}. \label{e.1.3} 
	\end{equation} 
	In other word, the  Euler-Maruyama    scheme   for  \eqref{Volterra eq} 
	has  a rate of convergence $H$. 
	A variant of the  above Euler-Maruyama scheme \eqref{Euler scheme} is  the following discretized version of \eqref{Volterra eq}:
	\begin{align}\label{Euler type scheme}
		X^{n}_t=&X_0+\int_{0}^{t}K(t-s)b(X^{n}_{\frac{[ns]}{n}})\d s+\int_{0}^{t}K(t-s)\sigma(X^{n}_{\frac{[ns]}{n}})\d W_s\,. 
	\end{align}
	For this scheme it would also  expected that \eqref{e.1.3} still holds true, namely, this variant has also convergence rate $H$. 
	In fact, it  does and we have  
	\begin{equation}
		\sup_{t\in [0,T]}\E[|X_t- X^{n}_t|^p]^{1/p}\leq C n^{-H}. \nonumber
	\end{equation}   
	Moreover,  it is proved   in  \cite{FU} and \cite{NS}, independently,  that 
	if we denote $U^n=n^{H}[X^{n}_t-X_t]$,
	then    as $n$ tends to infinity the process $U^n$ converges stably in law to the solution $U=(U^1,\cdots,U^d)^{\top}$ of  the following linear  SVE:
	\begin{align}
		U_t^i= \sum_{k=1}^{d}&\int_0^t K(t-s)  \partial_k b^i(X_s)U^k_s \mathrm{d} s+\sum_{j=1}^q \sum_{k=1}^d \int_0^t K(t-s) \partial_k \sigma_j^i(X_s)U^k_s\d W^j_s\nonumber \\
		& +\frac{1}{\sqrt{\Gamma(2H+2)\sin \pi H}}\sum_{j=1}^q \sum_{k=1}^d\sum_{l=1}^q \int_0^t K(t-s) \partial_k \sigma_j^i\left(X_s\right)\sigma^k_l(X_{s})\d B^{l,j}_s,
		\label{e.1.5} 	\end{align}
	where $B$ is an $q^2$-dimensional Brownian motion independent of $W$. 
	
	Let us also mention that in the case of classical  stochastic differential equation, namely, when the Hurst parameter  $H=1/2$ the asymptotic behaviors of the
	normalized error process for Euler-Maruyama  scheme  have
	been studied in \cite{Ja2,KP0} and the references therein. We also mention that still in the absence of the Volterra kernel, when  the driven Brownian motion  is replaced by the fractional Brownian motion, there are also a number of results on the rate of convergence and asymptotic error distributions of Euler-type
	numerical schemes, we refer to \cite{DNT,FR,HLN,HLN2,LT} and the references therein.
	
With the rapid development of computing power, people are more and more relying
on numerical computations to obtain the quantities people are interested. This boosts researchers to use 	multilevel Monte Carlo (MLMC) method  to do simulations.   In the case of classical  stochastic differential equation driven by standard Brownian motion, 
	   many  researchers  have been interested in the   central limit theorems associated with  the MLMC error  (e.g.  recent works \cite{BKN,BK,DL,GLP,HK,KL}). Let us mention one result.   Giles \cite{Giles},   studied  the scheme defined by \eqref{Euler type scheme}  
	 and     it is proved  (let us emphasize that it is in  the case $H=1/2$)  in  Ben Alaya and Kebaier \cite{BK} that for any fixed positive integer $m$, as $n$ tends to infinity the  multilevel error process $\sqrt{n} (X^{mn}-X^n)$ converges stably in law to the solution $U^m =(U^{1,m} ,\cdots,U^{d,m} )^{\top}$ of  the following linear  stochastic differential equation with $m\in\mathbb{N}\backslash \{0,1\} $: 
	 \begin{align}
	 	U_t^{i,m} &= \sum_{k=1}^{d}\int_0^t\partial_k b^i(X_s)U^{k,m} _s \mathrm{d} s+\sum_{j=1}^q \sum_{k=1}^d \int_0^t \partial_k \sigma_j^i(X_s) U^{k,m}_s\d W^j_s\nonumber\\
		&\quad+\sqrt{\frac{m-1}{2m}}\sum_{j=1}^q\sum_{k=1}^{d}\sum_{l=1}^q\int_0^{t}  \partial_k \sigma_j^i\left(X_s\right) \sigma^{k}_{l}(X_{s}) \mathrm{d} B_s^{l,j},\nonumber
	 \end{align}
where $B$ is an $q^2$-dimensional Brownian motion independent of $W$.  Notice that letting  $m$ tend to infinity, we obtain formally  the Jacod and Protter’s result \cite{Ja2}
	 (e.g.  the result of  \eqref{e.1.5} in the case $H=1/2$).  
	 
	 However,   the central limit theorem of MLMC errors  for SVEs with singular kernels 
	 and/or driven by   fractional Brownian motions has not been addressed until present.  This work  aims to fill this gap.  First,  we shall obtain the central limit
of $n^H (X^{mn}-X^n)$  for   SVEs with singular kernels 
in Theorem 
	  \ref{t.main} (e.g. Equation \eqref{limit Volterra eq}).  Comparing to the
	  works \cite{FU,Ja2,LHG}, here we transfer the classical error results to the multilevel error case. Moreover,  we need to point out that in comparison to \cite{BK}, we now work on the singular kernels framework. Theorem \ref{t.main}  is an extension and improvement of Theorem 3 of \cite{BK} (see Remark \ref{rem 2.1} for more details).

 Motivated by the work  \cite{BK}  it has some advantage to use the following 
 MLMC to approximate $\E(f(X_T))$:   
\begin{align}\label{def of Q}
		Q_n=\frac{1}{N_0}\sum_{k=1}^{N_0}f(X^1_{T,k})+\sum_{l=1}^{L}\frac{1}{N_l}\sum_{k=1}^{N_l}\big(f(X^{l,m^l}_{T,k})-f(X^{l,m^{l-1}}_{T,k})\big).
	\end{align} 	  
The second main result (Theorem \ref{t.3.4})   of this work is to obtain  a  central limit theorem of Lindeberg-Feller type for the above approximation  for our general SVE \eqref{Volterra eq}. 

The challenge of these two  new tasks lies in the new correlation structures introduced by the singular kernel  which  lacks  independent increment property.   
 On one hand, we need to analysis the interaction among  singular kernels 
 of  different step sizes. On the other hand, to obtain the desired limiting process,  we need to prove various precise limit theorems for fractional integral (see Appendix \ref{limit theorems}). The relationship between different limit theorems is given in Remark \ref{rem 3.2}. It is worth mentioning that although the ideas and  tools developed in \cite{FU} and \cite{LHG} are helpful  to achieve our goals,   the  results  there are not applicable 
 in  proving   our main theorem  \ref{t.main}. We need completely new technical treatments simply because the multilevel
 Monte Carlo Euler method involves the error process $X^{mn}-X^{n}$ instead of  $X^n-X$.


	Here is the structure of  the  paper.   In Section \ref{Error analysis of U}, we analyze  the error process $U^{mn,n}$ and prove a stable law convergence theorem in Theorem \ref{t.main}. In Section \ref{CLT}, we prove the  Lindeberg-Feller central limit theorem. There are huge amount of technical computations.
To ease the reading of the paper we postpone the technical computations
to the appendix.  
                    
		\section{Error analysis of $U^{mn,n}$}\label{Error analysis of U}
In this section, we shall prove a stable law convergence theorem, for the Euler scheme error on two consecutive levels $m^{\ell-1}$ and $m^{\ell}$, of the type obtained in Fukasawa and Ugai \cite{FU}. Our result in Theorem \ref{t.main} below is an innovative contribution on the Euler scheme error that is different and more tricky than the original work by Fukasawa and Ugai \cite{FU} since it involves the error process $X^{\ell, m^{\ell}}-X^{\ell, m^{\ell-1}}$ rather than $X^{\ell, m^{\ell}}-X$. Note that the study of the error $X^{\ell, m^{\ell}}-X^{\ell, m^{\ell-1}}$ as $\ell \rightarrow \infty$ can be reduced to the study of the error $X^{m n}-X^n$ as $n \rightarrow \infty$ where $X^{m n}$ and $X^n$ stand for the Euler schemes with time steps $1 /m n$ and $1/ n$ constructed on the same Brownian path.
	
	Consider the normalized error process $U^{mn,n}=(U_t^{mn,n})_{0\leq t\leq T}$ defined by
	$$U^{mn,n}_t:=n^{H}\big(X_t^{mn}-X^{n}_t\big),\quad t\in [0,T].$$
An application of the  Taylor expansion gives  
	\begin{align}
		U^{mn,n}_t&=n^{H}\int_{0}^{t}K(t-s)\big(b(X^{mn}_\frac{[mns]}{mn})-b(X^n_\frac{[ns]}{n})\big)\d s\nonumber\\&\quad+n^{H}\int_{0}^{t}K(t-s)\big(\sigma(X^{mn}_\frac{[mns]}{mn})-\sigma(X^n_\frac{[ns]}{n})\big)\d W_s\nonumber\\&=n^{H}\int_{0}^{t}K(t-s)\nabla b(X^n_{\frac{[ns]}{n}})\big(X^{mn}_\frac{[mns]}{mn}-X^{n}_\frac{[ns]}{n}\big)\d s\nonumber\\&\quad+n^{H}\int_{0}^{t}K(t-s)\nabla\sigma(X^n_{\frac{[ns]}{n}})\big(X^{mn}_\frac{[mns]}{mn}-X^n_\frac{[ns]}{n}\big)\d W_s+\mathcal{R}_t^{mn,n}\nonumber\\&=\int_{0}^{t}K(t-s)\nabla b(X^n_{\frac{[ns]}{n}})U_s^{mn,n}\d s+\int_{0}^{t}K(t-s)\nabla\sigma(X^n_{\frac{[ns]}{n}})U_s^{mn,n}\d W_s\nonumber\\&\quad+\int_{0}^{t}K(t-s)\nabla b(X^n_{\frac{[ns]}{n}})n^{H}\big(X^{n}_s-X^{n}_\frac{[ns]}{n}+X^{mn}_\frac{[mns]}{mn}-X^{mn}_s\big)\d s\nonumber\\&\quad+\int_{0}^{t}K(t-s)\nabla\sigma(X^n_{\frac{[ns]}{n}})n^{H}\big( X^n_s-X^{n}_\frac{[ns]}{n}+X^{mn}_\frac{[mns]}{mn}-X^{mn}_s\big)\d W_s+\mathcal{R}_t^{mn,n},\nonumber
	\end{align}
where
	\begin{align}
	\mathcal{R}^{mn,n}_{t}&=n^{H}\int_{0}^{t}K(t-s)\int_{0}^{1}\big(\nabla b(X^n_{\frac{[ns]}{n}}+r\big(X^{mn}_{\frac{[mns]}{mn}}-X^n_{\frac{[ns]}{n}}\big))-\nabla b(X^n_{\frac{[ns]}{n}})\big)\d r\big(X^{mn}_{\frac{[mns]}{mn}}-X^n_{\frac{[ns]}{n}}\big)\d s\nonumber \\
	&\quad+n^{H}\int_{0}^{t}K(t-s)\int_{0}^{1}\big(\nabla \sigma(X^n_{\frac{[ns]}{n}}+r\big(X^{mn}_{\frac{[mns]}{mn}}-X^n_{\frac{[ns]}{n}}\big))-\nabla \sigma(X^n_{\frac{[ns]}{n}})\big)\d r\big(X^{mn}_{\frac{[mns]}{mn}}-X^n_{\frac{[ns]}{n}}\big)\d s .\nonumber
\end{align}
	Let $V^{mn,n}_{\cdot}=\{V^{mn,n,k,j}\}_{ 1\leq k\leq d, 1\leq j\leq q} $ be  defined by 
	\begin{align}\label{def of V}
		V^{mn,n,k,j}_{\cdot}=n^{H}\int_{0}^{\cdot}\big(X^{n,k}_s-X^{n,k}_\frac{[ns]}{n}+X^{mn,k}_\frac{[mns]}{mn}-X^{mn,k}_s\big)\d W_s^j .
	\end{align}
	We first analyze  the limit of $\langle V^{mn,n,k_1,j}, V^{mn,n,k_2,j} \rangle_t$.  Recall that for $\kappa=n$ or $mn$
	\begin{align}
		X^{\kappa,k}_s-X^{\kappa,k}_{\frac{[\kappa s]}{\kappa}}&=\int_{0}^{\frac{[\kappa s]}{\kappa}}\Big(K(s-u)-K(\frac{[\kappa s]}{\kappa}-u)\Big)b^k(X^{\kappa}_{\frac{[\kappa u]}{\kappa}})\d u\nonumber\\&\quad+b^k(X^{\kappa}_{\frac{[\kappa s]}{\kappa}})\int_{\frac{[\kappa s]}{\kappa}}^{s}K(s-u)\d u\nonumber\\&\quad+\sum_{j=1}^{q}\int_{0}^{\frac{[\kappa s]}{\kappa}}\Big(K(s-u)-K(\frac{[\kappa s]}{\kappa}-u)\Big)\sigma^k_j(X^{\kappa}_{\frac{[\kappa u]}{\kappa}})\d W^j_u\nonumber\\&\quad+\sum_{j=1}^q\sigma^k_j(X^{\kappa}_{\frac{[\kappa s]}{\kappa}})\int_{\frac{[\kappa s]}{\kappa}}^{s}K(s-u)\d W^j_u.\nonumber
	\end{align}
	In order to rewrite  $X^{n,k}_s-X^{n,k}_\frac{[ns]}{n}+X^{mn,k}_\frac{[mns]}{mn}-X^{mn,k}_s$ as a treatable form, we introduce some more notations. Let
	\begin{align}
		\Delta K(n,s,u)&:=K(s-u)-K(\frac{[n s]}{n}-u),\nonumber\\\Delta K(mn,s,u)&:=K(s-u)-K(\frac{[mn s]}{mn}-u),\nonumber\\\Delta K(mn,n,s,u)&:=K(\frac{[mn s]}{mn}-u)-K(\frac{[ns]}{n}-u)\nonumber.
	\end{align}
	Then we have
	\begin{align}\label{def of Double X n m}
		&\quad X^{n,k}_s-X^{n,k}_\frac{[ns]}{n}+X^{mn,k}_\frac{[mns]}{mn}-X^{mn,k}_s\nonumber\\&=\int_{0}^{\frac{[n s]}{n}}\Delta K(n,s,u)b^k(X^{n}_{\frac{[nu]}{n}})\d u+b^k(X^{n}_{\frac{[n s]}{n}})\int_{\frac{[n s]}{n}}^{s}K(s-u)\d u\nonumber\\&\quad+\sum_{j=1}^q\int_{0}^{\frac{[n s]}{n}} \Delta K(n,s,u)\sigma^k_j(X^{n}_{\frac{[nu]}{n}})\d W^j_u+\sum_{j=1}^q\sigma^k_j(X^{n}_{\frac{[ns]}{n}})\int_{\frac{[n s]}{n}}^{s}K(s-u)\d W^j_u\nonumber\\&\quad-\int_{0}^{\frac{[mn s]}{mn}}\Delta K(mn,s,u)b^k(X^{mn}_{\frac{[mn u]}{mn}})\d u-b^k(X^{mn}_{\frac{[mn s]}{mn}})\int_{\frac{[mn s]}{mn}}^{s}K(s-u)\d u\nonumber\\&\quad-\sum_{j=1}^q\int_{0}^{\frac{[mn s]}{mn}}\Delta K(mn,s,u)\sigma^k_j(X^{mn}_{\frac{[mn u]}{mn}})\d W^j_u-\sum_{j=1}^q\sigma^k_j(X^{mn}_{\frac{[mn s]}{mn}})\int_{\frac{[mn s]}{mn}}^{s}K(s-u)\d W^j_u\nonumber\\
		&:= \mathcal{A}^{mn,n,k}_{1,s}+\mathcal{A}^{mn,n,k}_{2,s},
	\end{align}
	where
	\begin{align}
		\mathcal{A}^{mn,n,k}_{1,s}&:=\int_{0}^{\frac{[n s]}{n}}\Delta K(n,s,u)\big(b^k(X^{n}_{\frac{[n u]}{n}})-b^k(X^{mn}_{\frac{[mn u]}{mn}})\big)\d u\nonumber\\&\quad+\int_{0}^{\frac{[n s]}{n}}\Delta K(mn,n,s,u)b^k(X^{mn}_{\frac{[mn u]}{mn}})\d u-\int_{\frac{[ns]}{n}}^{\frac{[mn s]}{mn}}\Delta K(mn,s,u)b^k(X^{mn}_{\frac{[mn u]}{mn}})\d u\nonumber\\&\quad+b^k(X^{n}_{\frac{[n s]}{n}})\int_{\frac{[n s]}{n}}^{\frac{[mn s]}{mn}} K(s-u)\d u+\big(b^k(X^{n}_{\frac{[n s]}{n}})-b^k(X^{mn}_{\frac{[mn s]}{mn}})\big)\int_{\frac{[mns]}{mn}}^{s}K(s-u)\d u,\nonumber\\\mathcal{A}^{mn,n,k}_{2,s}&:=\sum_{j=1}^q\int_{0}^{\frac{[n s]}{n}} \Delta K(n,s,u)\sigma^k_j(X^{n}_{\frac{[nu]}{n}})\d W^j_u+\sum_{j=1}^q\sigma^k_j(X^{n}_{\frac{[ns]}{n}})\int_{\frac{[n s]}{n}}^{s}K(s-u)\d W^j_u\nonumber\\&\quad-\sum_{j=1}^q\int_{0}^{\frac{[mn s]}{mn}}\Delta K(mn,s,u)\sigma^k_j(X^{mn}_{\frac{[mn u]}{mn}})\d W^j_u-\sum_{j=1}^q\sigma^k_j(X^{mn}_{\frac{[mn s]}{mn}})\int_{\frac{[mn s]}{mn}}^{s}K(s-u)\d W^j_u.\nonumber
	\end{align}
	The quadratic variation of $V$ is computed easily:  
	\begin{align}\label{cov of V}
		&\langle V^{mn,n,k_1,j}, V^{mn,n,k_2,j} \rangle_t\nonumber\\&=n^{2H}\int_{0}^{t}\big(X^{n,k_1}_s-X^{n,k_1}_\frac{[ns]}{n}+X^{mn,k_1}_\frac{[mns]}{mn}-X^{mn,k_1}_s\big)\big(X^{n,k_2}_s-X^{n,k_2}_\frac{[ns]}{n}+X^{mn,k_2}_\frac{[mns]}{mn}-X^{mn,k_2}_s\big)\d s\nonumber\\&=\int_{0}^{t}n^{2H}\mathcal{A}^{mn,n,k_1}_{1,s}\mathcal{A}^{mn,n,k_2}_{1,s}\d s+\int_{0}^{t}n^{2H}\mathcal{A}^{mn,n,k_1}_{1,s}\mathcal{A}^{mn,n,k_2}_{2,s}\d s\nonumber\\&\quad+\int_{0}^{t}n^{2H}\mathcal{A}^{mn,n,k_1}_{2,s}\mathcal{A}^{mn,n,k_2}_{1,s}\d s+\int_{0}^{t}n^{2H}\mathcal{A}^{mn,n,k_1}_{2,s}\mathcal{A}^{mn,n,k_2}_{2,s}\d s.
	\end{align}
Our main result is
the following theorem. 
\begin{theorem}\label{t.main} Assume that the derivatives of the functions $b$ and $\sigma$  are bounded (hence Assumptions \ref{assumption 2.1} holds). 
	Let    $m\in\mathbb{N}\backslash \{0,1\}=\{2, 3, \cdots\}$ and  $\epsilon \in(0, H)$. Then the process $U^{mn,n}=n^{H}(X^{mn}-X^n)$  converges stably in law in $\mathcal{C}_0^{H-\epsilon}$ 
	to the   process $U=\left(U^1, \ldots, U^d\right)^{\top}$, 
which is the unique solution 	of the following  linear stochastic Volterra equation of random coefficients:  
	\begin{align}\label{limit Volterra eq}
		U_t^i=&\sum_{k=1}^{d}\int_0^t K(t-s) \partial_k b^i(X_s)U^k_s \mathrm{d} s+\sum_{j=1}^m \sum_{k=1}^d \int_0^t K(t-s) \partial_k \sigma_j^i(X_s)U^k_s\d W^j_s\nonumber \\
		& +
		\sqrt{\frac{g^H_m}{\Gamma(2H+1)\sin\pi H}}\sum_{j=1}^q\sum_{k=1}^{d}\sum_{l=1}^q\int_0^{t} K(t-s)\partial_k \sigma_j^i\left(X_s\right) \sigma^{k}_{l}(X_{s}) \mathrm{d} B_s^{l,j}\,, 
		\\
		&\qquad\quad  t \in[0, T], i=1, \ldots, d,\nonumber
	\end{align}
	where $g_m^H:=\sum_{j=0}^{m-1}\frac{j^{2H}}{m^{2H+1}}$, $\mathcal{C}_0^{\lambda}$  denotes the set of all  $\mathbb{R}^d$-valued $\lambda$-H$\mathrm{\ddot{o}}$lder continuous functions on $[0,T]$ vanishing at $t=0$, and $B$ is an $q^2$-dimensional standard Brownian motion, independent of the original Brownian motion $W$. 
\end{theorem}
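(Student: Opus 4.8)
The plan is to exploit the linear stochastic Volterra equation that $U^{mn,n}$ already satisfies (from the Taylor expansion preceding the statement) and to pass to the limit term by term. Writing that identity schematically as
\[
U^{mn,n}_t = \int_0^t K(t-s)\nabla b(X^n_{\frac{[ns]}{n}})U^{mn,n}_s\,\d s + \int_0^t K(t-s)\nabla\sigma(X^n_{\frac{[ns]}{n}})U^{mn,n}_s\,\d W_s + \mathcal{D}^{mn,n}_t + \mathcal{M}^{mn,n}_t + \mathcal{R}^{mn,n}_t,
\]
where $\mathcal{D}^{mn,n}$ and $\mathcal{M}^{mn,n}$ are the drift and diffusion correction terms carrying the factor $n^{H}\big(X^n_s-X^n_{\frac{[ns]}{n}}+X^{mn}_{\frac{[mns]}{mn}}-X^{mn}_s\big)$ and $\mathcal{R}^{mn,n}$ is the Taylor remainder, I would proceed in three steps: (i) show $\mathcal{R}^{mn,n}\to 0$ and $\mathcal{D}^{mn,n}\to 0$; (ii) prove that the diffusion input $\mathcal{M}^{mn,n}$, equivalently the family $V^{mn,n}$ of \eqref{def of V}, converges stably to a Wiener integral against a Brownian motion $B$ independent of $W$, with covariance matching \eqref{limit Volterra eq}; and (iii) invoke the continuity of the solution map of the linear SVE with the (convergent) random coefficients $\nabla b(X^n_{\cdot}),\nabla\sigma(X^n_{\cdot})$ to transfer the stable convergence of the inputs to $U^{mn,n}$ itself, identifying the limit with the unique solution $U$ of \eqref{limit Volterra eq}.

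For step (i), the remainder $\mathcal{R}^{mn,n}$ is handled through the a priori rate $\sup_{t\le T}\E[|X^{mn}_t-X^n_t|^p]^{1/p}\le Cn^{-H}$ of \eqref{e.1.3}: in each mean-value factor $\int_0^1\big(\nabla b(\cdots+r\Delta)-\nabla b(\cdots)\big)\,\d r$, with $\Delta=X^{mn}_{\frac{[mns]}{mn}}-X^n_{\frac{[ns]}{n}}=O(n^{-H})$, continuity of $\nabla b,\nabla\sigma$ forces the factor to be $o(1)$, while the remaining $n^{H}\Delta=O(1)$; hence $\mathcal{R}^{mn,n}\to 0$ uniformly in $t$ (quantitatively $O(n^{-H})$ when the second derivatives are bounded). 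The drift correction $\mathcal{D}^{mn,n}$ vanishes for a different reason: after rescaling, $n^{H}\big(X^n_s-X^n_{\frac{[ns]}{n}}+\cdots\big)$ is, to leading order, a mean-zero local martingale increment of unit order whose self-correlation is supported only on diagonal mesh cells of width $1/n$; integrating it against the locally integrable $K(t-s)\,\d s$ therefore contributes $\sum_{\text{cells}}O(n^{-2})=O(n^{-1})$ to the second moment, so $\mathcal{D}^{mn,n}\to 0$ in $L^2$ uniformly on $[0,T]$.

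The heart of the proof is step (ii), analyzed through the bracket \eqref{cov of V}. Using the decomposition \eqref{def of Double X n m} into the drift-type $\mathcal{A}_1$ and the martingale-type $\mathcal{A}_2$, I expect only the $\mathcal{A}_2\mathcal{A}_2$ contribution to survive the $n^{2H}$ scaling, the mixed and $\mathcal{A}_1\mathcal{A}_1$ terms being negligible. Inside $\mathcal{A}_2$ one must keep not only the last-cell integrals $\int_{[\kappa s]/\kappa}^{s}K(s-u)\,\d W_u$ but also the long-memory contributions $\int_0^{[\kappa s]/\kappa}\Delta K(\kappa,s,u)\,\d W_u$ carried by the singularity of $K$; it is precisely the interaction of the two mesh sizes $n$ and $mn$ inside these fractional integrals that generates the constant. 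Computing the $L^2$ kernels and summing the per-cell contributions — this is where the precise limit theorems for fractional integrals collected in the Appendix enter — I expect
\[
\langle V^{mn,n,k_1,j},\,V^{mn,n,k_2,j}\rangle_t \;\xrightarrow[n\to\infty]{\ \PP\ }\; \frac{g^H_m}{\Gamma(2H+1)\sin\pi H}\int_0^t \sum_{l=1}^q \sigma^{k_1}_l(X_s)\,\sigma^{k_2}_l(X_s)\,\d s,
\]
with $g^H_m=\sum_{j=0}^{m-1}j^{2H}/m^{2H+1}$ arising as the Riemann-type sum over the $m$ fine cells contained in each coarse cell. This computation, together with the bookkeeping of the singular-kernel memory across the two scales, is the main obstacle and the principal novelty relative to \cite{BK} (where $H=1/2$ trivializes the memory) and to \cite{FU} (where the error is $X^n-X$ rather than $X^{mn}-X^n$).

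Granting the bracket limit, I would finish step (ii) by a stable martingale central limit theorem in the spirit of \cite{Ja2}: beyond the bracket convergence one checks the asymptotic orthogonality $\langle V^{mn,n,k,j},W^{j'}\rangle_t\to 0$ and $\langle V^{mn,n,k,j},N\rangle_t\to 0$ for a separating family of bounded martingales $N$ orthogonal to $W$, which forces the limit field to be independent of $W$ and of the underlying $\sigma$-field, while the factorization $\sum_{l}\sigma^{k_1}_l\sigma^{k_2}_l$ identifies the driver as the $q^2$-dimensional Brownian motion $B$ of \eqref{limit Volterra eq}; since $\mathcal{M}^{mn,n,i}_t=\sum_{j=1}^q\sum_{k=1}^d\int_0^t K(t-s)\partial_k\sigma^i_j(X^n_{\frac{[ns]}{n}})\,\d V^{mn,n,k,j}_s$, the stable limit of $V^{mn,n}$ produces exactly the $B$-driven term of \eqref{limit Volterra eq}. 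Tightness in $\mathcal{C}_0^{H-\epsilon}$ follows from uniform bounds $\E[|U^{mn,n}_t-U^{mn,n}_r|^p]\le C|t-r|^{(H-\epsilon)p}$ obtained from the SVE together with Kolmogorov–Chentsov. Finally, step (iii) uses continuity of the linear-SVE solution map to propagate stable convergence of the inputs to $U^{mn,n}\Rightarrow U$, with uniqueness of $U$ supplied by the standard existence–uniqueness theory for linear stochastic Volterra equations with kernel $K$.
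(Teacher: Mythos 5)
Your proposal follows essentially the same route as the paper: the same Taylor-expansion decomposition of $U^{mn,n}$, vanishing of the remainder and drift-correction terms, convergence of the bracket of $V^{mn,n}$ to the $g_m^H$-constant via two-scale fractional-integral limit theorems, the orthogonality $\langle V^{mn,n,k,j},W^{j}\rangle\to 0$, Jacod's stable martingale CLT to identify the $B$-driven term with $B$ independent of $W$, and finally tightness in $\mathcal{C}_0^{H-\epsilon}$ combined with identification and uniqueness of the limiting linear SVE (the paper does this via Prokhorov plus subsequence uniqueness rather than a solution-map continuity argument, but the substance is identical). The one caveat is your heuristic for the drift correction: the self-correlations of $n^{H}\big(X^n_s-X^n_{[ns]/n}+X^{mn}_{[mns]/mn}-X^{mn}_s\big)$ are \emph{not} supported only on diagonal mesh cells, since the increments carry the full history through the kernel differences $\Delta K(n,s,u)$, so the claimed $O(n^{-1})$ second-moment bound is unjustified; the vanishing itself is nevertheless correct, but it requires the off-diagonal singular-kernel estimates (the paper's Fukasawa--Ugai-type lemmas), not a diagonal-support argument.
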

\begin{remark}\label{rem 2.1}
	Here are  some comments for the above theorem. When $H=1/2$,
\[
\sum_{j=0}^{m-1} j= m(m-1)/2\,,\quad  g_m^H=(m-1)/(2m)\,. 
\]
For general $H>0$,   it is easy to see that 
\[
\int_0^{m-1} x^{2H} \d x \le\sum_{j=0}^{m-1} j^{2H}\le 	\int_0^{m-1} x^{2H} \d x\,,
\quad \hbox{or}\quad 
\frac{(m-1)^{2H+1}}{ m^{2H+1}}\le g_m^H\le \frac{m^{2H+1}-1}{ m^{2H+1}}\,. 
\]
Thus 
\[
g_m^H\to 1\quad \hbox{as $m\to \infty$}\,.
\]
Hence we have 
	\begin{itemize}
		\item[(i)] 
		If we restrict $H=1/2$, we recover the Ben Alaya and Kebaier's   result (\cite{BK}).
		\item[(ii)] 
		If we restrict $H=1/2$ and further let formally $m$ tend to infinity, we recover the Jacod and Protter's   result (\cite{Ja2}).
	
		\item[(iii)]  If we just let formally $m$ tend to infinity, we recover the Fukasawa and Ugai's  result (\cite{FU}). 
	\end{itemize} 
\begin{center}
	\renewcommand{\arraystretch}{2}
	\begin{tabular}{|c|@{\extracolsep{2em}}c|}
		\hline
		Case& Constant of limit   equation  \\ \hline
		$H\in(0,1/2],m\in\mathbb{N}\backslash \{0,1\} $ & $\sqrt{\frac{g^H_m}{\Gamma(2H+1)\sin\pi H}}$ \\ \hline
		$H\in(0,1/2]$, $m\to\infty$ & $\frac{1}{\sqrt{\Gamma(2H+2)\sin \pi H}}$ \\\hline
		$H=1/2,m\in\mathbb{N}\backslash \{0,1\} $ & $\sqrt{\frac{m-1}{2m}}$ \\ \hline 
		$H=1/2$, $m\to\infty$ & $\frac{1}{\sqrt{2}}$ \\ \hline
	\end{tabular}
\end{center}
\end{remark}
To prove the theorem we need some lemmas. 
	\begin{lem}\label{est of A}
		For any $k=1,\cdots,d$ and any $m\in\mathbb{N}\backslash \{0,1\} $,
		\begin{itemize}
			\item[(i)] 
			$ \lim_{n\to\infty}\sup_{s\in[0,T]}\|n^{H}\mathcal{A}^{mn,n,k}_{1,s}\|_{L^2}=0$.
			\item[(ii)] 
			$\sup_{n \ge 1} \sup_{s\in[0,T]}\|n^{H}\mathcal{A}^{mn,n,k}_{2,s}\|_{L^2}<\infty.$
			\item[(iii)]  For any $k_1,k_2$ and $t\in [0,T]$  we have as ${n \rightarrow \infty}$
			\begin{align}
				n^{2H}\int_{0}^{t}\mathcal{A}^{mn,n,k_1}_{2,s}\mathcal{A}^{mn,n,k_2}_{2,s}\d s
				\stackrel{L^2}{\rightarrow } \sum_{j=1}^{q}\frac{g^H_m}{\Gamma(2H+1)\sin\pi H}\int_{0}^{t}\sigma^{k_1}_j(X_{s})\sigma^{k_2}_{j}(X_{s})\d s . \nonumber
			\end{align}
		\end{itemize} 	
	\end{lem}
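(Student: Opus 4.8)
The plan is to dispatch parts (i) and (ii) as consequences of two deterministic increment estimates for the kernel $K(u)=u^{H-1/2}/\Gamma(H+1/2)$, and to treat (iii), which produces the constant $g_m^H/(\Gamma(2H+1)\sin\pi H)$, as the substantive computation. First I would record the two workhorse bounds. Writing $a=[ns]/n$ and $b=[mns]/mn$, and using $s-a\le 1/n$, the substitution $v=a-u$ turns $\int_0^{a}|\Delta K(n,s,u)|\d u$ into $\int_0^{a}|K(v+(s-a))-K(v)|\d v$; since $K$ is nonincreasing for $H\le 1/2$ this telescopes to $\int_0^{s-a}K(v)\d v\le C n^{-(H+1/2)}$, while the same substitution gives $\int_0^{a}\Delta K(n,s,u)^2\d u=\int_0^{a}(K(v+(s-a))-K(v))^2\d v\le C(s-a)^{2H}\le Cn^{-2H}$ after scaling $v=(s-a)w$ and invoking the convergent integral $\int_0^\infty((1+w)^{H-1/2}-w^{H-1/2})^2\d w$. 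Together with the boundary bounds $\int_a^{b}K(s-u)\d u\le Cn^{-(H+1/2)}$, $\int_b^{s}K(s-u)^2\d u\le Cn^{-2H}$, and their analogues at scale $mn$, these control every piece of $\mathcal A_1$ and $\mathcal A_2$.

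For part (i), each of the five terms of $\mathcal A^{mn,n,k}_{1,s}$ is an integral of one of the above kernel pieces against either a grid-evaluation of $b^k$ (which has uniformly bounded $L^2$ moments, since $X^n,X^{mn}$ do) or a difference $b^k(X^n_\cdot)-b^k(X^{mn}_\cdot)$ which, by Assumption \ref{assumption 2.1} and the strong rate $\sup_t\|X^n_t-X^{mn}_t\|_{L^2}\le Cn^{-H}$, is $O(n^{-H})$ in $L^2$. Pushing the $L^2$-norm inside the $\d u$-integral and applying the $L^1$-kernel bound $n^{-(H+1/2)}$ yields $\|\mathcal A^{mn,n,k}_{1,s}\|_{L^2}\le Cn^{-(H+1/2)}$, so $n^H\mathcal A^{mn,n,k}_{1,s}=O(n^{-1/2})\to0$ uniformly in $s$. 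For part (ii), Itô's isometry (the integrands being adapted) and independence of the Brownian components make $\|n^H\mathcal A^{mn,n,k}_{2,s}\|^2_{L^2}$ a finite sum of terms of the form $n^{2H}\int\Delta K^2\,\E[|\sigma^k_j|^2]\d u$ and $n^{2H}\E[|\sigma^k_j|^2]\int K(s-u)^2\d u$; the $L^2$-kernel bound $n^{-2H}$ and the uniform moment bounds on $\sigma(X^\kappa)$ make each $O(1)$, giving the claimed uniform boundedness.

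Part (iii) I would carry out in three moves. (a) \emph{Leading kernel.} Cancelling the common $K(s-u)$ pieces among the four stochastic integrals in $\mathcal A_2$ and freezing the diffusion coefficient at $X_s$, one finds $\mathcal A^{mn,n,k}_{2,s}\approx\sum_j\sigma^k_j(X_s)\int_0^s g(s,u)\,\d W^j_u$ with the combined kernel $g(s,u)=K(b-u)\mathbf 1_{\{u<b\}}-K(a-u)\mathbf 1_{\{u<a\}}$; the replacement of $\sigma^k_j(X^{n}_{[nu]/n})$ and $\sigma^k_j(X^{mn}_{[mnu]/mn})$ by $\sigma^k_j(X_s)$ is justified by splitting the $u$-integral into a neighbourhood of $s$ (where path regularity and the strong rate make the coefficient difference small) and its complement (where the kernel difference is itself $O(n^{-1})$ and contributes at higher order). (b) \emph{Deterministic reduction.} By Itô isometry the diagonal $j=j'$ survives, and the leading conditional mean of $n^{2H}\mathcal A^{mn,n,k_1}_{2,s}\mathcal A^{mn,n,k_2}_{2,s}$ is $\sum_j\sigma^{k_1}_j(X_s)\sigma^{k_2}_j(X_s)\,n^{2H}\int_0^s g(s,u)^2\d u$, where, setting $\delta:=b-a$,
\[
\int_0^s g(s,u)^2\d u=\int_0^{a}\big(K(v+\delta)-K(v)\big)^2\d v+\int_0^{\delta}K(w)^2\d w\ \sim\ \frac{\delta^{2H}}{\Gamma(2H+1)\sin\pi H}
\]
as $n\to\infty$, by the scaling $v=\delta w$ and the Beta-function identity $\tfrac1{\Gamma(H+1/2)^2}\big(\int_0^\infty((1+w)^{H-1/2}-w^{H-1/2})^2\d w+\tfrac1{2H}\big)=\tfrac1{\Gamma(2H+1)\sin\pi H}$. (c) \emph{Subinterval averaging.} On a coarse cell $[i/n,(i+1)/n)$ one has $\delta=p/(mn)$ for $s$ in the $p$-th fine cell ($p=0,\dots,m-1$), hence $n^{2H}\delta^{2H}=p^{2H}/m^{2H}$; summing the $m$ fine cells of length $1/(mn)$ against the slowly varying coefficient and letting $n\to\infty$ converts the $s$-integral into a Riemann integral whose per-cell average is exactly $\tfrac1m\sum_{p=0}^{m-1}p^{2H}/m^{2H}=g_m^H$, producing the stated limit.

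The main obstacle is upgrading the conditional-mean computation of (b)--(c) to genuine $L^2$ convergence, that is, showing both that the frozen-coefficient error vanishes after the $n^{2H}\int_0^t\d s$ operation and that $n^{2H}\int_0^t\big(\mathcal A^{mn,n,k_1}_{2,s}\mathcal A^{mn,n,k_2}_{2,s}-\E[\mathcal A^{mn,n,k_1}_{2,s}\mathcal A^{mn,n,k_2}_{2,s}\mid\mathcal F_{[ns]/n}]\big)\d s\to0$ in $L^2$. Both are delicate precisely because the fractional kernel destroys independence of increments: the memory parts $\int_0^{a}(K(b-u)-K(a-u))\d W_u$ at different times $s$ are correlated, so the fluctuation cannot be written as a sum of independent contributions and must instead be controlled through fourth-moment bounds on the iterated Wiener integrals together with the cross-correlation estimates collected in Appendix \ref{limit theorems}. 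This is exactly where the independent-increment arguments of Ben Alaya--Kebaier \cite{BK} break down and the new limit theorems are required.
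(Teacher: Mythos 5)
Your parts (i) and (ii) are correct and coincide with the paper's own argument: Minkowski's integral inequality, the Lipschitz assumption, the uniform moment bounds (Lemmas \ref{bound of X n}, \ref{rate of mn,n}) and the $L^1$-kernel estimates of order $n^{-(H+1/2)}$ (Lemma \ref{est-1}) give $\sup_s\|\mathcal{A}^{mn,n,k}_{1,s}\|_{L^2}=O(n^{-(H+1/2)})$, hence (i); BDG/It\^o isometry and the $L^2$-kernel estimates of order $n^{-2H}$ (Lemma \ref{basic lemma}-(iv)) give (ii). Your algebra in (iii)(a)--(b) is also correct and is a genuinely different, tidier way to locate the constant: with every coefficient frozen at $X_s$ the four stochastic integrals in $\mathcal{A}^{mn,n,k}_{2,s}$ do collapse to $\sum_j\sigma^k_j(X_s)\int_0^sg(s,u)\,\mathrm{d}W^j_u$ with $g(s,u)=K(\tfrac{[mns]}{mn}-u)\mathbb{I}_{\{u<\frac{[mns]}{mn}\}}-K(\tfrac{[ns]}{n}-u)\mathbb{I}_{\{u<\frac{[ns]}{n}\}}$, and the exact evaluation $\int_0^sg(s,u)^2\,\mathrm{d}u\sim\delta^{2H}/(\Gamma(2H+1)\sin\pi H)$, $\delta=\tfrac{[mns]}{mn}-\tfrac{[ns]}{n}$, combined with the cell average $\frac1m\sum_{p=0}^{m-1}(p/m)^{2H}=g_m^H$, reproduces the constant that the paper only assembles at the very end of its sixteen-term decomposition via the Mishura identity.

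However, for part (iii) what you have written establishes the limit of the (conditional) means, not the $L^2$ convergence asserted in the lemma, and your closing paragraph concedes exactly this; the two deferred steps are the substance of the proof (all of subsections \ref{(4.2.1)}--\ref{(4.2.6)} in the paper). First, the fluctuation: after freezing coefficients you must show
$n^{2H}\int_0^t\sigma^{k_1}_j(X_s)\sigma^{k_2}_j(X_s)\bigl[\bigl(\int_0^sg(s,u)\,\mathrm{d}W^j_u\bigr)^2-\int_0^sg(s,u)^2\,\mathrm{d}u\bigr]\,\mathrm{d}s\to0$ in $L^2$, together with the off-diagonal $j\neq j'$ analogues. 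Writing the bracket as an iterated stochastic integral via \eqref{dcp of stochastic integral}, its second moment after the $\mathrm{d}s$-integration becomes a double integral over $v<s$ of cross-correlations of the kernels $g(s,\cdot)$ and $g(v,\cdot)$, and proving these vanish is precisely the content of the decorrelation Lemmas \ref{esti of A n}, \ref{esti of A mn}, \ref{esti of B}, \ref{esti of C}, \ref{esti of D}, \ref{esti of E} combined with tower-property arguments --- none of which you carry out, and which cannot be bypassed because the singular kernel correlates the memory terms at different times $s$. Second, the coefficient freezing and your Riemann-sum step (c) are not elementary: the ``slowly varying coefficient'' is the random process $\sigma^{k_1}_j(X^n_\cdot)\sigma^{k_2}_j(X^{mn}_\cdot)$, which only converges in $L^2$, not a deterministic continuous function, so passing from the oscillating factor $(n\delta_{(mn,n,s)})^{2H}$ to its average $g^H_m$ requires the weak-convergence/characteristic-function Lemmas \ref{limit distribution-1}, \ref{limit distribution-0_m} and \ref{limit distribution-2} together with coefficient-replacement estimates of the type \eqref{sigma-1}--\eqref{sigma-2}; these limit theorems are new (they follow neither from \cite{FU} nor from \cite{BK}) and occupy Appendix \ref{limit theorems}. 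Naming ``fourth-moment bounds'' and ``cross-correlation estimates'' is the correct diagnosis, but since neither step is executed, the proposal identifies the limit without proving the convergence: that is a genuine gap.
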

\begin{proof}[Proof of Lemma \ref{est of A}-(i)]
	By the properties of fractional kernels, Assumptions \ref{assumption 2.1}, Minkowski integral inequality,Lemma \ref{bound of X n} and Lemma \ref{est-1}, we have
	\begin{align}
		\E[|\mathcal{A}^{mn,n,k}_{1,s}|^2]^{1/2}&\leq \E\Big[\Big| \int_{0}^{\frac{[n s]}{n}}\Delta K(n,s,u)\big(b^k(X^{n}_{\frac{[n u]}{n}})-b^k(X^{mn}_{\frac{[mn u]}{mn}})\big)\d u\Big|^2\Big]^{1/2}\nonumber\\&\quad+\E\Big[\Big|\int_{0}^{\frac{[n s]}{n}}\Delta K(mn,n,s,u)b^k(X^{mn}_{\frac{[mn u]}{mn}})\d u\Big|^2\Big]^{1/2}\nonumber\\&\quad+\E\Big[\Big|-\int_{\frac{[ns]}{n}}^{\frac{[mn s]}{mn}}\Delta K(mn,s,u)b^k(X^{mn}_{\frac{[mn u]}{mn}})\d u\Big|^2\Big]^{1/2}\nonumber\\&\quad+\E\Big[\Big| b^k(X^{n}_{\frac{[n s]}{n}})\int_{\frac{[n s]}{n}}^{\frac{[mn s]}{mn}} K(s-u)\d u\Big|^2\Big]^{1/2}\nonumber\\&\quad+\E\Big[\Big|\big(b^k(X^{n}_{\frac{[n s]}{n}})-b^k(X^{mn}_{\frac{[mn s]}{mn}})\big)\int_{\frac{[mns]}{mn}}^{s}K(s-u)\d u\Big|^2\Big]^{1/2}\nonumber\\&\leq \int_{0}^{\frac{[n s]}{n}}\Delta K(n,s,u)\Big\|b^k(X^{n}_{\frac{[n u]}{n}})-b^k(X^{mn}_{\frac{[mn u]}{mn}})\Big\|_{L^2}\d u\nonumber\\&\quad+\int_{0}^{\frac{[n s]}{n}}\Delta K(mn,n,s,u)\big\|b^k(X^{mn}_{\frac{[mn u]}{mn}})\big\|_{L^2}\d u\nonumber\\&\quad+\int_{\frac{[ns]}{n}}^{\frac{[mn s]}{mn}}\Delta K(mn,s,u)\big\|b^k(X^{mn}_{\frac{[mn u]}{mn}})\big\|_{L^2}\d u\nonumber\\&\quad+\big\| b^k(X^{n}_{\frac{[n s]}{n}})\big\|_{L^2}\int_{\frac{[n s]}{n}}^{\frac{[mn s]}{mn}} K(s-u)\d u\nonumber\\&\quad+\Big\|b^k(X^{n}_{\frac{[n s]}{n}})-b^k(X^{mn}_{\frac{[mn s]}{mn}})\Big\|_{L^2}\int_{\frac{[mns]}{mn}}^{s}K(s-u)\d u\nonumber\\&\leq C\Big[ \int_{0}^{\frac{[n s]}{n}}\Delta K(n,s,u)\d u+\int_{0}^{\frac{[n s]}{n}}\Delta K(mn,n,s,u)\d u+\int_{\frac{[ns]}{n}}^{\frac{[mn s]}{mn}}\Delta K(mn,s,u)\d u\nonumber\\&\quad+\int_{\frac{[n s]}{n}}^{\frac{[mn s]}{mn}} K(s-u)\d u+\int_{\frac{[mns]}{mn}}^{s}K(s-u)\d u\Big]\nonumber
		\\&\leq Cn^{H-1/2}.\nonumber
	\end{align}
	Hence $\lim_{n\to\infty}\sup_{s\in[0,T]}\|n^{H}\mathcal{A}^{mn,n,k}_{1,s}\|_{L^2}=0$.
\end{proof}
	\begin{proof}[Proof of Lemma \ref{est of A}-(ii)]
		By the properties of fractional kernels, Assumptions \ref{assumption 2.1}, Lemma \ref{basic lemma}-(iv), Lemma \ref{rate of mn,n}  and the Burkholder-Davis-Gundy (BDG) inequality, we have
		\begin{align}
			\E[|n^{H}\mathcal{A}^{mn,n,k}_{2,s}|^2]&\leq Cn^{2H}\E\Big[\Big| \int_{0}^{\frac{[n s]}{n}}\Delta K(n,s,u)\sigma_j^k(X^{n}_{\frac{[n u]}{n}})\d W^j_u\Big|^2\Big]\nonumber\\&\quad+C n^{2H}\E\Big[\Big|\sigma_j^k(X^{n}_{\frac{[n s]}{n}})\int_{\frac{[mns]}{mn}}^{s}K(s-u)\d W^j_u\Big|^2\Big]\nonumber\\&\quad+Cn^{2H}\E\Big[\Big| \int_{0}^{\frac{[mn s]}{mn}}\Delta K(mn,s,u)\sigma_j^k(X^{n}_{\frac{[mn u]}{mn}})\d W^j_u\Big|^2\Big]\nonumber\\&\quad +C n^{2H}\E\Big[\Big|\sigma_j^k(X^{mn}_{\frac{[mn s]}{mn}})\int_{\frac{[mns]}{mn}}^{s}K(s-u)\d W^j_u\Big|^2\Big]\nonumber\\&\leq Cn^{2H}\Big|s-\frac{[ns]}{n}\Big|^{2H}\sup_{s\in [0,T]}\E\Big[\big|\sigma_j^k(X_{\frac{[ns]}{n}}^{n})\big|^2\Big]\nonumber\\&\quad+ Cn^{2H}\Big|s-\frac{[mns]}{mn}\Big|^{2H}\sup_{s\in [0,T]}\E\Big[\big|\sigma_j^k(X_{\frac{[mns]}{mn}}^{mn})\big|^2\Big]\nonumber\\&\leq Cn^{-2H}.
		\end{align}
		Hence $\lim_{n\to\infty}\sup_{s\in[0,T]}\|n^{H}\mathcal{A}^{mn,n,k}_{2,s}\|_{L^2}<\infty$.
	\end{proof}
	\begin{proof}[Proof of Lemma \ref{est of A}-(iii)] 
		Recall that
		\begin{align}
			\mathcal{A}^{mn,n,k}_{2,s}&:=\sum_{j=1}^q\int_{0}^{\frac{[n s]}{n}} \Delta K(n,s,u)\sigma^k_j(X^{n}_{\frac{[nu]}{n}})\d W^j_u+\sum_{j=1}^q\sigma^k_j(X^{n}_{\frac{[ns]}{n}})\int_{\frac{[n s]}{n}}^{s}K(s-u)\d W^j_u\nonumber\\&\quad-\sum_{j=1}^q\int_{0}^{\frac{[mn s]}{mn}}\Delta K(mn,s,u)\sigma^k_j(X^{mn}_{\frac{[mn u]}{mn}})\d W^j_u-\sum_{j=1}^q\sigma^k_j(X^{mn}_{\frac{[mn s]}{mn}})\int_{\frac{[mn s]}{mn}}^{s}K(s-u)\d W^j_u\,. \nonumber 
		\end{align}
We start with the following decomposition:
		\begin{align}
			&n^{2H}\int_{0}^{t}\mathcal{A}^{mn,n,k_1}_{2,s}\mathcal{A}^{mn,n,k_2}_{2,s}\d s\nonumber\\
			&:=\sum_{j,l=1}^q\Big[\textbf{(1,1)}_{j,l}^{n,k}+\textbf{(1,2)}_{j,l}^{n,k}-\textbf{(1,3)}_{j,l}^{ mn,n,k}-\textbf{(1,4)}_{j,l}^{ mn,n,k}+\textbf{(2,1)}_{j,l}^{n,k}+\textbf{(2,2)}_{j,l}^{n,k}\nonumber\\
			&\quad-\textbf{(2,3)}_{j,l}^{ mn,n,k}-\textbf{(2,4)}_{j,l}^{ mn,n,k}-\textbf{(3,1)}_{j,l}^{ mn,n,k}-\textbf{(3,2)}_{j,l}^{ mn,n,k}+\textbf{(3,3)}_{j,l}^{ mn,n,k}+\textbf{(3,4)}_{j,l}^{ mn,n,k}\nonumber\\
			&\quad-\textbf{(4,1)}_{j,l}^{ mn,n,k}-\textbf{(4,2)}_{j,l}^{ mn,n,k}+\textbf{(4,3)}_{j,l}^{ mn,n,k}+\textbf{(4,4)}_{j,l}^{ mn,n,k}\Big],\nonumber
		\end{align}
		where
		\begin{align}
			\textbf{(1,1)}_{j,l}^{n,k}&=n^{2H}\int_{0}^{t}\Big(\int_{0}^{\frac{[n s]}{n}}\Delta K(n,s,u)\sigma_j^{k_1}(X^{mn}_{\frac{[n u]}{n}})\d W^j_u\Big)\Big(\int_{0}^{\frac{[n s]}{n}}\Delta K(n,s,u)\sigma_l^{k_2}(X^{n}_{\frac{[n u]}{n}})\d W^l_u\Big)\d s,\nonumber\\
			\textbf{(1,2)}_{j,l}^{n,k}&=n^{2H}\int_{0}^{t}\Big(\int_{0}^{\frac{[n s]}{n}}\Delta K(n,s,u)\sigma_j^{k_1}(X^{n}_{\frac{[n u]}{n}})\d W^j_u\Big)\Big(\sigma^{k_2}_j(X^{n}_{\frac{[ns]}{n}})\int_{\frac{[n s]}{n}}^{s}K(s-u)\d W^l_u\Big)\d s,\nonumber\\
			\textbf{(2,1)}_{j,l}^{n,k}&=n^{2H}\int_{0}^{t}\Big(\sigma^{k_1}_j(X^{n}_{\frac{[ns]}{n}})\int_{\frac{[n s]}{n}}^{s}K(s-u)\d W^j_u\Big)\Big(\int_{0}^{\frac{[n s]}{n}}\Delta K(n,s,u)\sigma_l^{k_2}(X^{n}_{\frac{[n u]}{n}})\d W^l_u\Big)\d s,\nonumber\\
			\textbf{(2,2)}_{j,l}^{n,k}&=n^{2H}\int_{0}^{t}\Big(\sigma^{k_1}_j(X^{n}_{\frac{[ns]}{n}})\int_{\frac{[n s]}{n}}^{s}K(s-u)\d W^j_u\Big)\Big(\sigma^{k_2}_j(X^{n}_{\frac{[ns]}{n}})\int_{\frac{[n s]}{n}}^{s}K(s-u)\d W^l_u\Big)\d s,\nonumber\\\textbf{(1,3)}_{j,l}^{mn,n,k}&=n^{2H}\int_{0}^{t}\Big(\int_{0}^{\frac{[n s]}{n}}\Delta K(n,s,u)\sigma_j^{k_1}(X^{n}_{\frac{[n u]}{n}})\d W^j_u\Big)\Big(\int_{0}^{\frac{[mn s]}{mn}}\Delta K(mn,s,u)\sigma^{k_2}_j(X^{mn}_{\frac{[mn u]}{mn}})\d W^l_u\Big)\d s,\nonumber\\
			\textbf{(1,4)}_{j,l}^{mn,n,k}&=n^{2H}\int_{0}^{t}\Big(\int_{0}^{\frac{[n s]}{n}}\Delta K(n,s,u)\sigma_j^{k_1}(X^{n}_{\frac{[n u]}{n}})\d W^j_u\Big)\Big(\sigma^{k_2}_j(X^{mn}_{\frac{[mn s]}{mn}})\int_{\frac{[mn s]}{mn}}^{s}K(s-u)\d W^l_u\Big)\d s,\nonumber\\
			\textbf{(2,3)}_{j,l}^{mn,n,k}&=n^{2H}\int_{0}^{t}\Big(\sigma^{k_1}_j(X^{n}_{\frac{[ns]}{n}})\int_{\frac{[n s]}{n}}^{s}K(s-u)\d W^j_u\Big)\Big(\int_{0}^{\frac{[mn s]}{mn}}\Delta K(mn,s,u)\sigma^{k_2}_j(X^{mn}_{\frac{[mn u]}{mn}})\d W^l_u\Big)\d s,\nonumber\\
			\textbf{(2,4)}_{j,l}^{mn,n,k}&=n^{2H}\int_{0}^{t}\Big(\sigma^{k_1}_j(X^{n}_{\frac{[ns]}{n}})\int_{\frac{[n s]}{n}}^{s}K(s-u)\d W^j_u\Big)\Big(\sigma^{k_2}_j(X^{mn}_{\frac{[mn s]}{mn}})\int_{\frac{[mn s]}{mn}}^{s}K(s-u)\d W^l_u\Big)\d s,\nonumber\\
			\textbf{(3,1)}_{j,l}^{mn,n,k}&=n^{2H}\int_{0}^{t}\Big(\int_{0}^{\frac{[mn s]}{mn}}\Delta K(mn,s,u)\sigma^{k_1}_j(X^{mn}_{\frac{[mn u]}{mn}})\d W^j_u\Big)\Big(\int_{0}^{\frac{[n s]}{n}} \Delta K(n,s,u)\sigma^{k_2}_j(X^{n}_{\frac{[nu]}{n}})\d W^l_u\Big)\d s,\nonumber\\
			\textbf{(3,2)}_{j,l}^{mn,n,k}&=n^{2H}\int_{0}^{t}\Big(\int_{0}^{\frac{[mn s]}{mn}}\Delta K(mn,s,u)\sigma^{k_1}_j(X^{mn}_{\frac{[mn u]}{mn}})\d W^j_u\Big)\Big(\sigma^{k_2}_j(X^{n}_{\frac{[ns]}{n}})\int_{\frac{[n s]}{n}}^{s}K(s-u)\d W^l_u\Big)\d s,\nonumber\\
			\textbf{(3,3)}_{j,l}^{mn,n,k}&=n^{2H}\int_{0}^{t}\Big(\int_{0}^{\frac{[mn s]}{mn}}\Delta K(mn,s,u)\sigma^{k_1}_j(X^{mn}_{\frac{[mn u]}{mn}})\d W^j_u\Big)\Big(\int_{0}^{\frac{[mn s]}{mn}}\Delta K(mn,s,u)\sigma^{k_2}_j(X^{mn}_{\frac{[mn u]}{mn}})\d W^l_u\Big)\d s,\nonumber\\
			\textbf{(3,4)}_{j,l}^{mn,n,k}&=n^{2H}\int_{0}^{t}\Big(\int_{0}^{\frac{[mn s]}{mn}}\Delta K(mn,s,u)\sigma^{k_1}_j(X^{mn}_{\frac{[mn u]}{mn}})\d W^j_u\Big)\Big(\sigma^{k_2}_j(X^{mn}_{\frac{[mn s]}{mn}})\int_{\frac{[mn s]}{mn}}^{s}K(s-u)\d W^l_u\Big)\d s,\nonumber\\
			\textbf{(4,1)}_{j,l}^{mn,n,k}&=n^{2H}\int_{0}^{t}\Big(\sigma^{k_1}_j(X^{mn}_{\frac{[mn s]}{mn}})\int_{\frac{[mn s]}{mn}}^{s}K(s-u)\d W^j_u\Big)\Big(\int_{0}^{\frac{[n s]}{n}} \Delta K(n,s,u)\sigma^{k_2}_j(X^{n}_{\frac{[nu]}{n}})\d W^l_u\Big)\d s,\nonumber\\
			\textbf{(4,2)}_{j,l}^{mn,n,k}&=n^{2H}\int_{0}^{t}\Big(\sigma^{k_1}_j(X^{mn}_{\frac{[mn s]}{mn}})\int_{\frac{[mn s]}{mn}}^{s}K(s-u)\d W^j_u\Big)\Big(\sigma^{k_2}_j(X^{n}_{\frac{[ns]}{n}})\int_{\frac{[n s]}{n}}^{s}K(s-u)\d W^l_u\Big)\d s,\nonumber\\
			\textbf{(4,3)}_{j,l}^{mn,n,k}&=n^{2H}\int_{0}^{t}\Big(\sigma^{k_1}_j(X^{mn}_{\frac{[mn s]}{mn}})\int_{\frac{[mn s]}{mn}}^{s}K(s-u)\d W^j_u\Big)\Big(\int_{0}^{\frac{[mn s]}{mn}}\Delta K(mn,s,u)\sigma^{k_2}_j(X^{mn}_{\frac{[mn u]}{mn}})\d W^l_u\Big)\d s,\nonumber\\
			\textbf{(4,4)}_{j,l}^{mn,n,k}&=n^{2H}\int_{0}^{t}\Big(\sigma^{k_1}_j(X^{mn}_{\frac{[mn s]}{mn}})\int_{\frac{[mn s]}{mn}}^{s}K(s-u)\d W^j_u\Big)\Big(\sigma^{k_2}_j(X^{mn}_{\frac{[mn s]}{mn}})\int_{\frac{[mn s]}{mn}}^{s}K(s-u)\d W^l_u\Big)\d s.\nonumber
		\end{align}
		According to Lemma $4.2$  of \cite{FU}, we have that for any $k_1,k_2$ and $t\in [0,T]$  we have as ${n \rightarrow \infty}$ 
		\[
		\begin{split} 
			\textbf{(1,1)}_{j,l}^{n,k} \rightarrow & \begin{cases}\frac{1}{(2H+1)G}\int_{0}^{\infty}|\mu(r,1)|^2\d r\int_{0}^{t}\sigma^{k_1}_j(X_{s})\sigma^{k_2}_j(X_{s})\d s & \text { if } j=j', \\ 0 & \text { if } j \neq j',\end{cases}\\
					\textbf{(1,2)}_{j,l}^{n,k} \rightarrow & 0 \\
			\textbf{(2,1)}_{j,l}^{n,k}  \rightarrow&0\\ 
			\end{split}
		\]
		and 

\[
		\begin{split} 
			\textbf{(2,2)}_{j,l}^{n,k} \rightarrow & \begin{cases}\frac{1}{2HG(2H+1)}\int_{0}^{t}\sigma^{k_1}_j(X_{s})\sigma^{k_2}_j(X_{s})\d s & \text { if } j=j', \\ 0& \text { if } j \neq j' \end{cases} 
		\end{split}
		\]
		in the sense of $L^2$, where $\mu(r,1)$ can be referred to \eqref{basic types}. In subsections  \ref{(4.2.1)}-\ref{(4.2.6)} below, we will show the following limits of remainder terms in $L^2$ as $n\to \infty$: 
		\[
		\begin{split} 
			\sum_{i=1}^{2}\textbf{(i,3)}_{j,l}^{mn,n,k} \rightarrow & \begin{cases}\frac{1}{2G}\Bigg(\Big[\frac{m^{2H}+1}{(2H+1)m^{2H}}-g_m^H\Big]\int_{0}^{\infty}\mu(r,1)^2\d r+\frac{m^{2H}-1}{2H(2H+1)m^{2H}}-\frac{g_m^H}{2H}\Bigg)\int_{0}^{t}\sigma^{k_1}_j(X_{s})\sigma^{k_2}_j(X_{s})\d s& \text { if } j=j', \\ 0 & \text { if } j \neq j',\end{cases} \\
			\textbf{(1,4)}_{j,l}^{mn,n,k} \rightarrow & 0 \\
			\textbf{(2,4)}_{j,l}^{mn,n,k}  \rightarrow&\begin{cases}\frac{1}{2HG(2H+1)m^{2H}}\int_{0}^{t}\sigma^{k_1}_j(X_{s})\sigma^{k_2}_j(X_{s})\d s & \text { if } j=j', \\ 0 & \text { if } j \neq j',\end{cases} \\ 	
			\end{split}
		\] 
\[
		\begin{split} \sum_{i=1}^{2}\textbf{(3,i)}_{j,l}^{mn,n,k} \rightarrow & \begin{cases}\frac{1}{2G}\Bigg(\Big[\frac{m^{2H}+1}{(2H+1)m^{2H}}-g_m^H\Big]\int_{0}^{\infty}\mu(r,1)^2\d r+\frac{m^{2H}-1}{2H(2H+1)m^{2H}}-\frac{g_m^H}{2H}\Bigg)\int_{0}^{t}\sigma^{k_1}_j(X_{s})\sigma^{k_2}_j(X_{s})\d s& \text { if } j=j', \\ 0 & \text { if } j \neq j',\end{cases}\\
	\end{split}
		\] 
\[
		\begin{split} 		\textbf{(3,3)}_{j,l}^{mn,n,k} \rightarrow & \begin{cases}\frac{1}{(2H+1)m^{2H}G}\int_{0}^{\infty}\mu(r,1)^2\d r\int_{0}^{t}\sigma^{k_1}_j(X_{s})\sigma^{k_2}_j(X_{s})\d s& \text { if } j=j', \\ 0& \text { if } j \neq j'.\end{cases} \\\textbf{(3,4)}_{j,l}^{mn,n,k} \rightarrow & 0\\\textbf{(4,1)}_{j,l}^{mn,n,k} \rightarrow & 0\\
		\end{split}
		\] 
\[
		\begin{split} \textbf{(4,2)}_{j,l}^{mn,n,k} \rightarrow & \begin{cases}\frac{1}{2HG(2H+1)m^{2H}}\int_{0}^{t}\sigma^{k_1}_j(X_{s})\sigma^{k_2}_j(X_{s})\d s & \text { if } j=j', \\ 0& \text { if } j \neq j'.\end{cases}\\\textbf{(4,3)}_{j,l}^{mn,n,k} \rightarrow & 0\\\textbf{(4,4)}_{j,l}^{mn,n,k} \rightarrow & \begin{cases}\frac{1}{2HG(2H+1) m^{2H}}\int_{0}^{t}\sigma^{k_1}_j(X_{s})\sigma^{k_2}_j(X_{s})\d s& \text { if } j=j', \\ 0& \text { if } j \neq j'.\end{cases}
		\end{split}
		\]
		These limits  yield 
		\begin{align}
			n^{4H}&\int_{0}^{t}\mathcal{A}^{mn,n,k_1}_{2,s}\mathcal{A}^{mn,n,k_2}_{2,s}ds\xrightarrow[\text { in } L^2]{n \rightarrow \infty} \sum_{j=1}^{q}\Big[\frac{g^H_m}{G}\int_{0}^{\infty}\mu(r,1)^2\d r+\frac{g^H_m}{2HG}\Big]\int_{0}^{t}\sigma^{k_1}_j(X_{s})\sigma^{k_2}_{j}(X_{s})\d s, \nonumber
		\end{align}
		where
		$g_m^H:=\sum_{j=0}^{m-1}\frac{j^{2H}}{m^{2H+1}}$.  Using the identity
		$$2H\int_{0}^{\infty}|\mu(r,1)|^2\d r+1=\frac{G}{\Gamma(2H)\sin\pi H},$$
		  [we refer to Mishura \cite{Mis}, Theorem 1.3.1 and Lemma A.0.1],    we can rewrite the limit  as 
		\begin{align}
			n^{4H}&\int_{0}^{t}\mathcal{A}^{mn,n,k_1}_{2,s}\mathcal{A}^{mn,n,k_2}_{2,s}ds\xrightarrow[\text { in } L^2]{n \rightarrow \infty} \sum_{j=1}^{q}\frac{g^H_m}{\Gamma(2H+1)\sin\pi H}\int_{0}^{t}\sigma^{k_1}_j(X_{s})\sigma^{k_2}_{j}(X_{s})\d s, \nonumber
		\end{align}
		and the lemma is proved.  		
		 \end{proof}
		 The following integration by parts formula will be frequently used:
		\begin{align}\label{dcp of stochastic integral}
			\Big(\int_{s}^{t}h_1(u)&\d W_u^j\Big)\Big(\int_{s}^{t}h_2(u)\d W_u^{j'}\Big)=\int_{s}^{t}\Big(\int_{s}^{u}h_1(r)\d W_r^j\Big)h_2(u)\d W_u^{j'}\nonumber\\&\quad+\int_{s}^{t}\Big(\int_{s}^{u}h_2(r)\d W_r^{j'}\Big)h_1(u)\d W_u^{j}+\int_{s}^{t}h_1(u)h_2(u)\d \langle W^{j}, W^{j'} \rangle_u,
		\end{align}
		for any progressively measurable square integrable  processes  $h_1,h_2 $.

	\begin{lem}\label{est of V-2}
		For all $t\in [0,T], m\in\mathbb{N}\backslash \{0,1\} , k\in\{1,\cdots,d\}, 1\leq j\leq q,$   we have 
		$$\langle V^{mn,n,k,j}, W^{j} \rangle_t\xrightarrow { L^1} 0.$$
	\end{lem}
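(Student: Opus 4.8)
The plan is to compute the covariation explicitly and then split it along the decomposition already recorded in \eqref{def of Double X n m}. Writing $\Phi_s:=X^{n,k}_s-X^{n,k}_{[ns]/n}+X^{mn,k}_{[mns]/mn}-X^{mn,k}_s=\mathcal{A}^{mn,n,k}_{1,s}+\mathcal{A}^{mn,n,k}_{2,s}$, the definition \eqref{def of V} says that $V^{mn,n,k,j}$ is the It\^o integral $n^{H}\int_0^\cdot \Phi_s\,\d W^j_s$. Since $W^j$ is a standard one-dimensional component of $W$, the bracket with $W^j$ is just the Lebesgue integral of the integrand against $\d\langle W^j,W^j\rangle_s=\d s$, so
\[
\langle V^{mn,n,k,j},W^j\rangle_t=n^{H}\int_0^t\Phi_s\,\d s=n^{H}\int_0^t\mathcal{A}^{mn,n,k}_{1,s}\,\d s+n^{H}\int_0^t\mathcal{A}^{mn,n,k}_{2,s}\,\d s.
\]
It then suffices to show that each of the two summands tends to $0$, in fact in $L^2$, which is stronger than the asserted $L^1$ convergence.

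The drift part is immediate from the already proved estimate. By Minkowski's integral inequality and $\|\cdot\|_{L^1}\le\|\cdot\|_{L^2}$,
\[
\Big\|n^{H}\int_0^t\mathcal{A}^{mn,n,k}_{1,s}\,\d s\Big\|_{L^1}\le\int_0^t\big\|n^{H}\mathcal{A}^{mn,n,k}_{1,s}\big\|_{L^2}\,\d s\le T\sup_{s\in[0,T]}\big\|n^{H}\mathcal{A}^{mn,n,k}_{1,s}\big\|_{L^2},
\]
which converges to $0$ by Lemma \ref{est of A}-(i).

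For the martingale part I would treat each of the four families of It\^o integrals making up $\mathcal{A}^{mn,n,k}_{2,s}$ (the $\Delta K$-type and the boundary-$K$-type, at the two scales $\kappa=n$ and $\kappa=mn$) by first applying the stochastic Fubini theorem to interchange $\d s$ with $\d W^{j'}_u$; the boundedness of $\sigma$ and the uniform moment bounds of Lemma \ref{bound of X n} guarantee the integrability needed. The interchange produces a single It\^o integral of the form $\int_0^t G_\kappa(u)\,\sigma^{k}_{j'}(X^{\kappa}_{[\kappa u]/\kappa})\,\d W^{j'}_u$ with \emph{time-integrated} kernel $G_\kappa(u)=\int\Delta K(\kappa,s,u)\,\d s$ (respectively $G_\kappa(u)=\int K(s-u)\,\d s$ over the appropriate $s$-range), so It\^o's isometry bounds the $L^2$-norm squared by $C\int_0^t G_\kappa(u)^2\,\d u$. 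For the boundary terms this is explicit: on each mesh block the inner integral Fubinis to $\int(t^\kappa_{i+1}-u)^{H+1/2}/\Gamma(H+3/2)\,\d W^{j'}_u$, whence isometry gives $\sum_i(1/\kappa)^{2H+2}\asymp\kappa^{-(2H+1)}$; multiplied by $n^{2H}$ this is $O(n^{-1})$ for $\kappa=n$ and $O(m^{-(2H+1)}n^{-1})$ for $\kappa=mn$, hence vanishes. For the $\Delta K$ terms one estimates $\int_0^t G_\kappa(u)^2\,\d u$ using the kernel lemmas of the appendix (Lemma \ref{basic lemma}, Lemma \ref{est-1}), the gain again coming from the extra $s$-averaging of the singular kernel, which produces $n^{2H}\int_0^t G_\kappa(u)^2\,\d u=o(1)$.

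The main obstacle is precisely the need for this Fubini-and-isometry route rather than a crude bound: the naive Cauchy--Schwarz estimate $\E[(\int_0^t\mathcal{A}^{mn,n,k}_{2,s}\,\d s)^2]\le t\int_0^t\E[(\mathcal{A}^{mn,n,k}_{2,s})^2]\,\d s$ combined with Lemma \ref{est of A}-(ii) only yields $\|n^{H}\int_0^t\mathcal{A}^{mn,n,k}_{2,s}\,\d s\|_{L^2}\le C$, i.e. boundedness but no decay. The decay must be extracted from the cancellation produced by integrating the martingale in $s$, which the stochastic Fubini step exposes by replacing $\Delta K$ (resp. $K$) with its time integral, one order smaller in the mesh. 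The delicate step is therefore the sharp control of these time-integrated singular kernels near the diagonal $u\approx s$, where $K$ blows up, together with the bookkeeping required to handle the two partition scales $n$ and $mn$ and their boundary interaction on the common Brownian path.
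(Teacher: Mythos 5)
Your proposal is correct, but it takes a genuinely different route from the paper's proof. The paper does not split $\langle V^{mn,n,k,j},W^j\rangle_t=n^{H}\int_0^t\big(X^{n,k}_s-X^{n,k}_{\frac{[ns]}{n}}+X^{mn,k}_{\frac{[mns]}{mn}}-X^{mn,k}_s\big)\d s$ linearly into drift and martingale contributions. Instead it squares this time integral and applies (deterministic) Fubini to get
\begin{equation*}
\E\big[|\langle V^{mn,n,k,j},W^j\rangle_t|^2\big]=2\int_0^t\int_0^s n^{2H}\,\E\big[\Delta X^{mn,n}_s\Delta X^{mn,n}_v\big]\,\d v\,\d s,
\end{equation*}
shows the integrand is uniformly bounded (via the H\"older/rate estimates of Lemmas \ref{continuous of X n} and \ref{rate X-X n}), and then proves pointwise decay of the correlation for $v<\frac{[ns]}{n}$ by decomposing both factors into the $\mathcal{A}_1$-term and the stochastic-integral terms $\mathcal{W}$: most of the sixteen resulting cross terms $\mathcal{T}_{ij}$ vanish exactly by the tower property, and the remaining ones are controlled by the kernel-product lemmas of the appendix (Lemmas \ref{esti of A n}, \ref{esti of D}, \ref{esti of E}, \ref{esti of A mn}); dominated convergence then concludes, with no rate. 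Your route --- Minkowski plus Lemma \ref{est of A}-(i) for the $\mathcal{A}_1$ part, and stochastic Fubini plus It\^o isometry for the $\mathcal{A}_2$ part --- bypasses the cross-term case analysis and the conditioning arguments entirely, and it buys a quantitative rate ($O(n^{-1/2})$ in $L^2$); the paper's approach, in turn, needs only deterministic Fubini and reuses kernel estimates already set up for Lemma \ref{est of A}.

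One caveat you should repair before this is complete: the decisive bound in your argument, $n^{2H}\int_0^t G_\kappa(u)^2\,\d u=o(1)$ for the time-averaged kernels $G_\kappa(u)=\int \Delta K(\kappa,s,u)\,\d s$, is attributed to Lemmas \ref{basic lemma} and \ref{est-1}, but those lemmas bound $u$- (or first-variable) integrals of the kernels themselves for a fixed offset; they do not literally cover the $u$-integral of the square of the $s$-average. The estimate is nevertheless true and elementary, and you should write it out: with $t_i=i/n$ and, up to sign, $|G_n(u)|=\int\big(K(\tfrac{[ns]}{n}-u)-K(s-u)\big)\d s$, monotonicity of $K$ gives on each block $0\le\int_{t_i}^{t_{i+1}}\big(K(t_i-u)-K(s-u)\big)\d s\le \tfrac1n\big(K(t_i-u)-K(t_{i+1}-u)\big)$, so telescoping yields $|G_n(u)|\le \tfrac1n K\big(\tfrac{[nu]+1}{n}-u\big)$, and hence
\begin{equation*}
n^{2H}\int_0^t G_n(u)^2\,\d u\;\le\; C\,n^{2H-2}\sum_{j}\int_{t_j}^{t_{j+1}}(t_{j+1}-u)^{2H-1}\,\d u\;\le\; C\,n^{-1},
\end{equation*}
with the same computation at scale $\kappa=mn$ (constants depending on the fixed $m$). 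With this inserted, all four martingale pieces are $O(n^{-1/2})$ in $L^2$, matching your explicit treatment of the boundary terms, and the proof is sound.
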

	
	We delay the proof of this lemma to Appendix. 
	
\begin{lem}\label{charact of V}
	The process $V^{mn,n}$ converges stably in law in $\mathcal{C}_0$
	to the following  continuous process
	$$
	V^{k, j}=\sum_{l=1}^q\sqrt{\frac{g^H_m}{\Gamma(2H+1)\sin\pi H}}\int_{0}^{t}\sigma^{k}_l(X_{s})\d B_s^{l,j},
	$$
	where $\mathcal{C}_0$ denotes the set of all  $\mathbb{R}^d$-valued continuous functions on $[0,T]$ vanishing at $t=0$, $B$ is $q^2$-dimensional standard Brownian motion, independent of the original Brownian  motion $W$. 
\end{lem}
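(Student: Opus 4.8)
The plan is to deduce Lemma \ref{charact of V} from the stable central limit theorem for continuous local martingales (in the form used by Fukasawa and Ugai \cite{FU}). Observe first that, by the definition \eqref{def of V}, each $V^{mn,n,k,j}$ is a continuous $\mathcal{F}_t$-martingale with $V^{mn,n,k,j}_0=0$ and no bounded-variation part, so the whole array $V^{mn,n}=\{V^{mn,n,k,j}\}$ is a vector of continuous martingales vanishing at the origin. For such martingales the cited theorem reduces stable functional convergence in $\mathcal{C}_0$ to two ingredients: (a) convergence of all the predictable cross-brackets to the bracket of the candidate limit, and (b) asymptotic orthogonality of $V^{mn,n}$ with the driving Brownian motion $W$ (and hence, by martingale representation in the filtration generated by $W$, with every bounded $\mathcal{F}_t$-martingale).

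I would first establish (a). Since $V^{mn,n,k,j}$ is an integral against the single component $W^j$, independence of the Brownian components gives $\langle V^{mn,n,k_1,j},V^{mn,n,k_2,j'}\rangle_t=\delta_{jj'}\,n^{2H}\int_0^t\big(X^{n,k_1}_s-X^{n,k_1}_{\frac{[ns]}{n}}+X^{mn,k_1}_{\frac{[mns]}{mn}}-X^{mn,k_1}_s\big)\big(X^{n,k_2}_s-X^{n,k_2}_{\frac{[ns]}{n}}+X^{mn,k_2}_{\frac{[mns]}{mn}}-X^{mn,k_2}_s\big)\,\d s$, which is exactly the quantity in \eqref{cov of V}. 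Using the decomposition \eqref{def of Double X n m} of each integrand into $\mathcal{A}^{mn,n,k}_{1,s}+\mathcal{A}^{mn,n,k}_{2,s}$, the bracket splits into the four products in \eqref{cov of V}. By Lemma \ref{est of A}-(i) the $\mathcal{A}_1\mathcal{A}_1$ term tends to $0$ in $L^1$, and combining Lemma \ref{est of A}-(i) with the uniform $L^2$-bound of Lemma \ref{est of A}-(ii) through the Cauchy--Schwarz inequality kills the two cross terms $\mathcal{A}_1\mathcal{A}_2$ and $\mathcal{A}_2\mathcal{A}_1$. The surviving term is handled by Lemma \ref{est of A}-(iii), so that
$$\langle V^{mn,n,k_1,j},V^{mn,n,k_2,j'}\rangle_t\xrightarrow{L^1}\delta_{jj'}\,\frac{g^H_m}{\Gamma(2H+1)\sin\pi H}\sum_{l=1}^q\int_0^t\sigma^{k_1}_l(X_s)\sigma^{k_2}_l(X_s)\,\d s.$$
A direct computation of the bracket of the candidate process $V^{k,j}=\sum_{l}\sqrt{g^H_m/(\Gamma(2H+1)\sin\pi H)}\int_0^{\cdot}\sigma^k_l(X_s)\,\d B^{l,j}_s$, using that the $B^{l,j}$ are mutually independent, produces precisely this right-hand side; hence the limiting covariance structure matches.

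Next I would verify (b). Because $V^{mn,n,k,j}$ integrates against $W^j$ alone, its bracket with $W^{j'}$ vanishes identically for $j'\ne j$, while for $j'=j$ Lemma \ref{est of V-2} gives $\langle V^{mn,n,k,j},W^{j}\rangle_t\to0$ in $L^1$; thus $\langle V^{mn,n,k,j},W^{j'}\rangle_t\to0$ for every pair $(j,j')$. Since $\mathcal{F}_t$ is generated by $W$, the martingale representation theorem expresses every square-integrable $\mathcal{F}_t$-martingale as a stochastic integral against $W$, so asymptotic orthogonality with $W$ upgrades to asymptotic orthogonality with all bounded $\mathcal{F}_t$-martingales, which is the orthogonality hypothesis of the stable central limit theorem. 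With (a) and (b) in hand, the theorem yields that $V^{mn,n}$ converges stably in law in $\mathcal{C}_0$ to a continuous process which, on an extension of $(\Omega,\mathcal{F},\mathbb{P})$, is a conditionally Gaussian martingale independent of $\mathcal{F}$ with the covariance computed above; realizing this covariance through an independent $q^2$-dimensional Brownian motion $B$ identifies the limit with $V^{k,j}$ as stated.

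The main obstacle is concentrated entirely in step (a): the assertion that only the $\mathcal{A}_2\mathcal{A}_2$ contribution survives and converges to the precise constant $g^H_m/(\Gamma(2H+1)\sin\pi H)$ rests on the delicate Lemma \ref{est of A}-(iii), together with the $W$-orthogonality of Lemma \ref{est of V-2}, whose proofs encode the interaction of the singular kernels at the two step sizes $n$ and $mn$. Once those inputs are granted, the remaining work is the standard verification that the fixed-time bracket convergence, combined with the continuity and monotonicity of the brackets and the $W$-orthogonality, meets the hypotheses of the martingale stable central limit theorem and therefore upgrades to functional stable convergence in $\mathcal{C}_0$; the only points requiring genuine care are the correct bookkeeping of the off-diagonal ($j\ne j'$) brackets and the passage through martingale representation that secures the independence of the limiting Brownian motion $B$ from $W$.
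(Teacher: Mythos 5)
Your proposal is correct and follows essentially the same route as the paper's proof: convergence of the brackets via the three parts of Lemma \ref{est of A} (with Cauchy--Schwarz disposing of the $\mathcal{A}_1\mathcal{A}_1$ and cross terms), asymptotic orthogonality to $W$ via Lemma \ref{est of V-2}, an appeal to Jacod's stable limit theorem (Theorem 4-1 of \cite{Ja1}), and finally the representation of the conditionally Gaussian limit as $\sum_{l}\sqrt{g^H_m/(\Gamma(2H+1)\sin\pi H)}\int_0^{\cdot}\sigma^k_l(X_s)\,\d B^{l,j}_s$ with $B$ independent of $W$, which the paper obtains from Proposition 1-4 of \cite{Ja1} using $\mathrm{d}\langle V,V\rangle_t\ll\mathrm{d}t$. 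You merely spell out details the paper leaves implicit (the off-diagonal $j\neq j'$ brackets and the martingale-representation step behind Jacod's orthogonality hypothesis), so there is no substantive difference.
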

\begin{proof}
	By Lemmas \ref{est of A},   \ref{est of V-2},  and    \cite[Theorem 4-1]{Ja1}  we  see that $V^{mn,n}$  converges stably   in law in $\mathcal{C}_0$ to a conditional  Gaussian martingale $V=\{V^{k, j}\}$ with
	$$
	\begin{aligned}
		\langle V^{k_1, j_1}, V^{k_2, j_2}\rangle_t & = \begin{cases}\sum_{j=1}^{q}\frac{g^H_m}{\Gamma(2H+1)\sin\pi H}\int_{0}^{t}\sigma^{k_1}_j(X_{s})\sigma^{k_2}_{j}(X_{s})\d s& \text { if } j_1=j_2, \\
			0 & \text { if } j_1 \neq j_2,\end{cases} \\
		\langle V^{k, i}, W^j\rangle_t & =0, \quad{\forall} k \in\{1, \ldots, d\},{ \forall}(i, j) \in\{1, \ldots, q\}^2 .
	\end{aligned}
	$$
	
	Furthermore, since $\mathrm{d}\langle V, V\rangle_t \ll \mathrm{d} t$,  an application of  \cite[Proposition 1-4]{Ja1} yields that $V$ can be  represented by 
	$$
	V^{k, j}=\sum_{l=1}^q\sqrt{\frac{g^H_m}{\Gamma(2H+1)\sin\pi H}}\int_{0}^{t}\sigma^{k}_l(X_{s})\d B_s^{l,j},
	$$
	where $B$ is $q^2
	$-dimensional standard Brownian motion  independent of $W$. This concludes the proof.
\end{proof}
The following lemmas	\ref{nabla b}-\ref{unqueness of U}  are analogous to Lemma 2.4-2.8 of \cite{FU}, and so we omitted the proof, which are also similar.
\begin{lem}\label{nabla b}
	For all $i\in\{1,\cdots,d\},m\in\mathbb{N}\backslash \{0,1\} $ and any $\epsilon\in (0,H)$   we have 
	$$ \int_{0}^{t}K(t-s) n^{H}\nabla b^i(X^n_{\frac{[ns]}{n}})\Big(X^{n}_s-X^{n}_\frac{[ns]}{n}+X^{mn}_\frac{[mns]}{mn}-X^{mn}_s\Big)\d s
	\xrightarrow{ \text { in } \mathcal{C}_0^{H-\epsilon}} 0\quad \hbox{in probability} .$$
\end{lem}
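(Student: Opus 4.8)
The plan is to isolate the ``round--trip'' increment $X^{n}_s-X^{n}_{\frac{[ns]}{n}}+X^{mn}_{\frac{[mns]}{mn}}-X^{mn}_s$ and exploit its decomposition into a drift part and a stochastic part, $\mathcal{A}^{mn,n,k}_{1,s}+\mathcal{A}^{mn,n,k}_{2,s}$, given in \eqref{def of Double X n m}. Denoting by $I^n_t$ the expression in the statement, I would split $I^n_t=D^n_t+M^n_t$ with
\[
D^n_t = \sum_{k=1}^d\int_0^t K(t-s)\partial_k b^i(X^n_{\frac{[ns]}{n}})\,n^H\mathcal{A}^{mn,n,k}_{1,s}\,\d s,\qquad M^n_t = \sum_{k=1}^d\int_0^t K(t-s)\partial_k b^i(X^n_{\frac{[ns]}{n}})\,n^H\mathcal{A}^{mn,n,k}_{2,s}\,\d s,
\]
and show $D^n\to 0$ and $M^n\to 0$ in $\mathcal{C}_0^{H-\epsilon}$ in probability. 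The architecture mirrors Lemma~2.4 of \cite{FU}, but each estimate is now carried for the \emph{difference} of the two schemes $X^{mn}$ and $X^n$ rather than for $X^{n}-X$, which is exactly where the extra work lies.

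For the drift term, Lemma~\ref{est of A}(i) (upgraded from $L^2$ to $L^p$, which costs nothing since $b,\sigma$ have bounded derivatives and the schemes have finite moments of all orders) gives $\sup_{s\in[0,T]}\|n^H\mathcal{A}^{mn,n,k}_{1,s}\|_{L^p}\to 0$. Since $\partial_k b^i$ is bounded and $K\in L^1(0,T)$, Minkowski's integral inequality yields $\sup_{t\le T}\|D^n_t\|_{L^p}\to 0$; moreover the standard kernel-increment estimate $\int_0^{t'}|K(t-s)-K(t'-s)|\,\d s+\int_{t'}^t K(t-s)\,\d s\le C|t-t'|^{H+\frac12}$ gives $\|D^n_t-D^n_{t'}\|_{L^p}\le C_p\big(\sup_s\|n^H\mathcal{A}^{mn,n,k}_{1,s}\|_{L^p}\big)|t-t'|^{H+\frac12}$, so by Kolmogorov--Chentsov $D^n\to 0$ even in $\mathcal{C}^{H+\frac12-}$, hence a fortiori in $\mathcal{C}_0^{H-\epsilon}$, in probability.

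The heart of the argument is the martingale term, and the crux is the pointwise estimate $\E[|M^n_t|^2]\to 0$. Expanding,
\[
\E\big[|M^n_t|^2\big]=n^{2H}\sum_{k_1,k_2=1}^d\int_0^t\!\!\int_0^t K(t-s)K(t-r)\,\E\big[\partial_{k_1} b^i(X^n_{\frac{[ns]}{n}})\,\partial_{k_2} b^i(X^n_{\frac{[nr]}{n}})\,\mathcal{A}^{mn,n,k_1}_{2,s}\,\mathcal{A}^{mn,n,k_2}_{2,r}\big]\,\d s\,\d r,
\]
and since $\mathcal{A}^{mn,n,k}_{2,\cdot}$ is built from stochastic integrals, I would reduce the product $\mathcal{A}^{mn,n,k_1}_{2,s}\mathcal{A}^{mn,n,k_2}_{2,r}$ by the integration-by-parts formula \eqref{dcp of stochastic integral}, the bracket producing the diagonal correlation and the leftover martingale terms being controlled against the adapted weights $\partial_k b^i$. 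The main obstacle is precisely the feature stressed in the introduction: because $K$ has no independent increments, the nonlocal pieces $\int_0^{\cdot}\Delta K\,\sigma\,\d W$ inside $\mathcal{A}^{mn,n,k}_{2,s}$ correlate $\mathcal{A}^{mn,n,k_1}_{2,s}$ with $\mathcal{A}^{mn,n,k_2}_{2,r}$ over long ranges, so one cannot simply invoke the cell-wise independence available when $H=\tfrac12$. Instead I would show that the diagonal contribution is of order $n^{-2H}$ concentrated on a band $|s-r|\lesssim n^{-1}$ (giving, after the $n^{2H}$ prefactor and the $\d s\,\d r$ integration against the $L^2$ kernel, a term of order $n^{-1}$), while the long-range memory contributions, though not negligible term by term, sum and integrate against $K(t-s)K(t-r)$ to something that still vanishes. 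These correlation estimates are of the same type as in Lemma~\ref{est of A}(iii) and rely on the fractional-integral limit theorems collected in the Appendix; the upshot is $\E[|M^n_t|^2]\to 0$ for every $t$.

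Finally I would upgrade this to the H\"older topology. The same kernel-increment estimate, now with the merely \emph{bounded} $\sup_s\|n^H\mathcal{A}^{mn,n,k}_{2,s}\|_{L^p}<\infty$ from Lemma~\ref{est of A}(ii), gives $\|M^n_t-M^n_{t'}\|_{L^p}\le C_p|t-t'|^{H+\frac12}$ uniformly in $n$, so by Kolmogorov--Chentsov the family $(M^n)$ is tight in $\mathcal{C}^{H+\frac12-\epsilon/2}$, hence relatively compact in $\mathcal{C}_0^{H-\epsilon}$ by the compact embedding. Combining this tightness with the convergence of all finite-dimensional distributions to $0$ (from $\E[|M^n_t|^2]\to 0$) forces $M^n\to 0$ in law in $\mathcal{C}_0^{H-\epsilon}$, and convergence in law to a deterministic limit is convergence in probability. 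Together with the drift term this proves the lemma. As the structure parallels Lemma~2.4 of \cite{FU}, only the correlation estimates of the third paragraph genuinely require the two-level, singular-kernel adaptations.
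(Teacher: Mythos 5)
Your overall architecture is the right one, and it is what the paper intends: the paper in fact \emph{omits} the proof of this lemma, deferring to Lemma 2.4 of \cite{FU}, and your scheme (split via \eqref{def of Double X n m} into the drift part $\mathcal{A}^{mn,n,k}_{1}$ and the stochastic part $\mathcal{A}^{mn,n,k}_{2}$; kill the drift part using Lemma \ref{est of A}(i) upgraded to $L^p$; prove pointwise $L^2$ smallness of the stochastic part; upgrade to $\mathcal{C}_0^{H-\epsilon}$ by uniform Kolmogorov--Chentsov bounds and compact embedding of H\"older spaces) is exactly that argument adapted to two levels. Your drift estimate and your tightness estimate are both correct: with $\sup_{n,s}\|n^H\mathcal{A}^{mn,n,k}_{2,s}\|_{L^p}<\infty$ and $\int_0^{t'}|K(t-s)-K(t'-s)|\,\d s+\int_{t'}^{t}K(t-s)\,\d s\le C|t-t'|^{H+1/2}$, Minkowski's integral inequality gives the uniform H\"older moment bound, and ``tightness plus finite-dimensional convergence to $0$ plus deterministic limit'' does yield convergence in probability.

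The genuine gap is in the crux step $\E[|M^n_t|^2]\to 0$. You propose to treat the weighted off-diagonal correlations $n^{2H}\E\big[w_sw_r\,\mathcal{A}_{2,s}\mathcal{A}_{2,r}\big]$, $w_s=\partial_k b^i(X^n_{\frac{[ns]}{n}})$, ``of the same type as in Lemma \ref{est of A}(iii)'': apply \eqref{dcp of stochastic integral}, keep the bracket, and claim the leftover martingale terms are ``controlled against the adapted weights.'' This is precisely where the argument breaks. In Lemma \ref{est of A}(iii) and in Lemma \ref{est of V-2} the random coefficients $\sigma(X^n_{\cdot})$ sit \emph{inside} the stochastic integrals, adapted at the integration time (or the whole expression is a plain expectation with no outer weight), so after integration by parts the martingale terms have expectation exactly zero and only the bracket survives, to which the kernel lemmas of Appendix B apply. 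Here, instead, $w_s$ multiplies $\mathcal{A}_{2,s}$ from \emph{outside} and anticipates the integration variable inside $\mathcal{A}_{2,s}$; hence $\E[w_sw_r\cdot(\text{martingale term})]\neq 0$, and the best soft bound is $\|w\|_\infty^2\,\|\text{mart}\|_{L^2}\le Cn^{-2H}$, i.e.\ $O(1)$ after the $n^{2H}$ rescaling --- no decay. The same obstruction appears in the terms $\E\big[w_r\mathcal{A}_{2,r}\,\E[w_s(P_s-R_s)\mid\mathcal{F}_r]\big]$, where $P_s-R_s$ denotes the long-range components of $\mathcal{A}_{2,s}$: the conditional covariance of $w_s$ with these components is only $O(n^{-H})$ by Cauchy--Schwarz, again giving a bounded, non-vanishing contribution, so dominated convergence cannot conclude. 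Closing the gap requires an idea absent from your sketch, for instance: (a) freeze the weights on a fixed, $n$-independent coarse grid, replacing $w_s$ by $w_{\tau_i}$ on each block (the replacement error is controlled, uniformly in $n$, by the continuity of $\nabla b$ and the H\"older moment bounds on $X^n$); the cross-block terms are then bounded by $\|J_i\|_{L^2}\|J_{i'}\|_{L^2}$ with $J_i=\int_{\tau_i}^{\tau_{i+1}}K(t-s)n^H\mathcal{A}_{2,s}\,\d s$, whose norms involve only the \emph{unweighted} correlations $n^{2H}\E[\mathcal{A}_{2,s}\mathcal{A}_{2,r}]$ --- and these do vanish off the diagonal, by exactly the tower-property computation carried out in the proof of Lemma \ref{est of V-2}; or (b) estimate the weight--chaos correlation directly via Clark--Ocone/Malliavin calculus, using $|D_vX^n_{\frac{[ns]}{n}}|\lesssim K(\frac{[ns]}{n}-v)+C$ and $\int_0^{T}K(z)\,|K(z+\delta)-K(z)|\,\d z=O(\delta^{2H})$, which yields the required $o(n^{-H})$ bound but costs second-order smoothness of $b$. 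Without one of these devices, the step as you describe it would not go through.
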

\begin{lem}\label{remainder term}
	For all $m\in\mathbb{N}\backslash \{0,1\} $, $\|\mathcal{R}^{mn,n}\|_{C_0^{\gamma}}$ tends to zero in $L^p$ for any $\gamma\in (0,H),m\in\mathbb{N}\backslash \{0,1\} $ and any $p\geq 1$ as $n$ goes to infinity.
\end{lem}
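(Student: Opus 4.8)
The plan is to regard $\mathcal{R}^{mn,n}$ as a fractional convolution of a ``second-order'' integrand and to reduce the whole statement to a uniform $L^p$ bound on that integrand. Writing $\Delta_s:=X^{mn}_{[mns]/mn}-X^n_{[ns]/n}$, I would decompose $\mathcal{R}^{mn,n}=\mathcal{R}^{b}+\mathcal{R}^{\sigma}$ into its drift and diffusion parts,
\[
\mathcal{R}^{b}_t=\int_0^t K(t-s)\,R^{b}_s\,\d s,\qquad
\mathcal{R}^{\sigma}_t=\int_0^t K(t-s)\,R^{\sigma}_s\,\d W_s,
\]
where $R^{b}_s=n^H\int_0^1\big(\nabla b(X^n_{[ns]/n}+r\Delta_s)-\nabla b(X^n_{[ns]/n})\big)\,\d r\,\Delta_s$ and $R^{\sigma}_s$ is defined analogously with $\sigma$ in place of $b$ (the diffusion remainder being a stochastic convolution, as it arises from the $\d W_s$-integral in the Taylor expansion of $U^{mn,n}$). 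The key reduction is that everything follows once I show
\[
\sup_{s\in[0,T]}\big\|R^{b}_s\big\|_{L^p}+\sup_{s\in[0,T]}\big\|R^{\sigma}_s\big\|_{L^p}\le C\,n^{-H}.
\]

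The integrand bound is where the Taylor structure pays off. Since $\nabla b$ and $\nabla\sigma$ are Lipschitz continuous, the inner difference satisfies $|\nabla b(x+r\Delta)-\nabla b(x)|\le L\,r\,|\Delta|\le L|\Delta|$, so the Taylor remainder is genuinely quadratic: $|R^{b}_s|\le C\,n^H|\Delta_s|^2$ and likewise for $R^{\sigma}_s$. By the Cauchy--Schwarz and Minkowski inequalities this gives $\|R^{b}_s\|_{L^p}\le C\,n^H\,\|\Delta_s\|_{L^{2p}}^2$. Finally, the rate estimate for two consecutive levels (Lemma \ref{rate of mn,n}, together with the moment bound of Lemma \ref{bound of X n}) yields $\sup_{s}\|\Delta_s\|_{L^{2p}}\le C\,n^{-H}$, and hence $\sup_s\|R^{b}_s\|_{L^p}\le C\,n^H(n^{-H})^2=C\,n^{-H}$ uniformly in $s$; the same bound holds for $R^{\sigma}$.

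It remains to upgrade this pointwise-in-time $L^p$ bound on the integrand to an $L^p$ bound on the H\"older norm of $\mathcal{R}^{mn,n}$. For $t'<t$ I would split each increment as
\[
\mathcal{R}^{b}_t-\mathcal{R}^{b}_{t'}=\int_{t'}^t K(t-s)R^{b}_s\,\d s+\int_0^{t'}\big(K(t-s)-K(t'-s)\big)R^{b}_s\,\d s,
\]
and similarly for $\mathcal{R}^{\sigma}$. Using the elementary kernel estimates $\int_{t'}^t K(t-s)\,\d s\le C(t-t')^{H+1/2}$ and $\int_0^{t'}|K(t-s)-K(t'-s)|\,\d s\le C(t-t')^{H+1/2}$ for the drift part, and their $L^2$ analogues $\int_{t'}^t K(t-s)^2\,\d s+\int_0^{t'}(K(t-s)-K(t'-s))^2\,\d s\le C(t-t')^{2H}$ together with the Burkholder--Davis--Gundy inequality for the diffusion part, I obtain
\[
\big\|\mathcal{R}^{mn,n}_t-\mathcal{R}^{mn,n}_{t'}\big\|_{L^{p}}\le C\,(t-t')^{H}\,\sup_{s}\big(\|R^{b}_s\|_{L^{p}}+\|R^{\sigma}_s\|_{L^{p}}\big)\le C\,(t-t')^{H}\,n^{-H}.
\]
A Garsia--Rodemich--Rumsey (or Kolmogorov--Chentsov) argument then converts this increment bound into $\E\big[\|\mathcal{R}^{mn,n}\|_{\mathcal{C}_0^{\gamma}}^{p}\big]\le C\,n^{-Hp}$, provided $p$ is taken large enough that $H-\tfrac1p>\gamma$; since a bound for one large exponent implies the bound for all smaller ones by Jensen's inequality, this gives the claim for every $\gamma\in(0,H)$ and every $p\ge1$.

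The main obstacle is this last transfer step rather than the algebra of the remainder. The diffusion remainder only enjoys temporal regularity of order $H$ (the stochastic convolution caps the H\"older exponent at $H$, in agreement with the target space $\mathcal{C}_0^{H-\epsilon}$), so the exponent bookkeeping in the GRR estimate must be carried out carefully, absorbing the $1/p$ loss by choosing $p$ large. The decisive quantitative gain, by contrast, is the quadratic Taylor bound, which turns the scaling $n^H$ against $\|\Delta_s\|_{L^{2p}}^2=O(n^{-2H})$ into the vanishing factor $n^{-H}$.
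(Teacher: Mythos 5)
The paper itself contains no proof of this lemma: it is one of the statements (Lemmas \ref{nabla b}--\ref{unqueness of U}) whose proofs are omitted as being ``analogous to Lemmas 2.4--2.8 of \cite{FU}''. Your argument is correct and is precisely the natural implementation of that scheme: the quadratic Taylor bound combined with the two-level rate estimate (Lemma \ref{rate of mn,n}) gives $\sup_{s}\big(\|R^b_s\|_{L^p}+\|R^\sigma_s\|_{L^p}\big)\le Cn^{-H}$, the kernel estimates of Lemma \ref{basic lemma}(iii)--(iv) together with the BDG and Minkowski inequalities give the increment bound $C(t-t')^H n^{-H}$ for $p\ge 2$, and Garsia--Rodemich--Rumsey with $p>1/(H-\gamma)$ plus Jensen's inequality for smaller $p$ finish the proof; you also correctly read the second integral in the displayed definition of $\mathcal{R}^{mn,n}$ as a $\d W_s$-integral (the paper's formula has a typo there, as is clear from the derivation of $U^{mn,n}$). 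One caveat: your quadratic bound invokes Lipschitz continuity of $\nabla b$ and $\nabla\sigma$, which requires reading the standing hypothesis ``the derivatives of $b$ and $\sigma$ are bounded'' as including second-order derivatives (this is the reading consistent with \cite{FU}); if only the first derivatives were bounded and continuous, your scheme would still yield the claimed convergence, though without the rate $n^{-H}$, by replacing the quadratic bound with a modulus-of-continuity and dominated-convergence argument, since $\sup_s\|n^H\Delta_s\|_{L^{2p}}$ stays bounded while $\int_0^1\big(\nabla b(X^n_{\frac{[ns]}{n}}+r\Delta_s)-\nabla b(X^n_{\frac{[ns]}{n}})\big)\,\d r\to 0$ in every $L^q$, uniformly in $s$.
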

\begin{lem}\label{limit of U}
	For all $m\in\mathbb{N}\backslash \{0,1\} $, if the sequence
	$$
	\left(U^{mn,n}, V^{mn,n},\left\{\nabla b^i(X^n)\right\}_i,\left\{\partial_k \sigma_j^i(X^n)\right\}_{i j k}\right)
	$$
	converges in law in $\mathcal{C}_0^{H-\epsilon} \times \mathcal{C}_0 \times \mathcal{D}_{d^2} \times \mathcal{D}_{d^2 q}$ to
	$$
	\left(U, V,\left\{\nabla b^i(X)\right\}_i,\left\{\partial_k \sigma_j^i(X)\right\}_{i j k}\right),
	$$
	then $U$ is the  solution of \eqref{limit Volterra eq}.
\end{lem}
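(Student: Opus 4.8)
The plan is to identify $U$ by passing to the limit, term by term, in the linear stochastic Volterra equation satisfied by $U^{mn,n}$. Rewriting the Taylor expansion obtained above and using the definition \eqref{def of V} of $V^{mn,n}$, one has
\begin{align*}
U^{mn,n,i}_t &= \sum_{k=1}^d\int_0^t K(t-s)\partial_k b^i(X^n_{\frac{[ns]}{n}})U^{mn,n,k}_s\,\d s + \sum_{j=1}^q\sum_{k=1}^d\int_0^t K(t-s)\partial_k\sigma^i_j(X^n_{\frac{[ns]}{n}})U^{mn,n,k}_s\,\d W^j_s\\
&\quad + D^{mn,n,i}_t + N^{mn,n,i}_t + \mathcal{R}^{mn,n,i}_t,
\end{align*}
where $D^{mn,n,i}_t=\int_0^t K(t-s)n^H\nabla b^i(X^n_{\frac{[ns]}{n}})\big(X^n_s-X^n_{\frac{[ns]}{n}}+X^{mn}_{\frac{[mns]}{mn}}-X^{mn}_s\big)\d s$ is the drift correction, and, crucially, by \eqref{def of V} the diffusion correction is exactly $N^{mn,n,i}_t=\sum_{j,k}\int_0^t K(t-s)\partial_k\sigma^i_j(X^n_{\frac{[ns]}{n}})\,\d V^{mn,n,k,j}_s$. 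By Lemma \ref{nabla b}, $D^{mn,n}\to 0$ in $\mathcal{C}_0^{H-\epsilon}$ in probability, and by Lemma \ref{remainder term}, $\mathcal{R}^{mn,n}\to 0$; hence these two terms drop out in the limit.

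The first (Lebesgue) integral is the easy part. The map $(\phi,a)\mapsto\int_0^\cdot K(\cdot-s)\,a_s\,\phi_s\,\d s$ is continuous from $\mathcal{C}_0^{H-\epsilon}\times\mathcal{D}$ into $\mathcal{C}_0$, because $K(t-\cdot)$ is integrable on $[0,t]$ and the $\mathcal{C}_0^{H-\epsilon}$-topology controls $\phi$ uniformly; since the derivatives of $b$ are bounded and continuous, $\partial_k b^i(X^n_{\frac{[ns]}{n}})\to\partial_k b^i(X_s)$, and $U^{mn,n}\to U$. Therefore, by the assumed joint convergence and the continuous mapping theorem, $\int_0^t K(t-s)\partial_k b^i(X^n_{\frac{[ns]}{n}})U^{mn,n,k}_s\,\d s\to\int_0^t K(t-s)\partial_k b^i(X_s)U^k_s\,\d s$ jointly with the other components.

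The main obstacle is the passage to the limit in the two stochastic integrals: the martingale term with integrand $\partial_k\sigma^i_j(X^n_{\frac{[ns]}{n}})U^{mn,n,k}_s$ and the term $N^{mn,n}$ driven by $V^{mn,n}$. Here the singular kernel $K(t-s)$ blows up as $s\uparrow t$, so a pathwise passage is not available and one must preserve the martingale structure in the limit. I would rely on the stable convergence of stochastic integrals of Jacod \cite{Ja1} (the tool already used in \cite{FU}), combined with a truncation of the singularity: for fixed $\delta>0$ split each integral at $s=t-\delta$; on $[0,t-\delta]$ the kernel is bounded and the stable convergence theorem applies directly to the jointly stably convergent data, while the near-diagonal part on $[t-\delta,t]$ is shown to be negligible uniformly in $n$ using the uniform $L^2$ bounds of Lemma \ref{est of A}-(i),(ii) (which control $\langle V^{mn,n}\rangle$ on short intervals) together with the Hölder bound on $U^{mn,n}$ carried by the $\mathcal{C}_0^{H-\epsilon}$-topology; letting $\delta\to0$ then yields $\int_0^t K(t-s)\partial_k\sigma^i_j(X^n_{\frac{[ns]}{n}})U^{mn,n,k}_s\,\d W^j_s\to\int_0^t K(t-s)\partial_k\sigma^i_j(X_s)U^k_s\,\d W^j_s$ and $N^{mn,n,i}_t\to\sum_{j,k}\int_0^t K(t-s)\partial_k\sigma^i_j(X_s)\,\d V^{k,j}_s$.

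Finally I would substitute the explicit representation of the limit martingale from Lemma \ref{charact of V}, namely $V^{k,j}_t=\sum_{l=1}^q\sqrt{g^H_m/(\Gamma(2H+1)\sin\pi H)}\int_0^t\sigma^k_l(X_s)\,\d B^{l,j}_s$. This turns the limit of $N^{mn,n}$ into $\sqrt{g^H_m/(\Gamma(2H+1)\sin\pi H)}\sum_{j,k,l}\int_0^t K(t-s)\partial_k\sigma^i_j(X_s)\sigma^k_l(X_s)\,\d B^{l,j}_s$, which is precisely the third term of \eqref{limit Volterra eq}. Collecting the four limits shows that $U$ satisfies \eqref{limit Volterra eq}; since that equation has a unique solution (Lemma \ref{unqueness of U}), $U$ is exactly that solution, completing the proof.
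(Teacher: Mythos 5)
Your proposal is essentially correct, but note that the paper itself contains no proof of this lemma to compare against: it is grouped with Lemmas \ref{nabla b}--\ref{unqueness of U}, whose proofs are explicitly omitted as being analogous to Lemmas 2.4--2.8 of \cite{FU}. Your reconstruction matches the structure of that standard argument: the key identification of the diffusion correction as $N^{mn,n,i}_{\cdot}=\sum_{j,k}\int_0^{\cdot}K(\cdot-s)\,\partial_k\sigma^i_j(X^n_{\frac{[ns]}{n}})\,\d V^{mn,n,k,j}_s$ via \eqref{def of V} is exactly right, the drift correction and remainder are killed by Lemmas \ref{nabla b} and \ref{remainder term}, the Lebesgue term is handled by continuous mapping, the singular-kernel stochastic integrals are treated by truncating near the diagonal (uniform $L^2$ smallness of $\int_{t-\delta}^{t}$ from the square-integrability of $K$ and the uniform bounds behind Lemma \ref{est of A}) plus a Jacod/Kurtz--Protter limit theorem for stochastic integrals, and the third term of \eqref{limit Volterra eq} emerges by substituting the representation of $V$ from Lemma \ref{charact of V}. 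The one step you should make explicit rather than assert: the hypothesis is plain convergence in law of the quadruple, whereas your phrase ``jointly stably convergent data'' presupposes joint convergence with the integrators $(W,V^{mn,n})$; this is not automatic from the hypothesis, but is recovered because $\nabla b^i(X^n)$ and $\partial_k\sigma^i_j(X^n)$ converge in probability to $W$-measurable limits (so any subsequential joint limit with $W$ is identified), because $V^{mn,n}$ converges stably by Lemma \ref{charact of V}, and because in the only place the lemma is used (proof of Theorem \ref{t.main}) the convergence is made stable by the arbitrary-$Y$ device together with Lemma \ref{stably convergence}.
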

\begin{lem}\label{tight of U n}
	For all $m\in\mathbb{N}\backslash \{0,1\} $, the sequence $U^{mn,n}$ is tight in $\mathcal{C}_0^{H-\epsilon}$ for any $\epsilon\in (0,H)$.
\end{lem}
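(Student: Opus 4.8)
The plan is to prove tightness in the Hölder space $\mathcal{C}_0^{H-\epsilon}$ via the Kolmogorov--Chentsov moment criterion. Since $U^{mn,n}_0=0$, it suffices to establish, uniformly in $n$, a moment bound of the form
\[
\E\big[|U^{mn,n}_t-U^{mn,n}_s|^p\big]\le C|t-s|^{Hp}\qquad\text{for all }0\le s\le t\le T,
\]
for some $p$ large enough; choosing $p>1/\epsilon$ then gives $H-1/p>H-\epsilon$, and the standard compactness argument in $\mathcal{C}_0^{H-\epsilon}$ (via the Garsia--Rodemich--Rumsey inequality) yields tightness.

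The first step is to obtain the uniform $L^p$ bound $\sup_{n}\sup_{t\in[0,T]}\E[|U^{mn,n}_t|^p]<\infty$. I would start from the linear Volterra representation of $U^{mn,n}$ derived above, treating the two integrals against $U^{mn,n}_s$ as the principal part and the two integrals carrying $n^{H}\big(X^n_s-X^n_{[ns]/n}+X^{mn}_{[mns]/mn}-X^{mn}_s\big)$ together with $\mathcal{R}^{mn,n}_t$ as a forcing term. Since the derivatives of $b$ and $\sigma$ are bounded, the coefficients grow at most linearly; the forcing term is controlled uniformly in $n$ in $L^p$ by the $L^p$-analogues of Lemma \ref{est of A}-(ii) (obtained from the BDG inequality together with the uniform moment bounds on $X^n,X^{mn}$ of Lemma \ref{bound of X n}) and by Lemma \ref{remainder term} for $\mathcal{R}^{mn,n}$. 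Because the kernel $K(u)=u^{H-1/2}/\Gamma(H+1/2)$ is singular but square-integrable for $H\in(0,1/2]$, the ordinary Gronwall lemma does not apply; instead I would close the estimate with a fractional (Volterra-type) Gronwall inequality for singular kernels.

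For the second step, I would bound the increment $U^{mn,n}_t-U^{mn,n}_s$ by splitting each defining integral into the contribution over the new interval $(s,t)$ and the contribution of the kernel difference $K(t-u)-K(s-u)$ over $(0,s)$. Applying BDG to the stochastic integrals and Minkowski's inequality to the Lebesgue integrals, and inserting the uniform $L^p$ bounds from the first step together with the boundedness of the forcing terms, everything reduces to the deterministic kernel estimate
\[
\int_0^s\big|K(t-u)-K(s-u)\big|^2\,\d u+\int_s^t\big|K(t-u)\big|^2\,\d u\le C|t-s|^{2H},
\]
which holds for all $H\in(0,1/2]$ and is precisely the mechanism by which the fractional kernel imposes $H$-Hölder regularity. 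This yields the desired increment bound uniformly in $n$.

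The main obstacle I anticipate is the first step: upgrading the $L^2$ estimates of Lemma \ref{est of A} to uniform $L^p$ estimates for large $p$, and then closing the loop through a singular Volterra--Gronwall argument rather than the classical Gronwall lemma. Once the uniform $L^p$ bounds are in hand, the Hölder increment estimate and the Kolmogorov criterion are routine, and the proof mirrors Lemmas 2.4--2.8 of \cite{FU} with the two-level error process $X^{mn}-X^n$ in place of $X^n-X$.
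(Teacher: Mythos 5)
Your proposal is correct and follows essentially the same route as the paper, which omits the proof of this lemma entirely, stating only that it is analogous to Lemmas 2.4--2.8 of \cite{FU}; those arguments are exactly the moment-criterion scheme you describe (uniform $L^p$ bounds on the error process via a singular-kernel Volterra--Gronwall argument, increment estimates $\E[|U^{mn,n}_t-U^{mn,n}_s|^p]\le C|t-s|^{Hp}$ from the BDG inequality and the kernel bounds of Lemma \ref{basic lemma}, and then Kolmogorov--Chentsov/GRR plus compact embedding of H\"older spaces to conclude tightness in $\mathcal{C}_0^{H-\epsilon}$). The only adaptation the paper relies on, which you also identify, is that the forcing term now involves $n^{H}\big(X^n_s-X^n_{[ns]/n}+X^{mn}_{[mns]/mn}-X^{mn}_s\big)$, controlled uniformly in $n$ by Lemmas \ref{bound of X n} and \ref{continuous of X n} applied at both step sizes.
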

\begin{lem}\label{unqueness of U}
	The strong uniqueness   holds for   solution to the equation  \eqref{limit Volterra eq}.
\end{lem}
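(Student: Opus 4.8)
The plan is to exploit the linearity of \eqref{limit Volterra eq}: the driving noise $B$ enters only through the additive forcing term, so the difference of two solutions solves the associated \emph{homogeneous} equation, and uniqueness reduces to showing that the only solution of the homogeneous problem is zero. Concretely, suppose $U$ and $\tilde U$ are two solutions within the class of $\mathcal{F}_t$-adapted processes with $\sup_{t\in[0,T]}\E|U_t|^2<\infty$, and set $Z=U-\tilde U$. Subtracting the two copies of \eqref{limit Volterra eq}, the $B$-integral cancels and we obtain, for each $i$,
\begin{align}
Z_t^i=\sum_{k=1}^d\int_0^t K(t-s)\partial_k b^i(X_s)Z_s^k\,\d s+\sum_{j=1}^q\sum_{k=1}^d\int_0^t K(t-s)\partial_k\sigma_j^i(X_s)Z_s^k\,\d W_s^j. \nonumber
\end{align}

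Next I would derive a closed weakly singular integral inequality for $\phi(t):=\E|Z_t|^2$. Since the derivatives of $b$ and $\sigma$ are bounded by hypothesis, there is a constant $M$ with $|\partial_k b^i(x)|,|\partial_k\sigma_j^i(x)|\le M$ for all $x$. Applying the Cauchy--Schwarz inequality to the drift term and the It\^o isometry together with the Burkholder--Davis--Gundy inequality to the stochastic term, and using that $\int_0^t K(t-s)\,\d s\le C_T$ because $K(u)=u^{H-1/2}/\Gamma(H+1/2)$ is integrable for $H>0$, one gets
\begin{align}
\phi(t)\le C\int_0^t\big(K(t-s)+K(t-s)^2\big)\phi(s)\,\d s
= C\int_0^t\Big(c_1(t-s)^{H-\frac12}+c_2(t-s)^{2H-1}\Big)\phi(s)\,\d s. \nonumber
\end{align}
Both exponents $H-\tfrac12$ and $2H-1$ exceed $-1$ for $H\in(0,1/2]$, so both kernels are locally integrable; the more singular contribution is $(t-s)^{2H-1}$, coming from the quadratic variation of the stochastic term.

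Finally I would close the argument with a generalized (fractional) Gr\"onwall lemma of Henry type: for a locally bounded nonnegative $\phi$ satisfying $\phi(t)\le\int_0^t\psi(t-s)\phi(s)\,\d s$ with a locally integrable kernel $\psi\ge0$, iterating the inequality yields $\phi(t)\le\int_0^t\psi^{*n}(t-s)\phi(s)\,\d s$ for every $n$, and for $\psi(u)=u^{\beta-1}$ the $n$-fold convolution equals $\Gamma(\beta)^n u^{n\beta-1}/\Gamma(n\beta)$, which is summable over $n$ and tends to $0$; hence $\phi\equiv0$. Since $\sup_{[0,T]}\phi<\infty$ by the assumed moment bound, this forces $\E|Z_t|^2=0$ for every $t$, i.e. $U=\tilde U$ as processes, and indistinguishability follows from the continuity of the paths in $\mathcal{C}_0^{H-\epsilon}$. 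The main obstacle is precisely the singular kernel: the ordinary Gr\"onwall lemma does not apply, so one must invoke its weakly singular version, the decisive point being that $K^2(u)\sim u^{2H-1}$ remains integrable exactly because $H>0$.
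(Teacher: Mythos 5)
Your proof is correct and is essentially the argument the paper intends: the paper omits its own proof, citing the analogy with the corresponding lemma of \cite{FU}, and that standard argument is exactly yours — the difference of two solutions solves the homogeneous linear SVE, its second moment $\phi(t)=\mathbb{E}|Z_t|^2$ satisfies a convolution inequality with the weakly singular kernel $K(t-s)+K(t-s)^2\le C(t-s)^{2H-1}$ (using $u^{H-1/2}\le T^{1/2-H}u^{2H-1}$ on $(0,T]$), and the iterated-convolution (fractional Gr\"onwall) bound $C^n\Gamma(2H)^n t^{2Hn}/\Gamma(2Hn+1)\to 0$ forces $\phi\equiv 0$. The one point worth noting is that your uniqueness is stated within the class of adapted solutions with $\sup_{t\in[0,T]}\mathbb{E}|U_t|^2<\infty$; this is the natural class for the paper's application (subsequential limits of $U^{mn,n}$ inherit this bound by Fatou), and a routine localization with stopping times, inserting the indicator $\mathbb{I}_{\{s\le\tau_R\}}$ inside the It\^o isometry, would extend it to all continuous adapted solutions.
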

We now prove  the main theorem.
\begin{proof}[Proof of Theorem \ref{t.main}]
	By Lemma \ref{tight of U n} and Lemma B.1 in \cite{FU}, or more directly by Lemma \ref{rate X-X n}, $X^n \rightarrow X$ in probability in the uniform topology. Therefore,
	$$
	\left(\left\{\nabla b^i(X^n)\right\}_i,\left\{\partial_k \sigma_j^i(X^n)\right\}_{i j k}\right) \rightarrow\left(\left\{\nabla b^i(X)\right\}_i,\left\{\partial_k \sigma_j^i(X)\right\}_{i j k}\right)
	$$
	in probability in the uniform topology as well. Together with Lemmas \ref{charact of V} and \ref{tight of U n}, we conclude that
	$$
	\left(U^{mn,n}, V^{mn,n},\left\{\nabla b^i(X^n)\right\}_i,\left\{\partial_k \sigma_j^i(X^n)\right\}_{i j k}, Y\right)
	$$
	is tight in $\mathcal{C}_0^{H-\epsilon} \times \mathcal{C}_0 \times \mathcal{D}_{d^2} \times \mathcal{D}_{d^2 q} \times \mathbb{R}$ for any random variable $Y$ on $(\Omega, \mathcal{F},\mathbb{P})$. For any subsequence of this tight sequence, there exists   further a convergent subsequence  by Prokhorov's theorem (see, e.g., Theorem 5.1 of Billingsley \cite{Bil} for a nonseparable case). Lemmas \ref{limit of U} and \ref{unqueness of U} imply the uniqueness of the limit. Therefore the original sequence itself has to converge.    Again by Lemma \ref{limit of U} the limit $U$ of $U^n$ is characterized by \eqref{limit Volterra eq}. This  convergence of $U^n$ is stable because $Y$ is arbitrary.
\end{proof}
	
	\section{Lindeberg-Feller central limit theorem}\label{CLT}
	For $L=\log n/\log m $,   set 
	\begin{align}\label{def of Ef}
		\tilde Q_n=\E[f(X^1_T)]+\sum_{l=1}^{L}\E\big[f(X_T^{m^l})-f(X_T^{m^{l-1}})\big].
	\end{align}
	We are going to use 
  \begin{align}\label{def of Q}
		Q_n=\frac{1}{N_0}\sum_{k=1}^{N_0}f(X^1_{T,k})+\sum_{l=1}^{L}\frac{1}{N_l}\sum_{k=1}^{N_l}\big(f(X^{l,m^l}_{T,k})-f(X^{l,m^{l-1}}_{T,k})\big).
	\end{align} 
	to approximate the above $ \tilde Q_n$. 
Here, it is important to point out that all these $L+1$ Monte Carlo estimators have to be based on different, independent samples. For each $\ell \in\{1, \ldots, L\}$ the samples $\left(X_{T, k}^{\ell, m^{\ell}}, X_{T, k}^{\ell, m^{\ell-1}}\right)_{1 \leq k \leq N_{\ell}}$ are independent copies of $\left(X_T^{\ell, m^{\ell}}, X_T^{\ell, m^{\ell-1}}\right.$ ) whose components denote the Euler schemes with time steps $m^{-\ell} T$ and $m^{-(\ell-1)} T$ and simulated with the same Brownian path. Concerning the first empirical mean, the samples $\left(X_{T, k}^1\right)_{1 \leq k \leq N_0}$ are independent copies of $X_T^1$. The following result gives us a first and rough description of the asymptotic behavior of the variance in the multilevel Monte Carlo Euler method associated with stochastic Volterra equations with singular kernels.
	\begin{prop} Assume  $H_{b,\sigma}$. For a Lipschitz continuous function $f:\mathbb{R}^d\to \mathbb{R}$, we have
		\begin{align}\label{Var of Q}
			\Var(Q_n)=O\Big(\sum_{l=0}^{L}N_l^{-1}m^{-2Hl}\Big).
		\end{align}
	\end{prop}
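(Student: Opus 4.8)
The plan is to exploit the independence structure emphasized just before the statement: the $L+1$ empirical means making up $Q_n$ are built from mutually independent samples. First I would write $Q_n=\sum_{l=0}^{L}Q_n^{(l)}$, where $Q_n^{(0)}=N_0^{-1}\sum_{k}f(X^1_{T,k})$ and, for $l\geq 1$, $Q_n^{(l)}=N_l^{-1}\sum_{k}\big(f(X^{l,m^l}_{T,k})-f(X^{l,m^{l-1}}_{T,k})\big)$. Since the $Q_n^{(l)}$ are independent, the variance is additive:
\[
\Var(Q_n)=\sum_{l=0}^{L}\Var\big(Q_n^{(l)}\big).
\]
Within each level the $N_l$ summands are i.i.d., so $\Var\big(Q_n^{(l)}\big)=N_l^{-1}\Var(Z_l)$, where $Z_0=f(X^1_T)$ and $Z_l=f(X^{m^l}_T)-f(X^{m^{l-1}}_T)$ for $l\geq 1$.

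Next I would bound each single-sample variance by a second moment and invoke the Lipschitz property of $f$. For the level-zero term, $\Var(Z_0)\leq\E\big[|f(X^1_T)|^2\big]$ is a finite constant by the moment bounds on the Euler scheme $X^1$, so $\Var(Z_0)=O(1)=O\big(m^{-2H\cdot 0}\big)$. For $l\geq 1$, writing $L_f$ for the Lipschitz constant of $f$,
\[
\Var(Z_l)\leq\E\big[|Z_l|^2\big]\leq L_f^2\,\E\big[|X^{m^l}_T-X^{m^{l-1}}_T|^2\big].
\]
The decisive estimate is the strong error bound $\E\big[|X^{m^l}_T-X^{m^{l-1}}_T|^2\big]=O\big(m^{-2Hl}\big)$, which I would obtain by a triangle inequality through the true solution, $|X^{m^l}_T-X^{m^{l-1}}_T|\leq|X^{m^l}_T-X_T|+|X_T-X^{m^{l-1}}_T|$, followed by the $L^2$ version of the convergence rate \eqref{e.1.3} (valid for the scheme $X^n$ as noted after \eqref{Euler type scheme}) applied with $n=m^l$ and $n=m^{l-1}$. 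This gives $\E\big[|X^{m^l}_T-X^{m^{l-1}}_T|^2\big]^{1/2}\leq C\big((m^l)^{-H}+(m^{l-1})^{-H}\big)=C(1+m^H)m^{-Hl}$, and squaring yields the claimed $O\big(m^{-2Hl}\big)$.

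Assembling the pieces gives $\Var\big(Q_n^{(l)}\big)=O\big(N_l^{-1}m^{-2Hl}\big)$ for every $l$, and summing over $l=0,\ldots,L$ delivers $\Var(Q_n)=O\big(\sum_{l=0}^{L}N_l^{-1}m^{-2Hl}\big)$. The only genuine analytic input is the strong rate \eqref{e.1.3}; everything else is bookkeeping built on the cross-level independence and the within-level i.i.d.\ structure. I expect the main point requiring care — though not real difficulty — to be ensuring the constant in the strong error bound is \emph{uniform in} $l$ (equivalently, independent of the number of levels $L$); this is precisely guaranteed by \eqref{e.1.3}, whose constant $C$ is independent of $n$. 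A minor verification is that the level-zero contribution matches the stated form, which it does because $m^{0}=1$.
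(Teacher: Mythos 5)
Your proof is correct and follows essentially the same route as the paper: both use cross-level independence and within-level i.i.d.\ structure to reduce to single-sample variances, then bound each level-$l$ term by inserting the true solution $X_T$ (you via the triangle inequality on second moments, the paper via $\Var(A-B)\le 2\Var(A-f(X_T))+2\Var(f(X_T)-B)$) and applying the Lipschitz property of $f$ together with the uniform-in-$n$ strong rate $\sup_{t}\E[|X_t-X^n_t|^2]\le Cn^{-2H}$ at $n=m^l$ and $n=m^{l-1}$. The only difference is cosmetic bookkeeping, not substance.
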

	\begin{proof}
		By the Lipschitz continuity  of $f$, Lemma \ref{bound of X n} and Lemma \ref{rate X-X n} we have
		\begin{align}
			\Var(Q_n)&=N_0^{-1}\Var(f(X^1_T))+\sum_{l=1}^{L}N_l^{-1}\Var\big(f(X^{l,m^l}_T)-f(X^{l,m^{l-1}}_T)\big)\nonumber\\&\quad\leq N_0^{-1}\Var(f(X^1_T))+2\sum_{l=1}^{L}N_l^{-1}\Big[\Var\big(f(X^{l,m^l}_T)-f(X_T)\big)+\Var\big(f(X_T)-f(X^{l,m^{l-1}}_T)\big)\Big]\nonumber\\&\quad\leq N_0^{-1}\Var(f(X^1_T))+2[f]_{lip}\sum_{l=1}^{L}N_l^{-1}\Big[\sup_{t\in[0,T]}\E\Big[|X_t^{m^l}-X_t|^2\Big]+\sup_{t\in[0,T]}\E\Big[|X_t^{m^{l-1}}-X_t|^2\Big]\Big]\nonumber\\&\quad\leq CN_0^{-1}+C\sum_{l=1}^{L}N_l^{-1} m^{-2Hl}\leq C\sum_{l=0}^{L}N_l^{-1}m^{-2Hl}.  .\nonumber                                                       
		\end{align}
		The proof is complete.
	\end{proof}
	We recall the following Lindeberg-Feller central limit theorem that will be
used in the sequel (see, e.g., Theorems 7.2 and 7.3 in \cite{Bil}).
\begin{thm}\label{Central limit theorem for triangular array}
	(Central limit theorem for triangular array). Let $\left(k_n\right)_{n \in \mathbb{N}}$ be a sequence such that $k_n \rightarrow \infty$ as $n \rightarrow \infty$. For each $n$, let $X_{n, 1}, \ldots, X_{n, k_n}$ be $k_n$ independent random variables with finite variance such that $\mathbb{E}\left(X_{n, k}\right)=0$ for all $k \in\left\{1, \ldots, k_n\right\}$. Suppose that the following conditions hold:
	
	(A1) $\lim _{n \rightarrow \infty} \sum_{k=1}^{k_n} \mathbb{E}\left|X_{n, k}\right|^2=\sigma^2 >0$.
	
	(A2) Lindeberg's condition: for all $\varepsilon>0, \lim _{n \rightarrow \infty} \sum_{k=1}^{k_n} \mathbb{E}\left(\left|X_{n, k}\right|^2   \mathbb{I}_{\left\{\left|X_{n, k}\right|>\varepsilon\right\}}\right) =0$. Then
	
	$$
	\sum_{k=1}^{k_n} X_{n, k} \Rightarrow \mathcal{N}\left(0, \sigma^2\right) \quad \text { as } n \rightarrow \infty.
	$$
	Moreover, if the $X_{n, k}$ have moments of order $p>2$, then the above Lindeberg's condition (A2)   is implied  by the following one:
	
	(A3) Lyapunov's condition: $\lim _{n \rightarrow \infty} \sum_{k=1}^{k_n} \mathbb{E}\left|X_{n, k}\right|^p=0$.
\end{thm}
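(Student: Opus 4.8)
The plan is to prove the convergence via characteristic functions together with L\'evy's continuity theorem, with Lindeberg's condition supplying exactly the control needed on the tails. Write $S_n=\sum_{k=1}^{k_n}X_{n,k}$, set $\sigma_{n,k}^2=\mathbb{E}[X_{n,k}^2]$ (which equals $\Var(X_{n,k})$ since $\mathbb{E}[X_{n,k}]=0$), and let $\varphi_{n,k}(t)=\mathbb{E}[e^{\mathrm{i}tX_{n,k}}]$. By independence the characteristic function of $S_n$ factors as $\mathbb{E}[e^{\mathrm{i}tS_n}]=\prod_{k=1}^{k_n}\varphi_{n,k}(t)$, so it suffices to show $\prod_{k}\varphi_{n,k}(t)\to e^{-\sigma^2 t^2/2}$ pointwise in $t$ and then invoke the continuity theorem, since the Gaussian characteristic function is continuous at the origin.

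First I would establish the uniform asymptotic negligibility $\max_{1\le k\le k_n}\sigma_{n,k}^2\to 0$ as $n\to\infty$. This follows from (A1) and (A2): for fixed $\varepsilon>0$ and each $k$, splitting the expectation gives $\sigma_{n,k}^2=\mathbb{E}[X_{n,k}^2\mathbb{I}_{\{|X_{n,k}|\le\varepsilon\}}]+\mathbb{E}[X_{n,k}^2\mathbb{I}_{\{|X_{n,k}|>\varepsilon\}}]\le \varepsilon^2+\sum_{j}\mathbb{E}[X_{n,j}^2\mathbb{I}_{\{|X_{n,j}|>\varepsilon\}}]$, and the last sum vanishes by (A2); letting $n\to\infty$ and then $\varepsilon\to 0$ gives the claim.

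The core estimate is a Taylor expansion whose remainder is controlled by Lindeberg's condition. Using the elementary bound $|e^{\mathrm{i}x}-1-\mathrm{i}x+\tfrac{1}{2}x^2|\le\min(|x|^3,x^2)$ (up to a universal constant) together with $\mathbb{E}[X_{n,k}]=0$, I obtain $\varphi_{n,k}(t)=1-\tfrac{1}{2}t^2\sigma_{n,k}^2+r_{n,k}(t)$, where, after splitting at level $\varepsilon$, the remainder satisfies $|r_{n,k}(t)|\le C|t|^3\varepsilon\,\sigma_{n,k}^2+Ct^2\mathbb{E}[X_{n,k}^2\mathbb{I}_{\{|X_{n,k}|>\varepsilon\}}]$. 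Summing over $k$ and using $\sum_k\sigma_{n,k}^2\to\sigma^2$ together with (A2) yields $\sum_{k}|r_{n,k}(t)|\le C|t|^3\varepsilon\,\sigma^2+o(1)$; letting $n\to\infty$ and then $\varepsilon\to 0$ shows $\sum_k r_{n,k}(t)\to 0$.

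Finally I would replace the product by an exponential. Since $\max_k\sigma_{n,k}^2\to 0$, for $n$ large the numbers $w_{n,k}:=1-\tfrac{1}{2}t^2\sigma_{n,k}^2$ lie in $[0,1]$, and the standard product-comparison inequality $|\prod_k z_k-\prod_k w_k|\le\sum_k|z_k-w_k|$ for $|z_k|,|w_k|\le 1$ gives $|\prod_k\varphi_{n,k}(t)-\prod_k w_{n,k}|\le\sum_k|r_{n,k}(t)|\to 0$. A further use of $|\log(1-x)+x|\le Cx^2$ for small $x$, combined with $\sum_k\sigma_{n,k}^2\to\sigma^2$ and $\max_k\sigma_{n,k}^2\to 0$, shows $\prod_k w_{n,k}=\exp(\sum_k\log w_{n,k})\to e^{-\sigma^2t^2/2}$, completing the convergence. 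The supplementary assertion that (A3) implies (A2) is immediate from $\mathbb{E}[X_{n,k}^2\mathbb{I}_{\{|X_{n,k}|>\varepsilon\}}]\le\varepsilon^{-(p-2)}\mathbb{E}|X_{n,k}|^p$, so that $\sum_k\mathbb{E}[X_{n,k}^2\mathbb{I}_{\{|X_{n,k}|>\varepsilon\}}]\le\varepsilon^{-(p-2)}\sum_k\mathbb{E}|X_{n,k}|^p\to 0$. The main obstacle is the remainder control in the Taylor step: it is precisely Lindeberg's condition---rather than a mere bound on the individual variances---that allows the sum of remainders to be driven to zero uniformly through the decomposition at scale $\varepsilon$.
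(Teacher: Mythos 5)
Your proof is correct: the uniform negligibility step, the Taylor expansion with the remainder split at scale $\varepsilon$, the product-comparison inequality, and the Markov-type bound showing (A3) implies (A2) are all sound, and together they give the stated convergence via L\'evy's continuity theorem. Note, however, that the paper does not prove this theorem at all -- it is recalled as a classical result with a citation to Billingsley (Theorems 7.2 and 7.3) -- and your argument is essentially the standard characteristic-function proof found in that reference, so there is nothing to compare beyond observing that you have supplied the proof the paper chose to omit.
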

 According to Section 2 of Jacod \cite{Ja1} and Lemma 2.1 of
Jacod and Protter \cite{Ja2}, we have the following result.
\begin{lem}\label{stably convergence}
	Let $V_n$ and $V$ be defined on $(\Omega,\mathcal{F})$ with values in another metric
	space $E'$.
	$$\text{If}\ V^n\xrightarrow{\mathbb{P}} V,\ X^n\Rightarrow^{\text {stably }}X,\quad \text{then}\ (V^n,X^n)\Rightarrow^{\text {stably }}(V,X).$$
	Conversely, if $(V^n,X^n)\Rightarrow(V,X)$ and $V$  generates the $\sigma$-field $\mathcal{F}$, we can realize this limit as $(V,X)$ with $X$ is defined on an extension of $(\Omega,\mathcal{F},\mathbb{P})$ and $X^n\Rightarrow^{\text {stably }}X$.
\end{lem}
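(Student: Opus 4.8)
The strategy is to work directly from the definition of stable convergence: $X^n \Rightarrow^{\text{stably}} X$ means that for every bounded continuous $f$ on the state space $E$ of the $X^n$ and every bounded $\mathcal{F}$-measurable $Z$ one has $\mathbb{E}[Z f(X^n)] \to \bar{\mathbb{E}}[Z f(X)]$, where $\bar{\mathbb{E}}$ denotes expectation on the extension carrying $X$; equivalently, $(Y, X^n) \Rightarrow (Y, X)$ in law for every $\mathcal{F}$-measurable $Y$. I would treat the two implications separately.

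For the forward implication, fix an arbitrary bounded $\mathcal{F}$-measurable $Y$; by the equivalent formulation applied on the product space $E' \times E$, it suffices to prove $(Y, V^n, X^n) \Rightarrow (Y, V, X)$. Since $V$ is itself $\mathcal{F}$-measurable, so is the pair $(Y, V)$, and the assumed stability of $X^n$ gives at once $(Y, V, X^n) \Rightarrow (Y, V, X)$. It then remains only to swap the fixed $V$ for $V^n$ in the middle coordinate: because $d_{E'}(V^n, V) \to 0$ in probability, a Slutsky-type perturbation lemma guarantees that this replacement leaves the limit law unchanged, yielding $(Y, V^n, X^n) \Rightarrow (Y, V, X)$. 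As $Y$ was arbitrary, this is precisely $(V^n, X^n) \Rightarrow^{\text{stably}} (V, X)$.

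For the converse I would construct the extension explicitly. The hypothesis $(V^n, X^n) \Rightarrow (V, X)$ identifies a limit law $\nu$ on $E' \times E$ whose first marginal is the law $\mu_V$ of $V$; assuming the spaces are Polish I disintegrate $\nu(dv, dx) = \mu_V(dv)\,\kappa(v, dx)$ with $\kappa$ a regular conditional kernel, set $\bar\Omega = \Omega \times E$ with $\bar{\mathbb{P}}(d\omega, dx) = \mathbb{P}(d\omega)\,\kappa(V(\omega), dx)$ and $X(\omega, x) = x$, so that $(V, X)$ on $\bar\Omega$ reproduces $\nu$. The point of $V$ generating $\mathcal{F}$ is that every bounded $\mathcal{F}$-measurable $Z$ equals $\phi(V)$ and may be approximated by $g(V)$ with $g$ bounded continuous; then for bounded continuous $f$ the ordinary weak convergence gives $\mathbb{E}[g(V^n) f(X^n)] \to \int g(v) f(x)\,\nu(dv,dx) = \bar{\mathbb{E}}[g(V) f(X)]$, and after passing from $V^n$ to $V$ in the test functional one obtains $\mathbb{E}[Z f(X^n)] \to \bar{\mathbb{E}}[Z f(X)]$, which is the stable convergence $X^n \Rightarrow^{\text{stably}} X$.

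The main obstacle is the converse direction: unlike the forward part it cannot be reduced to a single Slutsky step, and one must genuinely build the extension through a regular conditional distribution (requiring the Polish/separability structure) and then verify the defining limit against the dense class $\{g(V) f(\cdot)\}$. The delicate point within this is the passage from $g(V^n)$ to $g(V)$ in the functional, which is exactly what forces the joint use of $V$ generating $\mathcal{F}$ together with the closeness of $V^n$ to $V$; I would isolate this as a separate estimate. The forward direction, by comparison, is routine once the equivalent formulation of stability and a perturbation lemma are in hand.
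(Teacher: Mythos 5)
The paper itself contains no proof of this lemma: it is quoted, with attribution, from Section~2 of Jacod and Lemma~2.1 of Jacod--Protter, so your proposal can only be measured against the standard argument in those references --- and it essentially reproduces that argument. Your forward direction is correct and is the canonical proof: stable convergence is equivalent to the joint weak convergence $(Y,X^n)\Rightarrow(Y,X)$ for every $\mathcal{F}$-measurable $Y$; apply this to the $\mathcal{F}$-measurable pair $(Y,V)$ to get $(Y,V,X^n)\Rightarrow(Y,V,X)$, and then the converging-together (Slutsky) lemma, with $d_{E'}(V^n,V)\to 0$ in probability, replaces $V$ by $V^n$ in the middle coordinate. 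Your converse construction --- disintegrating the limit law $\nu(\d v,\d x)=\mu_V(\d v)\,\kappa(v,\d x)$ over its first marginal, setting $\bar\Omega=\Omega\times E$, $\bar{\mathbb{P}}(\d\omega,\d x)=\mathbb{P}(\d\omega)\,\kappa(V(\omega),\d x)$, $X(\omega,x)=x$ --- is also the standard one, and the density step (every bounded $\mathcal{F}$-measurable $Z$ equals $\phi(V)$ because $\sigma(V)=\mathcal{F}$, and $\phi(V)$ is $L^1$-approximable by $g(V)$ with $g$ bounded continuous) is the right way to verify stability.

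There is, however, one point you should confront directly rather than defer to ``a separate estimate'': the passage from $\mathbb{E}[g(V^n)f(X^n)]$ to $\mathbb{E}[g(V)f(X^n)]$ requires $V^n\to V$ in probability, and this is \emph{not} among the hypotheses of the converse as stated --- only weak convergence of the pairs $(V^n,X^n)$ is assumed there. Without it the converse is false. Take $\Omega=\{0,1\}$ with the uniform measure, $V(\omega)=\omega$ (so $\sigma(V)=\mathcal{F}$), and $V^n=X^n=1-\omega$. Then $(V^n,X^n)$ has the same law as $(V,V)$ for every $n$, so $(V^n,X^n)\Rightarrow(V,X)$ and the realization of the limit is forced to be $X=V$ a.s.; yet $\mathbb{E}\big[\mathbb{I}_{\{V=1\}}f(X^n)\big]=0$ while $\mathbb{E}\big[\mathbb{I}_{\{V=1\}}f(X)\big]=\tfrac12$ for any bounded continuous $f$ with $f(0)=0$, $f(1)=1$, so $X^n$ does not converge stably to $X$. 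The resolution, as in Jacod--Protter's Lemma~2.1(b), is that the converse hypothesis must be read as convergence in law of the pairs $(V,X^n)$ with the \emph{fixed} $V$ (equivalently, keep $(V^n,X^n)$ but retain the assumption $V^n\xrightarrow{\mathbb{P}}V$ from the first half). Under that reading your argument closes --- and in fact the delicate step you flagged disappears entirely, since $g(V)$ then already sits in the test functional. So: your proof is the standard one and is correct, but the ``closeness of $V^n$ to $V$'' you invoke is an additional hypothesis that the converse, as stated, does not grant you; make it explicit.
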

In the same way as in the case of a crude Monte Carlo estimation, let us assume that the discretization error
$$
\varepsilon_n=\mathbb{E} f\left(X_T^n\right)-\mathbb{E} f\left(X_T\right)
$$
is of order $1 / n^\alpha$ for any $\alpha \in[1 / 2,1]$. Taking advantage from the limit theorem proven in the precedent  section, we are now able to establish a central limit theorem of Lindeberg-Feller type on the multilevel Monte Carlo Euler method. To do so, we introduce a real sequence $\left(a_{\ell}\right)_{\ell \in \mathbb{N}}$ of positive numbers  such that
$$
\lim _{L \rightarrow \infty} \sum_{\ell=1}^L a_{\ell}=\infty \quad \text { and } \quad \lim _{L \rightarrow \infty} \frac{1}{\left(\sum_{\ell=1}^L a_{\ell}\right)^{p / 2}} \sum_{\ell=1}^L a_{\ell}^{p / 2}=0
$$
for $p>2$
and we assume that the sample size $N_{\ell}$  is given by 
\begin{align}\label{def of N l}
	N_{\ell}=\frac{n^{2 \alpha}}{m^{2(\ell-1)H} a_{\ell}} \sum_{\ell=1}^L a_{\ell}, \quad \ell \in\{0, \ldots, L\} \text { and } L=\frac{\log n}{\log m}.
\end{align}
We choose this form for $N_{\ell}$ because it is a generic form allowing us a straightforward use of Toeplitz lemma that is a crucial tool used in the proof of our central limit theorem. Indeed, the above choice of $a_\ell$   implies that if $\left(x_{\ell}\right)_{\ell \geq 1}$ is a sequence converging to $x \in \mathbb{R}$ as $\ell$ tends to infinity then

$$
\lim _{L \rightarrow+\infty} \frac{\sum_{\ell=1}^L a_{\ell} x_{\ell}}{\sum_{\ell=1}^L a_{\ell}}=x
$$

In the sequel, we will denote by $\widetilde{\mathbb{E}}$  (respectively  $\widetilde{\operatorname{Var}}$) 
 the expectation (respectively  the variance)  defined on the probability space $(\widetilde{\Omega}, \widetilde{\mathcal{F}}, \widetilde{\mathbb{P}})$ introduced in Theorem \ref{t.main}. We can now state the central limit theorem under strengthened conditions on the diffusion coefficients.
\begin{thm} \label{t.3.4} 
Assume that the derivatives of  the coefficients  $b$ and $\sigma$  are bounded. Let $f$ be a real-valued function satisfying
	 
	$\left(\mathcal{H}_f\right) \quad|f(x)-f(y)| \leq C\left(1+|x|^p+|y|^p\right)|x-y| \quad$ for some $C, p>0$.
	
	Assume $\mathbb{P}\left(X_T \notin \mathcal{D}_f\right)=0$, where $\mathcal{D}_f:=\left\{x \in \mathbb{R}^d ; f\right.$ is differentiable at $\left.x\right\}$, and that for some $\alpha \in[1 / 2,1]$ we have
	
	$$
\left(\mathcal{H}_{\varepsilon_n}\right)\quad	\lim _{n \rightarrow \infty} n^\alpha \varepsilon_n=C_f(T, \alpha)
	$$ 
	
	 	 Fix  $m\in\mathbb{N}\backslash \{0,1\} $.   Then, for the choice of $N_{\ell}, \ell \in\{0,1, \ldots, L\}$ given by \eqref{def of N l}, we have
	
	$$
	n^\alpha\left(Q_n-\mathbb{E}\left(f\left(X_T\right)\right)\right) \Rightarrow \mathcal{N}\left(C_f(T, \alpha), \sigma^2\right)
	$$
	
	with $\sigma^2=\widetilde{\operatorname{Var}}\left(\nabla f\left(X_T\right) \cdot U_T\right)$ and $\mathcal{N}\left(C_f(T, \alpha), \sigma^2\right)$ denotes a normal distribution.
\end{thm}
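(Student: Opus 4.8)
\emph{The plan is to split the target into a deterministic bias and a centered statistical fluctuation, and to handle the fluctuation by the triangular-array central limit theorem.} Writing
$$
n^\alpha\big(Q_n-\E f(X_T)\big)=n^\alpha\big(\tilde Q_n-\E f(X_T)\big)+n^\alpha\big(Q_n-\tilde Q_n\big),
$$
with $\tilde Q_n$ as in \eqref{def of Ef}, the first observation is that the sum defining $\tilde Q_n$ telescopes: since $m^L=n$ and $X^{m^0}=X^1$, one gets $\tilde Q_n=\E[f(X_T^{m^L})]=\E[f(X_T^n)]$, so that $n^\alpha(\tilde Q_n-\E f(X_T))=n^\alpha\varepsilon_n\to C_f(T,\alpha)$ directly by $(\mathcal H_{\varepsilon_n})$. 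Because this term is deterministic, Slutsky's theorem reduces the claim to proving $n^\alpha(Q_n-\tilde Q_n)\Rightarrow\mathcal N(0,\sigma^2)$, after which the stated limit law $\mathcal N(C_f(T,\alpha),\sigma^2)$ follows.

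For the fluctuation I would set up the array. Since the $L+1$ empirical means are built on mutually independent samples and each level uses i.i.d.\ copies, the variables
$$
Z_{n,0,k}=\frac{n^\alpha}{N_0}\big(f(X^1_{T,k})-\E f(X^1_T)\big),\qquad Z_{n,\ell,k}=\frac{n^\alpha}{N_\ell}\Big(f(X^{\ell,m^\ell}_{T,k})-f(X^{\ell,m^{\ell-1}}_{T,k})-\E\big[f(X_T^{m^\ell})-f(X_T^{m^{\ell-1}})\big]\Big)
$$
(for $1\le\ell\le L$, $1\le k\le N_\ell$) are independent, mean zero, and satisfy $\sum_{\ell,k}Z_{n,\ell,k}=n^\alpha(Q_n-\tilde Q_n)$. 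I would then verify the two hypotheses of Theorem \ref{Central limit theorem for triangular array}.

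For condition (A1), the level-$0$ contribution equals $\frac{n^{2\alpha}}{N_0}\Var(f(X^1_T))$, which vanishes because $N_0$ grows like $n^{2\alpha}\sum_\ell a_\ell$ and $\sum_\ell a_\ell\to\infty$. For $\ell\ge1$, the choice of $N_\ell$ in \eqref{def of N l} gives $\sum_k\E|Z_{n,\ell,k}|^2=a_\ell x_\ell/\sum_{\ell'}a_{\ell'}$, where $x_\ell:=m^{2(\ell-1)H}\Var\big(f(X_T^{m^\ell})-f(X_T^{m^{\ell-1}})\big)$. The crucial point is that $x_\ell\to\sigma^2$. Indeed, taking $n'=m^{\ell-1}$ so that $X^{m^\ell},X^{m^{\ell-1}}$ are two consecutive Euler levels, Theorem \ref{t.main} gives $m^{(\ell-1)H}(X_T^{m^\ell}-X_T^{m^{\ell-1}})=U^{m^\ell,m^{\ell-1}}_T\Rightarrow U_T$ stably; together with $X_T^{m^{\ell-1}}\to X_T$, the hypothesis $\PP(X_T\notin\mathcal D_f)=0$, and a Taylor linearization of $f$, this yields $m^{(\ell-1)H}(f(X_T^{m^\ell})-f(X_T^{m^{\ell-1}}))\Rightarrow\nabla f(X_T)\cdot U_T$ stably. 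Upgrading this to convergence of second moments, hence $x_\ell\to\widetilde{\Var}(\nabla f(X_T)\cdot U_T)=\sigma^2$, requires uniform integrability of the squared quantities, which I would obtain from a uniform $L^{2+\delta}$ bound built from $(\mathcal H_f)$, the moment bounds of Lemma \ref{bound of X n}, and the rate $\|X_T^{m^\ell}-X_T^{m^{\ell-1}}\|_{L^q}\le Cm^{-(\ell-1)H}$. The Toeplitz lemma, for which the sequence $(a_\ell)$ was tailored, then turns $\sum_\ell \tfrac{a_\ell}{\sum a}x_\ell$ into $\sigma^2$, giving (A1).

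Finally, for the Lyapunov condition (A3) I would use $\E|f(X_T^{m^\ell})-f(X_T^{m^{\ell-1}})|^p\le Cm^{-(\ell-1)Hp}$ (again from $(\mathcal H_f)$ and the above $L^q$-rate). Substituting the explicit $N_\ell$ reduces $\sum_{\ell,k}\E|Z_{n,\ell,k}|^p$, up to the vanishing level-$0$ term, to a constant multiple of $n^{-\alpha(p-2)}(\sum a)^{-(p-1)}\sum_{\ell}a_\ell^{p-1}m^{(\ell-1)H(p-2)}$; since $\alpha\ge H$ and $m^L=n$, the geometric factor $m^{(\ell-1)H(p-2)}/n^{\alpha(p-2)}$ is at most one and decays at the top levels, and combined with the second assumption on $(a_\ell)$ this forces the sum to $0$. \textbf{The main obstacle is the limit $x_\ell\to\sigma^2$ in step (A1):} the genuinely new difficulty relative to \cite{BK} is that, for the singular kernel, Theorem \ref{t.main} provides only \emph{stable} convergence of the rescaled two-level error, so one must linearize $f$ under the mere almost-sure differentiability hypothesis while simultaneously securing the uniform integrability needed to pass from convergence in law to convergence of variances.
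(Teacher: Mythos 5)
Your proposal is correct and takes essentially the same route as the paper: the same bias/level-$0$/multilevel decomposition, the Lindeberg--Feller theorem verified through Lyapunov's condition, and the same key step --- combining Theorem \ref{t.main} with stable convergence (Lemma \ref{stably convergence}), Taylor linearization at $X_T$ under $\mathbb{P}(X_T\notin\mathcal{D}_f)=0$, uniform integrability supplied by $(\mathcal{H}_f)$ together with the moment and rate lemmas, and the Toeplitz lemma --- to obtain $m^{2(\ell-1)H}\operatorname{Var}\big(f(X_T^{\ell,m^{\ell}})-f(X_T^{\ell,m^{\ell-1}})\big)\to\sigma^2$. The only cosmetic differences are that you index the triangular array by individual samples and absorb the level-$0$ term into it (so no Burkholder inequality is needed), whereas the paper groups each level into a single array element and dispatches level $0$ separately by the classical CLT, and that your Lyapunov bound carries the correct rate $m^{-(\ell-1)Hp}$ where the paper writes the $H=1/2$ rate $m^{-p\ell/2}$; either way the exponents cancel against the choice of $N_\ell$ and the conclusion is the same.
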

	\begin{proof}
	To simplify the presentation we give the proof for $\alpha=1$, the case $\alpha \in[1 / 2,1)$ is a straightforward extension. Combining relations \eqref{def of Ef} and \eqref{def of Q} together, we have
	$$
	Q_n-\mathbb{E}\left(f\left(X_T\right)\right)=\widehat{Q}_n^1+\widehat{Q}_n^2+\varepsilon_n,
	$$ 
	where 
	$$
	\begin{aligned}
		& \widehat{Q}_n^1=\frac{1}{N_0} \sum_{k=1}^{N_0}\left(f(X_{T, k}^1)-\mathbb{E}\left(f(X_T^1)\right)\right) \\
		& \widehat{Q}_n^2=\sum_{\ell=1}^L \frac{1}{N_{\ell}} \sum_{k=1}^{N_{\ell}}\left(f(X_{T, k}^{\ell, m^{\ell}})-f(X_{T, k}^{\ell, m^{\ell-1}})-\mathbb{E}\left(f\left(X_T^{\ell, m^{\ell}}\right)-f\left(X_T^{\ell, m^{\ell-1}}\right)\right)\right)\,. 
	\end{aligned}
	$$

	Using assumption ( $\mathcal{H}_{\varepsilon_n}$ ) and taking the expectation, we see obviously   the term $C_f(T, \alpha)$ in the limit. Taking $N_0=\frac{n^2(m-1) T}{a_0} \sum_{\ell=1}^L a_{\ell}$, we can apply the classical central limit theorem to $\widehat{Q}_n^1$. Then we have $n \widehat{Q}_n^1 \xrightarrow{\mathbb{P}} 0$. Finally, we are going  to study the convergence of $n \widehat{Q}_n^2$ and we can  conclude our proof by establishing
	$$
	n \widehat{Q}_n^2 \Rightarrow \mathcal{N}\left(0, \widetilde{\operatorname{Var}}\left(\nabla f\left(X_T\right) \cdot U_T\right)\right)
	$$
	To show the above convergence, we plan to use Theorem \ref{Central limit theorem for triangular array} with the Lyapunov condition and we set
	$$
	\begin{aligned}
		X_{n, \ell} & :=\frac{n}{N_{\ell}} \sum_{k=1}^{N_{\ell}} Z_{T, k}^{m^{\ell}, m^{\ell-1}} \text { and } \\
		Z_{T, k}^{m^{\ell}, m^{\ell-1}} & :=f\left(X_{T, k}^{\ell, m^{\ell}}\right)-f\left(X_{T, k}^{\ell, m^{\ell-1}}\right)-\mathbb{E}\left(f\left(X_{T, K}^{\ell, m^{\ell}}\right)-f\left(X_{T, k}^{\ell, m^{\ell-1}}\right)\right).
	\end{aligned}
	$$
	In other words, we shall verify  the following statements. 
	
	- $\lim _{n \rightarrow \infty} \sum_{\ell=1}^L \mathbb{E}\left(X_{n, \ell}\right)^2=\widetilde{\operatorname{Var}}\left(\nabla f\left(X_T\right) \cdot U_T\right)$.
	
	- (Lyapunov condition) there exists $p>2$ such that $\lim _{n \rightarrow \infty} \sum_{\ell=1}^L \mathbb{E}\left|X_{n, \ell}\right|^p=0$.
	
	For the first claim, since $ \mathbb{E}\left(X_{n, \ell}\right)=0$, we have
	\begin{align}\label{X 2 nl}
		\sum_{\ell=1}^L \mathbb{E}\left(X_{n, \ell}\right)^2 & =\sum_{\ell=1}^L \operatorname{Var}\left(X_{n, \ell}\right) =\sum_{\ell=1}^L \frac{n^2}{N_{\ell}} \operatorname{Var}\left(Z_{T, 1}^{m^{\ell}, m^{\ell-1}}\right) \nonumber\\
		& =\frac{1}{\sum_{\ell=1}^L a_{\ell}} \sum_{\ell=1}^L a_{\ell} m^{2(\ell-1)H} \operatorname{Var}\left(Z_{T, 1}^{m^{\ell}, m^{\ell-1}}\right).
	\end{align}
	Otherwise, since $\mathbb{P}\left(X_T \notin \mathcal{D}_f\right)=0$, applying the Taylor expansion theorem twice, we have
	\begin{align}
		f(X_T^{\ell, m^{\ell}})-f(X_T^{\ell, m^{\ell-1}})= &\nabla f\left(X_T\right) \cdot U_T^{m^{\ell}, m^{\ell-1}}+\left(X_T^{\ell, m^{\ell}}-X_T\right) \varepsilon\left(X_T, X_T^{\ell, m^{\ell}}-X_T\right)\nonumber \\
		& -\left(X_T^{\ell, m^{\ell-1}}-X_T\right) \varepsilon\left(X_T, X_T^{\ell, m^{\ell-1}}-X_T\right)\nonumber.
	\end{align}
	The function $\varepsilon$ is given by the Taylor-Young expansion, so it satisfies $\varepsilon\left(X_T, X_T^{\ell, m^{\ell}}-X_T\right) \xrightarrow[\ell \rightarrow \infty]{\mathbb{P}} 0$ and $\varepsilon\left(X_T, X_T^{\ell, m^{\ell-1}}-X_T\right) \xrightarrow[\ell \rightarrow \infty]{\mathbb{P}} 0$. By Lemma \ref{rate X-X n}, we get the tightness of $m^{(\ell-1)H}\left(X_T^{\ell, m^{\ell}}-X_T\right)$ and $m^{(\ell-1)H}\left(X_T^{\ell, m^{\ell-1}}-X_T\right)$ and then we deduce
	$$
	\begin{aligned}
		m^{(\ell-1)H} \left(\left(X_T^{\ell, m^{\ell}}-X_T\right) \varepsilon\left(X_T, X_T^{\ell, m^{\ell}}-X_T\right)\right. 
		\left.-\left(X_T^{\ell, m^{\ell-1}}-X_T\right) \varepsilon\left(X_T, X_T^{\ell, m^{\ell-1}}-X_T\right)\right) \xrightarrow[\ell \rightarrow \infty]{\mathbb{P}} 0 .
	\end{aligned}
	$$
According to Theorem \ref{t.main} and Lemma \ref{stably convergence}, we conclude that
	$$
	m^{(\ell-1)H}\left(f\left(X_T^{\ell, m^{\ell}}\right)-f\left(X_T^{\ell, m^{\ell-1}}\right)\right) \Rightarrow^{\text {stably }} \nabla f\left(X_T\right) \cdot U_T
	$$	
	as $\ell \rightarrow \infty$.
	It follows from  $\left(\mathcal{H}_f\right)$, Lemma \ref{bound of X n} and Lemma \ref{rate X-X n} that
	 \begin{align}\label{limit of mul-level}
	 		\forall \varepsilon>0 \quad \sup _{\ell} \mathbb{E}\left|m^{(\ell-1)H}\left(f\left(X_T^{\ell, m^{\ell}}\right)-f\left(X_T^{\ell, m^{\ell-1}}\right)\right)\right|^{2+\varepsilon}<\infty.
	 \end{align}
	We deduce using \eqref{limit of mul-level} that
	$$
	\begin{aligned}
		\mathbb{E}\left(m^{(\ell-1)H}\left(f\left(X_T^{\ell, m^{\ell}}\right)-f\left(X_T^{\ell, m^{\ell-1}}\right)\right)\right)^k \rightarrow \widetilde{\mathbb{E}}\left(\nabla f\left(X_T\right) \cdot U_T\right)^k<\infty \\
		\quad \text { for } k \in\{1,2\} .
	\end{aligned}
	$$	
	Consequently,
	$$
	m^{2(\ell-1)H}\operatorname{Var}\left(Z_{T, 1}^{m^{\ell}, m^{\ell-1}}\right) \rightarrow \widetilde{\operatorname{Var}}\left(\nabla f\left(X_T\right) \cdot U_T\right)<\infty .
	$$
Combining this result together with  \eqref{X 2 nl}, we obtain the first condition using Toeplitz lemma. Concerning the second claim, by Burkhölder's inequality and elementary computations, we have for $p>2$
	$$
	\mathbb{E}\left|X_{n, \ell}\right|^p=\frac{n^p}{N_{\ell}^p} \mathbb{E}\left|\sum_{\ell=1}^{N_{\ell}} Z_{T, 1}^{m^{\ell}, m^{\ell-1}}\right|^p \leq C_p \frac{n^p}{N_{\ell}^{p / 2}} \mathbb{E}\left|Z_{T, 1}^{m^{\ell}, m^{\ell-1}}\right|^p,
	$$
	where $C_p$ is a numerical constant depending only on $p$. Otherwise, Lemma \ref{bound of X n} ensures the existence of a constant $K_p>0$ such that
	$$
	\mathbb{E}\left|Z_{T, 1}^{m^{\ell}, m^{\ell-1}}\right|^p \leq \frac{K_p}{m^{p \ell / 2}}\,. 
	$$ 	
	Therefore, 
		$$
	\begin{aligned}
		\sum_{\ell=1}^L \mathbb{E}\left|X_{n, \ell}\right|^p & \leq \widetilde{C}_p \sum_{\ell=1}^L \frac{n^p}{N_{\ell}^{p / 2} m^{p \ell / 2}} \\
		& \leq \frac{\widetilde{C}_p}{\left(\sum_{\ell=1}^L a_{\ell}\right)^{p / 2}} \sum_{\ell=1}^L a_{\ell}^{p / 2} \underset{n \rightarrow \infty}{\longrightarrow} 0 .
	\end{aligned}
	$$
	This completes the proof.
	\end{proof}
\section{Completion of the proofs of Lemma \ref{est of A} and Lemma \ref{est of V-2}}
\subsection{Completion of the proof of Lemma \ref{est of A}} In this subsection we provide the study of   the remaining terms of in the proof
of Lemma \ref{est of A}. 
\subsubsection{Analysis of $\textbf{(1,3)}_{j,l}^{ mn,n,k}+\textbf{(2,3)}_{j,l}^{ mn,n,k}$($\textbf{(3,1)}_{j,l}^{ mn,n,k}+\textbf{(3,2)}_{j,l}^{ mn,n,k}$)}\label{(4.2.1)}
We are now to deal with $\textbf{(1,3)}_{j,l}^{ mn,n,k}$ and $\textbf{(2,3)}_{j,l}^{ mn,n,k}$. By \eqref{dcp of stochastic integral}, we have 
\begin{align}
	\textbf{(1,3)}_{j,l}^{ mn,n,k}&=n^{2H}\int_{0}^{t}\Big(\int_{0}^{\frac{[n s]}{n}}\Delta K(n,s,u)\sigma_j^{k_1}(X^{n}_{\frac{[n u]}{n}})\d W^j_u\Big)\Big(\int_{0}^{\frac{[mn s]}{mn}}\Delta K(mn,s,u)\sigma^{k_2}_j(X^{mn}_{\frac{[mn u]}{mn}})\d W^l_u\Big)\d s\nonumber\\&=n^{2H}\int_{0}^{t}\Big(\int_{0}^{\frac{[n s]}{n}}\Delta K(n,s,u)\sigma_j^{k_1}(X^{n}_{\frac{[n u]}{n}})\d W^j_u\Big)\Big(\int_{0}^{\frac{[n s]}{n}}\Delta K(mn,s,u)\sigma^{k_2}_j(X^{mn}_{\frac{[mn u]}{mn}})\d W^l_u\Big)\d s\nonumber\\&\quad+n^{2H}\int_{0}^{t}\Big(\int_{0}^{\frac{[n s]}{n}}\Delta K(n,s,u)\sigma_j^{k_1}(X^{n}_{\frac{[n u]}{n}})\d W^j_u\Big)\Big(\int_{\frac{[ns]}{n}}^{\frac{[mn s]}{mn}}\Delta K(mn,s,u)\sigma^{k_2}_j(X^{mn}_{\frac{[mn u]}{mn}})\d W^l_u\Big)\d s\nonumber\\
	&=n^{2H}\int_{0}^{t}\int_{0}^{\frac{[ns]}{n}}\Big(\int_{0}^{u}\Delta K(mn,s,r) \sigma_l^{k_2}(X^{mn}_{\frac{[mn r]}{mn}})\d W^l_r\Big)\Delta K(n,s,u) \sigma_j^{k_1}(X^{mn}_{\frac{[mn u]}{mn}})\d W^j_u\d s\nonumber\\&\quad+n^{2H}\int_{0}^{t}\int_{0}^{\frac{[ns]}{n}}\Big(\int_{0}^{u}\Delta K(n,s,r)\sigma_j^{k_1}(X^{n}_{\frac{[n r]}{n}})\d W^j_r\Big)\Delta K(mn,s,u) \sigma_l^{k_2}(X^{mn}_{\frac{[mn u]}{mn}})\d W^l_u\d s\nonumber\\&\quad+n^{2H}\int_{0}^{t}\int_{0}^{\frac{[ns]}{n}}\Delta K(mn,s,u)\Delta K(n,s,u)\sigma_j^{k_1}(X^{n}_{\frac{[n u]}{n}})\sigma_l^{k_2}(X^{mn}_{\frac{[mn u]}{mn}})\d \langle W^{j}, W^{l} \rangle_u\d s\nonumber\\&\quad+n^{2H}\int_{0}^{t}\Big(\int_{0}^{\frac{[n s]}{n}}\Delta K(n,s,u)\sigma_j^{k_1}(X^{n}_{\frac{[n u]}{n}})\d W^j_u\Big)\Big(\int_{\frac{[ns]}{n}}^{\frac{[mn s]}{mn}}\Delta K(mn,s,u)\sigma^{k_2}_j(X^{mn}_{\frac{[mn u]}{mn}})\d W^l_u\Big)\d s\nonumber\\
	&:=\textbf{(1,3,1)}_{j,l}^{ mn,n,k}+\textbf{(1,3,2)}_{j,l}^{ mn,n,k}+\textbf{(1,3,3)}_{j,l}^{ mn,n,k}+\textbf{(1,3,4)}_{j,l}^{ mn,n,k},\nonumber
\end{align}
and
\begin{align}
	\textbf{(2,3)}_{j,l}^{ mn,n,k}&=n^{2H}\int_{0}^{t}\Big(\sigma^{k_1}_j(X^{n}_{\frac{[ns]}{n}})\int_{\frac{[n s]}{n}}^{s}K(s-u)\d W^j_u\Big)\Big(\int_{0}^{\frac{[mn s]}{mn}}\Delta K(mn,s,u)\sigma^{k_2}_j(X^{mn}_{\frac{[mn u]}{mn}})\d W^l_u\Big)\d s\nonumber\\&=n^{2H}\int_{0}^{t}\Big(\sigma^{k_1}_j(X^{n}_{\frac{[ns]}{n}})\int_{\frac{[n s]}{n}}^{\frac{[mns]}{mn}}K(s-u)\d W^j_u\Big)\Big(\int_{0}^{\frac{[n s]}{n}}\Delta K(mn,s,u)\sigma^{k_2}_j(X^{mn}_{\frac{[mn u]}{mn}})\d W^l_u\Big)\d s\nonumber\\&\quad+n^{2H}\int_{0}^{t}\Big(\sigma^{k_1}_j(X^{n}_{\frac{[ns]}{n}})\int_{\frac{[n s]}{n}}^{\frac{[mns]}{mn}}K(s-u)\d W^j_u\Big)\Big(\int_{\frac{[ns]}{n}}^{\frac{[mn s]}{mn}}\Delta K(mn,s,u)\sigma^{k_2}_j(X^{mn}_{\frac{[mn u]}{mn}})\d W^l_u\Big)\d s\nonumber\\&\quad+n^{2H}\int_{0}^{t}\Big(\sigma^{k_1}_j(X^{n}_{\frac{[ns]}{n}})\int_{\frac{[mn s]}{mn}}^{s}K(s-u)\d W^j_u\Big)\Big(\int_{0}^{\frac{[mn s]}{mn}}\Delta K(mn,s,u)\sigma^{k_2}_j(X^{mn}_{\frac{[mn u]}{mn}})\d W^l_u\Big)\d s\nonumber\\&=n^{2H}\int_{0}^{t}\Big(\sigma^{k_1}_j(X^{n}_{\frac{[ns]}{n}})\int_{\frac{[n s]}{n}}^{\frac{[mns]}{mn}}K(s-u)\d W^j_u\Big)\Big(\int_{0}^{\frac{[n s]}{n}}\Delta K(mn,s,u)\sigma^{k_2}_j(X^{mn}_{\frac{[mn u]}{mn}})\d W^l_u\Big)\d s\nonumber\\&\quad+n^{2H}\int_{0}^{t}\int_{\frac{[ns]}{n}}^{\frac{[mns]}{mn}}\Big(\int_{\frac{[ns]}{n}}^{u}\Delta K(mn,s,r) \sigma_l^{k_2}(X^{mn}_{\frac{[mn r]}{mn}})\d W^l_r\Big)K(s-u) \sigma_j^{k_1}(X^{n}_{\frac{[n u]}{n}})\d W^j_u\d s\nonumber\\&\quad+n^{2H}\int_{0}^{t}\int_{\frac{[ns]}{n}}^{\frac{[mns]}{mn}}\Big(\int_{\frac{[ns]}{n}}^{u}K(s-r)\sigma_j^{k_1}(X^{n}_{\frac{[n r]}{n}})\d W^j_r\Big)\Delta K(mn,s,u) \sigma_l^{k_2}(X^{mn}_{\frac{[mn u]}{mn}})\d W^l_u\d s\nonumber\\&\quad+n^{2H}\int_{0}^{t}\int_{\frac{[ns]}{n}}^{\frac{[mns]}{mn}}\Delta K(mn,s,u)K(s-u)\sigma_j^{k_1}(X^{n}_{\frac{[n u]}{n}})\sigma_l^{k_2}(X^{mn}_{\frac{[mn u]}{mn}})\d \langle W^{j}, W^{l} \rangle_u\d s\nonumber\\&\quad+n^{2H}\int_{0}^{t}\Big(\sigma^{k_1}_j(X^{n}_{\frac{[ns]}{n}})\int_{\frac{[mn s]}{mn}}^{s}K(s-u)\d W^j_u\Big)\Big(\int_{0}^{\frac{[mn s]}{mn}}\Delta K(mn,s,u)\sigma^{k_2}_j(X^{mn}_{\frac{[mn u]}{mn}})\d W^l_u\Big)\d s\nonumber\\
	&:=\textbf{(2,3,1)}_{j,l}^{ mn,n,k}+\textbf{(2,3,2)}_{j,l}^{ mn,n,k}+\textbf{(2,3,3)}_{j,l}^{ mn,n,k}+\textbf{(2,3,4)}_{j,l}^{ mn,n,k}+ \textbf{(2,3,5)}_{j,l}^{ mn,n,k}.\nonumber
\end{align}
To study the term $\textbf{(1,3,1)}_{j,l}^{ mn,n,k}$  we let
\begin{align}
	A_{1,s}^{(m,n,j,l)}=n^{2H}\int_{0}^{\frac{[ns]}{n}}\Big(\int_{0}^{u}\Delta K(mn,s,r) \sigma_l^{k_2}(X^{mn}_{\frac{[mn r]}{mn}})\d W^l_r\Big)\Delta K(n,s,u) \sigma_j^{k_1}(X^{n}_{\frac{[n u]}{n}})\d W^j_u.\nonumber
\end{align}
By Fubini's theorem we can rewrite $\E[|\textbf{(1,3,1)}_{j,l}^{ mn,n,k}|^2]$ as
\begin{align}
	\E[|\textbf{(1,3,1)}_{j,l}^{ mn,n,k}|^2]&=\E\Big[\Big|\int_{0}^{t}A_{1,s}^{(m,n,j,l)}\d s\Big|^2\Big]\nonumber\\&=\E\Big[\int_{0}^{t}\int_{0}^{t}A_{1,s}^{(m,n,j,l)}A_{1,v}^{(m,n,j,l)}\d v\d s\Big]\nonumber\\&=2\int_{0}^{t}\int_{0}^{s}\E\big[A_{1,s}^{(m,n,j,l)}A_{1,v}^{(m,n,j,l)}\big]\d v\d s.\nonumber
\end{align}
Applying \eqref{dcp of stochastic integral}, the tower property (see, e.g. \cite{BD}) and Fubini's theorem, we have
\begin{align}\label{double of A 1}
	&\E\big[A_{1,s}^{(m,n,j,l)}A_{1,v}^{(m,n,j,l)}\big]\nonumber\\&=n^{4H}\E\Big[\int_{0}^{\frac{[nv]}{n}}\Big(\int_{0}^{u}\Delta K(mn,s,r) \sigma_l^{k_2}(X^{mn}_{\frac{[mn r]}{mn}})\d W^l_r\Big)\Big(\int_{0}^{u}\Delta K(mn,v,r) \sigma_l^{k_2}(X^{n}_{\frac{[mn r]}{mn}})\d W^l_r\Big)\nonumber\\&\quad\cdot\Delta K(n,s,u)\Delta K(n,v,u) |\sigma_j^{k_1}(X^{n}_{\frac{[n u]}{n}})|^2\d u\Big]\nonumber\\&=n^{4H}\int_{0}^{\frac{[nv]}{n}}\Delta K(n,s,u)\Delta K(n,v,u)\cdot E^{(1)}_u\d u,
\end{align}
where
\begin{align}
	E^{(1)}_u:=\E\Big[\Big(\int_{0}^{u}\Delta K(mn,s,r) \sigma_l^{k_2}(X^{mn}_{\frac{[mn r]}{mn}})\d W^l_r\Big)\Big(\int_{0}^{u}\Delta K(mn,v,r) \sigma_l^{k_2}(X^{mn}_{\frac{[mn r]}{mn}})\d W^l_r\Big)|\sigma_j^{k_1}(X^{n}_{\frac{[n u]}{n}})|^2\Big].\nonumber
\end{align}
Since for any $m\geq 2, u\leq \frac{[mns]}{mn}$ and $s\leq T$, it holds
\begin{align}\label{esti of stochastic integral_1}
	&\Big\|\int_{0}^{u}\Delta K(mn,s,r)\sigma_l^{k_2}(X^{mn}_{\frac{[mn r]}{mn}}) \d W_r^l\Big\|_{L^m}\nonumber\\&\leq C \Big\|\int_{0}^{u}\Big|\Delta K(mn,s,r)\sigma_l^{k_2}(X^{mn}_{\frac{[mn r]}{mn}})\Big|^2\d r\Big\|^{\frac{1}{2}}_{L^{m/2}}\nonumber\\&\leq C\Big\|\int_{0}^{\frac{[mns]}{mn}}\Big(K(s-r)-K(\frac{[mns]}{mn}-r)\Big)^2|\sigma_l^{k_2}(X^{mn}_{\frac{[mn r]}{mn}})|^2\d r\Big\|^{\frac{1}{2}}_{L^{m/2}}\nonumber\\&\leq C\Big(\int_{0}^{\frac{[mns]}{mn}}\Big(K(s-r)-K(\frac{[mns]}{mn}-r)\Big)^2\||\sigma_l^{k_2}(X^{mn}_{\frac{[mn r]}{mn}})|^2\|_{L^{m/2}}\d r\Big)^{1/2}\nonumber\\&\leq Cn^{-H},
\end{align}
where Lemma \ref{est-3_m}, the boundness of $\sigma$, BDG's inequality, Minkowski's  inequality are used.

Then by the Cauchy-Schwarz inequality and \eqref{esti of stochastic integral_1}, we have
\begin{align}
	\E[E^{(1)}_{u}]&\leq \Big\|\int_{0}^{u}\Delta K(mn,s,r) \sigma_l^{k_2}(X^{mn}_{\frac{[mn r]}{mn}})\d W_r^j\Big\|_{L^4}\Big\| \int_{0}^{u}\Delta K(mn,v,r) \sigma_l^{k_2}(X^{mn}_{\frac{[mn r]}{mn}})\d W_r^j\Big\|_{L^4}\|\sigma_j^{k_1}(X^{n}_{\frac{[n u]}{n}})\|^2_{L^4}\nonumber\\&\leq Cn^{-2H}.\nonumber
\end{align}
Finally, Lemma \ref{esti of A n} and \eqref{double of A 1} give that
$$\Big|\E\big[A_{1,s}^{(m,n,j,l)}A_{1,v}^{(m,n,j,l)}\big]\Big|\leq Cn^{2H}\int_{0}^{\frac{[nv]}{n}}\Delta K(n,s,u)\Delta K(n,v,u)\d u=0.$$
Applying the dominated convergence theorem  with respect to $\d v\otimes \d s$, we have $$\textbf{(1,3,1)}_{j,l}^{ mn,n,k}\xrightarrow {  L^2} 0.$$
Similar to $\textbf{(1,3,1)}_{j,l}^{ mn,n,k}$, it holds that $\textbf{(1,3,2)}_{j,l}^{ mn,n,k}\to 0$ in $L^2$.

We now deal with $\textbf{(1,3,3)}_{j,l}^{ mn,n,k}+\textbf{(2,3,4)}_{j,l}^{ mn,n,k}$ together, this term vanishes if $j\neq l$.  When   $j=l$, noting  that $\frac{[mnu]}{mn}=\frac{[mns]}{mn}$ for $u\in (\frac{[ns]}{n},\frac{[mns]}{mn})$,   by  simple calculation, we have
\begin{align}
	&\textbf{(1,3,3)}_{j,j}^{ mn,n,k}+\textbf{(2,3,4)}_{j,j}^{ mn,n,k}\nonumber\\&=n^{2H}\int_{0}^{t}\int_{0}^{\frac{[ns]}{n}}\Delta K(mn,s,u)\Delta K(n,s,u)\sigma_j^{k_1}(X^{n}_{\frac{[n u]}{n}})\sigma_j^{k_2}(X^{mn}_{\frac{[mn u]}{mn}})\d u\d s\nonumber\\&\quad+n^{2H}\int_{0}^{t}\int_{\frac{[ns]}{n}}^{\frac{[mns]}{mn}}\Delta K(mn,s,u)K(s-u)\sigma_j^{k_1}(X^{n}_{\frac{[n u]}{n}})\sigma_j^{k_2}(X^{mn}_{\frac{[mn u]}{mn}})\d u\d s\nonumber\\
	&=-\frac{1}{2}n^{2H}\int_{0}^{t}\int_{0}^{\frac{[ns]}{n}}\Big(\Delta K(mn,n,s,u)\Big)^2\sigma_j^{k_1}(X^{n}_{\frac{[n u]}{n}})\sigma_j^{k_2}(X^{mn}_{\frac{[mn u]}{mn}})\d u\d s\nonumber\\
	&\quad+\frac{1}{2}n^{2H}\int_{0}^{t}\int_{0}^{\frac{[ns]}{n}}\Big(\Delta K(n,s,u)\Big)^2\sigma_j^{k_1}(X^{n}_{\frac{[n u]}{n}})\sigma_j^{k_2}(X^{mn}_{\frac{[mn u]}{mn}})\d u\d s\nonumber\\&\quad-\frac{1}{2}n^{2H}\int_{0}^{t}E_s^{m,n,j}\int_{\frac{[ns]}{n}}^{\frac{[mns]}{mn}}\Big( K(\frac{[mns]}{mn}-u)\Big)^2\d u\d s +\frac{1}{2}n^{2H}\int_{0}^{t}E_s^{m,n,j}\int_{\frac{[ns]}{n}}^{\frac{[mns]}{mn}}\Big( K(s-u)\Big)^2\d u\d s\nonumber\\&\quad +\frac{1}{2}n^{2H}\int_{0}^{t}\int_{0}^{\frac{[ns]}{n}}\Big(\Delta K(mn,s,u)\Big)^2\sigma_j^{k_1}(X^{n}_{\frac{[n u]}{n}})\sigma_j^{k_2}(X^{mn}_{\frac{[mn u]}{mn}})\d u\d s\nonumber\\	&\quad+\frac{1}{2}n^{2H}\int_{0}^{t}E_s^{m,n,j}\int_{\frac{[ns]}{n}}^{\frac{[mns]}{mn}}\Big(\Delta K(mn,s,u)\Big)^2\d u\d s\nonumber\\&=-\frac{1}{2}n^{2H}\int_{0}^{t}\int_{0}^{\frac{[ns]}{n}}\Big(\Delta K(mn,n,s,u)\Big)^2\sigma_j^{k_1}(X^{n}_{\frac{[n u]}{n}})\sigma_j^{k_2}(X^{mn}_{\frac{[mn u]}{mn}})\d u\d s\nonumber\\
	&\quad+\frac{1}{2}n^{2H}\int_{0}^{t}\int_{0}^{\frac{[ns]}{n}}\Big(\Delta K(n,s,u)\Big)^2\sigma_j^{k_1}(X^{n}_{\frac{[n u]}{n}})\sigma_j^{k_2}(X^{mn}_{\frac{[mn u]}{mn}})\d u\d s\nonumber\\&\quad +\frac{1}{2}n^{2H}\int_{0}^{t}\int_{0}^{\frac{[mns]}{mn}}\Big(\Delta K(mn,s,u)\Big)^2\sigma_j^{k_1}(X^{n}_{\frac{[n u]}{n}})\sigma_j^{k_2}(X^{mn}_{\frac{[mn u]}{mn}})\d u\d s\nonumber\\&\quad-\frac{1}{2}n^{2H}\int_{0}^{t}E_s^{m,n,j}\int_{\frac{[ns]}{n}}^{\frac{[mns]}{mn}}\Big( K(\frac{[mns]}{mn}-u)\Big)^2\d u\d s +\frac{1}{2}n^{2H}\int_{0}^{t}E_s^{m,n,j}\int_{\frac{[ns]}{n}}^{\frac{[mns]}{mn}}\Big( K(s-u)\Big)^2\d u\d s\nonumber\\
	&:=-\frac{1}{2}\textbf{(1,3,3,1)}_{j,j}^{ mn,n,k}+\frac{1}{2}\textbf{(1,3,3,2)}_{j,j}^{ mn,n,k}+\frac{1}{2}\textbf{(1,3,3,3)}_{j,j}^{ mn,n,k}\nonumber\\&\quad-\frac{1}{2}\textbf{(2,3,4,1)}_{j,j}^{ mn,n,k}+\frac{1}{2}\textbf{(2,3,4,2)}_{j,j}^{ mn,n,k},\nonumber
\end{align}
where $E_s^{m,n,j}:=\sigma_j^{k_1}(X^{n}_{\frac{[n s]}{n}})\sigma_j^{k_2}(X^{mn}_{\frac{[mn s]}{mn}})$ and $\Delta K(mn,n,s,u):=K(\frac{[mn s]}{mn}-u)-K(\frac{[n s]}{n}-u)$. 

On the one hand, the change of variables $\frac{[ns]}{n}-u=v$ and $r=v/\delta_{(mn,n,s)}$ in  $\textbf{(1,3,3,1)}_{j,j}^{ mn,n,k}$ yields
\begin{align}
	&n^{2H}\int_{0}^{t}\int_{0}^{\frac{[ns]}{n}}\Big(\Delta K(mn,n,s,u)\Big)^2\sigma_j^{k_1}(X^{n}_{\frac{[n u]}{n}})\sigma_j^{k_2}(X^{mn}_{\frac{[mn u]}{mn}})\d u\d s\nonumber\\&=n^{2H}\int_{0}^{t}\int_{0}^{\frac{[ns]}{n}}\Big(K(v+\delta_{(mn,n,s)})-K(v)\Big)^2\sigma_j^{k_1}(X^{n}_{\frac{[ns]+[-nv ]}{n}})\sigma_j^{k_2}(X^{mn}_{\frac{[m[ns]-mnv]}{mn}})\d v\d s\nonumber\\&=\frac{1}{G}\int_{0}^{t}(n\delta_{(mn,n,s)})^{2H}\int_{0}^{\frac{[ns]}{n\delta_{(mn,n,s)}}}|\mu(r,1)|^2\sigma_j^{k_1}(X^{n}_{\frac{[ns]+[-n\delta_{(mn,n,s)}r ]}{n}})\sigma_j^{k_2}(X^{mn}_{\frac{[m[ns]-mn\delta_{(mn,n,s)} r]}{mn}})\d r\d s,\nonumber
\end{align}	
where $\delta_{(mn,n,s)}=\frac{[mn s]}{mn}-\frac{[n s]}{n}$.	

Making the  change of variables $\frac{[ns]}{n}-u=v$ and $r=v/\delta_{(n,s)}$  in  $\textbf{(1,3,3,2)}_{j,j}^{ mn,n,k}$ yields 
\begin{align}
	&n^{2H}\int_{0}^{t}\int_{0}^{\frac{[ns]}{n}}\Big(\Delta K(n,s,u)\Big)^2\sigma_j^{k_1}(X^{n}_{\frac{[n u]}{n}})\sigma_j^{k_2}(X^{mn}_{\frac{[mn u]}{mn}})\d u\d s\nonumber\\&=n^{2H}\int_{0}^{t}\int_{0}^{\frac{[ns]}{n}}\Big(K(v+\delta_{(n,s)})-K(v)\Big)^2\sigma_j^{k_1}(X^{n}_{\frac{[ns]+[-nv ]}{n}})\sigma_j^{k_2}(X^{mn}_{\frac{[m[ns]-mnv]}{mn}})\d v\d s\nonumber\\&=\frac{1}{G}\int_{0}^{t}(n\delta_{(n,s)})^{2H}\int_{0}^{\frac{[ns]}{n\delta_{(n,s)}}}|\mu(r,1)|^2\sigma_j^{k_1}(X^{n}_{\frac{[ns]+[-n\delta_{(n,s)}r ]}{n}})\sigma_j^{k_2}(X^{mn}_{\frac{[m[ns]-mn\delta_{(n,s)} r]}{mn}})\d r\d s,\nonumber
\end{align}	
where $\delta_{(n,s)}=s-\frac{[n s]}{n}$.

For the term  to $\textbf{(1,3,3,3)}_{j,j}^{ mn,n,k}$ making substitution  $\frac{[mns]}{mn}-u=v$ and $r=v/\delta_{(mn,s)}$  yields
\begin{align}
	&n^{2H}\int_{0}^{t}\int_{0}^{\frac{[mns]}{mn}}\Big(\Delta K(mn,s,u)\Big)^2\sigma_j^{k_1}(X^{n}_{\frac{[n u]}{n}})\sigma_j^{k_2}(X^{mn}_{\frac{[mn u]}{mn}})\d u\d s\nonumber\\&=n^{2H}\int_{0}^{t}\int_{0}^{\frac{[                      mns]}{mn}}\Big(K(v+\delta_{(mn,s)})-K(v)\Big)^2\sigma_j^{k_1}(X^{n}_{\frac{[\frac{[mns]}{m}]+[-nv ]}{n}})\sigma_j^{k_2}(X^{mn}_{\frac{[mns]+[-mnv]}{mn}})\d v\d s\nonumber\\&=\frac{1}{G}\int_{0}^{t}(n\delta_{(mn,s)})^{2H}\int_{0}^{\frac{[mns]}{mn\delta_{(mn,s)}}}|\mu(r,1)|^2\sigma_j^{k_1}(X^{n}_{\frac{[\frac{[mns]}{m}]+[-n\delta_{(mn,s)}r ]}{n}})\sigma_j^{k_2}(X^{mn}_{\frac{[[mns]+[-mn\delta_{(mn,s)} r]}{mn}})\d r\d s,\nonumber
\end{align}	
where $\delta_{(mn,s)}=s-\frac{[mn s]}{mn}$.	

Next, we shall   Lemmas \ref{limit distribution-1}, \ref{limit distribution-0_m} and \ref{limit distribution-2} in the  later section 
  \ref{limit theorems} to the evaluaton  of $\textbf{(1,3,3,1)}_{j,j}^{ mn,n,k}$, $\textbf{(1,3,3,2)}_{j,j}^{ mn,n,k}$ and $\textbf{(1,3,3,3)}_{j,j}^{ mn,n,k}$, respectively. To this end, we show  (as $n\to \infty$)
\begin{align}\label{back need-1}
	 \frac{1}{G}\int_{0}^{x_i}|\mu(r,1)|^2&\sigma_j^{k_1}(X^{n}_{y_i}) \sigma_j^{k_2}(X^{mn}_{z_i})\d r\nonumber\\
	&\xrightarrow {  L^2(\d u\otimes \d \PP) } \frac{1}{G}\sigma_j^{k_1}(X_{s})\sigma_j^{k_2}(X_{s})\int_{0}^{\infty}|\mu(r,1)|^2\d r\,, 
	\end{align}
where $i=1,2,3$, 
\[
\begin{split} 
(x_1,y_1,z_1)=&(\frac{[ns]}{n\delta_{(mn,n,s)}},\frac{[ns]+[-n\delta_{(mn,n,s)}r ]}{n},\frac{[m[ns]-mn\delta_{(mn,n,s)} r]}{mn})\,, \\(x_2,y_2,z_2)=&(\frac{[ns]}{n\delta_{(n,s)}},\frac{[ns]+[-n\delta_{(n,s)}r ]}{n},\frac{[m[ns]-mn\delta_{(n,s)} r]}{mn})\,, \\
(x_3,y_3,z_3)=&(\frac{[mns]}{mn\delta_{(mn,s)}},\frac{[\frac{[mns]}{m}]+[-n\delta_{(mn,s)}r ]}{n},\frac{[[mns]+[-mn\delta_{(mn,s)} r]}{mn})\,.
\end{split}
\]
 In the sequel,  we shall only   prove the case of $(x_1,y_1,z_1)$. Note that  in this case  the right-hand side is a continuous function of $s$. It follows from Fubini's theorem and Minkowski's inequality that
\begin{align}
	\frac{1}{G^2}&\E\Bigg[\int_{0}^{t}\Big|\int_{0}^{\infty}|\mu(r,1)|^2\Big(\mathbb{I}_{(0,\frac{[ns]}{n\delta_{(mn,n,s)}})}(r)\sigma_j^{k_1}(X^{n}_{\frac{[ns]+[-n\delta_{(mn,n,s)}r ]}{n}})\sigma_j^{k_2}(X^{mn}_{\frac{[m[ns]-mn\delta_{(mn,n,s)} r]}{mn}})\nonumber\\&\quad-\sigma_j^{k_1}(X_{s})\sigma_j^{k_2}(X_{s})\Big)\d r\Big|^2\d s\Bigg]\nonumber\\&\leq C\int_{0}^{t} \Big(\int_{0}^{\infty}|\mu(r,1)|^2\Big\|\mathbb{I}_{(0,\frac{[ns]}{n\delta_{(mn,n,s)}})}(r)\sigma_j^{k_1}(X^{n}_{\frac{[ns]+[-n\delta_{(mn,n,s)}r ]}{n}})\sigma_j^{k_2}(X^{mn}_{\frac{[m[ns]-mn\delta_{(mn,n,s)} r]}{mn}})\nonumber\\&\quad-\sigma_j^{k_1}(X_{s})\sigma_j^{k_2}(X_{s})\Big\|_{L^2}\d r\Big)^2\d s.\nonumber
\end{align}
By Minkowski's inequality, it holds that
\begin{align}\label{sigma-1}
	&\Big\|\mathbb{I}_{(0,\frac{[ns]}{n\delta_{(mn,n,s)}})}(r)\sigma_j^{k_1}(X^{n}_{\frac{[ns]+[-n\delta_{(mn,n,s)}r ]}{n}})\sigma_j^{k_2}(X^{mn}_{\frac{[m[ns]-mn\delta_{(mn,n,s)} r]}{mn}})-\sigma_j^{k_1}(X_{s})\sigma_j^{k_2}(X_{s})\Big\|_{L^2}\nonumber\\&\leq \Big\|\sigma_j^{k_1}(X^{n}_{\frac{[ns]+[-n\delta_{(mn,n,s)}r ]}{n}})\sigma_j^{k_2}(X^{mn}_{\frac{[m[ns]-mn\delta_{(mn,n,s)} r]}{mn}})-\sigma_j^{k_1}(X_{s})\sigma_j^{k_2}(X_{s})\Big\|_{L^2}\mathbb{I}_{(0,\frac{[ns]}{n\delta_{(mn,n,s)}})}(r)\nonumber\nonumber\\&\quad+\|\sigma_j^{k_1}(X_{s})\sigma_j^{k_2}(X_{s})\|_{L^2}\mathbb{I}_{(\frac{[ns]}{n\delta_{(mn,n,s)}},\infty)}(r),
\end{align}
with the last term vanishing as $n\to\infty$. For the first term, by the assumption $H_{b,\sigma}$, the Cauchy-Schwarz inequality, Lemmas \ref{bound of X},    \ref{bound of X n},    \ref{continuous of X n} and  \ref{rate X-X n}, we have
\begin{align}\label{sigma-2}
	&\Big\|\sigma_j^{k_1}(X^{n}_{\frac{[ns]+[-n\delta_{(mn,n,s)}r ]}{n}})\sigma_j^{k_2}(X^{mn}_{\frac{[m[ns]-mn\delta_{(mn,n,s)} r]}{mn}})-\sigma_j^{k_1}(X_{s})\sigma_j^{k_2}(X_{s})\Big\|_{L^2}\nonumber\\&\leq \Big\|\big(\sigma_j^{k_1}(X^{n}_{\frac{[ns]+[-n\delta_{(mn,n,s)}r ]}{n}})-\sigma_j^{k_1}(X_{s})\big)\sigma^{k_2}_j(X^{mn}_{\frac{[m[ns]-mn\delta_{(mn,n,s)} r]}{mn}})\Big\|_{L^2}\nonumber\\&\quad+\Big\|\big(\sigma_j^{k_2}(X^{mn}_{\frac{[m[ns]-mn\delta_{(mn,n,s)}r ]}{mn}})-\sigma_j^{k_2}(X_{s})\big)\sigma^{k_1}_{j}(X_{s})\Big\|_{L^2}\nonumber\\&\leq \Big\|\sigma_j^{k_1}(X^{n}_{\frac{[ns]+[-n\delta_{(mn,n,s)}r ]}{n}})-\sigma_j^{k_1}(X_{s})\Big\|_{L^4}\|\sigma^{k_2}_j(X^{mn}_{\frac{[m[ns]-mn\delta_{(mn,n,s)} r]}{mn}})\|_{L^4}\nonumber\\&\quad+\Big\|\sigma_j^{k_2}(X^{mn}_{\frac{[m[ns]-mn\delta_{(mn,n,s)}r ]}{mn}})-\sigma_j^{k_2}(X_{s})\Big\|_{L^4}\|\sigma^{k_1}_{j}(X_{s})\|_{L^4}\nonumber\\&\leq C\Big\|X^{n}_{\frac{[ns]+[-n\delta_{(mn,n,s)}r ]}{n}}-X^{n}_s+X^{n}_s-X_s\Big\|_{L^4}+C\Big\|X^{mn}_{\frac{[m[ns]-mn\delta_{(mn,n,s)}r ]}{mn}}-X^{mn}_s+X^{mn}_s-X_s\Big\|_{L^4}\nonumber\\&\leq C\|X^{n}_{\frac{[ns]+[-n\delta_{(mn,n,s)}r ]}{n}}-X^{n}_s\|_{L^4}+ C\|X^{mn}_{\frac{[m[ns]-mn\delta_{(mn,n,s)}r ]}{mn}}-X^{mn}_s\|_{L^4}\nonumber\\&\quad+C\|X^{n}_s-X_s\|_{L^4}+C\|X^{mn}_s-X_s\|_{L^4}\nonumber\\&\leq C\Big(\Big|\frac{[ns]+[-n\delta_{(mn,n,s)}r ]}{n}-s]\Big|^H+\Big|\frac{[m[ns]-mn\delta_{(mn,n,s)}r ]}{mn}-s\Big|^H+n^{-H}+ {(mn)}^{-H}\Big)\nonumber\\&\leq C(1+r^H)n^{-H}+C(1+r^H){(mn)}^{-H}\to 0 \quad \text{as $n\to\infty$}.
\end{align}
Consequently, the right-hand side of \eqref{sigma-1} tends to zero by applying the dominated convergence theorem to  the integral   of $\d u$ and $\d r$ respectively.

Hence, Lemma \ref{limit distribution-1} gives that
\begin{align}
	\textbf{(1,3,3,1)}_{j,j}^{ mn,n,k}\xrightarrow {  L^2 } \frac{g^H_m}{G}\int_{0}^{\infty}\mu(r,1)^2\d r\int_{0}^{t}\sigma^{k_1}_j(X_{s})\sigma^{k_2}_j(X_{s})\d s,\nonumber
\end{align}
where $g^{H}_m=\sum_{j=0}^{m-1}\frac{j^{2H}}{m^{2H+1}}$.\\
Lemma \ref{limit distribution-0_m} gives that
\begin{align}
	\textbf{(1,3,3,2)}_{j,j}^{ mn,n,k}\xrightarrow {  L^2 } \frac{1}{(2H+1)G}\int_{0}^{\infty}\mu(r,1)^2\d r\int_{0}^{t}\sigma^{k_1}_j(X_{s})\sigma^{k_2}_j(X_{s})\d s.\nonumber
\end{align}
Lemma \ref{limit distribution-2} gives that
\begin{align}
	\textbf{(1,3,3,3)}_{j,j}^{ mn,n,k}\xrightarrow { L^2 } \frac{1}{(2H+1)m^{2H}G}\int_{0}^{\infty}\mu(r,1)^2\d r\int_{0}^{t}\sigma^{k_1}_j(X_{s})\sigma^{k_2}_j(X_{s})\d s.\nonumber
\end{align}
On the other hand, a direct computation yields that
\begin{align}
	\textbf{(2,3,4,1)}_{j,j}^{ mn,n,k}=\frac{n^{2H}}{2HG}\int_{0}^{t}E_s^{m,n,j}(n\delta_{(mn,n,s)})^{2H}\d s,\nonumber
\end{align}
\begin{align}
	\textbf{(2,3,4,2)}_{j,j}^{ mn,n,k}=\frac{n^{2H}}{2HG}\int_{0}^{t}E_s^{m,n,j}\big((n\delta_{(n,s)})^{2H}-(n\delta_{(mn,s)})^{2H}\big)\d s.\nonumber
\end{align}
By Lemmas \ref{rate X-X n}, \ref{limit distribution-0_m},\ref{limit distribution-1} and \ref{limit distribution-2}, we have that
\begin{align}
	\textbf{(2,3,4,1)}_{j,j}^{ mn,n,k}\xrightarrow {L^2 } \frac{g^H_m}{2HG}\int_{0}^{t}\sigma^{k_1}_j(X_{s})\sigma^{k_2}_j(X_{s})\d s,\nonumber
\end{align}	
and
\begin{align}
	\textbf{(2,3,4,2)}_{j,j}^{ mn,n,k}\xrightarrow {L^2 } \frac{1}{2HG}\frac{m^{2H}-1}{(2H+1)m^{2H}}\int_{0}^{t}\sigma^{k_1}_j(X_{s})\sigma^{k_2}_j(X_{s})\d s.\nonumber
\end{align}
 
Therefore 
\begin{align}
	\textbf{(1,3,3)}_{j,j}^{ mn,n,k}+\textbf{(2,3,4)}_{j,j}^{ mn,n,k}&\xrightarrow {  L^2 } \frac{1}{2G}\Big[\frac{m^{2H}+1}{(2H+1)m^{2H}}-g_m^H\Big]\int_{0}^{\infty}\mu(r,1)^2\d r\int_{0}^{t}\sigma^{k_1}_j(X_{s})\sigma^{k_2}_j(X_{s})\d s\nonumber\\&\quad+\frac{1}{2G}\Big[\frac{m^{2H}-1}{2H(2H+1)m^{2H}}-\frac{g_m^H}{2H}\Big]\int_{0}^{t}\sigma^{k_1}_j(X_{s})\sigma^{k_2}_j(X_{s})\d s.\nonumber
\end{align}

To  study  the term $\textbf{(1,3,4)}_{j,l}^{mn,n,k}$  we   let
\begin{align}\label{def of A_2}
	A_{2,s}^{(m,n,j,l)}=n^{2H}\Big(\int_{0}^{\frac{[n s]}{n}}\Delta K(n,s,u)\sigma_j^{k_1}(X^{n}_{\frac{[n u]}{n}})\d W^j_u\Big)\Big(\int_{\frac{[ns]}{n}}^{\frac{[mn s]}{mn}}\Delta K(mn,s,u)\sigma^{k_2}_j(X^{mn}_{\frac{[mn u]}{mn}})\d W^l_u\Big).
\end{align}
By Fubini's theorem we can rewrite $\E[|\textbf{(1,3,4)}_{j,l}^{ mn,n,k}|^2]$ as
\begin{align}
	\E[|\textbf{(1,3,4)}_{j,l}^{ mn,n,k}|^2]&=\E\Big[\int_{0}^{t}\int_{0}^{t}A_{2,s}^{(m,n,j,l)}A_{2,v}^{(m,n,j,l)}\d v \d s\Big]\nonumber\\&=2\E\Big[\int_{0}^{t}\int_{0}^{\frac{[ns]}{n}}A_{2,s}^{(m,n,j,l)}A_{2,v}^{(m,n,j,l)}\d v \d s\Big]\nonumber\\&\quad+2\E\Big[\int_{0}^{t}\int_{\frac{[ns]}{n}}^{s}A_{2,s}^{(m,n,j,l)}A_{2,v}^{(m,n,j,l)}\d v \d s\Big]\nonumber\\&:=2(\textbf{(1,3,4,1)}_{j,l}^{ mn,n,k}+\textbf{(1,3,4,2)}_{j,l}^{ mn,n,k}).\nonumber
\end{align}
For the above first term, we use  the tower property and Fubini's theorem 
to obtain 
\begin{align}
	\textbf{(1,3,4,1)}_{j,l}^{ mn,n,k}&=\E\Big[\int_{0}^{t}\int_{0}^{\frac{[ns]}{n}}A_{2,s}^{(m,n,j,l)}A_{2,v}^{(m,n,j,l)}\d v \d s\Big]\nonumber\\&=\int_{0}^{t}\int_{0}^{\frac{[ns]}{n}}\E\Big[A_{2,s}^{(m,n,j,l)}A_{2,v}^{(m,n,j,l)}\Big]\d v \d s\nonumber\\&=\int_{0}^{t}\int_{0}^{\frac{[ns]}{n}}\E\Big[\E\Big[\int_{\frac{[ns]}{n}}^{\frac{[mn s]}{mn}}\Delta K(mn,s,u)\sigma_l^{k_2}(X^{mn}_{\frac{[mn u]}{mn}})\d W^l_u|\mathcal{F}_{\frac{[ns]}{n}}\Big]\nonumber\\\quad&\cdot\int_{0}^{\frac{[n s]}{n}}\Delta K(mn,n,s,u)\sigma_j^{k_1}(X^{mn}_{\frac{[mn u]}{mn}})\d W^j_uA_{2,v}^{(m,n,j,l)}\Big]\d v \d s=0.\nonumber
\end{align}

Now we study  the term $\textbf{(1,3,4,2)}_{j,l}^{ mn,n,k}$.  Similar to \eqref{esti of stochastic integral_1}  and  by Lemma \ref{est-3_m}, we have that
\begin{align}\label{esti of stochastic integral-1}
	\Big\|\int_{\frac{[nv]}{n}}^{u}\Delta K(mn,s,r)\sigma_l^{k_2}(X^{mn}_{\frac{[mn r]}{mn}}) \d W_r^l\Big\|_{L^m}\leq Cn^{-H}.
\end{align}

Then by the Cauchy-Schwarz inequality, $(26)$ of \cite{FU} and \eqref{esti of stochastic integral-1} we have
\begin{align}
	&\E\Big[A_{2,s}^{(m,n,j,l)}A_{2,v}^{(m,n,j,l)}\Big]\nonumber\\&\leq \E\Big[|A_{2,s}^{(m,n,j,l)}|^2\Big]^{\frac{1}{2}}\E\Big[|A_{2,v}^{(m,n,j,l)}|^2\Big]^{\frac{1}{2}}\nonumber\\&\leq n^{4H}\Big\|\int_{0}^{\frac{[n s]}{n}}\Delta K(n,s,u)\sigma_j^{k_1}(X^{n}_{\frac{[n u]}{n}})\d W^j_u\Big\|_{L^4}\Big\|\int_{\frac{[ns]}{n}}^{\frac{[mn s]}{mn}}\Delta K(mn,s,u)\sigma_l^{k_2}(X^{mn}_{\frac{[mn u]}{mn}})\d W^l_u\Big\|_{L^4}\nonumber\\&\quad\cdot \Big\|\int_{0}^{\frac{[n v]}{n}}\Delta K(n,v,u)\sigma_j^{k_1}(X^{n}_{\frac{[n u]}{n}})\d W^j_u\Big\|_{L^4}\Big\|\int_{\frac{[nv]}{n}}^{\frac{[mn v]}{mn}}\Delta K(mn,v,u)\sigma_l^{k_2}(X^{mn}_{\frac{[mn u]}{mn}})\d W^l_u\Big\|_{L^4}\nonumber\\&\leq C.\nonumber
\end{align}
The bounded convergence theorem yields
\begin{align}
	0\leq \lim_{n\to\infty}\textbf{(1,3,4,2)}_{j,l}^{ mn,n,k}=\int_{0}^{t}\int_{0}^{t}\lim_{n\to\infty}\mathbb{I}_{(\frac{[ns]}{n},s)}\E\Big[A_{2,s}^{(m,n,j,l)}A_{2,v}^{(m,n,j,l)}\Big]\d v\d s=0\nonumber, 
\end{align}
To bound  the term $\textbf{(2,3,1)}_{j,l}^{ mn,n,k}$ we let
\begin{align}\label{def of A_3}
	A_{3,s}^{(m,n,j,l)}=n^{2H}\Big(\sigma^{k_1}_j(X^{n}_{\frac{[ns]}{n}})\int_{\frac{[n s]}{n}}^{\frac{[mns]}{mn}}K(s-u)\d W^j_u\Big)\Big(\int_{0}^{\frac{[n s]}{n}}\Delta K(mn,s,u)\sigma^{k_2}_j(X^{mn}_{\frac{[mn u]}{mn}})\d W^l_u\Big).
\end{align}
By Fubini's theorem we can rewrite $\E[|\textbf{(2,3,1)}_{j,l}^{ mn,n,k}|^2]$ as
\begin{align}
	\E[|\textbf{(2,3,1)}_{j,l}^{ mn,n,k}|^2]&=\E\Big[\int_{0}^{t}\int_{0}^{t}A_{3,s}^{(m,n,j,l)}A_{3,v}^{(m,n,j,l)}\d v \d s\Big]\nonumber\\&=2\E\Big[\int_{0}^{t}\int_{0}^{\frac{[ns]}{n}}A_{3,s}^{(m,n,j,l)}A_{3,v}^{(m,n,j,l)}\d v \d s\Big]\nonumber\\&\quad+2\E\Big[\int_{0}^{t}\int_{\frac{[ns]}{n}}^{s}A_{3,s}^{(m,n,j,l)}A_{3,v}^{(m,n,j,l)}\d v \d s\Big]\nonumber\\&:=2(\textbf{(2,3,1,1)}_{j,l}^{ mn,n,k}+\textbf{(2,3,1,2)}_{j,l}^{ mn,n,k}).\nonumber
\end{align}
We also study the above two terms separately.  Notice 
\begin{align}
	\E[A_{3,s}^{(m,n,j,l)}|\mathcal{F}_{\frac{[ns]}{n}}]=n^{H}\int_{0}^{\frac{[n s]}{n}}\Delta K(mn,s,u)\sigma^{k_2}_j(X^{mn}_{\frac{[mn u]}{mn}})\d W^l_u\E\Big[n^{H}\sigma^{k_1}_j(X^{n}_{\frac{[ns]}{n}})\int_{\frac{[n s]}{n}}^{\frac{[mns]}{mn}}K(s-u)\d W^j_u|\mathcal{F}_{\frac{[ns]}{n}}\Big]=0,\nonumber
\end{align}
and  notice that  for $v\in (0,\frac{[ns]}{n})$
$$\frac{[mnv]}{mn}< \frac{[mn \frac{[ns]}{n}]}{mn}=\frac{[ns]}{n}.$$
By tower property and Fubini's theorem, we have
\begin{align}
	\textbf{(2,3,1,1)}_{j,l}^{ mn,n,k}&=\int_{0}^{t}\int_{0}^{\frac{[mns]}{mn}}\E\Big[A_{3,s}^{(m,n,j,l)}A_{3,v}^{(m,n,j,l)}\Big]\d v \d s\nonumber\\&=\int_{0}^{t}\int_{0}^{\frac{[mns]}{mn}}\E\Big[\sigma^{k_1}_j(X^{n}_{\frac{[ns]}{n}})\int_{\frac{[n s]}{n}}^{\frac{[mns]}{mn}}K(s-u)\d W^j_u|\mathcal{F}_{\frac{[mns]}{mn}}\Big]\nonumber\\\quad&\cdot n^{H}\int_{0}^{\frac{[n s]}{n}}\Delta K(mn,s,u)\sigma^{k_2}_j(X^{mn}_{\frac{[mn u]}{mn}})\d W^l_uA_{3,v}^{(m,n,j,l)}\Big]\d v \d s=0.\nonumber
\end{align}
For the term $\textbf{(2,3,1,2)}_{j,l}^{ mn,n,k}$, by the Cauchy-Schwarz inequality, \eqref{esti of stochastic integral_1} and \eqref{esti of stochastic integral-2} we have
\begin{align}
	&\E\Big[A_{3,s}^{(m,n,j,l)}A_{3,v}^{(m,n,j,l)}\Big]\nonumber\\&\leq \E\Big[|A_{3,s}^{(m,n,j,l)}|^2\Big]^{\frac{1}{2}}\E\Big[|A_{3,v}^{(m,n,j,l)}|^2\Big]^{\frac{1}{2}}\nonumber\\&\leq n^{2H}\Big\|\int_{0}^{\frac{[n s]}{n}}\Delta K(mn,s,u)\sigma_j^{k_2}(X^{mn}_{\frac{[mn u]}{mn}})\d W^j_u\Big\|_{L^4}\Big\|\int_{\frac{[mns]}{mn}}^{s} K(s-u)\sigma_l^{k_1}(X^{n}_{\frac{[n u]}{n}})\d W^l_u\Big\|_{L^4}\nonumber\\&\quad\cdot \Big\|\int_{0}^{\frac{[n v]}{n}}\Delta K(mn,v,u)\sigma_j^{k_2}(X^{mn}_{\frac{[mn u]}{mn}})\d W^j_u\Big\|_{L^4}\Big\|\int_{\frac{[mnv]}{mn}}^{v} K(v-u)\sigma_l^{k_1}(X^{n}_{\frac{[n u]}{n}})\d W^l_u\Big\|_{L^4}\nonumber\\&\leq C.\nonumber
\end{align}
The bounded convergence theorem yields
\begin{align}
	0\leq \lim_{n\to\infty}\textbf{(2,3,1,2)}_{j,l}^{ mn,n,k}=\int_{0}^{t}\int_{0}^{t}\lim_{n\to\infty}\mathbb{I}_{(\frac{[ns]}{n},s)}\E\Big[A_{3,s}^{(m,n,j,l)}A_{3,v}^{(m,n,j,l)}\Big]\d v\d s=0\,. \nonumber  
\end{align}
From above we conclude that $\lim_{n\to\infty}\textbf{(2,3,1)}_{j,l}^{ mn,n,k}=0$ in the sense of $L^2$.

For the term $\textbf{(2,3,2)}_{j,l}^{ mn,n,k}$, denote
\begin{align}
	A_{4,s}^{(m,n,j,l)}=n^{2H}\int_{\frac{[ns]}{n}}^{\frac{[mns]}{mn}}\Big(\int_{\frac{[ns]}{n}}^{u}\Delta K(mn,s,r) \sigma_l^{k_2}(X^{mn}_{\frac{[mn r]}{mn}})\d W^l_r\Big)K(s-u) \sigma_j^{k_1}(X^{n}_{\frac{[n u]}{n}})\d W^j_u.\nonumber
\end{align}
By Fubini's theorem we have
\begin{align}
	\E[|\textbf{(2,3,2)}_{j,l}^{ mn,n,k}|^2]&=\E\Big[\int_{0}^{t}\int_{0}^{t}A_{4,s}^{(m,n,j,l)}A_{4,v}^{(m,n,j,l)}\d v\d s\Big]\nonumber\\&=2\int_{0}^{t}\int_{0}^{s}\E\big[A_{4,s}^{(m,n,j,l)}A_{4,v}^{(m,n,j,l)}\big]\d v\d s.\nonumber
\end{align}
We now need to compare $\frac{[ns]}{n}$ and $\frac{[mnv]}{mn}$ for $v<s$. First, we consider the case $v\leq \frac{[ns]}{n}$.  Applying Fubini's theorem again and tower property, we have that
\begin{align}
	\E\big[A_{4,s}^{(m,n,j,l)}A_{4,v}^{(m,n,j,l)}\big]=n^{4H}\E\Big[\E\Big[A_{4,s}^{(m,n,j,l)}|\mathcal{F}_{\frac{[ns]}{n}}\Big]A_{4,v}^{(m,n,j,l)}\Big]=0.\nonumber
\end{align}
Now we consider the case   $\frac{[nv]}{n}\leq \frac{[mnv]}{mn}\leq v\leq \frac{[ns]}{n}\leq \frac{[mns]}{mn}\leq s$. 
By \eqref{dcp of stochastic integral} and Fubini's theorem, we have
\begin{align}\label{double of A 4}
	&\E\big[A_{4,s}^{(m,n,j,l)}A_{4,v}^{(m,n,j,l)}\big]\nonumber\\&=n^{4H}\E\Big[\int_{\frac{[ns]}{n}}^{\frac{[mnv]}{mn}}\Big(\int_{\frac{[ns]}{n}}^{u}\Delta K(mn,s,r)\sigma_l^{k_2}(X^{mn}_{\frac{[mn r]}{mn}})\d W^l_r\Big)\Big(\int_{\frac{[nv]}{n}}^{u}\Delta K(mn,v,r)\sigma_l^{k_2}(X^{mn}_{\frac{[mn r]}{mn}})\d W^l_r\Big)\nonumber\\&\quad\cdot K(s-u) K(v-u) |\sigma_j^{k_1}(X^{n}_{\frac{[n u]}{n}})|^2\d u\Big]\nonumber\\&=n^{4H}\int_{\frac{[ns]}{n}}^{\frac{[mnv]}{mn}} K(s-u) K(v-u)\cdot E^{(2)}_u\d u,
\end{align}
where
\begin{align}
	E^{(2)}_u:=\E\Big[\Big(\int_{\frac{[ns]}{n}}^{u}\Delta K(mn,s,r)\sigma_l^{k_2}(X^{mn}_{\frac{[mn r]}{mn}})\d W^l_r\Big)\Big(\int_{\frac{[nv]}{n}}^{u}\Delta K(mn,v,r)\sigma_l^{k_2}(X^{mn}_{\frac{[mn r]}{mn}})\d W^l_r\Big)|\sigma_j^{k_1}(X^{n}_{\frac{[n u]}{n}})|^2\Big].\nonumber
\end{align}
Since for any $m\geq 2, \frac{[ns]}{n}<u\leq \frac{[mns]}{mn}$ and $s\leq T$, it holds
\begin{align}\label{esti of stochastic integral_3}
	&\Big\|\int_{\frac{[ns]}{n}}^{u}\Delta K(mn,s,r)\sigma_l^{k_2}(X^{mn}_{\frac{[mn r]}{mn}}) \d W_r^l\Big\|_{L^m}\nonumber\\&\leq C \Big\|\int_{\frac{[ns]}{n}}^{u}\Big|\Delta K(mn,s,r)\sigma_l^{k_2}(X^{mn}_{\frac{[mn r]}{mn}})\Big|^2\d r\Big\|^{\frac{1}{2}}_{L^{m/2}}\nonumber\\&\leq C\Big\|\int_{\frac{[ns]}{n}}^{\frac{[mns]}{mn}}\Big(K(s-r)-K(\frac{[mns]}{mn}-r)\Big)^2|\sigma_l^{k_2}(X^{mn}_{\frac{[mn r]}{mn}})|^2\d r\Big\|^{\frac{1}{2}}_{L^{m/2}}\nonumber\\&\leq C\Big(\int_{0}^{\frac{[mns]}{mn}}\Big(K(s-r)-K(\frac{[mns]}{mn}-r)\Big)^2\|\sigma_l^{k_2}(X^{mn}_{\frac{[mn r]}{mn}})|^2\|_{L^{m/2}}\d r\Big)^{1/2}\nonumber\\&\leq Cn^{-H},
\end{align}
where Lemma \ref{est-3_m}, the boundness of $\sigma$, BDG's inequality, Minkowski's  inequality are used.

Then by the Cauchy-Schwarz inequality, Lemma \ref{bound of X n}, the boundness of $\sigma$ and \eqref{esti of stochastic integral_3}, we have
\begin{align}
	\E[E^{(2)}_{u}]&\leq \Big\|\int_{\frac{[ns]}{n}}^{u}\Delta K(mn,s,r)\sigma_l^{k_2}(X^{mn}_{\frac{[mn r]}{mn}})\d W^l_r\Big\|_{L^4}\Big\| \int_{\frac{[nv]}{n}}^{u}\Delta K(mn,v,r)\sigma_l^{k_2}(X^{mn}_{\frac{[mn r]}{mn}})\d W^l_r\Big\|_{L^4}\|\sigma_j^{k_2}(X^{n}_{\frac{[n u]}{n}})\|^2_{L^4}\nonumber\\&\leq Cn^{-2H}.\nonumber
\end{align}
Lemma \ref{esti of C} and \eqref{double of A 4} give that
$$\Big|\E\big[A_{4,s}^{(m,n,j,l)}A_{4,v}^{(m,n,j,l)}\big]\Big|\leq Cn^{2H}\int_{\frac{[ns]}{n}}^{\frac{[mnv]}{mn}}K(s-u)K(v-u)\d u=0.$$
Applying the dominated convergence theorem  with respect to $\d v\otimes \d s$, we have $$\textbf{(2,3,2)}_{j,l}^{ mn,n,k}\xrightarrow {  L^2} 0.$$
For the term $\textbf{(2,3,3)}_{j,l}^{ mn,n,k}$, we set
\begin{align}
	A_{5,s}^{(m,n,j,l)}=n^{2H}\int_{\frac{[ns]}{n}}^{\frac{[mns]}{mn}}\Big(\int_{\frac{[ns]}{n}}^{u}K(s-r)\sigma_j^{k_1}(X^{n}_{\frac{[n r]}{n}})\d W^j_r\Big)\Delta K(mn,s,u) \sigma_l^{k_2}(X^{mn}_{\frac{[mn u]}{mn}})\d W^l_u.\nonumber
\end{align}
By Fubini's theorem we  see
\begin{align}
	\E[|\textbf{(2,3,3)}_{j,l}^{ mn,n,k}|^2]&=\E\Big[\int_{0}^{t}\int_{0}^{t}A_{5,s}^{(m,n,j,l)}A_{5,v}^{(m,n,j,l)}\d v\d s\Big]\nonumber\\&=2\int_{0}^{t}\int_{0}^{s}\E\big[A_{5,s}^{(m,n,j,l)}A_{5,v}^{(m,n,j,l)}\big]\d v\d s.\nonumber
\end{align}
We study the above term in two cases. First, assume   $v\leq \frac{[ns]}{n}$. Similar to $\textbf{(2,3,2)}_{j,l}^{ mn,n,k}$, we have
\begin{align}
	\E\big[A_{5,s}^{(m,n,j,l)}A_{5,v}^{(m,n,j,l)}\big]=n^{4H}\E\Big[\E\Big[A_{5,s}^{(m,n,j,l)}|\mathcal{F}_{\frac{[ns]}{n}}\Big]A_{5,v}^{(m,n,j,l)}\Big]=0.\nonumber
\end{align}
Now we consider the case  $\frac{[ns]}{n}<v\leq s$. By \eqref{dcp of stochastic integral} and Fubini's theorem, we have
\begin{align}\label{double of A 5}
	&\E\big[A_{5,s}^{(m,n,j,l)}A_{5,v}^{(m,n,j,l)}\big]\nonumber\\&=n^{4H}\E\Big[\int_{\frac{[ns]}{n}}^{\frac{[mnv]}{mn}}\Big(\int_{\frac{[ns]}{n}}^{u}K(s-r)\sigma_j^{k_1}(X^{n}_{\frac{[n r]}{n}})\d W^j_r\Big)\Big(\int_{\frac{[nv]}{n}}^{u}K(v-r)\sigma_j^{k_1}(X^{n}_{\frac{[n r]}{n}})\d W^j_r\Big)\nonumber\\&\quad\cdot\Delta K(mn,s,u)\Delta K(mn,v,u) |\sigma_j^{k_2}(X^{mn}_{\frac{[mn u]}{mn}})|^2\d u\Big]\nonumber\\&=n^{4H}\int_{\frac{[ns]}{n}}^{\frac{[mnv]}{mn}}\Delta K(mn,s,u)\Delta K(mn,v,u)\cdot E^{(2)}_u\d u,
\end{align}
where
\begin{align}
	E^{(3)}_u:=\E\Big[\Big(\int_{\frac{[ns]}{n}}^{u}K(s-r)\sigma_l^{k_1}(X^{n}_{\frac{[n r]}{n}})\d W^j_r\Big)\Big(\int_{\frac{[nv]}{n}}^{u}K(v-r)\sigma_l^{k_1}(X^{n}_{\frac{[n r]}{n}})\d W^j_r\Big)|\sigma_j^{k_2}(X^{mn}_{\frac{[mn u]}{mn}})|^2\Big].\nonumber
\end{align}
Similar to \eqref{esti of stochastic integral_1}, for $u<s\in[0,T]$ by Lemma \ref{est-3}, we have that
\begin{align}\label{esti of stochastic integral_4}
	\Big\|\int_{\frac{[ns]}{n}}^{u} K(s-r)\sigma_l^{k_1}(X^{n}_{\frac{[n r]}{n}}) \d W_r^l\Big\|_{L^m}\leq Cn^{-H}.
\end{align}
Then by the Cauchy-Schwarz inequality, Lemma \ref{bound of X n}, the boundness of $\sigma$ and \eqref{esti of stochastic integral_4}, we have
\begin{align}
	\E[E^{(3)}_{u}]&\leq \Big\|\int_{\frac{[ns]}{n}}^{u}K(s-r)\sigma_j^{k_1}(X^{n}_{\frac{[n r]}{n}})\d W^j_r\Big\|_{L^4}\Big\| \int_{\frac{[nv]}{n}}^{u}K(v-r)\sigma_j^{k_1}(X^{n}_{\frac{[n r]}{n}})\d W^j_r\Big\|_{L^4}\|\sigma_j^{k_2}(X^{mn}_{\frac{[mn u]}{mn}})\|^2_{L^4}\nonumber\\&\leq Cn^{-2H}.\nonumber
\end{align}
Lemma \ref{esti of B} and \eqref{double of A 5} give that
$$\Big|\E\big[A_{5,s}^{(m,n,j,l)}A_{5,v}^{(m,n,j,l)}\big]\Big|\leq Cn^{2H}\int_{\frac{[ns]}{n}}^{\frac{[mnv]}{mn}}\Delta K(mn,s,u)\Delta K(mn,v,u)\d u=0.$$
Applying the dominated convergence theorem  with respect to $\d v\otimes \d s$, we have $$\textbf{(2,3,3)}_{j,l}^{mn,n,k}\xrightarrow {  L^2} 0.$$	
Finally,  we consider  the term $\textbf{(2,3,5)}_{j,l}^{ mn,n,k}$.  Let
\begin{align}\label{def of A_6}
	A_{6,s}^{(m,n,j,l)}=n^{2H}\Big(\sigma^{k_1}_j(X^{n}_{\frac{[ns]}{n}})\int_{\frac{[mn s]}{mn}}^{s}K(s-u)\d W^j_u\Big)\Big(\int_{0}^{\frac{[mn s]}{mn}}\Delta K(mn,s,u)\sigma^{k_2}_j(X^{mn}_{\frac{[mn u]}{mn}})\d W^l_u\Big).
\end{align}
Then by Fubini's theorem we can rewrite $\E[|\textbf{(2,3,5)}_{j,l}^{ mn,n,k}|^2]$ as
\begin{align}
	\E[|\textbf{(2,3,5)}_{j,l}^{ mn,n,k}|^2]&=\E\Big[\int_{0}^{t}\int_{0}^{t}A_{6,s}^{(m,n,j,l)}A_{6,v}^{(m,n,j,l)}\d v \d s\Big]\nonumber\\&=2\E\Big[\int_{0}^{t}\int_{0}^{\frac{[mns]}{mn}}A_{6,s}^{(m,n,j,l)}A_{6,v}^{(m,n,j,l)}\d v \d s\Big]\nonumber\\&\quad+2\E\Big[\int_{0}^{t}\int_{\frac{[mns]}{mn}}^{s}A_{6,s}^{(m,n,j,l)}A_{6,v}^{(m,n,j,l)}\d v \d s\Big]\nonumber\\&:=2(\textbf{(2,3,5,1)}_{j,l}^{ mn,n,k}+\textbf{(2,3,5,2)}_{j,l}^{ mn,n,k}).\nonumber
\end{align}
For the term $\textbf{(2,3,5,1)}_{j,l}^{ mn,n,k}$, by tower property and Fubini's theorem, we have
\begin{align}
	\textbf{(2,3,5,1)}_{j,l}^{ mn,n,k}&=\int_{0}^{t}\int_{0}^{\frac{[mns]}{mn}}\E\Big[A_{6,s}^{(m,n,j,l)}A_{6,v}^{(m,n,j,l)}\Big]\d v \d s\nonumber\\&=\int_{0}^{t}\int_{0}^{\frac{[mns]}{mn}}\E\Big[n^{H}\int_{\frac{[mn s]}{mn}}^{s}K(s-u)\d W^j_u|\mathcal{F}_{\frac{[mns]}{mn}}\Big]\nonumber\\\quad&\cdot \sigma^{k_1}_j(X^{n}_{\frac{[ns]}{n}})n^{H}\int_{0}^{\frac{[mn s]}{mn}}\Delta K(mn,s,u)\sigma^{k_2}_j(X^{mn}_{\frac{[mn u]}{mn}})\d W^l_uA_{3,v}^{(m,n,j,l)}\Big]\d v \d s=0.\nonumber
\end{align}
For the term $\textbf{(2,3,5,2)}_{j,l}^{ mn,n,k}$, by the Cauchy-Schwarz inequality, \eqref{esti of stochastic integral_1} and \eqref{esti of stochastic integral-2} we have
\begin{align}
	&\E\Big[A_{6,s}^{(m,n,j,l)}A_{6,v}^{(m,n,j,l)}\Big]\nonumber\\&\leq \E\Big[|A_{6,s}^{(m,n,j,l)}|^2\Big]^{\frac{1}{2}}\E\Big[|A_{6,v}^{(m,n,j,l)}|^2\Big]^{\frac{1}{2}}\nonumber\\&\leq n^{2H}\Big\|\sigma^{k_1}_j(X^{n}_{\frac{[ns]}{n}})\int_{\frac{[mn s]}{mn}}^{s}K(s-u)\d W^j_u\Big\|_{L^4}\Big\|\int_{0}^{\frac{[mn s]}{mn}}\Delta K(mn,s,u)\sigma^{k_2}_j(X^{mn}_{\frac{[mn u]}{mn}})\d W^l_u\Big\|_{L^4}\nonumber\\&\quad\cdot \Big\|\sigma^{k_1}_j(X^{n}_{\frac{[nv]}{n}})\int_{\frac{[mn v]}{mn}}^{v}K(v-u)\d W^j_u\Big\|_{L^4}\Big\|\int_{0}^{\frac{[mn v]}{mn}}\Delta K(mn,v,u)\sigma^{k_2}_j(X^{mn}_{\frac{[mn u]}{mn}})\d W^l_u\Big\|_{L^4}\nonumber\\&\leq C.\nonumber
\end{align}
The bounded convergence theorem yields
\begin{align}
	0\leq \lim_{n\to\infty}\textbf{(2,3,5,2)}_{j,l}^{ mn,n,k}=\int_{0}^{t}\int_{0}^{t}\lim_{n\to\infty}\mathbb{I}_{(\frac{[ns]}{n},s)}\E\Big[A_{6,s}^{(m,n,j,l)}A_{6,v}^{(m,n,j,l)}\Big]\d v\d s=0\,, \nonumber  
\end{align}
  concluding  that $\lim_{n\to\infty}\textbf{(2,3,5)}_{j,l}^{ mn,n,k}=0$ in the sense of $L^2$.

From all the above bounds  we conclude that 
\begin{align}
	\textbf{(1,3)}_{j,l}^{ mn,n,k}+\textbf{(2,3)}_{j,l}^{ mn,n,k}&\xrightarrow {  L^2 } \frac{1}{2G}\Big[\frac{m^{2H}+1}{(2H+1)m^{2H}}-g_m^H\Big]\int_{0}^{\infty}\mu(r,1)^2\d r\int_{0}^{t}\sigma^{k_1}_j(X_{s})\sigma^{k_2}_j(X_{s})\d s\nonumber\\&\quad+\frac{1}{2G}\Big[\frac{m^{2H}-1}{2H(2H+1)m^{2H}}-\frac{g_m^H}{2H}\Big]\int_{0}^{t}\sigma^{k_1}_j(X_{s})\sigma^{k_2}_j(X_{s})\d s.\nonumber
\end{align}
Similarly, $\textbf{(3,1)}_{j,l}^{ mn,n,k}+\textbf{(3,2)}_{j,l}^{ mn,n,k}$ 
satisfies.  
\begin{align}
	\textbf{(3,1)}_{j,l}^{ mn,n,k}+\textbf{(3,2)}_{j,l}^{ mn,n,k}&\xrightarrow {  L^2 } \frac{1}{2G}\Big[\frac{m^{2H}+1}{(2H+1)m^{2H}}-g_m^H\Big]\int_{0}^{\infty}\mu(r,1)^2\d r\int_{0}^{t}\sigma^{k_1}_j(X_{s})\sigma^{k_2}_j(X_{s})\d s\nonumber\\&\quad+\frac{1}{2G}\Big[\frac{m^{2H}-1}{2H(2H+1)m^{2H}}-\frac{g_m^H}{2H}\Big]\int_{0}^{t}\sigma^{k_1}_j(X_{s})\sigma^{k_2}_j(X_{s})\d s.\nonumber
\end{align}
\subsubsection{Error analysis of $\textbf{(1,4)}_{j,l}^{mn,n,k}$ and $\textbf{(4,1)}_{j,l}^{mn,n,k}$}	For the term $\textbf{(1,4)}_{j,l}^{mn,n,k}$, denote
\begin{align}\label{def of A_7}
	A_{7,s}^{(m,n,j,l)}=n^{2H}\Big(\int_{0}^{\frac{[n s]}{n}}\Delta K(n,s,u)\sigma_j^{k_1}(X^{n}_{\frac{[n u]}{n}})\d W^j_u\Big)\Big(\sigma^{k_2}_j(X^{mn}_{\frac{[mn s]}{mn}})\int_{\frac{[mns]}{mn}}^{s}K(s-u)\d W^l_u\Big).
\end{align}
Then by Fubini's theorem we can rewrite $\E[|\textbf{(1,4)}_{j,l}^{ mn,n,k}|^2]$ as
\begin{align}
	\E[|\textbf{(1,4)}_{j,l}^{ mn,n,k}|^2]&=\E\Big[\int_{0}^{t}\int_{0}^{t}A_{7,s}^{(m,n,j,l)}A_{7,v}^{(m,n,j,l)}\d v \d s\Big]\nonumber\\&=2\E\Big[\int_{0}^{t}\int_{0}^{\frac{[mns]}{mn}}A_{7,s}^{(m,n,j,l)}A_{7,v}^{(m,n,j,l)}\d v \d s\Big]\nonumber\\&\quad+2\E\Big[\int_{0}^{t}\int_{\frac{[mns]}{mn}}^{s}A_{7,s}^{(m,n,j,l)}A_{7,v}^{(m,n,j,l)}\d v \d s\Big]\nonumber\\&:=2(\textbf{(1,4,1)}_{j,l}^{ mn,n,k}+\textbf{(1,4,2)}_{j,l}^{ mn,n,k}).\nonumber
\end{align}
For the term $\textbf{(1,4,1)}_{j,l}^{ mn,n,k}$, by tower property and Fubini's theorem, we have
\begin{align}
	\textbf{(1,4,1)}_{j,l}^{ mn,n,k}&=\E\Big[\int_{0}^{t}\int_{0}^{\frac{[mns]}{mn}}A_{7,s}^{(m,n,j,l)}A_{7,v}^{(m,n,j,l)}\d v \d s\Big]\nonumber\\&=\int_{0}^{t}\int_{0}^{\frac{[mns]}{mn}}\E\Big[A_{7,s}^{(m,n,j,l)}A_{7,v}^{(m,n,j,l)}\Big]\d v \d s\nonumber\\&=\int_{0}^{t}\int_{0}^{\frac{[mns]}{mn}}\E\Big[\E\Big[n^{H}\int_{\frac{[mns]}{mn}}^{s}K(s-u)\sigma_l^{k_2}(X^{mn}_{\frac{[mn u]}{mn}})\d W^l_u|\mathcal{F}_{\frac{[mns]}{mn}}\Big]\nonumber\\\quad&\cdot \sigma_j^{k_2}(X^{mn}_{\frac{[mn s]}{mn}}) n^{H}\int_{0}^{\frac{[n s]}{n}}\Delta K(n,s,u)\sigma_j^{k_1}(X^{n}_{\frac{[n u]}{n}})\d W^j_uA_{2,v}^{(m,n,j,l)}\Big]\d v \d s=0.\nonumber
\end{align}
To study the term $\textbf{(1,4,2)}_{j,l}^{ mn,n,k}$, similar to \eqref{esti of stochastic integral-1},  we   need some more effort.   For any $m\geq 2, v\leq s, s\leq T$, it holds
\begin{align}\label{esti of stochastic integral-2}
	&\Big\|\int_{\frac{[mnv]}{mn}}^{s}K(s-r)\sigma_l^{k_2}(X^{mn}_{\frac{[mn r]}{mn}}) \d W_r^l\Big\|_{L^m}\nonumber\\&\leq C \Big\|\int_{\frac{[mnv]}{mn}}^{s}\Big| K(s-r)\sigma_l^{k_2}(X^{mn}_{\frac{[mn r]}{mn}})\Big|^2\d r\Big\|^{\frac{1}{2}}_{L^{m/2}}\nonumber\\&\leq C\Big\|\int_{\frac{[mnv]}{mn}}^{s}\Big(K(s-r)\Big)^2|\sigma_l^{k_2}(X^{mn}_{\frac{[mn r]}{mn}})|^2\d r\Big\|^{\frac{1}{2}}_{L^{m/2}}\nonumber\\&\leq C\Big(\int_{\frac{[mnv]}{mn}}^{s}\Big(K(s-r)\Big)^2\||\sigma_l^{k_2}(X^{mn}_{\frac{[mn r]}{mn}})|^2\|_{L^{m/2}}\d r\Big)^{1/2}\nonumber\\&\leq Cn^{-H},
\end{align}
where Lemma \ref{est-3_m} , the boundness of $\sigma$, BDG's inequality, Minkowski's  inequality are used.

Then by the Cauchy-Schwarz inequality, $(26)$ of \cite{FU} and \eqref{esti of stochastic integral-2} we have
\begin{align}
	&\E\Big[A_{7,s}^{(m,n,j,l)}A_{7,v}^{(m,n,j,l)}\Big]\nonumber\\&\leq \E\Big[|A_{7,s}^{(m,n,j,l)}|^2\Big]^{\frac{1}{2}}\E\Big[|A_{7,v}^{(m,n,j,l)}|^2\Big]^{\frac{1}{2}}\nonumber\\&\leq n^{2H}\Big\|\int_{0}^{\frac{[n s]}{n}}\Delta K(n,s,u)\sigma_j^{k_1}(X^{n}_{\frac{[n u]}{n}})\d W^j_u\Big\|_{L^4}\Big\|\int_{\frac{[mns]}{mn}}^{s} K(s-u)\sigma_l^{k_2}(X^{mn}_{\frac{[mn u]}{mn}})\d W^l_u\Big\|_{L^4}\nonumber\\&\quad\cdot \Big\|\int_{0}^{\frac{[n v]}{n}}\Delta K(n,v,u)\sigma_j^{k_1}(X^{n}_{\frac{[n u]}{n}})\d W^j_u\Big\|_{L^4}\Big\|\int_{\frac{[mnv]}{mn}}^{v} K(v-u)\sigma_l^{k_2}(X^{mn}_{\frac{[mn u]}{mn}})\d W^l_u\Big\|_{L^4}\nonumber\\&\leq C.\nonumber
\end{align}
So the bounded convergence theorem yields
\begin{align}
	0\leq \lim_{n\to\infty}\textbf{(1,4,2)}_{j,l}^{ mn,n,k}=\int_{0}^{t}\int_{0}^{t}\lim_{n\to\infty}\mathbb{I}_{(\frac{[ns]}{n},s)}\E\Big[A_{7,s}^{(m,n,j,l)}A_{7,v}^{(m,n,j,l)}\Big]\d v\d s=0\nonumber, 
\end{align}
from which we conclude that $\lim_{n\to\infty}\textbf{(1,4)}_{j,l}^{ mn,n,k}=0$ in the sense of $L^2$, and $\textbf{(4,1)}_{j,l}^{ mn,n,k}\xrightarrow {  L^2} 0$ can be obtained similarly.
\subsubsection{Error analysis of $\textbf{(2,4)}_{j,l}^{mn,n,k}$ and $\textbf{(4,2)}_{j,l}^{mn,n,k}$}Recall that
\begin{align}
	\textbf{(2,4)}_{j,l}^{mn,n,k}&=\int_{0}^{t}n^{2H}\Big(\sigma^{k_1}_j(X^{n}_{\frac{[ns]}{n}})\int_{\frac{[n s]}{n}}^{s}K(s-u)\d W^j_u\Big)\Big(\sigma^{k_2}_j(X^{mn}_{\frac{[mn s]}{mn}})\int_{\frac{[mn s]}{mn}}^{s}K(s-u)\d W^l_u\Big)\d s\nonumber\\&=\int_{0}^{t}n^{2H}\Big(\sigma^{k_1}_j(X^{n}_{\frac{[ns]}{n}})\int_{\frac{[n s]}{n}}^{\frac{[mns]}{mn}}K(s-u)\d W^j_u\Big)\Big(\sigma^{k_2}_j(X^{mn}_{\frac{[mn s]}{mn}})\int_{\frac{[mn s]}{mn}}^{s}K(s-u)\d W^l_u\Big)\d s\nonumber\\&\quad+\int_{0}^{t}n^{2H}\Big(\sigma^{k_1}_j(X^{n}_{\frac{[ns]}{n}})\int_{\frac{[mn s]}{mn}}^{s}K(s-u)\d W^j_u\Big)\Big(\sigma^{k_2}_j(X^{mn}_{\frac{[mn s]}{mn}})\int_{\frac{[mn s]}{mn}}^{s}K(s-u)\d W^l_u\Big)\d s\nonumber\\&:=\textbf{(2,4,1)}_{j,l}^{mn,n,k}+\textbf{(2,4,2)}_{j,l}^{mn,n,k}.\nonumber
\end{align}
For the term $\textbf{(2,4,1)}_{j,l}^{mn,n,k}$, we can rewrite $\E[|\textbf{(2,4,1)}_{j,l}^{mn,n,k}|^2]$ as
\begin{align}
	\E[|\textbf{(2,4,1)}_{j,l}^{ mn,n,k}|^2]&=\E\Big[\int_{0}^{t}\int_{0}^{t}A_{8,s}^{(m,n,j,l)}A_{8,v}^{(m,n,j,l)}\d v \d s\Big]\nonumber\\&=2\E\Big[\int_{0}^{t}\int_{0}^{\frac{[mns]}{mn}}A_{8,s}^{(m,n,j,l)}A_{8,v}^{(m,n,j,l)}\d v \d s\Big]\nonumber\\&\quad+2\E\Big[\int_{0}^{t}\int_{\frac{[mns]}{mn}}^{s}A_{8,s}^{(m,n,j,l)}A_{8,v}^{(m,n,j,l)}\d v \d s\Big]\nonumber\\&:=2(\textbf{(2,4,1,1)}_{j,l}^{ mn,n,k}+\textbf{(2,4,1,2)}_{j,l}^{ mn,n,k})\,, \nonumber
\end{align}
where
\begin{align}\label{def of A 8}
	A_{8,s}^{(m,n,j,l)}=n^{2H}\Big(\sigma^{k_1}_j(X^{n}_{\frac{[ns]}{n}})\int_{\frac{[n s]}{n}}^{\frac{[mns]}{mn}}K(s-u)\d W^j_u\Big)\Big(\sigma^{k_2}_j(X^{mn}_{\frac{[mn s]}{mn}})\int_{\frac{[mn s]}{mn}}^{s}K(s-u)\d W^l_u\Big).
\end{align}
For the term $\textbf{(2,4,1,1)}_{j,l}^{ mn,n,k}$, by tower property and Fubini's theorem, we have
\begin{align}
	\textbf{(2,4,1,1)}_{j,l}^{ mn,n,k}&=\int_{0}^{t}\int_{0}^{\frac{[mns]}{mn}}\E\Big[A_{8,s}^{(m,n,j,l)}A_{8,v}^{(m,n,j,l)}\Big]\d v \d s\nonumber\\&=\int_{0}^{t}\int_{0}^{\frac{[mns]}{mn}}\E\Big[n^{H}\int_{\frac{[mn s]}{mn}}^{s}K(s-u)\d W^j_u|\mathcal{F}_{\frac{[mns]}{mn}}\Big]\nonumber\\\quad&\cdot \sigma^{k_1}_j(X^{n}_{\frac{[ns]}{n}})n^{H}\sigma^{k_2}_j(X^{mn}_{\frac{[mn s]}{mn}})\int_{\frac{[mn s]}{mn}}^{s}K(s-u)\d W^l_uA_{8,v}^{(m,n,j,l)}\Big]\d v \d s=0.\nonumber
\end{align}
For the term $\textbf{(2,4,1,2)}_{j,l}^{ mn,n,k}$, by the Cauchy-Schwarz inequality and \eqref{esti of stochastic integral_4}, we have
\begin{align}
	&\E\Big[A_{8,s}^{(m,n,j,l)}A_{8,v}^{(m,n,j,l)}\Big]\nonumber\\&\leq \E\Big[|A_{8,s}^{(m,n,j,l)}|^2\Big]^{\frac{1}{2}}\E\Big[|A_{8,v}^{(m,n,j,l)}|^2\Big]^{\frac{1}{2}}\nonumber\\&\leq n^{2H}\Big\|\sigma^{k_1}_j(X^{n}_{\frac{[ns]}{n}})\int_{\frac{[n s]}{n}}^{\frac{[mns]}{mn}}K(s-u)\d W^j_u\Big\|_{L^4}\Big\|\sigma^{k_2}_j(X^{mn}_{\frac{[mn s]}{mn}})\int_{\frac{[mn s]}{mn}}^{s}K(s-u)\d W^l_u\Big\|_{L^4}\nonumber\\&\quad\cdot \Big\|\sigma^{k_1}_j(X^{n}_{\frac{[nv]}{n}})\int_{\frac{[n v]}{n}}^{\frac{[mnv]}{mn}}K(v-u)\d W^j_u\Big\|_{L^4}\Big\|\sigma^{k_2}_j(X^{mn}_{\frac{[mn v]}{mn}})\int_{\frac{[mn v]}{mn}}^{v}K(v-u)\d W^l_u\Big\|_{L^4}\nonumber\\&\leq C.\nonumber
\end{align}
So the bounded convergence theorem yields
\begin{align}
	0\leq \lim_{n\to\infty}\textbf{(2,4,1,2)}_{j,l}^{ mn,n,k}=\int_{0}^{t}\int_{0}^{t}\lim_{n\to\infty}\mathbb{I}_{(\frac{[mns]}{mn},s)}\E\Big[A_{6,s}^{(m,n,j,l)}A_{6,v}^{(m,n,j,l)}\Big]\d v\d s=0\nonumber, 
\end{align}
from above we conclude that $\lim_{n\to\infty}\textbf{(2,4,1)}_{j,l}^{ mn,n,k}=0$ in the sense of $L^2$.

For the term $\textbf{(2,4,2)}_{j,l}^{ mn,n,k}$, by \eqref{dcp of stochastic integral}, we have that
\begin{align}
	\textbf{(2,4,2)}_{j,l}^{ mn,n,k}&=\int_{0}^{t}n^{2H}\Big(\sigma^{k_1}_j(X^{n}_{\frac{[ns]}{n}})\int_{\frac{[mn s]}{mn}}^{s}K(s-u)\d W^j_u\Big)\Big(\sigma^{k_2}_j(X^{mn}_{\frac{[mn s]}{mn}})\int_{\frac{[mn s]}{mn}}^{s}K(s-u)\d W^l_u\Big)\d s\nonumber\\&=\int_{0}^{t}n^{2H}E_s^{m,n,j}\int_{\frac{[mn s]}{mn}}^{s}K(s-u)\int_{\frac{[mn s]}{mn}}^{u}K(u-r)\d W^j_r \d W^l_u\d s\nonumber\\&\quad+\int_{0}^{t}n^{2H}E_s^{m,n,j}\int_{\frac{[mn s]}{mn}}^{s}K(s-u)\int_{\frac{[mn s]}{mn}}^{u}K(u-r)\d W^l_r \d W^j_u\d s\nonumber\\&\quad+\int_{0}^{t}n^{2H}E_s^{m,n,j}\int_{\frac{[mn s]}{mn}}^{s}K(s-u)^2\d \langle W^{j}, W^{l} \rangle_u\d s\nonumber\\&:=\textbf{(2,4,2,1)}_{j,l}^{ mn,n,k}+\textbf{(2,4,2,2)}_{j,l}^{ mn,n,k}+\textbf{(2,4,2,3)}_{j,l}^{ mn,n,k}.\nonumber
\end{align}
For the term $\textbf{(2,4,2,1)}_{j,l}^{ mn,n,k}$, set
\begin{align}\label{def of A 9}
	A_{9,s}^{(m,n,j,l)}=n^{2H}E_s^{m,n,j}\int_{\frac{[mn s]}{mn}}^{s}K(s-u)\int_{\frac{[mn s]}{mn}}^{u}K(u-r)\d W^j_r \d W^l_u,
\end{align}
and by Fubini's theorem we have 
\begin{align}
	\E[|\textbf{(2,4,2,1)}_{j,l}^{ mn,n,k}|^2]&=\E\Big[\int_{0}^{t}\int_{0}^{t}A_{9,s}^{(m,n,j,l)}A_{9,v}^{(m,n,j,l)}\d v\d s\Big]\nonumber\\&=2\int_{0}^{t}\int_{0}^{s}\E\Big[A_{9,s}^{(m,n,j,l)}A_{9,v}^{(m,n,j,l)}\Big]\d v\d s\nonumber.
\end{align}
We deal with the above expectation in two cases.  If $v\leq \frac{[mns]}{mn}$, by tower property we have
\begin{align}
	\E\Big[A_{9,s}^{(m,n,j,l)}A_{9,v}^{(m,n,j,l)}\Big]&=\E\Big[\E\Big[n^{H}\int_{\frac{[mn s]}{mn}}^{s}K(s-u)\int_{\frac{[mn s]}{mn}}^{u}K(u-r)\d W^j_r \d W^l_u|\mathcal{F}_{\frac{[mns]}{mn}}\Big]\nonumber\\&\quad\cdot n^{H}E_s^{m,n,j}E_v^{m,n,j}A_{9,v}^{(m,n,j,l)}\Big]=0\nonumber.
\end{align}
If $\frac{[mns]}{mn}<v$, by Fubini's theorem and \eqref{dcp of stochastic integral}, we have
\begin{align}\label{double of A 9}
	&\E\Big[A_{9,s}^{(m,n,j,l)}A_{9,v}^{(m,n,j,l)}\Big]\nonumber\\&=\E\Big[n^{4H}E_s^{m,n,j}E_v^{m,n,j}\int_{\frac{[mns]}{mn}}^{v}K(s-u)K(v-u)\int_{\frac{[mn s]}{mn}}^{u}K(u-r)\d W^j_r\int_{\frac{[mn v]}{mn}}^{u}K(u-r)\d W^j_r\d u\Big]\nonumber\\&=n^{4H}\E\Big[\int_{\frac{[mns]}{mn}}^{v}K(s-u)K(v-u)E^{(3)}_u\d u\Big],
\end{align}
where	
\begin{align}
	E^{(3)}_u:=\E\Big[E_s^{m,n,j}E_v^{m,n,j}\int_{\frac{[mn s]}{mn}}^{u}K(u-r)\d W^j_r\int_{\frac{[mn v]}{mn}}^{u}K(u-r)\d W^j_r\Big].\nonumber
\end{align}
By the Cauchy-Schwarz inequality, the boundness of $\sigma$, Lemma \ref{bound of X n} and \eqref{esti of stochastic integral-2}, we have that
\begin{align}
	\E[E^{(3)}_{u}]&\leq \Big\|\int_{\frac{[mns]}{mn}}^{u} K(u-r)\sigma_l^{k_2}(X^{mn}_{\frac{[mn r]}{mn}})\d W^l_r\Big\|_{L^4}\Big\| \int_{\frac{[mnv]}{mn}}^{u} K(u-r)\sigma_l^{k_2}(X^{mn}_{\frac{[mn r]}{mn}})\d W^l_r\Big\|_{L^4}\|\sigma_j^{k_1}(X^{n}_{\frac{[n s]}{n}})\|_{L^8}\|\sigma_j^{k_1}(X^{n}_{\frac{[n v]}{n}})\|_{L^8}\nonumber\\&\leq Cn^{-2H}.\nonumber
\end{align}
This implies by \eqref{double of A 9} that
\begin{align}
	\Big|\E\Big[A_{9,s}^{(m,n,j,l)}A_{9,v}^{(m,n,j,l)}\Big]\Big|\leq Cn^{2H}\int_{\frac{[mns]}{mn}}^{v}K(s-u)K(v-u)\d u.\nonumber
\end{align}
Therefore, Lemma \ref{esti of C} leads to $ \E\Big[A_{9,s}^{(m,n,j,l)}A_{9,v}^{(m,n,j,l)}\Big]\to 0$. Applying the bounded convergence theorem with respect to $\d v\otimes\d s$, we have $\lim_{n\to\infty}\textbf{(2,4,2,1)}_{j,l}^{ mn,n,k}=0$ in the sense of $L^2$, and $\textbf{(2,4,2,2)}_{j,l}^{ mn,n,k}\xrightarrow { L^2} 0$ can be obtained similarly.

For the last term $\textbf{(2,4,2,3)}_{j,l}^{ mn,n,k}$, this term vanishes if $j\neq l$. Otherwise, by Lemma \eqref{est-3_m}, we have
\begin{align}
	\textbf{(2,4,2,3)}_{j,j}^{ mn,n,k}&=\int_{0}^{t}n^{2H}E_s^{m,n,j}\int_{\frac{[mn s]}{mn}}^{s}K(s-u)^2\d u\d s\nonumber\\&=\frac{1}{2HG}\int_{0}^{t}(n\delta_{(n,s)})^{2H}E_s^{m,n,j}\d s\,. 
\end{align}
By Lemma \ref{rate X-X n} and Lemma \ref{limit distribution-0_m}, we have
\begin{align}
	\textbf{(2,4,2,3)}_{j,j}^{ mn,n,k}\xrightarrow {L^2 } \frac{1}{2HG(2H+1)}\int_{0}^{t}\sigma^{k_1}_j(X_{s})\sigma^{k_2}_j(X_{s})\d s.\nonumber
\end{align}
Therefore, if $j\neq l$, the term $\textbf{(2,4)}_{j,l}^{ mn,n,k}$ vanishes, and if $j=l$, we have 
\begin{align}
	\textbf{(2,4)}_{j,j}^{ mn,n,k}\xrightarrow {L^2 } \frac{1}{2HG(2H+1)}\int_{0}^{t}\sigma^{k_1}_j(X_{s})\sigma^{k_2}_j(X_{s})\d s.\nonumber
\end{align}
Similarly, we also have 
\begin{align}
	\textbf{(4,2)}_{j,j}^{ mn,n,k}\xrightarrow {L^2 } \frac{1}{2HG(2H+1)}\int_{0}^{t}\sigma^{k_1}_j(X_{s})\sigma^{k_2}_j(X_{s})\d s.\nonumber
\end{align}
\subsubsection{Error analysis of $\textbf{(3,3)}_{j,l}^{mn,n,k}$}
By \eqref{dcp of stochastic integral}, we have that
\begin{align}
	\textbf{(3,3)}_{j,l}^{mn,n,k}&=n^{2H}\int_{0}^{t}\Big(\int_{0}^{\frac{[mn s]}{mn}}\Delta K(mn,s,u)\sigma^{k_1}_j(X^{mn}_{\frac{[mn u]}{mn}})\d W^j_u\Big)\Big(\int_{0}^{\frac{[mn s]}{mn}}\Delta K(mn,s,u)\sigma^{k_2}_j(X^{mn}_{\frac{[mn u]}{mn}})\d W^l_u\Big)\d s\nonumber\\&=n^{2H}\int_{0}^{t}\int_{0}^{\frac{[mn s]}{mn}}\Delta K(mn,s,u)\sigma^{k_1}_j(X^{mn}_{\frac{[mn u]}{mn}})\Big(\int_{0}^{\frac{[mn u]}{mn}}\Delta K(mn,u,r)\sigma^{k_2}_j(X^{mn}_{\frac{[mn r]}{mn}})\d W^l_r\Big)\d W^j_u\d s\nonumber\\&\quad+n^{2H}\int_{0}^{t}\int_{0}^{\frac{[mn s]}{mn}}\Delta K(mn,s,u)\sigma^{k_2}_j(X^{mn}_{\frac{[mn u]}{mn}})\Big(\int_{0}^{\frac{[mn u]}{mn}}\Delta K(mn,u,r)\sigma^{k_1}_j(X^{mn}_{\frac{[mn r]}{mn}})\d W^j_r\Big)\d W^l_u\d s\nonumber\\&\quad+n^{2H}\int_{0}^{t}\int_{0}^{\frac{[mn s]}{mn}}\Big(\Delta K(mn,s,u)\Big)^2\sigma^{k_1}_j(X^{mn}_{\frac{[mn u]}{mn}})\sigma^{k_2}_j(X^{mn}_{\frac{[mn u]}{mn}})\d \langle W^{j}, W^{l} \rangle_u\d s\nonumber\\&:=\textbf{(3,3,1)}_{j,l}^{mn,n,k}+\textbf{(3,3,2)}_{j,l}^{mn,n,k}+\textbf{(3,3,3)}_{j,l}^{mn,n,k}.\nonumber
\end{align}
For the term $\textbf{(3,3,1)}_{j,l}^{mn,n,k}$, denote
\begin{align}\label{def of A 10}
	A_{10,s}^{(m,n,j,l)}=n^{2H}\int_{0}^{\frac{[mn s]}{mn}}\Delta K(mn,s,u)\sigma^{k_1}_j(X^{mn}_{\frac{[mn u]}{mn}})\Big(\int_{0}^{\frac{[mn u]}{mn}}\Delta K(mn,u,r)\sigma^{k_2}_j(X^{mn}_{\frac{[mn r]}{mn}})\d W^l_r\Big)\d W^j_u\,. 
\end{align}
  Fubini's theorem gives that
\begin{align}
	\E[|\textbf{(3,3,1)}_{j,l}^{ mn,n,k}|^2]&=\E\Big[\int_{0}^{t}\int_{0}^{t}A_{10,s}^{(m,n,j,l)}A_{10,v}^{(m,n,j,l)}\d v\d s\Big]\nonumber\\&=2\int_{0}^{t}\int_{0}^{s}\E\Big[A_{10,s}^{(m,n,j,l)}A_{10,v}^{(m,n,j,l)}\Big]\d v\d s\nonumber.
\end{align}
By Fubini's theorem and \eqref{dcp of stochastic integral}, we have
\begin{align}\label{double of A 10}
	&\E\Big[A_{10,s}^{(m,n,j,l)}A_{10,v}^{(m,n,j,l)}\Big]\nonumber\\&=\E\Big[n^{4H}\int_{0}^{\frac{[mnv]}{mn}}\Delta K(mn,s,u)\Delta K(mn,v,u)|\sigma^{k_1}_j(X^{mn}_{\frac{[mn u]}{mn}})|^2\Big(\int_{0}^{\frac{[mn u]}{mn}}\Delta K(mn,u,r)\sigma^{k_2}_j(X^{mn}_{\frac{[mn r]}{mn}})\d W^l_r\Big)^2\d u\Big]\nonumber\\&=n^{4H}\E\Big[\int_{0}^{\frac{[mnv]}{mn}}\Delta K(mn,s,u)\Delta K(mn,v,u)E^{(4)}_u\d u\Big],
\end{align}
where	
\begin{align}
	E^{(4)}_u:=\E\Big[|\sigma^{k_1}_j(X^{mn}_{\frac{[mn u]}{mn}})|^2\Big(\int_{0}^{\frac{[mn u]}{mn}}\Delta K(mn,u,r)\sigma^{k_2}_j(X^{mn}_{\frac{[mn r]}{mn}})\d W^l_r\Big)^2\Big].\nonumber
\end{align}
By the Cauchy-Schwarz inequality, the boundness of $\sigma$, Lemma \ref{bound of X n} and \eqref{esti of stochastic integral_1}, we have that
\begin{align}
	\E[E^{(4)}_{u}]&\leq \Big\|\int_{0}^{\frac{[mn u]}{mn}}\Delta K(mn,u,r)\sigma^{k_2}_j(X^{mn}_{\frac{[mn r]}{mn}})\d W^l_r\Big\|^2_{L^4}\|\sigma_j^{k_1}(X^{n}_{\frac{[mn u]}{mn}})\|^2_{L^4}\nonumber\\&\leq Cn^{-2H}.\nonumber
\end{align}
This implies by \eqref{double of A 10} that
\begin{align}
	\Big|\E\Big[A_{10,s}^{(m,n,j,l)}A_{10,v}^{(m,n,j,l)}\Big]\Big|\leq Cn^{2H}\int_{0}^{\frac{[mnv]}{mn}}\Delta K(mn,s,u)\Delta K(mn,v,u)\d u.\nonumber
\end{align}
Therefore, Proposition \ref{esti of A mn} leads to $ \E\Big[A_{10,s}^{(m,n,j,l)}A_{10,v}^{(m,n,j,l)}\Big]\to 0$. Applying the bounded convergence theorem with respect to $\d v\otimes\d s$, we have $\lim_{n\to\infty}\textbf{(3,3,1)}_{j,l}^{ mn,n,k}=0$ in the sense of $L^2$, and $\textbf{(3,3,2)}_{j,l}^{ mn,n,k}\xrightarrow { L^2} 0$ can be obtained similarly.

For the term $\textbf{(3,3,3)}_{j,l}^{ mn,n,k}$, if $j\neq l$, this term vanishes, and for $j=l$, similar to $\textbf{(1,3,3,3)}_{j,l}^{ mn,n,k}$, we have that
\begin{align}
	\textbf{(3,3,3)}_{j,j}^{ mn,n,k}\xrightarrow { L^2 } \frac{1}{(2H+1)m^{2H}G}\int_{0}^{\infty}\mu(r,1)^2\d r\int_{0}^{t}\sigma^{k_1}_j(X_{s})\sigma^{k_2}_j(X_{s})\d s.\nonumber
\end{align}
Therefore, if $j\neq l$, the term $\textbf{(3,3)}_{j,l}^{ mn,n,k}$ vanishes, and if $j=l$, we conclude that
\begin{align}
	\textbf{(3,3)}_{j,j}^{ mn,n,k}\xrightarrow {L^2 } \frac{1}{(2H+1)m^{2H}G}\int_{0}^{\infty}\mu(r,1)^2\d r\int_{0}^{t}\sigma^{k_1}_j(X_{s})\sigma^{k_2}_j(X_{s})\d s.\nonumber
\end{align}
\subsubsection{Error analysis of $\textbf{(3,4)}_{j,l}^{mn,n,k}$ and $\textbf{(4,3)}_{j,l}^{mn,n,k}$}
Similar to $\textbf{(2,3,5)}_{j,l}^{mn,n,k}$, we conclude that $\lim_{n\to\infty}\textbf{(3,4)}_{j,l}^{ mn,n,k}=0$ in the sense of $L^2$, and $\textbf{(4,3)}_{j,l}^{ mn,n,k}\xrightarrow { L^2} 0$ can be obtained similarly.
\subsubsection{Error analysis of $\textbf{(4,4)}_{j,l}^{mn,n,k}$}\label{(4.2.6)}
Similar to the term $\textbf{(2,4)}_{j,l}^{mn,n,k}$, we can get that when $j\neq l$, this term vanishes, if $j=l$, we get
\begin{align}
	\textbf{(4,4)}_{j,j}^{ mn,n,k}\xrightarrow {L^2 } \frac{1}{2HG(2H+1)}\int_{0}^{t}\sigma^{k_1}_j(X_{s})\sigma^{k_2}_j(X_{s})\d s.\nonumber
\end{align}
\subsection{Proof of Lemma \ref{est of V-2}}
\begin{proof}
	Recall the expression of \eqref{def of V}.  We shall  compute the $L^1$-limit of 
	$$\langle V^{mn,n,k,j}, W^{j} \rangle_t=n^{H}\int_{0}^{t}\big(X^{n,k}_s-X^{n,k}_\frac{[ns]}{n}+X^{mn,k}_\frac{[mns]}{mn}-X^{mn,k}_s\big)\d s $$
	for $j\in\{1,\cdots,q\}$. Write $\Delta X^{mn,n}_s=X^{n,k}_s-X^{n,k}_\frac{[ns]}{n}+X^{mn,k}_\frac{[mns]}{mn}-X^{mn,k}_s$. Then it follows from Fubini’s theorem that
	\begin{align}
		\E\Big[|\langle V^{mn,n,k,j}, W^{j} \rangle_t|^2\Big]=2\int_{0}^{t}\int_{0}^{s}n^{2H}\E\Big[\Delta X^{mn,n}_s\Delta X^{mn,n}_v\Big]\d v\d s.\nonumber
	\end{align}
	
	From Lemma \ref{rate X-X n}, together with the Cauchy-Schwarz inequality, Minkowski's inequality, we have that
	\begin{align}
		\E\Big[\Delta X^{mn,n}_s\Delta X^{mn,n}_v\Big]&\leq \|\Delta X^{mn,n}_s\|_{L^2}\|\Delta X^{mn,n}_v\|_{L^2}\nonumber\\&\leq \Big[\|X^{n,k}_s-X^{n,k}_\frac{[ns]}{n}\|_{L^2}+\|X^{mn,k}_\frac{[mns]}{mn}-X^{mn,k}_s\|_{L^2}\Big]\nonumber\\&\quad\cdot \Big[\|X^{n,k}_v-X^{n,k}_\frac{[nv]}{n}\|_{L^2}+\|X^{mn,k}_\frac{[mnv]}{mn}-X^{mn,k}_v\|_{L^2}\Big]\nonumber\\&\leq C\Big[(s-\frac{[ns]}{n})^H+(s-\frac{[mns]}{mn})^H\Big]\cdot\Big[(v-\frac{[nv]}{n})^H+(v-\frac{[mnv]}{mn})^H\Big]\nonumber\\&\leq Cn^{-2H}.\nonumber
	\end{align}
	Therefore, in light of the dominated convergence theorem, it suffices to show $$ n^{2H}\E\Big[\Delta X^{mn,n}_s\Delta X^{mn,n}_v\Big]\to 0$$ 
	for each $s,v$ with $v<s$. We only need to consider the case $v<\frac{[ns]}{n}$.
	In this case by \eqref{def of Double X n m} and the tower property we have
	\begin{align}
		\quad\quad&\E\Big[\Delta X^{mn,n}_s\Delta X^{mn,n}_v\Big]\nonumber\\&=\E\Big[\E\Big[\Delta X^{mn,n}_s|\mathcal{F}_{\frac{[mns]}{mn}}\Big]\Delta X^{mn,n}_v\Big]\nonumber\\&=\E\Big[\Big(\mathcal{A}^{mn,n,k}_{1,s}+\sum_{j=1}^q\int_{0}^{\frac{[n s]}{n}} \Delta K(n,s,u)\sigma^k_j(X^{n}_{\frac{[nu]}{n}})\d W^j_u+\sum_{j=1}^q\sigma^k_j(X^{n}_{\frac{[ns]}{n}})\int_{\frac{[n s]}{n}}^{\frac{[mns]}{mn}}K(s-u)\d W^j_u\nonumber\\&\quad-\sum_{j=1}^q\int_{0}^{\frac{[mn s]}{mn}}\Delta K(mn,s,u)\sigma^k_j(X^{mn}_{\frac{[mn u]}{mn}})\d W^j_u\Big)\Delta X^{mn,n}_v\Big]\nonumber\\&=\E\Big[\mathcal{A}^{mn,n,k}_{1,s}\Delta X^{mn,n}_v\Big]+\sum_{j=1}^{q}\E\Big[\Delta X^{mn,n}_v\mathcal{W}^{1,n,j}_s\Big]+\sum_{j=1}^{q}\E\Big[\Delta X^{mn,n}_v\mathcal{W}^{3,mn,n,j}_s\Big]\nonumber\\&\quad-\sum_{j=1}^{q}\E\Big[\Delta X^{mn,n}_v\mathcal{W}^{1,mn,n,j}_s\Big]\nonumber\\&=\E\Big[\mathcal{A}^{mn,n,k}_{1,s}\Delta X^{mn,n}_v\Big]+\sum_{j=1}^{q}\E\Big[\mathcal{A}^{mn,n,k}_{1,v}\mathcal{W}^{1,n,j}_s\Big]+\sum_{j,l=1}^{q}\E\Big[\mathcal{W}^{1,n,l}_v\mathcal{W}^{1,n,j}_s\Big]\nonumber\\&\quad+\sum_{j,l=1}^{q}\E\Big[\mathcal{W}^{2,n,l}_v\mathcal{W}^{1,n,j}_s\Big]-\sum_{j,l=1}^{q}\E\Big[\mathcal{W}^{1,mn,n,l}_v\mathcal{W}^{1,n,j}_s\Big]-\sum_{j,l=1}^{q}\E\Big[\mathcal{W}^{2,mn,n,l}_v\mathcal{W}^{1,n,j}_s\Big]\nonumber\\&\quad+\sum_{j=1}^{q}\E\Big[\mathcal{A}^{mn,n,k}_{1,v}\mathcal{W}^{3,mn,n,j}_s\Big]+\sum_{j,l=1}^{q}\E\Big[\mathcal{W}^{1,n,l}_v\mathcal{W}^{3,mn,n,j}_s\Big]\nonumber\\&\quad+\sum_{j,l=1}^{q}\E\Big[\mathcal{W}^{2,n,l}_v\mathcal{W}^{3,mn,n,j}_s\Big]-\sum_{j,l=1}^{q}\E\Big[\mathcal{W}^{1,mn,n,l}_v\mathcal{W}^{3,mn,n,j}_s\Big]-\sum_{j,l=1}^{q}\E\Big[\mathcal{W}^{2,mn,n,l}_v\mathcal{W}^{3,mn,n,j}_s\Big]\nonumber\\&\quad+\sum_{j=1}^{q}\E\Big[\mathcal{A}^{mn,n,k}_{1,v}\mathcal{W}^{1,mn,n,j}_s\Big]+\sum_{j,l=1}^{q}\E\Big[\mathcal{W}^{1,n,l}_v\mathcal{W}^{1,mn,n,j}_s\Big]\nonumber\\&\quad+\sum_{j,l=1}^{q}\E\Big[\mathcal{W}^{2,n,l}_v\mathcal{W}^{1,mn,n,j}_s\Big]-\sum_{j,l=1}^{q}\E\Big[\mathcal{W}^{1,mn,n,l}_v\mathcal{W}^{1,mn,n,j}_s\Big]-\sum_{j,l=1}^{q}\E\Big[\mathcal{W}^{2,mn,n,l}_v\mathcal{W}^{1,mn,n,j}_s\Big]\nonumber\\&:=\mathcal{T}_1+\sum_{j=1}^{q}\mathcal{T}^j_{21}+\sum_{j,l=1}^{q}\mathcal{T}^{jl}_{22}+\sum_{j,l=1}^{q}\mathcal{T}^{jl}_{23}-\sum_{j,l=1}^{q}\mathcal{T}^{jl}_{24}-\sum_{j,l=1}^{q}\mathcal{T}^{jl}_{25}\nonumber\\&\quad+\sum_{j=1}^{q}\mathcal{T}^j_{31}+\sum_{j,l=1}^{q}\mathcal{T}^{jl}_{32}+\sum_{j,l=1}^{q}\mathcal{T}^{jl}_{33}-\sum_{j,l=1}^{q}\mathcal{T}^{jl}_{34}-\sum_{j,l=1}^{q}\mathcal{T}^{jl}_{35}\nonumber\\&\quad+\sum_{j=1}^{q}\mathcal{T}^j_{41}+\sum_{j,l=1}^{q}\mathcal{T}^{jl}_{42}+\sum_{j,l=1}^{q}\mathcal{T}^{jl}_{43}-\sum_{j,l=1}^{q}\mathcal{T}^{jl}_{44}-\sum_{j,l=1}^{q}\mathcal{T}^{jl}_{45}\,, \nonumber
	\end{align}
	where $\Delta X^{mn,n}_s=\mathcal{A}^{mn,n,k}_{1,s}+\sum_{j=1}^{q}\big[\mathcal{W}^{1,n,j}_s+\mathcal{W}^{2,n,j}_s-\mathcal{W}^{1,mn,n,j}_s-\mathcal{W}^{2,mn,n,j}_s\big]$ and
	\begin{align}
		\mathcal{W}^{1,n,j}_s&:=\int_{0}^{\frac{[n s]}{n}} \Delta K(n,s,u)\sigma^k_j(X^{n}_{\frac{[nu]}{n}})\d W^j_u,\nonumber\\
		\mathcal{W}^{2,n,j}_s&:=\sigma^k_j(X^{n}_{\frac{[ns]}{n}})\int_{\frac{[n s]}{n}}^{s}K(s-u)\d W^j_u,\nonumber\\
		\mathcal{W}^{1,mn,n,j}_s&:=\int_{0}^{\frac{[mn s]}{mn}}\Delta K(mn,s,u)\sigma^k_j(X^{mn}_{\frac{[mn u]}{mn}})\d W^j_u,\nonumber\\
		\mathcal{W}^{2,mn,n,j}_s&:=\sigma^k_j(X^{mn}_{\frac{[mn s]}{mn}})\int_{\frac{[mn s]}{mn}}^{s}K(s-u)\d W^j_u,\nonumber\\
		\mathcal{W}^{3,mn,n,j}_s&:=\sigma^k_j(X^{n}_{\frac{[ns]}{n}})\int_{\frac{[n s]}{n}}^{\frac{[mns]}{mn}}K(s-u)\d W^j_u.\nonumber
	\end{align}
	For the term $\mathcal{T}_1$, by Fubini's theorem, the Cauchy-Schwarz inequality, Lemmas \ref{bound of X n}, \ref{rate X-X n}, the properties of singular kernel $K$, we have
	\begin{align}
		\mathcal{T}_1&=\E\Big[\mathcal{A}^{mn,n,k}_{1,s}\Delta X^{mn,n}_v\Big]\nonumber\\&\leq C\int_{0}^{\frac{[n s]}{n}}\Delta K(n,s,u)\E\Big[\big(b^k(X^{n}_{\frac{[n u]}{n}})-b^k(X^{mn}_{\frac{[mn u]}{mn}})\Delta X^{mn,n}_v\big)\Big]\d u\nonumber\\&\quad+C\int_{0}^{\frac{[n s]}{n}}\Delta K(mn,n,s,u)\E\Big[b^k(X^{mn}_{\frac{[mn u]}{mn}})\Delta X^{mn,n}_v\Big]\d u\nonumber\\&\quad+C\int_{\frac{[ns]}{n}}^{\frac{[mn s]}{mn}}\Delta K(mn,s,u)\E\Big[b^k(X^{mn}_{\frac{[mn u]}{mn}})\Delta X^{mn,n}_v\Big]\d u\nonumber\\&\quad+C\E\Big[b^k(X^{n}_{\frac{[n s]}{n}})\Delta X^{mn,n}_v\Big]\int_{\frac{[n s]}{n}}^{\frac{[mn s]}{mn}} K(s-u)\d u\\&\leq C\Big[\sup_{0\leq s\leq T}\|b^k(X^{n}_{\frac{[n s]}{n}})\|_{L^2}+\sup_{0\leq s\leq T}\|b^k(X^{mn}_{\frac{[mn s]}{mn}})\|_{L^2}\Big]\sup_{0\leq v\leq T}\|\Delta X^{mn,n}_v\|_{L^2}\Big[\int_{0}^{\frac{[n s]}{n}}\Delta K(n,s,u)\d u\nonumber\\&\quad+\int_{0}^{\frac{[n s]}{n}}\Delta K(mn,n,s,u)\d u+\int_{\frac{[ns]}{n}}^{\frac{[mn s]}{mn}}\Delta K(mn,s,u)\d u+\int_{\frac{[n s]}{n}}^{\frac{[mn s]}{mn}} K(s-u)\d u\Big]\nonumber\\&\leq Cn^{-(2H+1/2)}.\nonumber
	\end{align}
	Hence, $n^{2H}\mathcal{T}_1\leq Cn^{-1/2}\to 0.$ Similarly, we   can get $n^{2H}\mathcal{T}^j_{21}\leq Cn^{-1/2}\to 0$, $n^{2H}\mathcal{T}^j_{31}\leq Cn^{-1/2}\to 0$ and $n^{2H}\mathcal{T}^{j}_{41}\leq Cn^{-1/2}\to 0$ for $j\in\{1,\cdots,q\}$.
	
	For the term $\mathcal{T}^{jl}_{22}$, by \eqref{dcp of stochastic integral}, Fubini's theorem, Lemmas \ref{bound of X n}, \ref{esti of A n} and the Cauchy-Schwarz inequality, we have
	\begin{align}
		n^{2H}	\mathcal{T}^{jl}_{22}&=n^{2H}\E\Big[\int_{0}^{\frac{[n v]}{n}} \Delta K(n,v,u)\sigma^k_l(X^{n}_{\frac{[nu]}{n}})\d W^l_u\cdot\int_{0}^{\frac{[n s]}{n}} \Delta K(n,s,u)\sigma^k_j(X^{n}_{\frac{[nu]}{n}})\d W^j_u\Big]\nonumber\\&=n^{2H}\E\Big[\int_{0}^{\frac{[nv]}{n}} \Delta K(n,v,u) \Delta K(n,s,u)\sigma^k_l(X^{n}_{\frac{[nu]}{n}})\sigma^k_j(X^{n}_{\frac{[nu]}{n}})\d \langle W^{l}, W^{j} \rangle_u\Big]\nonumber\\&\leq n^{2H}\int_{0}^{\frac{[nv]}{n}} \Delta K(n,v,u) \Delta K(n,s,u)\E\Big[\sigma^k_l(X^{n}_{\frac{[nu]}{n}})\sigma^k_j(X^{n}_{\frac{[nu]}{n}})\Big]\d u\nonumber\\&\leq n^{2H}\int_{0}^{\frac{[nv]}{n}} \Delta K(n,v,u) \Delta K(n,s,u)\|\sigma^k_l(X^{n}_{\frac{[nu]}{n}})\|_{L^2}\|\sigma^k_j(X^{n}_{\frac{[nu]}{n}})\|_{L^2}\d u\nonumber\\&\leq Cn^{2H}\int_{0}^{\frac{[nv]}{n}} \Delta K(n,v,u) \Delta K(n,s,u)\d u\to 0\quad \text{as $n\to\infty$} \nonumber
	\end{align}
	for all $j,l\in\{1,\cdots,q\}$. 
	
	For the term $\mathcal{T}^{jl}_{22}$,   following  the same idea as above,   we have that
	\begin{align}
		n^{2H}\mathcal{T}^{jl}_{23}&\leq Cn^{2H}\int_{\frac{[nv]}{n}}^{v} K(v-u) \Delta K(n,s,u)\d u\nonumber\\&= 
		Cn^{2H} \int_0^{v-\frac{[n v]}{n}}\Big|(r+s-v)^{H-1/2}-(r+\frac{[n s]}{n}-v)^{H-1 / 2}\Big| r^{H-1 / 2} \mathrm{d} r\nonumber\\&
		=C \int_0^{n v-[n v]}\left|(z+n s-n v)^{H-1 / 2}-(z+[n s]-n v)^{H-1 / 2}\right| z^{H-1 / 2} \mathrm{d} z\to 0\quad \text{as $n\to\infty$},\nonumber
	\end{align}
	since \begin{align}
		\int_0^1\left|(z+n s-n v)^{H-1 / 2}-(z+[n s]-n v)^{H-1 / 2}\right| z^{H-1 / 2} \mathrm{d} z \rightarrow 0.\nonumber
	\end{align}
	For the term $\mathcal{T}^{jl}_{24}$, we have
	\begin{align}
		n^{2H}\mathcal{T}^{jl}_{24}&\leq Cn^{2H}\int_{0}^{\frac{[mnv]}{mn}} \Delta K(mn,v,u) \Delta K(n,s,u)\d u\to 0\quad \text{as $n\to\infty$},\nonumber
	\end{align}
	where Lemma \ref{esti of D} is used.\\
	For the term $\mathcal{T}^{jl}_{25}$, we have
	\begin{align}
		n^{2H}\mathcal{T}^{jl}_{24}&\leq Cn^{2H}\int_{\frac{[mnv]}{mn}}^{v} K(v-u) \Delta K(n,s,u)\d u\nonumber\\&\leq Cn^{2H}\int_{\frac{[nv]}{n}}^{v} K(v-u) \Delta K(n,s,u)\d u \to 0\quad \text{as $n\to\infty$},\nonumber
	\end{align}
	where the proof of $\mathcal{T}^{jl}_{23}$ is used.\\
	For the terms $\mathcal{T}^{jl}_{32}$-$\mathcal{T}^{jl}_{35}$, by the tower property, we have
	\begin{align}
		n^{2H}\mathcal{T}^{jl}_{32}=n^{2H}\E\Big[\int_{0}^{\frac{[n v]}{n}} \Delta K(n,v,u)\sigma^l_j(X^{n}_{\frac{[nu]}{n}})\d W^l_u\E\Big[\sigma^k_j(X^{n}_{\frac{[ns]}{n}})\int_{\frac{[n s]}{n}}^{\frac{[mns]}{mn}}K(s-u)\d W^j_u|\mathcal{F}_{\frac{[ns]}{n}}\Big]\Big]=0,\nonumber
	\end{align}
	\begin{align}
		n^{2H}\mathcal{T}^{jl}_{33}=n^{2H}\E\Big[\sigma^k_l(X^{n}_{\frac{[nv]}{n}})\int_{\frac{[n v]}{n}}^{v}K(v-u)\d W^l_u\E\Big[\sigma^k_j(X^{n}_{\frac{[ns]}{n}})\int_{\frac{[n s]}{n}}^{\frac{[mns]}{mn}}K(s-u)\d W^j_u|\mathcal{F}_{\frac{[ns]}{n}}\Big]\Big]=0,\nonumber
	\end{align}
	\begin{align}
		n^{2H}\mathcal{T}^{jl}_{34}=n^{2H}\E\Big[\int_{0}^{\frac{[mn v]}{mn}}\Delta K(mn,v,u)\sigma^k_l(X^{mn}_{\frac{[mn u]}{mn}})\d W^l_u\E\Big[\sigma^k_j(X^{n}_{\frac{[ns]}{n}})\int_{\frac{[n s]}{n}}^{\frac{[mns]}{mn}}K(s-u)\d W^j_u|\mathcal{F}_{\frac{[ns]}{n}}\Big]\Big]=0,\nonumber
	\end{align}
	and
	\begin{align}
		n^{2H}\mathcal{T}^{jl}_{35}=n^{2H}\E\Big[\int_{0}^{\frac{[mn v]}{mn}}\Delta K(mn,v,u)\sigma^k_l(X^{mn}_{\frac{[mn u]}{mn}})\d W^l_u\E\Big[\sigma^k_j(X^{n}_{\frac{[ns]}{n}})\int_{\frac{[n s]}{n}}^{\frac{[mns]}{mn}}K(s-u)\d W^j_u|\mathcal{F}_{\frac{[ns]}{n}}\Big]\Big]=0.\nonumber
	\end{align}
	For the term $\mathcal{T}^{jl}_{42}$, we have
	\begin{align}
		n^{2H}\mathcal{T}^{jl}_{42}&\leq Cn^{2H}\int_{0}^{\frac{[nv]}{n}} \Delta K(n,v,u) \Delta K(mn,s,u)\d u\to 0\quad \text{as $n\to\infty$},\nonumber
	\end{align}
	where Lemma \ref{esti of E} is used.\\
	For the term $\mathcal{T}^{jl}_{43}$, we have
	\begin{align}
		n^{2H}\mathcal{T}^{jl}_{43}\leq Cn^{2H}\int_{\frac{[nv]}{n}}^{v} K(v-u) \Delta K(n,s,u)\d u \to 0\quad \text{as $n\to\infty$},\nonumber
	\end{align}
	where the proof of $\mathcal{T}^{jl}_{23}$ is used.\\
	For the term $\mathcal{T}^{jl}_{44}$, we have
	\begin{align}
		n^{2H}\mathcal{T}^{jl}_{44}\leq Cn^{2H}\int_{0}^{\frac{[mnv]}{mn}}\Delta K(mn,s,u)\Delta K(mn,v,u)\d u \to 0\quad \text{as $n\to\infty$},\nonumber
	\end{align}
	where Lemma \ref{esti of A mn} is used.\\
	For the term $\mathcal{T}^{jl}_{45}$, we have
	\begin{align}
		n^{2H}\mathcal{T}^{jl}_{45}&\leq Cn^{2H}\int_{\frac{[mnv]}{mn}}^{v} K(v-u) \Delta K(mn,s,u)\d u\nonumber\\&= 
		Cn^{2H} \int_0^{v-\frac{[mn v]}{mn}}\Big|(r+s-v)^{H-1/2}-(r+\frac{[mn s]}{mn}-v)^{H-1 / 2}\Big| r^{H-1 / 2} \mathrm{d} r\nonumber\\&
		\leq C \int_0^{mn v-[mn v]}\left|(z+mn s-mn v)^{H-1 / 2}-(z+[mn s]-mn v)^{H-1 / 2}\right| z^{H-1 / 2} \mathrm{d} z\to 0\quad \text{as $n\to\infty$},\nonumber
	\end{align}
	since \begin{align}
		\int_0^1\left|(z+mn s-mn v)^{H-1 / 2}-(z+[mn s]-mn v)^{H-1 / 2}\right| z^{H-1 / 2} \mathrm{d} z \rightarrow 0.\nonumber
	\end{align}
	The proof is complete.
\end{proof}
	\appendix
	\section{Estimates for stochastic Volterra equations and its Euler scheme}
	\begin{lem}\cite{LHG}\label{basic lemma} Let $K(u)=u^{H-1/2}/\Gamma(H+1/2)$.  Then 
		\[
		\left\{	\begin{split}
			&\int_{0}^{h}K(t)\d t=O(h^{H+1/2})\,, \quad \int_{0}^{T}\big(K(t+h)-K(t)\big)\d t=O(h^{H+1/2})\,,\\ 
			& \int_{0}^{h}K(t)^2\d t =O(h^{2H})\,, \quad \int_{0}^{T}\big(K(t+h)-K(t)\big)^2\d t =O(h^{2H})\,, 
		\end{split}\right. 
		\]
		where the notation $A(h)=O(B(h))$ for two quantities  $A$ and $B$ means that there is a constant $C$ such that
		$A(h)\le CB(h) $  for all small $h$. 
		
		Moreover, for any adapted $\mathbb{R}^{d}$-valued process $Y$ and $\mathbb{R}^{d\times m}$-valued process $Z$,  the following inequalities hold 
		
		(i) For $p\geq 2$ and $t\in [0,T]$,
		$$\E\Big[\Big|\int_{0}^{t}K(t-s)Y_s\d s\Big|^p\Big]\leq C\int_{0}^{t}K(t-s)\cdot \E[|Y_s|^p]\d s\,.$$
		
		(ii) For $p\geq 2$ and $t\in [0,T]$,
		$$\E\Big[\Big|\int_{0}^{t}K(t-s)Z_s\d W_s\Big|^p\Big]\leq C\int_{0}^{t}K(t-s)^2\cdot E[\|Z_s\|^p]\d s\,.$$
		
		(iii) For $p\geq 1, t\in [0,T]$ and $h\geq 0$ with $t+h\leq T$,
		$$\E\Big[\Big|\int_{0}^{t}(K(t+h-s)-K(t-s))Y_s\d s\Big|^p\Big]+\E\Big[\Big|\int_{t}^{t+h}K(t+h-s)Y_s\d s\Big|^p\Big]\leq C h^{(H+1/2)p}\sup_{t\in[0,T]}\E[|Y_t|^p]. $$
		
		(iv) For $p\geq 2, t\in [0,T]$ and $h\geq 0$ with $t+h\leq T$,
		$$\E\Big[\Big|\int_{0}^{t}(K(t+h-s)-K(t-s))Z_s\d W_s\Big|^p\Big]+E\Big[\Big|\int_{t}^{t+h}K(t+h-s)Z_s\d W_s\Big|^p\Big]\leq C h^{H p}\sup_{t\in[0,T]}\E[\|Z_t\|^p],$$
		where $C$ depends only on $K,p$ and $T$.
	\end{lem}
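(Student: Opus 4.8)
The plan is to establish the four deterministic kernel estimates first, and then to deduce the process inequalities (i)--(iv) from them uniformly in $t$ by combining Jensen's inequality with the Burkholder--Davis--Gundy (BDG) inequality. For the deterministic bounds, the first and third follow by direct integration: since $K(u)=u^{H-1/2}/\Gamma(H+1/2)$ with $H+1/2>0$ and $2H>0$, one has $\int_0^h K(t)\,\d t=\frac{h^{H+1/2}}{\Gamma(H+1/2)(H+1/2)}$ and $\int_0^h K(t)^2\,\d t=\frac{h^{2H}}{2H\,\Gamma(H+1/2)^2}$, exactly of the claimed orders. For the first-order difference $\int_0^T\big(K(t+h)-K(t)\big)\,\d t$ I would integrate explicitly to get a multiple of $(T+h)^{H+1/2}-T^{H+1/2}-h^{H+1/2}$; since $(T+h)^{H+1/2}-T^{H+1/2}=O(h)$ by the mean value theorem and $H+1/2\le1$, the $h^{H+1/2}$ term dominates and the whole expression is $O(h^{H+1/2})$. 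The genuinely delicate estimate is $\int_0^T\big(K(t+h)-K(t)\big)^2\,\d t=O(h^{2H})$, for which I would split the integral at $t=h$: on $[0,h]$ I bound the integrand crudely by $2\big((t+h)^{2H-1}+t^{2H-1}\big)$ and integrate each term to obtain a contribution $O(h^{2H})$, while on $[h,T]$, writing $K(t+h)-K(t)=\frac{H-1/2}{\Gamma(H+1/2)}\int_t^{t+h}u^{H-3/2}\,\d u$ and using monotonicity of $u^{H-3/2}$ gives $|K(t+h)-K(t)|\le C\,h\,t^{H-3/2}$, so $\int_h^T|K(t+h)-K(t)|^2\,\d t\le C h^2\int_h^T t^{2H-3}\,\d t\le C h^2\cdot h^{2H-2}=Ch^{2H}$, the last bound using $2H-2<0$.

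With the kernel estimates in hand, inequalities (i) and (iii) follow by a single Jensen argument. I would view $K(t-s)\,\d s$ (resp.\ $|K(t+h-s)-K(t-s)|\,\d s$ and $K(t+h-s)\,\d s$) as a finite measure on the relevant interval, whose total mass is $\int_0^t K(u)\,\d u\le C$ (resp.\ $O(h^{H+1/2})$ by the first-order difference estimate, and $\int_0^h K(u)\,\d u=O(h^{H+1/2})$), and apply Jensen's inequality to the convex map $x\mapsto|x|^p$. This yields, after taking expectations, $\E\big[|\int_0^t K(t-s)Y_s\,\d s|^p\big]\le\big(\int_0^tK(t-s)\,\d s\big)^{p-1}\int_0^tK(t-s)\,\E[|Y_s|^p]\,\d s$, which is (i); for the shifted kernels the same computation gives a prefactor raised to the power $p-1$, producing precisely the $h^{(H+1/2)p}$ factor in (iii) after bounding $\E[|Y_s|^p]$ by its supremum.

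For the stochastic integrals (ii) and (iv) I would first apply the BDG inequality to replace $\E[|\int\cdots\,\d W_s|^p]$ by $C_p\,\E\big[(\int K(t-s)^2\|Z_s\|^2\,\d s)^{p/2}\big]$ (and its shifted analogues), and then run the same Jensen argument with the measure $K(t-s)^2\,\d s$ (resp.\ $(K(t+h-s)-K(t-s))^2\,\d s$ and $K(t+h-s)^2\,\d s$) and the convex map $x\mapsto x^{p/2}$, which is legitimate because $p\ge2$. The total masses here are $\int_0^t K(u)^2\,\d u\le C$ and $O(h^{2H})$ by the second-order kernel estimates, so the prefactor raised to the power $p/2-1$ delivers the bounded constant in (ii) and the $h^{Hp}$ factor in (iv).

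The main obstacle I anticipate is the second-order difference estimate $\int_0^T\big(K(t+h)-K(t)\big)^2\,\d t=O(h^{2H})$: the interplay between the integrable singularity of $K$ at the origin and the non-integrable singularity $t^{2H-3}$ appearing after differentiation must be balanced exactly by the split at $t=h$. A secondary point requiring care is that all total-mass bounds be uniform in $t\in[0,T]$ (they are, since $\int_0^t K(u)\,\d u\le\int_0^T K(u)\,\d u$ and likewise for $K^2$), so that the constants in (i)--(iv) depend only on $K$, $p$ and $T$ as asserted.
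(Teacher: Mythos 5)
Your proposal is correct, but note that the paper itself contains no proof of this lemma to compare against: it is imported verbatim from \cite{LHG}, so the statement is used as a black box. Judged on its own merits, your argument is sound and complete. The four kernel estimates are handled correctly: the two one-sided integrals are exact computations; the first-order difference reduces (in absolute value, which is how the $O(\cdot)$ must be read here, since $K(t+h)-K(t)\le 0$) to $T^{H+1/2}+h^{H+1/2}-(T+h)^{H+1/2}\le h^{H+1/2}$, and your mean-value argument gives the same conclusion; and the genuinely delicate second-order difference is correctly resolved by the split at $t=h$, bounding crudely by $K(t)^2+K(t+h)^2$ near the origin and by $|K(t+h)-K(t)|\le C\,h\,t^{H-3/2}$ on $[h,T]$, where $2H-3<-1$ makes the tail integral converge to $O(h^{2H-2})$. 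The passage to (i)--(iv) via Jensen with the normalized measures $K(t-s)\,\d s$, $|K(t+h-s)-K(t-s)|\,\d s$ (total mass $O(h^{H+1/2})$, uniformly in $t\le T$ since $\int_0^t(K(v)-K(v+h))\,\d v\le\int_0^hK(v)\,\d v$), and their squared analogues, combined with BDG for the stochastic integrals, is the standard route for Volterra kernels and is almost certainly the argument of \cite{LHG} as well. Two trivial points you may wish to make explicit: the Jensen normalization requires the total mass $M$ to be nonzero (when $M=0$ both sides vanish), and BDG is applied to the martingale $r\mapsto\int_0^rK(t-s)Z_s\,\d W_s$, $r\le t$, with $t$ frozen inside the kernel, which is legitimate because $\int_0^tK(t-s)^2\,\d s<\infty$ even though the integrand is singular at $s=t$.
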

	
	\begin{lem}\cite{ALP}\label{bound of X}
		Under the assumption $H_{b,\sigma}$,   then
		$$\sup_{t\in[0,T]}\E[|X_t|^p]\leq C \quad \forall ] p\geq 1 \,,  $$
		where $C$ is a constant that only depends on $|X_0|,K,L_1,p$ and $T$.
	\end{lem}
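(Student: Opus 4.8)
The plan is to extract from the Volterra structure of \eqref{Volterra eq} a closed singular integral inequality for the scalar function $\phi(t):=\E[|X_t|^p]$ and then to close it by a weakly singular (fractional) Gronwall argument. First I would record that the Lipschitz hypothesis $H_{b,\sigma}$ (Assumption \ref{assumption 2.1}) forces the linear growth bounds $|b(x)|+|\sigma(x)|\le C(1+|x|)$, so that $\E[|b(X_s)|^p]+\E[\|\sigma(X_s)\|^p]\le C\bigl(1+\E[|X_s|^p]\bigr)$ whenever the right-hand side is finite.

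Applying the elementary inequality $|a+b+c|^p\le 3^{p-1}(|a|^p+|b|^p+|c|^p)$ to \eqref{Volterra eq}, and then the two $L^p$ Volterra estimates of Lemma \ref{basic lemma}(i)--(ii) with $Y_s=b(X_s)$ and $Z_s=\sigma(X_s)$, I obtain
\[
\E[|X_t|^p]\le C|X_0|^p+C\int_0^t K(t-s)\,\E[|b(X_s)|^p]\,\d s+C\int_0^t K(t-s)^2\,\E[\|\sigma(X_s)\|^p]\,\d s .
\]
Inserting the linear growth bound yields the key inequality
\[
\phi(t)\le C_0+C_1\int_0^t\bigl(K(t-s)+K(t-s)^2\bigr)\bigl(1+\phi(s)\bigr)\,\d s,\qquad t\in[0,T].
\]

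Since $K(u)+K(u)^2=u^{H-1/2}/\Gamma(H+1/2)+u^{2H-1}/\Gamma(H+1/2)^2$ with $H\in(0,1/2]$, both exponents $H-1/2$ and $2H-1$ lie in $(-1,0]$, so the kernel is integrable on $[0,T]$; this integrability is precisely what a singular Gronwall lemma of Henry type requires in order to conclude that $\phi$ is bounded on $[0,T]$ by a constant depending only on $|X_0|,K,L_1,p,T$. To make the derivation rigorous, before using the inequality I would first localize by the stopping times $\tau_N=\inf\{t:|X_t|\ge N\}$, run the estimate for $X_{t\wedge\tau_N}$ so that all the expectations are \emph{a priori} finite and the manipulations are justified, obtain a bound uniform in $N$ and in $t\le T$, and then let $N\to\infty$ via Fatou's lemma.

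The main obstacle is twofold. First, the square kernel $K^2(u)\sim u^{2H-1}$ is unbounded at the origin, so the classical Gronwall lemma does not apply and one must invoke the weakly singular version, iterating the convolution operator finitely many times until the accumulated kernel becomes bounded (using the semigroup property of the Riemann--Liouville integral). Second, one must secure the \emph{a priori} finiteness of the $p$-th moments, which is exactly why the localization step is indispensable rather than a direct appeal to the inequality. An alternative route that sidesteps the finiteness issue is to run the same bound along the Picard iterates used to construct the unique solution under Assumption \ref{assumption 2.1}: one shows their moments are uniformly bounded on $[0,T]$ by the identical singular Gronwall estimate and passes to the $L^p$ limit.
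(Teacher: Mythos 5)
The paper itself offers no proof of this lemma: it is imported verbatim from \cite{ALP}, so there is no internal argument to compare against. Your proposal is, in substance, exactly the proof in that cited reference: linear growth from the Lipschitz hypothesis, the $L^p$ convolution/BDG estimates (Lemma \ref{basic lemma}(i)--(ii)), and a weakly singular Gronwall lemma for the integrable kernel $K+K^2$ (equivalently, the resolvent of an $L^1$ kernel, or Henry-type iteration until the convolved kernel becomes bounded), with localization to secure a priori finiteness of the moments. Your plan is correct; I would only flag two refinements. First, for Volterra equations the stopped process $X_{t\wedge\tau_N}$ does \emph{not} satisfy a closed equation (the kernel $K(t\wedge\tau_N-s)$ destroys the flow property), so you cannot literally ``run the estimate for $X_{t\wedge\tau_N}$''; the standard fix, and what \cite{ALP} does, is to work with $f_N(t)=\E\big[|X_t|^p\,\mathbb{I}_{\{t\le \tau_N\}}\big]\le N^p$, using that $\{t\le\tau_N\}\subset\{s\le\tau_N\}$ for $s\le t$ together with the local property of the It\^o integral to insert the indicator into the integrands, which yields the same convolution inequality for the a priori bounded function $f_N$, after which Fatou finishes as you say. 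Second, your tools require $p\ge 2$, while the statement asserts the bound for all $p\ge 1$; the range $1\le p<2$ follows trivially from the $p=2$ case by Jensen's inequality and is worth one line.
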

	\begin{lem}\cite{ALP}\label{continuous of X}
		Let $p>H^{-1}$. Then
		$$\E[|X_t-X_s|^p]\leq C|t-s|^{Hp}, \quad t,s\in [0,T]$$
		and $X$ admits a version which is $\mathrm{H\ddot{o}lder}$ continuous on $[0,T]$ of any order $\a<H-p^{-1}$. Denoting this version again by $X$, one has
		$$\E\Big[\Big(\sup_{0\leq s\leq t\leq T}\frac{|X_t-X_s|}{|t-s|^{\a}}\Big)^p\Big]\leq C_{\a}$$
		for all $\a\in[0,H-p^{-1})$, where $C_{\a}$ is a constant. As a consequence, we can regard $X-X_0$ as a $\mathcal{C}^{\alpha}_{0}$ valued random variable for any $\alpha<H$.
	\end{lem}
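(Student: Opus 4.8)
The plan is to prove the increment moment estimate first, and then feed it into a quantitative Kolmogorov-type continuity criterion; since this is exactly the situation for which Lemma \ref{basic lemma} was tailored, the argument is structural rather than computational. Fix $0\le s<t\le T$ and write $h=t-s$. I would begin by subtracting the defining equations \eqref{Volterra eq} for $X_t$ and $X_s$ and regrouping the kernel, which splits the increment into four pieces reflecting the drift and the diffusion on the ``old'' interval $[0,s]$ and the ``new'' interval $[s,t]$:
\begin{align*}
	X_t-X_s &= \int_s^t K(t-u)b(X_u)\,\d u + \int_0^s\big(K(t-u)-K(s-u)\big)b(X_u)\,\d u \\
	&\quad + \int_s^t K(t-u)\sigma(X_u)\,\d W_u + \int_0^s\big(K(t-u)-K(s-u)\big)\sigma(X_u)\,\d W_u.
\end{align*}

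Next I would estimate each group directly. Under Assumption \ref{assumption 2.1} the coefficients $b$ and $\sigma$ have at most linear growth, so Lemma \ref{bound of X} gives $\sup_{u\in[0,T]}\E[|b(X_u)|^p]<\infty$ and $\sup_{u\in[0,T]}\E[\|\sigma(X_u)\|^p]<\infty$ for every $p\ge 1$. Applying Lemma \ref{basic lemma}-(iii) with $Y_u=b(X_u)$ bounds the two drift terms by $Ch^{(H+1/2)p}$, and Lemma \ref{basic lemma}-(iv) with $Z_u=\sigma(X_u)$ bounds the two diffusion terms by $Ch^{Hp}$. Since $h\le T$ is bounded one has $h^{(H+1/2)p}\le T^{p/2}h^{Hp}$, so the diffusion term dictates the rate and I obtain $\E[|X_t-X_s|^p]\le C|t-s|^{Hp}$, valid for all $p\ge 2$ (hence in particular for $p>H^{-1}$).

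Finally, for $p>H^{-1}$ the exponent satisfies $Hp>1$, which is precisely the regime required by the Kolmogorov--Chentsov theorem; this produces a modification of $X$ that is $\alpha$-H\"older continuous for every $\alpha<H-p^{-1}$. The stronger statement controlling $\E\big[(\sup_{s<t}|X_t-X_s|/|t-s|^{\alpha})^p\big]$ is then the quantitative form of this criterion, obtained from the Garsia--Rodemich--Rumsey inequality, which upgrades the pointwise increment bound to an $L^p$ bound on the H\"older seminorm. Letting $p\to\infty$ pushes $\alpha=H-p^{-1}$ up to $H$, so $X-X_0$ lives in $\mathcal{C}_0^{\alpha}$ for every $\alpha<H$. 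I expect no serious obstacle here: the only point requiring a little care is recognizing that it is the Brownian (diffusion) contribution, not the smoother drift, that fixes the H\"older exponent at $H$, so the singularity of the kernel $K$ does not degrade the regularity below this threshold; the mild technical input is the uniform-in-time moment control of the coefficients, which rests on combining the Lipschitz hypothesis with Lemma \ref{bound of X}.
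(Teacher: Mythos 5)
Your proposal is correct, but note that the paper itself offers no proof of this lemma at all: it is imported verbatim from \cite{ALP} (and its Euler-scheme analogue, Lemma \ref{continuous of X n}, likewise from \cite{FU}), so there is no internal argument to compare against. What you have written is the standard derivation, and it is sound: the four-term splitting of $X_t-X_s$ into drift/diffusion contributions on $[0,s]$ and $[s,t]$ is exactly the structure that Lemma \ref{basic lemma}-(iii) and (iv) are designed for; combined with the linear growth of $b,\sigma$ (from Assumption \ref{assumption 2.1}) and the uniform moment bound of Lemma \ref{bound of X}, this gives $\E[|X_t-X_s|^p]\le C|t-s|^{Hp}$ with the diffusion term dictating the exponent, and you correctly absorb the smoother drift rate $h^{(H+1/2)p}$ into $h^{Hp}$ on a bounded interval. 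The passage to the H\"older version and the $L^p$ bound on the H\"older seminorm via Kolmogorov--Chentsov in its quantitative (Garsia--Rodemich--Rumsey) form, with $\alpha<H-p^{-1}$ and $p\to\infty$ yielding $X-X_0\in\mathcal{C}_0^{\alpha}$ for every $\alpha<H$, is also the standard and correct conclusion. One small logical remark: your argument uses Lemma \ref{bound of X}, which in the paper is itself only cited from \cite{ALP}; this is harmless (there is no circularity, since the a priori moment bound does not rely on the continuity lemma), but it means your proof is self-contained only modulo that prior estimate, exactly as the paper's presentation intends.
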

	\begin{lem}\cite{FU}\label{bound of X n}
		Let $p\geq 1$, then
		$$\sup_{t\in [0,T]}\E[|X^n_t|^p]\leq C,$$
		where $C$ is a constant that  depends  only  on $|X_0|,K,L_1,p,T$ and the derivatives of $b$ and $\sigma$.
	\end{lem}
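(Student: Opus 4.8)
The plan is to derive a closed integral inequality for the $p$-th moment of $X^n$ and then close it with a singular (fractional) Gronwall argument. It suffices to treat $p\ge 2$, since the case $1\le p<2$ follows from Jensen's inequality applied to $x\mapsto x^{p/2}$. Because the derivatives of $b$ and $\sigma$ are bounded, both coefficients are globally Lipschitz and hence of linear growth: there is a constant $C$ with $|b(x)|^p\le C(1+|x|^p)$ and $\|\sigma(x)\|^p\le C(1+|x|^p)$ for all $x\in\mathbb{R}^d$.

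First I would apply the elementary inequality $|a_1+a_2+a_3|^p\le 3^{p-1}(|a_1|^p+|a_2|^p+|a_3|^p)$ to the defining scheme \eqref{Euler type scheme}, take expectations, and invoke parts (i) and (ii) of Lemma \ref{basic lemma} for the drift and the diffusion integral respectively. Using the linear growth bound and the fact that $\frac{[ns]}{n}\le s$, this yields
\begin{align}
\E[|X^n_t|^p]\le C+C\int_0^t\big(K(t-s)+K(t-s)^2\big)\,\E\big[|X^n_{\frac{[ns]}{n}}|^p\big]\,\d s,\nonumber
\end{align}
where the constant $C$ depends only on $|X_0|,K,L_1,p,T$ and the bounds on the derivatives, and in particular is \emph{independent of $n$}; this $n$-uniformity is the whole point, and it comes precisely from $\frac{[ns]}{n}\le s$ together with the $n$-free constants above.

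Next I would pass to the running supremum $\psi(t):=\sup_{0\le r\le t}\E[|X^n_r|^p]$, which is nondecreasing. Since $\frac{[ns]}{n}\le s$ gives $\E[|X^n_{\frac{[ns]}{n}}|^p]\le\psi(s)$, the displayed inequality upgrades to $\psi(t)\le C+C\int_0^t\big(K(t-s)+K(t-s)^2\big)\psi(s)\,\d s$. The kernel $K(t-s)+K(t-s)^2$ has a genuine singularity at $s=t$: by Lemma \ref{basic lemma} both $K$ and $K^2$ are integrable, the worst behaviour being $(t-s)^{2H-1}$, which is integrable exactly because $H>0$. Consequently the classical Gronwall lemma does not apply and one must instead use the singular (fractional) Gronwall inequality, which then gives $\psi(T)\le C$ with $C$ independent of $n$, the desired assertion.

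The main obstacle is the Gronwall step. The Lipschitz/BDG manipulations leading to the integral inequality are routine, but closing the estimate requires the fractional Gronwall inequality adapted to the kernel $(t-s)^{2H-1}$, and one must check that feeding the discretized argument $X^n_{[ns]/n}$ through the nondecreasing envelope $\psi$ is compatible with the $t$-dependence of the singular kernel, so that the resulting constant genuinely stays uniform in $n$.
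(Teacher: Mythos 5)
Your proposal is correct, and it follows the standard route: the paper itself does not prove this lemma but cites \cite{FU}, and the argument there is precisely your scheme --- linear growth of $b,\sigma$ from the bounded derivatives, the moment estimates of Lemma \ref{basic lemma}(i)--(ii) applied to the two Volterra integrals in \eqref{Euler type scheme}, and a Gronwall inequality for weakly singular kernels.

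Two small points would make the write-up airtight. First, before the singular Gronwall lemma can be invoked you need the a priori finiteness of $\sup_{t\le T}\E[|X^n_t|^p]$ for each \emph{fixed} $n$; this is immediate by induction over the grid intervals $[\tfrac{i}{n},\tfrac{i+1}{n}]$, since on each such interval $X^n_t$ is a sum of deterministic integrals and Wiener integrals of deterministic kernels multiplied by $b,\sigma$ evaluated at earlier grid points. Second, the ``compatibility'' issue you flag at the end is harmless and resolves in one line: writing $\tilde K=K+K^2$ and $\psi(t)=\sup_{r\le t}\E[|X^n_r|^p]$, for any $r\le t$ the change of variables $v=r-s$, the monotonicity of $\psi$, and the positivity of $\tilde K$ give
\begin{align}
\int_0^r \tilde K(r-s)\,\psi(s)\,\d s=\int_0^r \tilde K(v)\,\psi(r-v)\,\d v\le \int_0^t \tilde K(v)\,\psi(t-v)\,\d v=\int_0^t \tilde K(t-s)\,\psi(s)\,\d s,\nonumber
\end{align}
so the inequality for $\E[|X^n_t|^p]$ upgrades to the same inequality for $\psi(t)$, and the Mittag--Leffler-type Gronwall bound then yields a constant depending only on $|X_0|,K,L_1,p,T$ and the derivative bounds, uniformly in $n$.
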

	\begin{lem}\cite{FU}\label{continuous of X n}
		Let $p>H^{-1}$. Then
		$$\E[|X^n_t-X^n_s|^p]\leq C|t-s|^{H p}, \quad t,s\in [0,T]$$
		and $X^n$ admits a version which is $\mathrm{H\ddot{o}lder}$ continuous on $[0,T]$ of any order $\a<H-p^{-1}$. Denoting this version again by $X^n$, one has
		$$\E\Big[\Big(\sup_{0\leq s\leq t\leq T}\frac{|X^n_t-X^n_s|}{|t-s|^{\a}}\Big)^p\Big]\leq C_{\a}$$
		for all $\a\in[0,H-p^{-1})$, where $C_{\a}$ is a constant. As a consequence, we can regard $X^n-X_0$ as a $\mathcal{C}^{\alpha}_{0}$ valued random variable for any $\alpha<H$.
	\end{lem}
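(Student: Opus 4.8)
The plan is to first establish the single-increment moment bound $\E[|X^n_t-X^n_s|^p]\le C|t-s|^{Hp}$ with a constant $C$ that is \emph{independent of $n$}, and then read off both the H\"older modification and the seminorm moment bound from the quantitative Kolmogorov--Chentsov theorem together with the Garsia--Rodemich--Rumsey (GRR) inequality. The argument is structurally identical to that for the true solution $X$ in Lemma \ref{continuous of X}; the only change is that the coefficients are evaluated at the frozen points $X^n_{\frac{[nu]}{n}}$, which causes no difficulty because every moment bound we use is uniform in $n$.

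For the increment bound, fix $0\le s<t\le T$, put $h=t-s$, and split the increment coming from \eqref{Euler type scheme} into a kernel-difference part over $[0,s]$ and a tail part over $[s,t]$, separately for the drift and the stochastic integral:
\begin{align*}
X^n_t-X^n_s&=\int_0^s\big(K(t-u)-K(s-u)\big)b(X^n_{\frac{[nu]}{n}})\,\d u+\int_s^t K(t-u)b(X^n_{\frac{[nu]}{n}})\,\d u\\
&\quad+\int_0^s\big(K(t-u)-K(s-u)\big)\sigma(X^n_{\frac{[nu]}{n}})\,\d W_u+\int_s^t K(t-u)\sigma(X^n_{\frac{[nu]}{n}})\,\d W_u.
\end{align*}
The two drift terms are controlled by Lemma \ref{basic lemma}(iii) and the two stochastic terms by Lemma \ref{basic lemma}(iv), each with this value of $h$. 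The coefficient moments $\sup_u\E[|b(X^n_{\frac{[nu]}{n}})|^p]$ and $\sup_u\E[\|\sigma(X^n_{\frac{[nu]}{n}})\|^p]$ are finite and uniformly bounded in $n$: the Lipschitz Assumption \ref{assumption 2.1} gives linear growth of $b$ and $\sigma$, and Lemma \ref{bound of X n} supplies $\sup_t\E[|X^n_t|^p]\le C$. Hence the drift contribution is $O(|t-s|^{(H+1/2)p})$ and the stochastic contribution is $O(|t-s|^{Hp})$; since $H+\frac{1}{2}>H$ and $|t-s|\le T$, the drift term is of higher order, and adding the four bounds yields $\E[|X^n_t-X^n_s|^p]\le C|t-s|^{Hp}$ with $C$ depending only on $|X_0|,K,L_1,p,T$ and the derivative bounds of $b,\sigma$.

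Since $p>H^{-1}$ we have $Hp>1$, so Kolmogorov--Chentsov produces a modification of $X^n$ that is a.s.\ $\alpha$-H\"older continuous for every $\alpha<H-p^{-1}$; relabelling this modification $X^n$ gives the second assertion. For the seminorm moment bound I would invoke the GRR inequality with $\Psi(x)=x^p$ and an auxiliary function of the form $p(u)=u^{\alpha+1/p}$: after taking expectations and using Tonelli, its right-hand side reduces to a double integral of $\E[|X^n_t-X^n_s|^p]/|t-s|^{\alpha p+1}$, which converges exactly when $\alpha<H-p^{-1}$, producing $\E[(\sup_{s<t}|X^n_t-X^n_s|/|t-s|^\alpha)^p]\le C_\alpha$ with $C_\alpha$ again independent of $n$. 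Finiteness of this quantity makes $X^n-X_0$ a $\mathcal{C}_0^\alpha$-valued random variable, and letting $p$ be large allows $\alpha$ to approach $H$, giving the final consequence.

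The main obstacle is not any individual estimate---each is routine---but the bookkeeping needed to ensure that \emph{every} constant is uniform in $n$. This is precisely what the cited ingredients are arranged to deliver: the constants in Lemma \ref{basic lemma} depend only on $K,p,T$, while the moment bound in Lemma \ref{bound of X n} is uniform in $n$. Once their outputs are fed into the Volterra increment decomposition above, the uniformity propagates automatically through Kolmogorov--Chentsov and GRR.
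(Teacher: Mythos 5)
Your proof is correct, and it is the argument one would expect: the paper itself gives no proof of this lemma at all (it is imported from \cite{FU} by citation, just as Lemma \ref{continuous of X} is imported from \cite{ALP}), so your write-up reconstructs what the citation hides. The increment decomposition of the scheme \eqref{Euler type scheme}, the application of Lemma \ref{basic lemma}(iii)--(iv) with $h=t-s$ to the four pieces, the uniform-in-$n$ moment bounds for $b(X^n_{[nu]/n})$ and $\sigma(X^n_{[nu]/n})$ coming from Assumption \ref{assumption 2.1} (linear growth) together with Lemma \ref{bound of X n}, and the passage from $\E[|X^n_t-X^n_s|^p]\le C|t-s|^{Hp}$ with $Hp>1$ to a H\"older modification via Kolmogorov--Chentsov are all correct, and the constants are visibly independent of $n$ at every stage, which is the only delicate point of the lemma.

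One bookkeeping correction in the Garsia--Rodemich--Rumsey step: with $\Psi(x)=x^p$ the auxiliary function should be $u\mapsto u^{\alpha+2/p}$, not $u^{\alpha+1/p}$, so that the GRR double integral carries the denominator $|t-s|^{\alpha p+2}$. Then $\int_0^T\int_0^T |t-s|^{Hp-\alpha p-2}\,\d s\,\d t<\infty$ precisely when $\alpha<H-p^{-1}$, and the GRR output bounds the seminorm of order $\alpha$ itself, which is the statement of the lemma. With your choice $u^{\alpha+1/p}$ (denominator $|t-s|^{\alpha p+1}$), the integral converges for all $\alpha<H$ --- not $\alpha<H-p^{-1}$ as you wrote --- but the inequality then only controls the seminorm of order $\alpha-1/p$; the two versions agree after relabeling, so the slip is cosmetic and no conclusion is affected, but as written your exponents are internally inconsistent.
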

	\begin{lem}\cite{FU}\label{rate X-X n}
		Under the assumption $H_{b,\sigma}$,   for any $p\geq 1$ the process $X_t-X^n_t$ uniformly converges to zero in $L^p$ with the rate $n^{-Hp}$ as $n$ goes to infinity, that is
		$$\sup_{t\in [0,T]}\E[|X_t-X^n_t|^p]\leq C n^{-H p},$$
		where $C$ is a positive constant which does not depend on $n$.
	\end{lem}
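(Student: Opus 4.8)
The plan is to derive an integral inequality for the $p$-th moment of the error and close it with a weakly singular Grönwall argument. Since $X^n$ solves \eqref{Euler type scheme} and $X$ solves \eqref{Volterra eq}, subtracting the two equations gives
\begin{align*}
X_t - X_t^n = \int_0^t K(t-s)\big(b(X_s) - b(X^n_{\frac{[ns]}{n}})\big)\,\d s + \int_0^t K(t-s)\big(\sigma(X_s) - \sigma(X^n_{\frac{[ns]}{n}})\big)\,\d W_s.
\end{align*}
First I would route each coefficient difference through the grid value of the exact solution, writing $b(X_s) - b(X^n_{\frac{[ns]}{n}}) = \big(b(X_s) - b(X_{\frac{[ns]}{n}})\big) + \big(b(X_{\frac{[ns]}{n}}) - b(X^n_{\frac{[ns]}{n}})\big)$ and likewise for $\sigma$. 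By the Lipschitz property (Assumption \ref{assumption 2.1}) the first bracket is $\le L_1|X_s - X_{\frac{[ns]}{n}}|$ and the second is $\le L_1|X_{\frac{[ns]}{n}} - X^n_{\frac{[ns]}{n}}|$, and the Hölder regularity of $X$ (Lemma \ref{continuous of X}) bounds $\|X_s - X_{\frac{[ns]}{n}}\|_{L^p} \le C|s - \frac{[ns]}{n}|^H \le Cn^{-H}$.

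Next I would feed this decomposition into the fractional moment estimates of Lemma \ref{basic lemma}(i)--(ii), obtaining, for $p\ge 2$,
\begin{align*}
\E\big[|X_t - X_t^n|^p\big] \le C\int_0^t K(t-s)\,\E\big[|X_s - X^n_{\frac{[ns]}{n}}|^p\big]\,\d s + C\int_0^t K(t-s)^2\,\E\big[|X_s - X^n_{\frac{[ns]}{n}}|^p\big]\,\d s.
\end{align*}
Using the split together with $\int_0^t K(t-s)\,\d s \le C$ and $\int_0^t K(t-s)^2\,\d s \le C$ (Lemma \ref{basic lemma}), the discretization contribution is $Cn^{-Hp}$, while the propagated contribution is dominated by $C\int_0^t\big(K(t-s) + K(t-s)^2\big)\phi(s)\,\d s$, where $\phi(s) := \sup_{r\le s}\E[|X_r - X_r^n|^p]$ is the nondecreasing error envelope, which absorbs the grid argument since $\frac{[ns]}{n}\le s$. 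The uniform moment bounds of Lemmas \ref{bound of X} and \ref{bound of X n} make every term finite and justify these manipulations.

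Because $\phi$ is nondecreasing and $K\ge 0$, the convolution $r\mapsto \int_0^r K(r-s)\phi(s)\,\d s$ is itself nondecreasing, so taking the supremum over $t'\le t$ yields the closed inequality
\begin{align*}
\phi(t) \le Cn^{-Hp} + C\int_0^t\Big((t-s)^{H-\frac12} + (t-s)^{2H-1}\Big)\phi(s)\,\d s,
\end{align*}
in which both exponents $H-\tfrac12$ and $2H-1$ exceed $-1$ for $H\in(0,1/2]$, so the kernel is integrable. Invoking a generalized Grönwall inequality for weakly singular kernels then gives $\phi(t)\le Cn^{-Hp}$ uniformly on $[0,T]$ with $C$ depending only on $K,L_1,p,T$ and the derivative bounds of $b,\sigma$; setting $t=T$ is the asserted estimate. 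The case $p>H^{-1}$ is established first (so that Lemma \ref{continuous of X} and the kernel estimates apply with the same $p$, using $H^{-1}\ge 2$), and all remaining $1\le p\le H^{-1}$ follow by Jensen's inequality.

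I expect the main obstacle to be the singular Grönwall step together with the bookkeeping preceding it. The classical Grönwall lemma fails here because $K(t-s)=(t-s)^{H-1/2}$ blows up as $s\uparrow t$, so one must instead iterate the convolution and sum the resulting Mittag-Leffler-type series, exploiting that repeated convolution of $(t-s)^{\beta-1}$ with $\beta>0$ progressively gains integrability; and one must carefully convert the pointwise-in-$t$ moment bound into the inequality for the nondecreasing envelope $\phi$ (via the monotonicity of the convolution noted above) before that singular Grönwall lemma can be applied.
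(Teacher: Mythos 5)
This lemma is imported by the paper from \cite{FU} with no proof given, so there is no in-paper argument to compare against; judged on its own, your proposal is a correct and complete proof, and it is essentially the standard argument used for such rate results. Each step checks out: the decomposition of $b(X_s)-b(X^n_{\frac{[ns]}{n}})$ through the grid value $X_{\frac{[ns]}{n}}$ of the exact solution, the moment inequalities of Lemma \ref{basic lemma}(i)--(ii) for $p\ge 2$, the $Cn^{-Hp}$ bound from Lemma \ref{continuous of X}, the passage to the nondecreasing envelope $\phi(s)=\sup_{r\le s}\E[|X_r-X^n_r|^p]$ (your monotonicity argument for $r\mapsto\int_0^r K(r-s)\phi(s)\,\d s$ is valid since $\phi\ge 0$ is nondecreasing and $K\ge 0$), and the weakly singular Gr\"onwall step, where one may further simplify by noting $(t-s)^{H-1/2}\le T^{1/2-H}(t-s)^{2H-1}$ on $(0,T]$, reducing to a single kernel of exponent $2H-1>-1$ so that Henry's lemma (or direct iteration of the convolution until the kernel becomes bounded) applies. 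Two points you handle correctly that are easy to miss: the a priori finiteness of $\phi$, via Lemmas \ref{bound of X} and \ref{bound of X n}, which is needed before the Gr\"onwall iteration is legitimate; and the ordering of the exponents --- proving the case $p>H^{-1}$ first (which forces $p\ge 2$ since $H\le 1/2$, so both Lemma \ref{basic lemma} and Lemma \ref{continuous of X} are available) and then deducing $1\le p\le H^{-1}$ by Jensen's inequality.
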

	Let us point out that the index $n$ in Lemmas \ref{bound of X n}, \ref{continuous of X n}, \ref{rate X-X n} can be replaced  by the index $mn$.
	\begin{lem}\label{rate of mn,n}
		Let $p>H^{-1}$. Then for $n,m\in\mathbb{N}/\{0\}$
		$$\E[|X^{mn}_\frac{[mns]}{mn}-X^{n}_\frac{[ns]}{n}|^p]\leq Cn^{-H p}, \quad s\in [0,T],$$
		where $C$ is a positive constant which does not depend on $n,m$.
	\end{lem}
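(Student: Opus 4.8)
The plan is to reduce this two-scheme estimate to the already established one-scheme convergence rates together with the H\"older regularity of the true solution $X$. Both ingredients are available: Lemma~\ref{rate X-X n} controls $\|X_t-X^n_t\|_{L^p}$ (and, by the remark preceding the statement, also $\|X_t-X^{mn}_t\|_{L^p}$), while Lemma~\ref{continuous of X} gives $\|X_t-X_s\|_{L^p}\le C|t-s|^H$ for $p>H^{-1}$. Accordingly, I would first insert the true solution $X$ evaluated at both grid points and use the triangle inequality in $L^p$:
\begin{align}
\big\|X^{mn}_{\frac{[mns]}{mn}}-X^{n}_{\frac{[ns]}{n}}\big\|_{L^p}
&\le \big\|X^{mn}_{\frac{[mns]}{mn}}-X_{\frac{[mns]}{mn}}\big\|_{L^p}\nonumber\\
&\quad+\big\|X_{\frac{[mns]}{mn}}-X_{\frac{[ns]}{n}}\big\|_{L^p}
+\big\|X_{\frac{[ns]}{n}}-X^{n}_{\frac{[ns]}{n}}\big\|_{L^p}.\nonumber
\end{align}

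Next I would estimate the three summands separately. The first and third terms are direct applications of Lemma~\ref{rate X-X n}: they are bounded by $C(mn)^{-H}$ and $Cn^{-H}$ respectively, hence both by $Cn^{-H}$. For the middle term, Lemma~\ref{continuous of X} reduces the problem to bounding the mesh gap $\big|\frac{[mns]}{mn}-\frac{[ns]}{n}\big|$. Here the only genuine observation is that the coarse grid point $\frac{[ns]}{n}$ is itself a point of the finer mesh, since $\frac1n=\frac{m}{mn}$, and it does not exceed $s$; consequently it is one of the competitors defining $\frac{[mns]}{mn}$, which yields the nesting $\frac{[ns]}{n}\le\frac{[mns]}{mn}\le s$ and therefore $0\le\frac{[mns]}{mn}-\frac{[ns]}{n}\le s-\frac{[ns]}{n}<\frac1n$. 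Feeding this into the H\"older estimate gives $\big\|X_{\frac{[mns]}{mn}}-X_{\frac{[ns]}{n}}\big\|_{L^p}\le C n^{-H}$.

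Combining the three bounds produces the claimed estimate $\E[|X^{mn}_{\frac{[mns]}{mn}}-X^{n}_{\frac{[ns]}{n}}|^p]^{1/p}\le Cn^{-H}$ uniformly in $s\in[0,T]$, with $C$ independent of $n$ and $m$ (the dependence on $m$ only improves the first summand). I do not expect any serious obstacle: the argument is essentially a triangle inequality around the common limit $X$, and the sole nonroutine step is the elementary comparison of the two grid points, which rests on the divisibility $n\mid mn$ of the uniform partitions. In particular, no new analysis of the singular kernel or of the stochastic integrals is needed, since all the kernel-dependent work has been absorbed into the cited lemmas.
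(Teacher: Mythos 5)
Your proof is correct. The paper's own argument is also a one-line triangle inequality, but with a different pivot: it inserts the common time $s$ and decomposes the error as
\begin{align}
X^{mn}_{\frac{[mns]}{mn}}-X^{n}_{\frac{[ns]}{n}}
=\Big(X^{mn}_{\frac{[mns]}{mn}}-X^{mn}_s\Big)+\Big(X^{mn}_s-X_s\Big)
+\Big(X_s-X^{n}_s\Big)+\Big(X^{n}_s-X^{n}_{\frac{[ns]}{n}}\Big),\nonumber
\end{align}
controlling the two outer terms by the H\"older continuity of the Euler schemes themselves (Lemma \ref{continuous of X n}, with $n$ replaced by $mn$ for the finer scheme) and the two inner terms by Lemma \ref{rate X-X n} evaluated at time $s$. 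You instead pivot through the exact solution at the two grid points, which requires the H\"older continuity of $X$ (Lemma \ref{continuous of X}) together with the grid-nesting observation $\frac{[ns]}{n}\le\frac{[mns]}{mn}\le s$, so that the gap between the two grid points is at most $s-\frac{[ns]}{n}<\frac1n$; the paper's choice of pivot makes that comparison unnecessary, since each grid point is automatically within $\frac1n$ (resp.\ $\frac{1}{mn}$) of $s$. Both routes rest on the same pair of ingredients, a strong convergence-rate lemma plus a H\"older-regularity lemma, and both yield a constant independent of $m$ (in your version the $m$-dependence only improves the $X^{mn}$ terms). Your nesting step, resting on $m[ns]\le[mns]$ because $m[ns]$ is an integer not exceeding $mns$, is elementary and correct, so there is no gap.
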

	\begin{proof}
		By Lemma \ref{continuous of X n} and Lemma \ref{rate X-X n} we have
		\begin{align}
			\E[|X^{mn}_\frac{[mns]}{mn}-X^{n}_\frac{[ns]}{n}|^p]&\leq C_p\E[|X^{mn}_\frac{[mns]}{mn}-X^{mn}_s|^p]+C_p\E[|X^{mn}_s-X_s|^p]\nonumber\\&\quad+C_p\E[|X_s-X^{n}_s|^p]+C_p\E[|X^{n}_s-X^{n}_\frac{[ns]}{n}|^p]\nonumber\\&\leq Cn^{-Hp}.\nonumber
		\end{align}
	\end{proof}
	\begin{lem}\cite{FU}
		For all $p\geq 1$ and $\varepsilon\in (0,H)$, under the assumption $H_{b,\sigma}$, there exists a constant $C>0$ which does not depend on $n$ such that
		$$\E\Big[\sup_{t\in [0,T]}|X_t-X_t^n|^p\Big]\leq Cn^{-p(H-\varepsilon)}.$$
	\end{lem}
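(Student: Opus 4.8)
The plan is to upgrade the pointwise-in-time estimate of Lemma~\ref{rate X-X n} to a uniform one by combining it with the joint H\"older regularity of $X$ and $X^n$ through a Garsia--Rodemich--Rumsey (GRR) argument, and then to remove the restriction to large exponents by Jensen's inequality. Write $e^n_t:=X_t-X^n_t$ and note that $e^n_0=0$ since both $X$ and $X^n$ start at $X_0$. Fix $\varepsilon\in(0,H)$ and an auxiliary exponent $p'$, to be taken large below. I would first collect two bounds on the increments of $e^n$. On the one hand, Lemma~\ref{rate X-X n} gives the smallness bound
$$
\E\big[|e^n_t-e^n_s|^{p'}\big]\le 2^{p'-1}\big(\E[|e^n_t|^{p'}]+\E[|e^n_s|^{p'}]\big)\le C\,n^{-Hp'}.
$$
On the other hand, Lemmas~\ref{continuous of X} and \ref{continuous of X n} give the regularity bound $\E[|e^n_t-e^n_s|^{p'}]\le C\,|t-s|^{Hp'}$. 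Interpolating via $\min(a,b)\le a^{\theta}b^{1-\theta}$ yields, for every $\theta\in[0,1]$,
$$
\E\big[|e^n_t-e^n_s|^{p'}\big]\le C\,n^{-H\theta p'}\,|t-s|^{H(1-\theta)p'}.
$$

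Next I would feed this into the GRR inequality with $\Psi(x)=x^{p'}$ and $p(u)=u^{\gamma}$. Taking expectations, the GRR control quantity satisfies
$$
\E\Big[\int_0^T\!\!\int_0^T\frac{|e^n_u-e^n_v|^{p'}}{|u-v|^{\gamma p'}}\,\d u\,\d v\Big]\le C\,n^{-H\theta p'}\int_0^T\!\!\int_0^T|u-v|^{H(1-\theta)p'-\gamma p'}\,\d u\,\d v ,
$$
and the double integral on the right is finite provided $\gamma<H(1-\theta)+1/p'$. The GRR conclusion then controls the $(\gamma-2/p')$-H\"older seminorm of $e^n$ provided $\gamma>2/p'$, and since $e^n_0=0$ this bounds the supremum:
$$
\E\Big[\sup_{t\in[0,T]}|e^n_t|^{p'}\Big]\le C\,n^{-H\theta p'}.
$$
The two constraints on $\gamma$ are compatible exactly when $2/p'<H(1-\theta)+1/p'$, i.e. $\theta<1-\tfrac{1}{Hp'}$; to reach the target rate I also need $H\theta>H-\varepsilon$, i.e. $\theta>1-\varepsilon/H$. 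Both hold simultaneously once $p'>1/\varepsilon$, so for such large $p'$ I obtain $\E[\sup_{t}|e^n_t|^{p'}]\le C\,n^{-(H-\varepsilon)p'}$.

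Finally, for an arbitrary $p\ge 1$ I would pick $p'\ge p$ large enough for the previous step and apply Jensen's (Lyapunov's) inequality,
$$
\E\Big[\sup_{t\in[0,T]}|e^n_t|^{p}\Big]\le\Big(\E\Big[\sup_{t\in[0,T]}|e^n_t|^{p'}\Big]\Big)^{p/p'}\le C\,n^{-(H-\varepsilon)p},
$$
which is the claimed estimate. The main obstacle is the simultaneous calibration of the GRR exponent $\gamma$ and the interpolation weight $\theta$: one must keep the double integral convergent ($\gamma<H(1-\theta)+1/p'$) and the embedding non-degenerate ($\gamma>2/p'$) while pushing $H\theta$ past $H-\varepsilon$. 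This is precisely what forces the auxiliary exponent $p'$ to be large and accounts for the unavoidable loss of $\varepsilon$ relative to the pointwise rate $n^{-H}$.
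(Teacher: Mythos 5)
Your proof is correct. The paper itself offers no argument for this lemma---it is imported directly from \cite{FU}---and your route (interpolating the pointwise rate of Lemma \ref{rate X-X n} against the $H$-H\"older increment bounds of Lemmas \ref{continuous of X} and \ref{continuous of X n}, feeding the interpolated bound into a Garsia--Rodemich--Rumsey embedding, and finishing with Lyapunov's inequality to cover all $p\ge 1$) is essentially the standard argument used in that reference, where the GRR step is packaged as its Lemma B.1. Your exponent bookkeeping is sound ($\gamma>2/p'$, $\gamma<H(1-\theta)+1/p'$, and $H\theta\ge H-\varepsilon$ are simultaneously satisfiable once $p'>1/\varepsilon$); the only point left implicit is that Lemmas \ref{continuous of X} and \ref{continuous of X n} require $p'>H^{-1}$, which is automatic here since $p'>1/\varepsilon>1/H$.
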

	\section{Fractional singular kernel caluclus}
	Now let us introduce some notations for $s\in [0,T]$:
	\begin{align}\label{basic types}
		&\delta_{(n,s)}=s-\frac{[ns]}{n},\quad\delta_{(mn,u)}=s-\frac{[mns]}{mn},\quad \delta_{(mn,n,s)}=\frac{[mns]}{mn}-\frac{[ns]}{n}\nonumber\\ &G=\Gamma(H+1/2)^2, 
		\quad \mu(r,y)=(r+y)^{H-1/2}-r^{H-1/2}.
	\end{align}
	The following $L^1$ and $L^2$ bounds associated with fractional singular kernel will  also be used in the  sequel.
	
	\begin{lem}\label{est-1}
		For any $s\in [0,T], m\in\mathbb{N}\backslash \{0,1\} $, we have
					$$(i)\quad\int_{0}^{\frac{[ns]}{n}}K(s-u)-K(\frac{[n s]}{n}-u)\d u=O\left(n^{-(H+1/2)}\right),$$
					$$(ii)\quad\int_{0}^{\frac{[ns]}{n}}K(\frac{[mn s]}{mn}-u)-K(\frac{[n s]}{n}-u)\d u=O\left(n^{-(H+1/2)}\right),$$
		$$(iii)\quad\int_{\frac{[ns]}{n}}^{\frac{[mns]}{mn}}K(s-u)-K(\frac{[mn s]}{mn}-u)\d u=O\left(n^{-(H+1/2)}\right),$$
		$$(iv)\quad\int_{\frac{[ns]}{n}}^{\frac{[mns]}{mn}}K(s-u)\d u=O\left(n^{-(H+1/2)}\right),$$
		$$(v)\quad\int_{\frac{[mns]}{mn}}^{s}K(s-u)\d u=O\left(n^{-(H+1/2)}\right).$$	
	\end{lem}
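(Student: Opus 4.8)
The plan is to reduce all five estimates to the two elementary bounds already recorded in Lemma \ref{basic lemma}, namely $\int_0^h K(t)\,\d t = O(h^{H+1/2})$ and $\int_0^T (K(t+h)-K(t))\,\d t = O(h^{H+1/2})$, by performing the change of variables $v = s-u$ for the single-kernel terms and $t = \frac{[ns]}{n}-u$ or $t = \frac{[mns]}{mn}-u$ for the kernel-difference terms. The crucial structural fact is that every mesh gap appearing here is controlled by $1/n$: writing $\delta_{(n,s)} = s - \frac{[ns]}{n}$, $\delta_{(mn,s)} = s - \frac{[mns]}{mn}$ and $\delta_{(mn,n,s)} = \frac{[mns]}{mn}-\frac{[ns]}{n}$ as in \eqref{basic types}, one has $0 \le \delta_{(mn,s)}\le \frac{1}{mn}\le \frac1n$, $0\le \delta_{(n,s)}\le \frac1n$ and $0\le \delta_{(mn,n,s)}\le \delta_{(n,s)}\le\frac1n$, uniformly in $s\in[0,T]$ and in $m\ge 2$ (the last inequality uses $\frac{[mns]}{mn}\le s$).

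First I would dispatch (iv) and (v), which involve a single kernel. Substituting $v = s-u$ turns (v) into $\int_0^{\delta_{(mn,s)}}K(v)\,\d v$ and (iv) into $\int_{\delta_{(mn,s)}}^{\delta_{(n,s)}}K(v)\,\d v \le \int_0^{\delta_{(n,s)}}K(v)\,\d v$; the first bound in Lemma \ref{basic lemma} then gives $O(\delta_{(mn,s)}^{H+1/2})$ and $O(\delta_{(n,s)}^{H+1/2})$ respectively, both of which are $O(n^{-(H+1/2)})$.

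Next I would treat the three kernel-difference terms (i), (ii), (iii) in a uniform way. After substitution the integrand of each becomes $K(t+h)-K(t)$ for an appropriate shift: $h=\delta_{(n,s)}$ in (i) (since $s-u = (\frac{[ns]}{n}-u)+\delta_{(n,s)}$), $h=\delta_{(mn,n,s)}$ in (ii) (since $\frac{[mns]}{mn}-u = (\frac{[ns]}{n}-u)+\delta_{(mn,n,s)}$), and $h=\delta_{(mn,s)}$ in (iii) (since $s-u = (\frac{[mns]}{mn}-u)+\delta_{(mn,s)}$). Because $K$ is monotone decreasing, the integrand has a fixed sign, so $\big|\int_0^L (K(t+h)-K(t))\,\d t\big| = \int_0^L |K(t+h)-K(t)|\,\d t \le \int_0^T |K(t+h)-K(t)|\,\d t$ for the relevant upper limit $L\le T$; the second bound in Lemma \ref{basic lemma} then yields $O(h^{H+1/2})$, and inserting the three shifts (each $\le 1/n$) gives the claim.

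The computations are entirely routine once the bounds of Lemma \ref{basic lemma} are in hand; the only point requiring a little care is the sign/absolute-value issue in (i)--(iii). Lemma \ref{basic lemma} is phrased for the signed integral over $[0,T]$, so one must invoke the monotonicity of $K$ to pass to $\int_0^T|K(t+h)-K(t)|\,\d t$, and in case (iii) to legitimately enlarge the domain of integration from $[0,\delta_{(mn,n,s)}]$ to $[0,T]$. Beyond that, the substance is simply keeping track of the fact that all three mesh gaps are simultaneously $O(1/n)$ uniformly in $m$ and $s$, which is exactly what makes the common exponent $H+1/2$ surface in every one of the five estimates.
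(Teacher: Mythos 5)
Your proposal is correct and follows essentially the same route as the paper's own proof: a change of variables reduces each term to the elementary kernel bounds of Lemma \ref{basic lemma}, with the monotonicity (fixed sign) of $K(t+h)-K(t)$ used to enlarge the integration domain to $[0,T]$ in (i)--(iii), and the mesh gaps $\delta_{(n,s)},\delta_{(mn,s)},\delta_{(mn,n,s)}$ all bounded by $1/n$. The only cosmetic difference is that you phrase the sign step via absolute values while the paper writes the chain of inequalities $0\geq \cdots \geq \int_0^T(K(z+h)-K(z))\,\d z$; the content is identical.
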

	\begin{proof}
		For the cliam $(i)$, by the change of variable $z=\frac{[n s]}{n}-u$ and  the property of fractional kernels, we have
		\begin{align}
			0&\geq \int_{0}^{\frac{[ns]}{n}}K(s-u)-K(\frac{[n s]}{n}-u)\d u\nonumber\\&=\int_{0}^{\frac{[ns]}{n}}K(z+s-\frac{[ns]}{n})-K(z)\d z\nonumber\\&\geq \int_{0}^{T}K(z+s-\frac{[ns]}{n})-K(z)\d z=O\left(\Big(s-\frac{[ns]}{n}\Big)^{H+1/2}\right)=O\left(n^{-(H+1/2)}\right).\nonumber
		\end{align}
		For the cliam $(ii)$, by the change of variable $z=\frac{[n s]}{n}-u$ and  the property of fractional kernels, we have
			\begin{align}
				0&\geq \int_{0}^{\frac{[ns]}{n}}K(\frac{[mn s]}{mn}-u)-K(\frac{[n s]}{n}-u)\d u\nonumber\\&=\int_{0}^{\frac{[ns]}{n}}K(z+\frac{[mn s]}{mn}-\frac{[ns]}{n})-K(z)\d z\nonumber\\&\geq \int_{0}^{T}K(z+\frac{[mn s]}{mn}-\frac{[ns]}{n})-K(z)\d z=O\left(\Big(\frac{[mn s]}{mn}-\frac{[ns]}{n}\Big)^{H+1/2}\right)=O\left(n^{-(H+1/2)}\right).\nonumber
			\end{align}
		For the cliam $(iii)$, by the change of variable  $z=\frac{[mn s]}{mn}-u$ and the property of fractional kernels, we have
		\begin{align}
			0&\geq \int_{\frac{[ns]}{n}}^{\frac{[mns]}{mn}}K(s-u)-K(\frac{[mn s]}{mn}-u)\d u\nonumber\\&=\int_{\frac{[mns]}{mn}-\frac{[ns]}{n}}^{\frac{[mns]}{mn}}K(z+s-
			\frac{[mn s]}{mn})-K(z)\d z\nonumber\\&\geq \int_{0}^{T}K(z+s-\frac{[mn s]}{mn})-K(z)\d z=O\left(\Big(s-\frac{[mns]}{mn}\Big)^{H+1/2}\right)=O\left(n^{-(H+1/2)}\right).\nonumber
		\end{align}
	For the cliam $(iv)$ and cliam $(v)$ , a direct computation gives that 
	\begin{align}
		\int_{\frac{[ns]}{n}}^{\frac{[mns]}{mn}}K(s-u)\d u&=O\left(\Big(\frac{[mns]}{mn}-\frac{[ns]}{n}\Big)^{H+1/2}\right)=O\left(n^{-(H+1/2)}\right)\nonumber\\	\int_{\frac{[mns]}{mn}}^{s}K(s-u)\d u&=O\left(\Big(s-\frac{[mns]}{mn}\Big)^{H+1/2}\right)=O\left(n^{-(H+1/2)}\right).\nonumber
	\end{align}
		The proof is complete.
	\end{proof}
	\begin{lem}\label{est-2}
		For $s\in [0,T]$, we have
		$$ n^{2H}\int_{\frac{[ns]}{n}}^{\frac{[mns]}{mn}}\Big|K(s-u)-K(\frac{[mns]}{mn}-u)\Big|^2\d u\leq C,$$
		where $C$ does not depend on $n$.
	\end{lem}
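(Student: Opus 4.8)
The plan is to reduce the claim to a one–dimensional estimate for a shifted fractional kernel over a short interval, and then exploit the monotonicity of $K$ when $H<1/2$ (the case $H=1/2$ being trivial, since $K$ is then constant).

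First I would set $a=\frac{[ns]}{n}$ and $b=\frac{[mns]}{mn}$ and record the elementary ordering $a\le b\le s$ together with the length quantities $\delta:=b-a$ and $\eta:=s-b$, which satisfy $\delta+\eta=s-\frac{[ns]}{n}=\delta_{(n,s)}<\tfrac1n$; in particular $0\le\delta\le\tfrac1n$ and $0\le\eta\le\tfrac1n$. Performing the change of variables $v=b-u$ converts the integral into
$$n^{2H}\int_{a}^{b}\big|K(s-u)-K(b-u)\big|^2\d u=n^{2H}\int_{0}^{\delta}\big|K(v+\eta)-K(v)\big|^2\d v,$$
since $s-u=v+\eta$ and $b-u=v$. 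This isolates the two genuinely small scales $\delta$ and $\eta$.

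Next, for $H=1/2$ the kernel is constant, the integrand vanishes identically, and the bound is trivial. For $H\in(0,1/2)$ the exponent $H-\tfrac12$ is strictly negative, so $v\mapsto v^{H-1/2}$ is nonnegative and decreasing; since $v+\eta\ge v>0$ this yields $0\le v^{H-1/2}-(v+\eta)^{H-1/2}\le v^{H-1/2}$, that is, the pointwise bound $|K(v+\eta)-K(v)|\le K(v)$. The only place where a little care is needed is precisely here: this monotonicity observation lets me discard $\eta$ altogether and avoid expanding the square, which would otherwise produce the awkward cross term $\int_0^\delta (v+\eta)^{H-1/2}v^{H-1/2}\d v$. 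The crude bound already suffices because we only seek an $O(1)$ upper bound rather than sharp asymptotics.

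Finally I would integrate $K(v)^2=\tfrac1G v^{2H-1}$ over $[0,\delta]$, which converges at the origin exactly because $2H>0$, to obtain
$$n^{2H}\int_{0}^{\delta}\big|K(v+\eta)-K(v)\big|^2\d v\le \frac{n^{2H}}{G}\int_{0}^{\delta}v^{2H-1}\d v=\frac{(n\delta)^{2H}}{2HG}.$$
Since $n\delta\le n\,\delta_{(n,s)}<1$, we have $(n\delta)^{2H}\le1$, so the expression is bounded by $\frac{1}{2HG}$, a constant depending only on $H$ and independent of $n$, $m$ and $s$, which proves the lemma. I do not anticipate any genuine obstacle; the substantive steps are merely the change of variables and the elementary bound $\delta\le\tfrac1n$.
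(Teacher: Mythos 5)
Your proof is correct, and it takes a genuinely different route from the paper's. The paper's own argument first \emph{enlarges} the domain of integration from $[\frac{[ns]}{n},\frac{[mns]}{mn}]$ to $[0,\frac{[mns]}{mn}]$ (the integrand being nonnegative), substitutes $z=\frac{[mns]}{mn}-u$, and then uses the scaling identity $\mu(z,\delta)=\delta^{H-1/2}\mu(z/\delta,1)$ with $\delta=\delta_{(mn,s)}=s-\frac{[mns]}{mn}$ to obtain the bound $\frac{\delta_{(mn,s)}^{2H}}{G}\int_0^{\infty}|\mu(r,1)|^2\,\d r\leq \frac{n^{-2H}}{G}\int_0^{\infty}|\mu(r,1)|^2\,\d r$; in other words, it exploits the smallness of the \emph{shift} $s-\frac{[mns]}{mn}$ and relies on the square-integrability of $\mu(\cdot,1)$ on all of $(0,\infty)$, a fact that requires checking both the origin (where $2H-1>-1$) and infinity (where $\mu(r,1)\sim (H-\frac{1}{2})r^{H-3/2}$). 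You instead keep the short domain $[0,\delta]$ with $\delta=\frac{[mns]}{mn}-\frac{[ns]}{n}\leq \delta_{(n,s)}<\frac{1}{n}$, discard the shift $\eta=s-\frac{[mns]}{mn}$ entirely through the monotonicity bound $|K(v+\eta)-K(v)|\leq K(v)$, and integrate $K(v)^2=\frac{1}{G}v^{2H-1}$ over an interval of length at most $1/n$ — that is, you exploit the smallness of the \emph{domain} rather than of the shift. Your argument is more elementary and self-contained, needing no global integrability property of $\mu(\cdot,1)$; what the paper's scaling template buys is uniformity of method (Lemmas \ref{est-3_m} and \ref{est-4} are proved by the identical substitution) and a bound expressed through the constant $\int_0^{\infty}\mu(r,1)^2\,\d r$ that reappears in the limit computations of Section \ref{Error analysis of U}. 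Since the present lemma only asserts an $O(1)$ bound, your cruder constant $\frac{1}{2HG}$ is perfectly adequate.
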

	\begin{proof}
		Let $z=\frac{[mns]}{mn}-u$ and $r=z/\delta_{(mn,s)}$ we have
		\begin{align}
			&\int_{\frac{[ns]}{n}}^{\frac{[mns]}{mn}}\Big|K(s-u)-K(\frac{[mns]}{mn}-u)\Big|^2\d u\nonumber\\&\leq\int_{0}^{\frac{[mns]}{mn}}\Big|K(s-u)-K(\frac{[mns]}{mn}-u)\Big|^2\d u\nonumber\\&=\frac{1}{G}\int_{0}^{\frac{[mns]}{mn}}\Big|\mu(z,s-\frac{[mns]}{mn})\Big|^2\d z\nonumber\\&= \frac{\delta_{(mn,s)}^{2H}}{G}\int_{0}^{\frac{[mns]}{mn\delta_{(mn,s)}}}|\mu(r,1)|^2\d r\nonumber\\&\leq \frac{n^{-2H}}{G}\int_{0}^{\infty}|\mu(r,1)|^2\d r.\nonumber
		\end{align}
		The proof is complete.
	\end{proof}
	\begin{lem}\label{est-2 m}
		For $s\in [0,T]$, we have
		$$ n^{2H}\int_{\frac{[ns]}{n}}^{\frac{[mns]}{mn}}\Big|K(s-u)\Big|^2\d u\leq C,$$
		where $C$ does not depend on $n$.
	\end{lem}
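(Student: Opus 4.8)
The plan is to exploit the fact that, unlike in Lemma~\ref{est-2}, the integrand here carries no increment (difference) structure, so the square of the kernel can simply be integrated in closed form. First I would write $K(s-u)^2=(s-u)^{2H-1}/G$, where $G=\Gamma(H+1/2)^2$, so that the quantity to be estimated becomes
\[
n^{2H}\int_{\frac{[ns]}{n}}^{\frac{[mns]}{mn}}|K(s-u)|^2\d u=\frac{n^{2H}}{G}\int_{\frac{[ns]}{n}}^{\frac{[mns]}{mn}}(s-u)^{2H-1}\d u.
\]

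Next I would perform the change of variable $z=s-u$. Since $\frac{[ns]}{n}\le \frac{[mns]}{mn}\le s$, the lower endpoint maps to $z=\delta_{(n,s)}$ and the upper endpoint to $z=\delta_{(mn,s)}$, with $\delta_{(mn,s)}\le\delta_{(n,s)}$. Because $2H-1\in(-1,0]$ for $H\in(0,1/2]$, the function $z^{2H-1}$ is integrable near the origin and admits the explicit primitive $z^{2H}/(2H)$, giving
\[
\frac{n^{2H}}{G}\int_{\delta_{(mn,s)}}^{\delta_{(n,s)}}z^{2H-1}\d z=\frac{n^{2H}}{2HG}\big(\delta_{(n,s)}^{2H}-\delta_{(mn,s)}^{2H}\big).
\]

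Finally, discarding the nonnegative subtracted term and using the elementary bound $\delta_{(n,s)}=s-\frac{[ns]}{n}<1/n$, I would conclude
\[
n^{2H}\int_{\frac{[ns]}{n}}^{\frac{[mns]}{mn}}|K(s-u)|^2\d u\le\frac{n^{2H}}{2HG}\,\delta_{(n,s)}^{2H}\le\frac{1}{2HG},
\]
which furnishes the asserted constant, independent of $n$ (and in fact of $m$ and $s$ as well). I do not anticipate any genuine obstacle in this computation; the only point deserving a word of care is that the exponent $2H-1$ is negative (strictly so whenever $H<1/2$), but the singularity of $z^{2H-1}$ at the origin is integrable and the explicit antiderivative remains valid, so the closed-form evaluation is legitimate across the whole range $H\in(0,1/2]$.
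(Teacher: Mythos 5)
Your proof is correct and follows essentially the same route as the paper: a direct closed-form integration of $K(s-u)^2=(s-u)^{2H-1}/G$ over the interval, followed by the elementary bound $\delta_{(n,s)}\le 1/n$. If anything, your version is slightly more careful than the paper's one-line computation, which asserts the integral \emph{equals} $\frac{1}{2HG}\delta_{(mn,n,s)}^{2H}$ — an identity that for $H<1/2$ is really only an upper bound (by concavity of $x\mapsto x^{2H}$) — whereas you keep the exact value $\frac{1}{2HG}\big(\delta_{(n,s)}^{2H}-\delta_{(mn,s)}^{2H}\big)$ before discarding the nonnegative subtracted term.
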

	\begin{proof}
		It is clear that
		\begin{align}
			\int_{\frac{[ns]}{n}}^{\frac{[mns]}{mn}}\Big|K(s-u)\Big|^2\d u=\frac{1}{2HG}\delta^{2H}_{(mn,n,s)}\leq Cn^{-2H}\nonumber
		\end{align}
		The proof is complete.
	\end{proof}
	\begin{lem}[\cite{FU}] \label{est-3}
		For $s\in [0,T]$, we have
		$$(i)\quad n^{2H}\int_{\frac{[ns]}{n}}^{s}\Big|K(s-u)\Big|^2\d u=\frac{1}{2HG}\delta_{(n,s)}^{2H}\leq C,$$
		and
		$$(ii)\quad n^{2H}\int_{0}^{\frac{[ns]}{n}}\Big|K(s-u)-K(\frac{[ns]}{n}-u)\Big|^2\d u\leq C,$$
		where $C$ does not depend on $n$.
	\end{lem}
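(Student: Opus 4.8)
The plan is to evaluate both integrals in closed form by elementary substitutions and then control them through the elementary bound $n\delta_{(n,s)}<1$, where $\delta_{(n,s)}=s-\frac{[ns]}{n}\in[0,1/n)$.

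First, for part (i), I would use $K(s-u)^2=(s-u)^{2H-1}/G$ with $G=\Gamma(H+1/2)^2$ and substitute $z=s-u$. This turns the integral over $[\frac{[ns]}{n},s]$ into $\frac1G\int_0^{\delta_{(n,s)}}z^{2H-1}\,\d z=\frac{1}{2HG}\delta_{(n,s)}^{2H}$, which is the claimed identity. Multiplying by $n^{2H}$ produces $\frac{1}{2HG}\big(n\delta_{(n,s)}\big)^{2H}$, and since $n\delta_{(n,s)}<1$ this is bounded above by $\frac{1}{2HG}=:C$, independent of $n$.

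For part (ii), the idea is to first \emph{center} the kernel difference. Substituting $z=\frac{[ns]}{n}-u$ sends $s-u$ to $z+\delta_{(n,s)}$, so that, recalling $\mu(r,y)=(r+y)^{H-1/2}-r^{H-1/2}$ from \eqref{basic types}, the integral becomes $\frac1G\int_0^{[ns]/n}|\mu(z,\delta_{(n,s)})|^2\,\d z$. I would then exploit the scaling relation $\mu(\delta r,\delta)=\delta^{H-1/2}\mu(r,1)$: the further substitution $z=\delta_{(n,s)}r$ factors out $\delta_{(n,s)}^{2H}$ and yields $\frac{\delta_{(n,s)}^{2H}}{G}\int_0^{([ns]/n)/\delta_{(n,s)}}|\mu(r,1)|^2\,\d r$. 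Multiplying by $n^{2H}$ and enlarging the upper limit to $\infty$ then gives the bound $\frac{(n\delta_{(n,s)})^{2H}}{G}\int_0^\infty|\mu(r,1)|^2\,\d r\le\frac1G\int_0^\infty|\mu(r,1)|^2\,\d r$.

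The only substantive point, and hence the main obstacle, is the finiteness of $\int_0^\infty|\mu(r,1)|^2\,\d r$. Near $r=0$ one has $|\mu(r,1)|^2\sim r^{2H-1}$, which is integrable precisely because $H>0$; as $r\to\infty$, a Taylor expansion gives $\mu(r,1)=(H-\tfrac12)r^{H-3/2}+o(r^{H-3/2})$, so $|\mu(r,1)|^2\sim r^{2H-3}$, which is integrable because $H<1$. Alternatively, this finiteness is already recorded in the identity $2H\int_0^\infty|\mu(r,1)|^2\,\d r+1=\frac{G}{\Gamma(2H)\sin\pi H}$ invoked in the proof of Lemma \ref{est of A}-(iii). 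Once this constant is in hand, both bounds in (i) and (ii) follow immediately, with $C$ depending only on $H$ and $G$ but not on $n$.
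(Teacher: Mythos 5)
Your proof is correct and follows essentially the same route as the paper: although the paper itself only cites \cite{FU} for this lemma, its proofs of the analogous Lemmas \ref{est-2}, \ref{est-3_m} and \ref{est-4} use exactly your substitution-and-scaling argument, reducing everything to $(n\delta_{(n,s)})^{2H}\le 1$ times $\frac{1}{G}\int_0^\infty|\mu(r,1)|^2\,\d r$ (and, for part (i), the exact evaluation $\frac{1}{2HG}\delta_{(n,s)}^{2H}$). The finiteness of $\int_0^\infty|\mu(r,1)|^2\,\d r$, which you justify by the asymptotics $r^{2H-1}$ at $0$ and $r^{2H-3}$ at infinity, is the same ingredient the paper relies on throughout.
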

	\begin{lem}\label{est-3_m}
		For $s\in [0,T]$ and $m\in\mathbb{N}/\{0\}$, we have
		$$(i)\quad n^{2H}\int_{\frac{[mns]}{mn}}^{s}\Big|K(s-u)\Big|^2\d u=\frac{1}{2HG}\delta_{(n,s)}^{2H}\leq C,$$
		$$(ii)\quad n^{2H}\int_{0}^{\frac{[mns]}{mn}}\Big|K(s-u)-K(\frac{[mns]}{mn}-u)\Big|^2\d u\leq C,$$
		and
		$$(iii)\quad n^{2H}\int_{\frac{[ns]}{n}}^{\frac{[mns]}{mn}}\Big|K(s-u)-K(\frac{[mns]}{mn}-u)\Big|^2\d u\leq C,$$
		where $C$ does not depend on $n$.
	\end{lem}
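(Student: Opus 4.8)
The plan is to handle all three estimates by the elementary device already used for Lemma \ref{est-3} and Lemma \ref{est-2}: via a translation of the integration variable, reduce each integral either to an integral of a pure power or to one involving the kernel difference $\mu(r,y)=(r+y)^{H-1/2}-r^{H-1/2}$, and then exploit the homogeneity $\mu(z,y)=y^{H-1/2}\mu(z/y,1)$ together with the finiteness of $\int_0^\infty|\mu(r,1)|^2\,\d r$. The powers of $n$ will cancel because the relevant mesh gaps satisfy $n\delta_{(mn,s)}=n\big(s-\tfrac{[mns]}{mn}\big)<\tfrac1m\le 1$.

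For part (i), I would substitute $z=s-u$, obtaining
$$\int_{\frac{[mns]}{mn}}^{s}|K(s-u)|^2\,\d u=\int_0^{\delta_{(mn,s)}}K(z)^2\,\d z=\frac{1}{G}\int_0^{\delta_{(mn,s)}}z^{2H-1}\,\d z=\frac{1}{2HG}\,\delta_{(mn,s)}^{2H}.$$
Multiplying by $n^{2H}$ and using $n\delta_{(mn,s)}\le \tfrac1m\le 1$ gives $n^{2H}\cdot\tfrac{1}{2HG}\delta_{(mn,s)}^{2H}=\tfrac{1}{2HG}(n\delta_{(mn,s)})^{2H}\le \tfrac{1}{2HG}$, which is the assertion (the exact prefactor carries $\delta_{(mn,s)}$ rather than $\delta_{(n,s)}$, but only the bound is needed).

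For part (ii), the substitution $z=\tfrac{[mns]}{mn}-u$ turns $s-u$ into $z+\delta_{(mn,s)}$, so the integrand becomes $\tfrac1G|\mu(z,\delta_{(mn,s)})|^2$ and
$$\int_0^{\frac{[mns]}{mn}}\Big|K(s-u)-K\big(\tfrac{[mns]}{mn}-u\big)\Big|^2\,\d u=\frac{1}{G}\int_0^{\frac{[mns]}{mn}}|\mu(z,\delta_{(mn,s)})|^2\,\d z.$$
Rescaling $r=z/\delta_{(mn,s)}$ and invoking $\mu(z,\delta_{(mn,s)})=\delta_{(mn,s)}^{H-1/2}\mu(r,1)$, this equals $\tfrac{\delta_{(mn,s)}^{2H}}{G}\int_0^{[mns]/(mn\delta_{(mn,s)})}|\mu(r,1)|^2\,\d r$, which I bound above by $\tfrac{\delta_{(mn,s)}^{2H}}{G}\int_0^\infty|\mu(r,1)|^2\,\d r$. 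Multiplying by $n^{2H}$ and using $n\delta_{(mn,s)}\le 1$ once more yields the constant bound $\tfrac1G\int_0^\infty|\mu(r,1)|^2\,\d r$. The only nontrivial input is the convergence of this last integral, which holds because $|\mu(r,1)|^2\sim r^{2H-1}$ as $r\to0$ (integrable since $H>0$) and $|\mu(r,1)|^2\sim(H-\tfrac12)^2 r^{2H-3}$ as $r\to\infty$ (integrable since $H<1$); this is precisely the fact already exploited in the proof of Lemma \ref{est of A}-(iii) (and it is trivial at $H=\tfrac12$, where $\mu\equiv0$).

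Finally, part (iii) requires no new work: the integrand is nonnegative and the interval $[\tfrac{[ns]}{n},\tfrac{[mns]}{mn}]$ is contained in $[0,\tfrac{[mns]}{mn}]$, since $[mns]\ge m[ns]$ forces $\tfrac{[mns]}{mn}\ge\tfrac{[ns]}{n}$; hence (iii) follows immediately by monotonicity of the integral from (ii), and in fact it is exactly the statement of Lemma \ref{est-2}. I do not anticipate any genuine obstacle in this lemma — it is a routine consequence of the kernel scaling — and the only point needing care is the elementary mesh bound $n\delta_{(mn,s)}\le 1$ that makes the factors $n^{2H}$ and $\delta_{(mn,s)}^{2H}$ cancel cleanly.
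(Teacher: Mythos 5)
Your proposal is correct and follows essentially the same route as the paper: (i) by direct computation of the power integral, (ii) by the substitution $z=\tfrac{[mns]}{mn}-u$ and the rescaling $r=z/\delta_{(mn,s)}$ using the homogeneity of $\mu(\cdot,1)$ together with the finiteness of $\int_0^\infty|\mu(r,1)|^2\,\d r$, and (iii) by enlarging the domain of integration to $[0,\tfrac{[mns]}{mn}]$, which is exactly how the paper handles the corresponding Lemma \ref{est-2}. You also correctly observe that the exact value in (i) should involve $\delta_{(mn,s)}$ rather than $\delta_{(n,s)}$ (a typo in the paper's statement), and that only the uniform bound matters.
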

	\begin{proof}
		For the cliam $(i)$, a direct computation gives that
		\begin{align}
			n^{2H}\int_{\frac{[mns]}{mn}}^{s}\Big|K(s-u)\Big|^2\d u=\frac{1}{2HG}\delta_{(mn,s)}^{2H}.\nonumber
		\end{align}
		For the cliam $(ii)$, let $z=\frac{[mns]}{mn}-u$ and $v=z/\delta_{(mn,s)}$ we have
		\begin{align}
			&\int_{0}^{\frac{[mns]}{mn}}\Big|K(s-u)-K(\frac{[mns]}{mn}-u)\Big|^2\d u\nonumber\\&=\frac{1}{G}\int_{0}^{\frac{[mns]}{mn}}\Big|\mu(z,s-\frac{[mns]}{mn})\Big|^2\d z\nonumber\\&= \frac{\delta_{(mn,s)}^{2H}}{G}\int_{0}^{\frac{[mns]}{mn\delta_{(mn,s)}}}|\mu(v,1)|^2\d v\nonumber\\&\leq C\frac{n^{-2H}}{G}\int_{0}^{\infty}|\mu(v,1)|^2\d v.\nonumber
		\end{align}
		The proof is complete.
	\end{proof}
	\begin{lem}\label{est-4}
		For $s\in [0,T]$ and $m\in\mathbb{N}\backslash \{0,1\} $, we have
		$$n^{2H}\int_{0}^{\frac{[ns]}{n}}\Big(K(\frac{[mns]}{mn}-r)-K(\frac{[ns]}{n}-r)\Big)^2\d r\leq C,$$
		where $C$ does not depend on $n$.
	\end{lem}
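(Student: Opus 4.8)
The plan is to reduce this estimate to the convergent improper integral $\int_0^\infty \mu(r,1)^2\,\d r$ via the scaling of the fractional kernel, mirroring the proofs of Lemmas \ref{est-2} and \ref{est-3_m}. First I would perform the change of variable $z=\frac{[ns]}{n}-r$; this maps $r\in(0,\frac{[ns]}{n})$ to $z\in(0,\frac{[ns]}{n})$ and rewrites the two kernel arguments as $\frac{[mns]}{mn}-r=z+\delta_{(mn,n,s)}$ and $\frac{[ns]}{n}-r=z$, where $\delta_{(mn,n,s)}=\frac{[mns]}{mn}-\frac{[ns]}{n}\ge 0$ (every grid point of step $1/n$ is also a grid point of step $1/(mn)$). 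Using $K(u)=u^{H-1/2}/\Gamma(H+1/2)$ together with the notation $\mu$ and $G$ from \eqref{basic types}, the integrand becomes $\bigl(K(z+\delta_{(mn,n,s)})-K(z)\bigr)^2=G^{-1}\mu(z,\delta_{(mn,n,s)})^2$, so the quantity to bound equals $\frac{n^{2H}}{G}\int_0^{[ns]/n}\mu(z,\delta_{(mn,n,s)})^2\,\d z$.

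Next I would rescale by $r=z/\delta_{(mn,n,s)}$ and exploit the homogeneity $\mu(\delta r,\delta)=\delta^{H-1/2}\mu(r,1)$, which gives $\mu(z,\delta_{(mn,n,s)})^2=\delta_{(mn,n,s)}^{2H-1}\mu(r,1)^2$ and $\d z=\delta_{(mn,n,s)}\,\d r$. The expression then collapses to $\frac{(n\delta_{(mn,n,s)})^{2H}}{G}\int_0^{[ns]/(n\delta_{(mn,n,s)})}\mu(r,1)^2\,\d r$, and two elementary bounds finish the proof. For the prefactor, since $\frac{[mns]}{mn}\le s$ and $\frac{[ns]}{n}>s-\frac1n$, we have $0\le\delta_{(mn,n,s)}<\frac1n$, so $(n\delta_{(mn,n,s)})^{2H}\le 1$ (here $H>0$ is used). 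For the integral, I would enlarge the upper limit to $+\infty$ and invoke $\int_0^\infty\mu(r,1)^2\,\d r<\infty$, which is already established in Lemma \ref{est-2}; this produces the $n$-independent constant $C=G^{-1}\int_0^\infty\mu(r,1)^2\,\d r$.

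The one point requiring care, and the reason a crude pointwise bound on $\mu(r,1)^2$ over a fixed interval will not do, is that the upper limit $[ns]/(n\delta_{(mn,n,s)})$ blows up as $\delta_{(mn,n,s)}\to 0$, so the argument genuinely needs the \emph{global} integrability of $\mu(r,1)^2$ on $(0,\infty)$. This is exactly where the standing restriction $H\in(0,1/2]$ enters: near $r=0$ one has $\mu(r,1)^2=O(r^{2H-1})$, integrable because $H>0$, while near $r=\infty$ one has $\mu(r,1)\sim(H-\tfrac12)r^{H-3/2}$, so $\mu(r,1)^2=O(r^{2H-3})$, integrable because $H\le\tfrac12<1$. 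Establishing this integrability (or simply citing it from the proof of Lemma \ref{est-2}) is the crux that renders $C$ independent of $n$.
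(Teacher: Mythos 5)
Your proposal is correct and follows essentially the same route as the paper's proof: the substitution $z=\frac{[ns]}{n}-r$ followed by the rescaling $v=z/\delta_{(mn,n,s)}$, the observation that $0\le \delta_{(mn,n,s)}<\frac{1}{n}$ so that $(n\delta_{(mn,n,s)})^{2H}\le 1$, and the enlargement of the integration range to invoke $\int_{0}^{\infty}|\mu(v,1)|^2\,\d v<\infty$. The only difference is that you explicitly verify the integrability of $\mu(\cdot,1)^2$ at both endpoints ($O(r^{2H-1})$ near $0$, $O(r^{2H-3})$ near $\infty$), which the paper simply takes for granted.
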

	\begin{proof}
		Let $z=\frac{[ns]}{n}-r$ and $v=z/\delta_{(mn,n,s)}$ we have
		\begin{align}
			&\int_{0}^{\frac{[ns]}{n}}\Big|K(\frac{[mns]}{mn}-r)-K(\frac{[ns]}{n}-r)\Big|^2\d r\nonumber\\&=\frac{1}{G}\int_{0}^{\frac{[ns]}{n}}\Big|\mu(z,\frac{[mns]}{mn}-\frac{[ns]}{n})\Big|^2\d z\nonumber\\&= \frac{\delta_{(mn,n,s)}^{2H}}{G}\int_{0}^{\frac{[ns]}{n\delta_{(mn,n,s)}}}|\mu(v,1)|^2\d v\nonumber\\&\leq C\frac{n^{-2H}}{G}\int_{0}^{\infty}|\mu(v,1)|^2\d v.\nonumber
		\end{align}
		The proof is complete.
	\end{proof}
	\begin{lem}\cite{FU}\label{esti of A n}
		For $v\leq s$, let
		$$A^{(m)}(v,s)=n^{2H}\int_{0}^{\frac{[nv]}{n}}\Big(K(s-u)-K(\frac{[ns]}{n}-u)\Big)\Big(K(v-u)-K(\frac{[nv]}{n}-u)\Big)\d u.$$  
		Then $\sup_{0\leq v\leq s\leq T}\sup_{n}|A^{(n)}(v,s)|<\infty$ and for $v<s$, $\lim_{n\to\infty} A^{(n)}(v,s)=0.$
	\end{lem}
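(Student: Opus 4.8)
The plan is to treat the two assertions by different routes: the uniform bound comes from a Cauchy--Schwarz argument fed by the $L^2$ kernel estimate of Lemma \ref{est-3}, while the vanishing of the limit for $v<s$ exploits the fact that the separation $\frac{[ns]}{n}-\frac{[nv]}{n}$ stays bounded away from $0$, which renders the first factor non-singular on the whole integration domain $[0,\frac{[nv]}{n}]$. As a preliminary remark I would note that, $K$ being non-increasing, both factors $K(s-u)-K(\frac{[ns]}{n}-u)$ and $K(v-u)-K(\frac{[nv]}{n}-u)$ are non-positive on $[0,\frac{[nv]}{n}]$, so their product is non-negative and $A^{(n)}(v,s)\ge 0$; this lets me drop absolute values.

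For the uniform bound I would apply the Cauchy--Schwarz inequality in $u$ and split $n^{2H}=n^H\cdot n^H$ to get
\[
0\le A^{(n)}(v,s)\le \Big(n^{2H}\!\int_0^{\frac{[nv]}{n}}\!\big(K(s-u)-K(\tfrac{[ns]}{n}-u)\big)^2\d u\Big)^{1/2}\Big(n^{2H}\!\int_0^{\frac{[nv]}{n}}\!\big(K(v-u)-K(\tfrac{[nv]}{n}-u)\big)^2\d u\Big)^{1/2}.
\]
Since $\frac{[nv]}{n}\le \frac{[ns]}{n}$, the first factor is dominated by the same integral taken over $[0,\frac{[ns]}{n}]$, so both factors are bounded by a universal constant by Lemma \ref{est-3}-(ii) (used at $s$ and at $v$ respectively). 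This yields $\sup_{n}\sup_{0\le v\le s\le T}|A^{(n)}(v,s)|<\infty$.

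For the pointwise limit with $v<s$ fixed I would estimate the $s$-factor pointwise and the $v$-factor in $L^1$. For $u\in[0,\frac{[nv]}{n}]$ the smaller argument satisfies $\frac{[ns]}{n}-u\ge \frac{[ns]}{n}-\frac{[nv]}{n}=:b_n$, and $b_n\to s-v>0$, hence $b_n\ge (s-v)/2$ for $n$ large. Applying the mean value theorem to $K$, together with $|K'(x)|=\tfrac{1/2-H}{\Gamma(H+1/2)}x^{H-3/2}\le C x^{H-3/2}$ and $s-\frac{[ns]}{n}\le 1/n$, gives
\[
\sup_{0\le u\le \frac{[nv]}{n}}\big|K(s-u)-K(\tfrac{[ns]}{n}-u)\big|\le C\,b_n^{H-3/2}\,\tfrac1n\le C(s-v)^{H-3/2}\,n^{-1}.
\]
Combining this with the $L^1$ bound $\int_0^{[nv]/n}|K(v-u)-K(\frac{[nv]}{n}-u)|\d u=O(n^{-(H+1/2)})$ from Lemma \ref{est-1}-(i) (whose constant depends only on $K$ and $T$, not on $s$) produces
\[
0\le A^{(n)}(v,s)\le n^{2H}\cdot C(s-v)^{H-3/2}n^{-1}\cdot C\,n^{-(H+1/2)}=C(s-v)^{H-3/2}\,n^{H-3/2},
\]
and since $H\in(0,1/2]$ forces $H-3/2<0$, the right-hand side tends to $0$ as $n\to\infty$.

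The main obstacle, and the reason the two claims genuinely need different arguments, is the behaviour near the diagonal $v=s$: there $b_n\to 0$ and the pointwise estimate of the $s$-factor degenerates, its constant blowing up like $(s-v)^{H-3/2}$, so the off-diagonal decay is not uniform in $v$. This is precisely why the uniform bound has to go through the Cauchy--Schwarz/$L^2$ route, which is insensitive to the separation $b_n$, while the convergence statement crucially uses $b_n$ bounded below. The only point needing mild care is to check that the constant in the $L^1$ estimate of the $v$-factor is independent of $s$, which it is since that integral involves $v$ alone, so that the product bound in the last display is legitimate.
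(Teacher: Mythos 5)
Your proof is correct, but it takes a genuinely different route from the one the paper (following Fukasawa--Ugai, whose Lemma 4.2 this statement reproduces, and which the paper mirrors in its own proofs of the analogous Lemmas \ref{esti of A}, \ref{esti of B}, \ref{esti of D}, \ref{esti of E}) uses. The paper's technique is a single substitution $z=[nv]-nu$: the homogeneity of the fractional kernel makes the factor $n^{2H}$ cancel exactly, after which the integrand is dominated on $(0,\infty)$ by the fixed integrable profile $z^{H-1/2}\big[z^{H-1/2}-(z+1)^{H-1/2}\big]$; integrability of that profile gives the uniform bound, and the dominated convergence theorem gives the limit, since the rescaled $s$-factor tends to zero pointwise when $v<s$. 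You instead decouple the two claims: the uniform bound by Cauchy--Schwarz fed by the $L^2$ estimates of Lemma \ref{est-3}-(ii) (correctly noting that both factors are non-positive so the product is non-negative, and that enlarging the domain to $[0,\frac{[ns]}{n}]$ is harmless), and the limit by a sup-times-$L^1$ splitting, with the mean value theorem exploiting the separation $b_n=\frac{[ns]}{n}-\frac{[nv]}{n}\to s-v>0$ and Lemma \ref{est-1}-(i) controlling the $v$-factor. Both arguments are sound; all exponents check out ($n^{2H}\cdot n^{-1}\cdot n^{-(H+1/2)}=n^{H-3/2}\to 0$ since $H\le 1/2$), and your diagnosis of why the two claims need different treatments (the $(s-v)^{H-3/2}$ blow-up near the diagonal) is accurate. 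What each approach buys: yours is more modular (it reuses estimates already established in the paper) and yields an explicit off-diagonal convergence rate $n^{H-3/2}$; the paper's substitution argument is more unified, requires no case distinction or separation hypothesis in the boundedness part, and transfers verbatim to the two-scale variants with kernels evaluated at $\frac{[mns]}{mn}$ and $\frac{[ns]}{n}$, which is exactly how the paper proves its Lemmas \ref{esti of A}--\ref{esti of E}.
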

	If we replace $n$ by $mn$, we can obtain the following proposition.
	\begin{prop}\cite{FU}\label{esti of A mn}
		For $v\leq s$ and $m\in\mathbb{N}/\{0\}$, let
		$$A^{(mn)}(v,s)=n^{2H}\int_{0}^{\frac{[nv]}{n}}\Big(K(s-u)-K(\frac{[mns]}{mn}-u)\Big)\Big(K(v-u)-K(\frac{[mnv]}{mn}-u)\Big)\d u.$$  
		Then $\sup_{0\leq v\leq s\leq T}\sup_{n}|A^{(mn)}(v,s)|<\infty$ and for $v<s$, $\lim_{n\to\infty} A^{(mn)}(v,s)=0.$
	\end{prop}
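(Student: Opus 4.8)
\emph{Plan.} The plan is to prove the two assertions separately, following the proof of Lemma~\ref{esti of A n} in \cite{FU} (to which the statement reduces when $m=1$) but replacing the coarse partition points $\tfrac{[ns]}{n},\tfrac{[nv]}{n}$ by their finer counterparts $\tfrac{[mns]}{mn},\tfrac{[mnv]}{mn}$, and invoking Lemma~\ref{est-3_m} wherever \cite{FU} uses Lemma~\ref{est-3}. The first thing I would record is a domain inclusion: since $m[nv]\le mnv\le mns$ and $m[nv]$ is an integer, one has $\tfrac{[nv]}{n}\le\tfrac{[mnv]}{mn}\le v$ and $\tfrac{[nv]}{n}\le\tfrac{[mns]}{mn}\le s$. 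Thus the interval $[0,\tfrac{[nv]}{n}]$ is contained in both $[0,\tfrac{[mns]}{mn}]$ and $[0,\tfrac{[mnv]}{mn}]$, and all four kernel arguments occurring in $A^{(mn)}(v,s)$ are nonnegative on it.

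For the uniform bound I would apply the Cauchy--Schwarz inequality in $u$,
\[
|A^{(mn)}(v,s)|\le n^{2H}\Big(\int_0^{[nv]/n}\big(K(s-u)-K(\tfrac{[mns]}{mn}-u)\big)^2\d u\Big)^{1/2}\Big(\int_0^{[nv]/n}\big(K(v-u)-K(\tfrac{[mnv]}{mn}-u)\big)^2\d u\Big)^{1/2},
\]
and then enlarge each domain of integration using the inclusions above. Lemma~\ref{est-3_m}-(ii), applied once with $s$ and once with $v$, bounds each enlarged integral by $Cn^{-2H}$, so the product is $\le Cn^{-2H}$ and the prefactor $n^{2H}$ yields $\sup_{0\le v\le s\le T}\sup_n|A^{(mn)}(v,s)|<\infty$.

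For the convergence to zero at a fixed pair $v<s$ I would argue \emph{asymmetrically}, which is the crux. On the domain $u\in[0,\tfrac{[nv]}{n}]$ the arguments $s-u$ and $\tfrac{[mns]}{mn}-u$ stay bounded below by $\tfrac{[mns]}{mn}-\tfrac{[nv]}{n}\to s-v>0$, i.e.\ away from the singularity of $K$ at the origin; hence the mean value theorem together with $|s-\tfrac{[mns]}{mn}|\le\tfrac1{mn}$ and the boundedness of $K'$ on $[(s-v)/2,\infty)$ gives, for all large $n$,
\[
\sup_{0\le u\le [nv]/n}\Big|K(s-u)-K(\tfrac{[mns]}{mn}-u)\Big|\le \frac{C}{n}
\]
(when $H=\tfrac12$ the kernel is constant and this difference vanishes identically). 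The remaining factor is estimated in $L^1$: by Cauchy--Schwarz over an interval of length $\le T$ and Lemma~\ref{est-3_m}-(ii) with $v$ in place of $s$, $\int_0^{[nv]/n}|K(v-u)-K(\tfrac{[mnv]}{mn}-u)|\d u\le T^{1/2}(Cn^{-2H})^{1/2}=C'n^{-H}$. Combining,
\[
|A^{(mn)}(v,s)|\le n^{2H}\cdot\frac{C}{n}\cdot C'n^{-H}=C''\,n^{H-1}\xrightarrow[n\to\infty]{}0,
\]
since $H\le\tfrac12<1$.

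The main obstacle is exactly this last step. Unlike the uniform bound, where a symmetric $L^2$ estimate suffices, the limit requires exploiting that the \emph{first} kernel difference is uniformly small, its arguments being separated from the origin by $s-v>0$, while only the \emph{second} difference sits near the singularity and must be controlled through Lemma~\ref{est-3_m}; balancing the $n^{2H}$ prefactor then forces the extra $n^{-1}$ gain coming from the mean value theorem, which is what makes the product vanish.
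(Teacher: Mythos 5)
Your proof is correct, but it takes a genuinely different route from the paper's. The paper gives no standalone argument for this proposition: it is obtained in one line from Lemma \ref{esti of A n} (cited from \cite{FU}) by "replacing $n$ with $mn$". Concretely, since both kernel differences are nonpositive on $[0,\frac{[nv]}{n}]$ (the kernel $K$ is nonincreasing and $s-u\ge \frac{[mns]}{mn}-u>0$, $v-u\ge\frac{[mnv]}{mn}-u\ge 0$ there), the integrand is nonnegative, so
\[
0\le A^{(mn)}(v,s)\le m^{-2H}\,(mn)^{2H}\int_0^{\frac{[mnv]}{mn}}\Big(K(s-u)-K(\tfrac{[mns]}{mn}-u)\Big)\Big(K(v-u)-K(\tfrac{[mnv]}{mn}-u)\Big)\d u,
\]
and the right-hand side is (up to the factor $m^{-2H}\le 1$) exactly the quantity of Lemma \ref{esti of A n} with $n$ replaced by $mn$; boundedness and the pointwise limit then transfer immediately. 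Your argument instead proves both claims from scratch, and it does so by a method that is also different from the \cite{FU} pattern used elsewhere in the appendix (change of variables to the integer grid, domination by $\int_0^\infty z^{H-1/2}\big(z^{H-1/2}-(z+1)^{H-1/2}\big)\d z<\infty$, then dominated convergence, as in Lemmas \ref{esti of A}--\ref{esti of E}). Your uniform bound via Cauchy--Schwarz, the domain enlargements $\frac{[nv]}{n}\le\frac{[mnv]}{mn}\le\frac{[mns]}{mn}$, and Lemma \ref{est-3_m}-(ii) is correct; and your asymmetric treatment of the limit --- mean value theorem giving $O(1/n)$ for the $s$-difference, whose arguments stay at distance about $s-v>0$ from the singularity, against an $O(n^{-H})$ bound in $L^1$ for the $v$-difference --- is also correct, since $n^{2H}\cdot n^{-1}\cdot n^{-H}=n^{H-1}\to 0$ for $H\le 1/2$. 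What your route buys is a self-contained proof with an explicit rate $O(n^{H-1})$ (with a constant that degenerates as $v\uparrow s$, which is harmless for a pointwise limit); what the paper's route buys is brevity and reuse of the already-established \cite{FU} lemma, at the cost of yielding no rate and resting on a result whose proof is external to the paper.
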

	\begin{lem}\label{esti of A}
		For $v\leq s\in [0,T]$ and $m\in\mathbb{N}/\{0\}$, let
		$$A^{(m,n)}(v,s)=n^{2H}\int_{0}^{\frac{[nv]}{n}}\Big((\frac{[mns]}{mn}-u)^{H-1/2}-(\frac{[ns]}{n}-u)^{H-1/2}\Big)\Big((\frac{[mnv]}{mn}-u)^{H-1/2}-(\frac{[nv]}{n}-u)^{H-1/2}\Big)\d u.$$  
		Then $\sup_{0\leq v\leq s\leq T}\sup_{n}|A^{(m,n)}(v,s)|<\infty$ and for $v<s$, $\lim_{n\to\infty} A^{(m,n)}(v,s)=0.$
	\end{lem}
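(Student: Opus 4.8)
The plan is to notice that, because $K(u)=u^{H-1/2}/\Gamma(H+1/2)$, the two factors in the integrand are nothing but multiples of the ``double grid'' kernel increment already introduced in the text. Writing $\Delta K(mn,n,s,u)=K(\tfrac{[mns]}{mn}-u)-K(\tfrac{[ns]}{n}-u)$ and recalling $G=\Gamma(H+1/2)^2$, we have
$$A^{(m,n)}(v,s)=G\,n^{2H}\int_{0}^{\frac{[nv]}{n}}\Delta K(mn,n,s,u)\,\Delta K(mn,n,v,u)\,\d u.$$
This is exactly the analogue, for $\Delta K(mn,n,\cdot,u)$, of the quantities handled in Lemma \ref{esti of A n} and Proposition \ref{esti of A mn}, so I would follow the same two-step template: a uniform $L^2$-type bound for the supremum, and a mean-value argument for the pointwise vanishing. (When $m=1$ the increment $\Delta K(mn,n,\cdot,u)$ is identically zero and both claims are trivial, so I may assume $m\ge 2$.)

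For the uniform bound I would apply the Cauchy--Schwarz inequality in the $u$-variable,
$$|A^{(m,n)}(v,s)|\le G\,n^{2H}\Big(\int_{0}^{\frac{[nv]}{n}}|\Delta K(mn,n,s,u)|^2\d u\Big)^{1/2}\Big(\int_{0}^{\frac{[nv]}{n}}|\Delta K(mn,n,v,u)|^2\d u\Big)^{1/2},$$
and then invoke Lemma \ref{est-4}. Since $\frac{[nv]}{n}\le\frac{[ns]}{n}$ and the integrands are nonnegative, the first factor is bounded by extending the integral to $[0,\frac{[ns]}{n}]$, where Lemma \ref{est-4} gives $n^{2H}\int_0^{\frac{[ns]}{n}}|\Delta K(mn,n,s,u)|^2\d u\le C$; the second factor is bounded directly by Lemma \ref{est-4} with $s$ replaced by $v$. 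Each square root is thus $O(n^{-H})$, the prefactor $n^{2H}$ is absorbed, and we obtain $\sup_n\sup_{0\le v\le s\le T}|A^{(m,n)}(v,s)|<\infty$.

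For the pointwise limit with $v<s$ fixed, the key observation is that the first increment factor is evaluated \emph{away from its singularity} over the entire domain of integration. Because $\frac{[ns]}{n}-\frac{[nv]}{n}\to s-v>0$, for $n$ large one has $\frac{[ns]}{n}-u\ge\frac{[ns]}{n}-\frac{[nv]}{n}\ge\tfrac12(s-v)$ for every $u\in[0,\frac{[nv]}{n}]$. Applying the mean value theorem to $x\mapsto x^{H-1/2}$ on $[\frac{[ns]}{n}-u,\frac{[mns]}{mn}-u]$ then yields $|\Delta K(mn,n,s,u)|\le C(s-v)^{H-3/2}\,\delta_{(mn,n,s)}\le C(s-v)^{H-3/2}n^{-1}$ uniformly in $u$, using $\delta_{(mn,n,s)}=\frac{[mns]}{mn}-\frac{[ns]}{n}<n^{-1}$. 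Combining this with the $L^1$ estimate $\int_{0}^{\frac{[nv]}{n}}|\Delta K(mn,n,v,u)|\d u\le Cn^{-(H+1/2)}$, which follows from Lemma \ref{basic lemma} after the substitution $z=\frac{[nv]}{n}-u$, gives
$$|A^{(m,n)}(v,s)|\le C\,(s-v)^{H-3/2}\,n^{2H}\cdot n^{-1}\cdot n^{-(H+1/2)}=C(s-v)^{H-3/2}\,n^{H-3/2},$$
which tends to $0$ since $H\le\tfrac12$ forces $H-\tfrac32<0$.

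The main obstacle is this last step, and specifically the bookkeeping of the exponent: plain Cauchy--Schwarz only delivers boundedness, and to extract decay one must exploit the nonsingularity of the $s$-factor to replace one power of its $L^2$-size by the genuinely smaller mean-value size $O(n^{-1})$. This is precisely where the strict inequality $v<s$ is used, via the separation $\frac{[ns]}{n}-\frac{[nv]}{n}\ge\tfrac12(s-v)$; at $v=s$ the separation collapses and $A^{(m,n)}(v,s)$ converges to a nonzero constant rather than to $0$, consistent with its role as a quadratic-variation density in Lemma \ref{est of A}-(iii).
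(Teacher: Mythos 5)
Your proposal is correct, but it proves the lemma by a genuinely different route than the paper. The paper's own proof makes the single change of variables $z=[nv]-nu$, which turns $A^{(m,n)}(v,s)$ into an integral whose integrand is dominated, uniformly in $n$, by the $n$-independent integrable function $z^{H-1/2}\bigl[z^{H-1/2}-(z+1)^{H-1/2}\bigr]$ (integrability being Lemma 4.1 of \cite{FU}); this one domination gives the uniform bound, and the limit then follows from the dominated convergence theorem because the factor $(z+\frac{[mns]}{m}-[nv])^{H-1/2}-(z+[ns]-[nv])^{H-1/2}$ tends to $0$ pointwise as $[ns]-[nv]\to\infty$. You instead split the two claims: for boundedness you use Cauchy--Schwarz in $u$ together with the $L^2$ kernel estimate of Lemma \ref{est-4} (extending the first integral from $[0,\frac{[nv]}{n}]$ to $[0,\frac{[ns]}{n}]$ by nonnegativity), and for the limit you exploit the separation $\frac{[ns]}{n}-\frac{[nv]}{n}\ge\tfrac12(s-v)$ to apply the mean value theorem to the $s$-factor, paired with the $L^1$ estimate $\int_0^{\frac{[nv]}{n}}|\Delta K(mn,n,v,u)|\,\d u\le Cn^{-(H+1/2)}$ from Lemma \ref{basic lemma} (equivalently Lemma \ref{est-1}(ii)). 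Both arguments are sound; what yours buys is a quantitative rate, $|A^{(m,n)}(v,s)|\le C(s-v)^{H-3/2}n^{H-3/2}$, rather than the paper's purely qualitative DCT conclusion, at the cost of a constant that blows up as $v\uparrow s$ and of invoking two separate auxiliary lemmas where the paper's rescaling handles both claims with one dominating function. Your handling of $m=1$ as a trivial case is also appropriate, since Lemma \ref{est-4} is stated only for $m\ge 2$.
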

	\begin{proof}
		By the change of variable $z=[nv]-nu$ we have
		\begin{align}
			0&\leq A^{(m,n)}(v,s)\nonumber\\&=\int_{0}^{[nv]}\Big[(z+\frac{[mns]}{m}-[nv])^{H-1/2}-(z+[ns]-[nv])^{H-1/2}\Big]\nonumber\\&\cdot \Big[(z+\frac{[mnv]}{m}-[nv])^{H-1/2}-z^{H-1/2}\Big]\d z\nonumber\\&\leq \int_{0}^{\infty}\Big[(z+\frac{[mns]}{m}-[nv])^{H-1/2}-(z+[ns]-[nv])^{H-1/2}\Big]\nonumber\\&\cdot \Big[z^{H-1/2}-(z+1)^{H-1/2}\Big]\d z.\nonumber
		\end{align}
		Since
		\begin{align}
			0&\leq (z+\frac{[mns]}{m}-[nv])^{H-1/2}-(z+[ns]-[nv])^{H-1/2}\nonumber\\&\leq (z+\frac{[mns]}{m}-[nv])^{H-1/2}\leq z^{H-1/2},\nonumber
		\end{align}
		for all $z\in (0,\infty)$   we obtain the same bound as Lemma $4.1$ of \cite{FU}. 
		
		Moreover, by the dominated convergence theorem, we have
		\begin{align}
			\int_{0}^{\infty}\Big[(z+\frac{[mns]}{m}-[nv])^{H-1/2}-(z+[ns]-[nv])^{H-1/2}\Big]\cdot \Big[z^{H-1/2}-(z+1)^{H-1/2}\Big]\d z\nonumber\to 0,
		\end{align}
		as $n\to\infty$ for $v<s$, because $(z+\frac{[mns]}{m}-[nv])^{H-1/2}-(z+[ns]-[nv])^{H-1/2}\to 0-0=0$.
	\end{proof}
	\begin{lem}\label{esti of B}
		For $\frac{[ns]}{n}<v\leq s\in [0,T]$ and $m\in\mathbb{N}/\{0\}$, let
		$$B^{(m,n)}(v,s)=n^{2H}\int_{\frac{[ns]}{n}}^{\frac{[mnv]}{mn}}\Big((s-u)^{H-1/2}-(\frac{[mns]}{mn}-u)^{H-1/2}\Big)\Big((v-u)^{H-1/2}-(\frac{[mnv]}{mn}-u)^{H-1/2}\Big)\d u.$$  
		Then $\sup_{\frac{[ns]}{n}< v\leq s\leq T}\sup_{n}|B^{(m,n)}(v,s)|<\infty$ and for $v<s$, $\lim_{n\to\infty} B^{(m,n)}(v,s)=0.$
	\end{lem}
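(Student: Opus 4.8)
The plan is to prove Lemma~\ref{esti of B} by the same mechanism as Lemma~\ref{esti of A} and its companion Lemma~\ref{esti of A n}: rescale the integral by a change of variables anchored at the upper endpoint, factor out the exact power of $n$, and compare the transformed integrand with the square-integrable profile $|\mu(\cdot,1)|^2$. The only analytic input needed is a pair of elementary monotonicity facts for $\mu(r,y)=(r+y)^{H-1/2}-r^{H-1/2}$, both consequences of $H-1/2\le0$: (a) for each fixed $\theta\ge0$, the map $r\mapsto r^{H-1/2}-(r+\theta)^{H-1/2}$ is nonincreasing on $(0,\infty)$; and (b) for each fixed $r>0$, the map $\theta\mapsto r^{H-1/2}-(r+\theta)^{H-1/2}$ is nondecreasing. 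I would first record the geometry of the region: if $\frac{[ns]}{n}<v\le s$, then, since $\frac{[ns]}{n}\le s<\frac{[ns]}{n}+\frac1n$, both $s$ and $v$ lie in the single mesh interval $(\frac{[ns]}{n},\frac{[ns]}{n}+\frac1n)$, so $\frac{[ns]}{n}\le\frac{[mnv]}{mn}\le\frac{[mns]}{mn}$ and the integral is over a thin near-diagonal strip; both integrand factors are nonpositive because $K$ is decreasing, whence $B^{(m,n)}(v,s)\ge0$.

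Next I would carry out the substitution $z=[mnv]-mnu$, so that $u=\frac{[mnv]}{mn}\mapsto z=0$ and $u=\frac{[ns]}{n}\mapsto z=[mnv]-m[ns]$, and the four kernel arguments become $s-u=\frac{mns-[mnv]+z}{mn}$, $\frac{[mns]}{mn}-u=\frac{[mns]-[mnv]+z}{mn}$, $v-u=\frac{mnv-[mnv]+z}{mn}$ and $\frac{[mnv]}{mn}-u=\frac{z}{mn}$. Each kernel difference carries a factor $(mn)^{-(H-1/2)}$ and $\d u=\frac{\d z}{mn}$, so the prefactor collapses to the $n$-free constant $n^{2H}(mn)^{-2H}=m^{-2H}$, giving
\begin{align}
B^{(m,n)}(v,s)=m^{-2H}\int_{0}^{[mnv]-m[ns]}&\Big[(a+\theta_1+z)^{H-1/2}-(a+z)^{H-1/2}\Big]\nonumber\\
&\times\Big[(z+\theta_2)^{H-1/2}-z^{H-1/2}\Big]\,\d z,\nonumber
\end{align}
where $a:=[mns]-[mnv]\ge0$, $\theta_1:=mns-[mns]\in[0,1)$ and $\theta_2:=mnv-[mnv]\in[0,1)$.

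For the uniform bound I would dominate each factor by $|\mu(z,1)|$. By (a), applied with $w=a+z\ge z$, the first factor satisfies $\big|(a+\theta_1+z)^{H-1/2}-(a+z)^{H-1/2}\big|=(a+z)^{H-1/2}-(a+z+\theta_1)^{H-1/2}\le z^{H-1/2}-(z+\theta_1)^{H-1/2}$, and by (b) with $\theta_1\le1$ this is $\le z^{H-1/2}-(z+1)^{H-1/2}=|\mu(z,1)|$; likewise the second factor has absolute value $z^{H-1/2}-(z+\theta_2)^{H-1/2}\le|\mu(z,1)|$ by (b). Hence the integrand is bounded by $|\mu(z,1)|^2$ and
\[
0\le B^{(m,n)}(v,s)\le m^{-2H}\int_{0}^{\infty}|\mu(z,1)|^2\,\d z<\infty,
\]
uniformly in $n$ and in $(v,s)$ throughout the region, the finiteness of $\int_0^\infty|\mu(z,1)|^2\d z$ being the same fact (near $0$ one has $|\mu(z,1)|^2\sim z^{2H-1}$ with $H>0$, and at infinity $|\mu(z,1)|^2\sim z^{2H-3}$ with $H<1$) that underlies the identity $2H\int_0^\infty|\mu(r,1)|^2\d r+1=G/(\Gamma(2H)\sin\pi H)$ of \cite{Mis}. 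This settles the first claim.

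For the limit I would write $B^{(m,n)}(v,s)=m^{-2H}\int_0^\infty\mathbb{I}_{(0,\,[mnv]-m[ns])}(z)\,(\text{integrand})\,\d z$, dominated by $|\mu(z,1)|^2$, and note that $[mnv]-m[ns]$ is a nonnegative integer in $\{0,\dots,m-1\}$ (since $0\le nv-[ns]<1$ forces $0\le mnv-m[ns]<m$). For fixed $v<s$ one has $\frac{[ns]}{n}\to s>v$, so for all large $n$ the hypothesis $\frac{[ns]}{n}<v$ fails; then $mnv<m[ns]$ gives $[mnv]<m[ns]$, the defining interval is empty, and $B^{(m,n)}(v,s)=0$. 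Thus the indicator vanishes pointwise and the dominated convergence theorem yields $\lim_{n\to\infty}B^{(m,n)}(v,s)=0$, which is exactly what is needed to run the $\d v\otimes\d s$ dominated convergence argument in the analysis of $\textbf{(2,3,3)}_{j,l}^{mn,n,k}$. The main obstacle is the uniform bound: it hinges on anchoring the rescaling at the upper endpoint $\frac{[mnv]}{mn}$ so that the prefactor degenerates to $m^{-2H}$, and on the two monotonicity comparisons that absorb the fractional-part remainders $\theta_1,\theta_2$ and the integer offset $a$ into the single clean majorant $|\mu(z,1)|$; once this is in place the thinness of the near-diagonal strip makes the limit essentially automatic. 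In short, the statement is the singular-kernel, two-mesh analogue of Lemma~\ref{esti of A n}, restricted to the strip between the grids of mesh $1/n$ and $1/(mn)$.
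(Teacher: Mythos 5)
Your proof is correct and follows essentially the same route as the paper's: the same substitution $z=[mnv]-mnu$ collapsing the prefactor $n^{2H}(mn)^{-2H}$ to $m^{-2H}$, a uniform $|\mu(\cdot,1)|$-type majorant, and dominated convergence for the limit. The only differences are minor refinements rather than a different method: you bound both factors by $|\mu(z,1)|$ via the two monotonicity facts, making the finiteness self-contained (the paper bounds the first factor crudely by $z^{H-1/2}$ and cites Lemma 4.1 of \cite{FU}), and you get the limit from the interval $(0,[mnv]-m[ns])$ becoming empty for large $n$, whereas the paper argues that the first factor tends to zero pointwise.
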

	\begin{proof}
		By the change of variable $z=[mnv]-mnu$ we have
		\begin{align}
			0&\leq B^{(m,n)}(v,s)\nonumber\\&=m^{-2H}\int_{0}^{[mnv]-m[ns]}\Big[(z+mns-[mnv])^{H-1/2}-(z+[mns]-[mnv])^{H-1/2}\Big]\nonumber\\&\cdot \Big[(z+mnv-[mnv])^{H-1/2}-z^{H-1/2}\Big]\d z\nonumber\\&\leq C \int_{0}^{\infty}\Big[(z+[mns]-[mnv])^{H-1/2}-(z+mns-[mnv])^{H-1/2}\Big]\nonumber\\&\cdot \Big[z^{H-1/2}-(z+1)^{H-1/2}\Big]\d z\nonumber\\&\leq C\int_{0}^{\infty}z^{H-1/2}\Big[z^{H-1/2}-(z+1)^{H-1/2}\Big]\d z\nonumber,
		\end{align}
		for all $z\in (0,\infty)$. So $B^{(m,n)}$ is bounded by Lemma $4.1$ of \cite{FU}.
		
		Moreover, by the dominated convergence theorem, we have
		\begin{align}
			\int_{0}^{\infty}\Big[(z+[mns]-[mnv])^{H-1/2}-(z+mns-[mnv])^{H-1/2}\Big]\cdot \Big[z^{H-1/2}-(z+1)^{H-1/2}\Big]\d z\nonumber\to 0,
		\end{align}
		as $n\to\infty$ for $v<s$, because $(z+[mns]-[mnv])^{H-1/2}-(z+mns-[mnv])^{H-1/2}\to 0-0=0$.
	\end{proof} 
	\begin{lem}\label{esti of C}\cite{LHG}
		For $\frac{[ns]}{n}\leq v\leq s$, let
		$$C_n(v,s)=n^{2H}\int_{\frac{[ns]}{n}}^{v}K(s-u)K(v-u)\d u.$$  
		Then $\sup_{0\leq \frac{[ns]}{n}\leq v\leq s\leq T}\sup_{n}|C_n(v,s)|<\infty$ and for $v<s$, $\lim_{n\to\infty} C_n(v,s)=0.$
	\end{lem}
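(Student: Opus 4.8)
The plan is to reduce both assertions to a single one--dimensional integral by one change of variables, and then to exploit that the exponent $H-\tfrac12$ of the kernel is \emph{non-positive} throughout the admissible range $H\in(0,1/2]$. Recalling $K(w)=w^{H-1/2}/\Gamma(H+1/2)$ and $G=\Gamma(H+1/2)^2$, I would first substitute $z=v-u$ to write
\[
C_n(v,s)=\frac{n^{2H}}{G}\int_{0}^{\,v-\frac{[ns]}{n}}\big(s-v+z\big)^{H-1/2}z^{H-1/2}\,\d z
=\frac{n^{2H}}{G}\int_{0}^{b}(a+z)^{H-1/2}z^{H-1/2}\,\d z,
\]
where $a:=s-v\ge 0$ and $b:=v-\frac{[ns]}{n}\ge 0$ obey $a+b=s-\frac{[ns]}{n}\in[0,1/n)$, the last inclusion being the defining property of the uniform mesh ($\frac{[ns]}{n}\le s<\frac{[ns]+1}{n}$).

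For the uniform bound I would invoke $H-\tfrac12\le 0$, which gives $(a+z)^{H-1/2}\le z^{H-1/2}$ for all $z>0$ and $a\ge0$; this is the single place where $H\le 1/2$ is genuinely used. Hence
\[
0\le C_n(v,s)\le\frac{n^{2H}}{G}\int_{0}^{b}z^{2H-1}\,\d z=\frac{(nb)^{2H}}{2HG}\le\frac{1}{2HG},
\]
because $b\le a+b<1/n$ forces $nb<1$. This yields $\sup_n\sup_{0\le\frac{[ns]}{n}\le v\le s\le T}|C_n(v,s)|\le\frac{1}{2HG}<\infty$. The computation is deliberately parallel to the $L^2$--type estimates for the companion Lemmas~\ref{esti of A n}--\ref{esti of B}, and the constant $\tfrac{1}{2HG}$ is the expected one (e.g.\ it reduces to $1$ at $H=1/2$, where $K\equiv1$).

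For the convergence I would fix $0\le v<s\le T$, so that the gap $s-v$ is a fixed positive number. Since $[ns]>ns-1$ gives $\frac{[ns]}{n}>s-\frac1n$, for every $n>1/(s-v)$ we get $\frac{[ns]}{n}>s-(s-v)=v$; thus $b=v-\frac{[ns]}{n}<0$, the interval $[\frac{[ns]}{n},v]$ is empty, and $C_n(v,s)=0$. Hence $\lim_{n\to\infty}C_n(v,s)=0$. The only subtle point --- and the one I would flag as the conceptual obstacle --- is precisely the mechanism of this limit: unlike Lemmas~\ref{esti of A}--\ref{esti of B}, where one lets a difference of kernels decay on a fixed domain under dominated convergence, here the integrand does \emph{not} vanish pointwise; instead, for a fixed pair $v<s$ the admissibility constraint $\frac{[ns]}{n}\le v$ can hold only for the finitely many $n<1/(s-v)$, beyond which $C_n(v,s)$ is identically zero. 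This is exactly the form needed when the lemma is applied, with the smaller upper limit $\frac{[mnv]}{mn}\le v$, to the terms $\textbf{(2,3,2)}_{j,l}^{mn,n,k}$ and $\textbf{(2,4,2,1)}_{j,l}^{mn,n,k}$: monotonicity of the integration domain reduces those integrals to $C_n(v,s)$, after which the dominated convergence theorem in $\d v\otimes\d s$ applies with the uniform envelope $\tfrac{1}{2HG}$.
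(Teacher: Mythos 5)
Your proof is correct. Note first that there is nothing in the paper to compare it against: the lemma is quoted from \cite{LHG} and no proof is reproduced here. Both of your steps check out. The substitution $z=v-u$, the monotonicity $(a+z)^{H-1/2}\le z^{H-1/2}$ (valid precisely because $H\le 1/2$), and the mesh bound $v-\frac{[ns]}{n}\le s-\frac{[ns]}{n}<1/n$ give $0\le C_n(v,s)\le (nb)^{2H}/(2HG)\le 1/(2HG)$, which is the claimed uniform bound. For the limit, the hypothesis $\frac{[ns]}{n}\le v\le s$ with $v<s$ forces $[ns]=[nv]$, hence $n<1/(s-v)$, so for all large $n$ the domain of integration is empty and $C_n(v,s)=0$. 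Your diagnosis that the limit works by this ``eventually vacuous'' mechanism, rather than by pointwise decay of the integrand under dominated convergence, is in fact the only tenable reading of the statement: at $H=1/2$ one has $K\equiv 1$ and $C_n(v,s)=n\big(v-\frac{[ns]}{n}\big)$, which does not tend to zero along pairs satisfying the constraint, so only the emptiness of the domain can produce the limit. This reading is also exactly how the lemma is consumed in the paper: in the treatment of $\textbf{(2,3,2)}_{j,l}^{mn,n,k}$ and $\textbf{(2,4,2,1)}_{j,l}^{mn,n,k}$ the kernel-product integrals appear only in the case where $v$ and $s$ lie in the same mesh cell (the case $\frac{[ns]}{n}<v\le s$, resp. $\frac{[mns]}{mn}<v\le s$), whose indicator vanishes for large $n$ at every fixed pair $v<s$, while your uniform bound $1/(2HG)$ supplies the envelope needed for the dominated convergence in $\d v\otimes\d s$.
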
 
In the next a few  lemmas,  we will discuss the limit of integrals of combinations  of different singular kernels,    which are critical in our proof.
\begin{lem}\label{esti of D}
	For $v\leq \frac{[ns]}{n},s\in [0,T]$ and $m\in\mathbb{N}/\{0\}$, let
	$$D^{(m,n)}(v,s)=n^{2H}\int_{0}^{\frac{[mnv]}{mn}}\Big((s-u)^{H-1/2}-(\frac{[ns]}{n}-u)^{H-1/2}\Big)\Big((v-u)^{H-1/2}-(\frac{[mnv]}{mn}-u)^{H-1/2}\Big)\d u.$$  
	Then $\sup_{0\leq v\leq s\leq T}\sup_{n}|A^{(m,n)}(v,s)|<\infty$ and for $v<s$, $\lim_{n\to\infty} A^{(m,n)}(v,s)=0.$
\end{lem}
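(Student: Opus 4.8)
The plan is to follow exactly the template already used for Lemmas \ref{esti of A} and \ref{esti of B}: reduce $D^{(m,n)}(v,s)$ to an explicit integral of products of power functions by a single change of variable, and then read off both the uniform bound and the vanishing limit from monotonicity of $x\mapsto x^{H-1/2}$ together with the integrability result of Lemma $4.1$ of \cite{FU}. Since the only genuine singularity of the integrand sits at the upper endpoint $u=\tfrac{[mnv]}{mn}$, coming from the factor $(\tfrac{[mnv]}{mn}-u)^{H-1/2}$, the natural substitution is $z=[mnv]-mnu$, precisely as in Lemma \ref{esti of B}.

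First I would carry out this change of variable. Writing each of $(s-u)^{H-1/2}$, $(\tfrac{[ns]}{n}-u)^{H-1/2}$, $(v-u)^{H-1/2}$, $(\tfrac{[mnv]}{mn}-u)^{H-1/2}$ in terms of $z$, each picks up a factor $(mn)^{-(H-1/2)}$, and with $\mathrm{d}u=-\mathrm{d}z/(mn)$ the overall prefactor becomes $n^{2H}(mn)^{-(2H-1)}(mn)^{-1}=m^{-2H}$. This yields
\begin{align}
D^{(m,n)}(v,s)=m^{-2H}\int_{0}^{[mnv]}&\Big[(z+a_n)^{H-1/2}-(z+b_n)^{H-1/2}\Big]\nonumber\\
&\quad\cdot\Big[(z+c_n)^{H-1/2}-z^{H-1/2}\Big]\,\mathrm{d}z,\nonumber
\end{align}
with $a_n=mns-[mnv]$, $b_n=m[ns]-[mnv]$ and $c_n=mnv-[mnv]$. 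Using the hypothesis $v\le\tfrac{[ns]}{n}\le s$ one checks $a_n\ge b_n\ge 0$ (here $b_n\ge 0$ is exactly where $v\le\tfrac{[ns]}{n}$ enters) and $c_n\in[0,1)$; since $H-1/2\le 0$, both square brackets are then nonpositive, so the integrand is nonnegative and $D^{(m,n)}(v,s)\ge 0$.

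Next I would establish the uniform bound. Passing to the nonnegative forms, $(z+b_n)^{H-1/2}-(z+a_n)^{H-1/2}\le z^{H-1/2}$ because $b_n\ge 0$, while monotonicity of $c\mapsto z^{H-1/2}-(z+c)^{H-1/2}$ together with $c_n<1$ gives $z^{H-1/2}-(z+c_n)^{H-1/2}\le z^{H-1/2}-(z+1)^{H-1/2}$. Hence
\begin{align}
0\le D^{(m,n)}(v,s)\le m^{-2H}\int_{0}^{\infty}z^{H-1/2}\big[z^{H-1/2}-(z+1)^{H-1/2}\big]\,\mathrm{d}z,\nonumber
\end{align}
and the right-hand integral is finite by Lemma $4.1$ of \cite{FU}, proving $\sup_{0\le v\le s\le T}\sup_n|D^{(m,n)}(v,s)|<\infty$. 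For the limit, fix $v<s$ and apply the dominated convergence theorem, using the $n$-independent dominating function $g(z)=z^{H-1/2}\big[z^{H-1/2}-(z+1)^{H-1/2}\big]$ after extending the integrand by zero on $([mnv],\infty)$; the key point is that $b_n/(mn)=\tfrac{[ns]}{n}-\tfrac{[mnv]}{mn}\to s-v>0$ forces $b_n\to\infty$, while the gap $a_n-b_n=m(ns-[ns])$ stays bounded in $[0,m)$, so $a_n\to\infty$ as well, whence the first bracket tends to $0$ pointwise and the whole integrand tends to $0$.

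The main delicacy — essentially the only place care is needed — is this last pointwise limit: one must confirm that the divergence of the coarse-grid arguments $a_n,b_n$ annihilates the first bracket uniformly against the non-vanishing, oscillatory second bracket $z^{H-1/2}-(z+c_n)^{H-1/2}$ (bounded for fixed $z$ by $g$), and separately note the borderline case $H=1/2$, where $K$ is constant so $D^{(m,n)}\equiv 0$ and the claim is trivial. Everything else is routine bookkeeping within the singular-kernel calculus already developed above, so the conclusion $\lim_{n\to\infty}D^{(m,n)}(v,s)=0$ for $v<s$ follows at once.
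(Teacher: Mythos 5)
Your proof is correct and follows essentially the same route as the paper's own argument: the substitution $z=[mnv]-mnu$ giving the prefactor $m^{-2H}$, the monotonicity bounds $(z+b_n)^{H-1/2}-(z+a_n)^{H-1/2}\le z^{H-1/2}$ and $z^{H-1/2}-(z+c_n)^{H-1/2}\le z^{H-1/2}-(z+1)^{H-1/2}$, finiteness via Lemma $4.1$ of \cite{FU}, and dominated convergence driven by $a_n,b_n\to\infty$ when $v<s$. Your write-up is in fact slightly cleaner than the paper's, since you state the correct upper limit $[mnv]$ after the substitution (the paper misprints it as $[mnv]-m[ns]$, carried over from the preceding lemma) and you make explicit where the hypothesis $v\le\frac{[ns]}{n}$ enters (namely $b_n\ge 0$).
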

\begin{proof}
	By the change of variable $z=[mnv]-mnu$, we have
	\begin{align}
		0&\leq D^{(m,n)}(v,s)\nonumber\\&=m^{-2H}\int_{0}^{[mnv]-m[ns]}\Big[(z+mns-[mnv])^{H-1/2}-(z+m[ns]-[mnv])^{H-1/2}\Big]\nonumber\\&\cdot \Big[(z+mnv-[mnv])^{H-1/2}-z^{H-1/2}\Big]\d z\nonumber\\&\leq C \int_{0}^{\infty}\Big[(z+m[ns]-[mnv])^{H-1/2}-(z+mns-[mnv])^{H-1/2}\Big]\nonumber\\&\cdot \Big[z^{H-1/2}-(z+1)^{H-1/2}\Big]\d z\nonumber\\&\leq C\int_{0}^{\infty}z^{H-1/2}\Big[z^{H-1/2}-(z+1)^{H-1/2}\Big]\d z\nonumber,
	\end{align}
	for all $z\in (0,\infty)$. So $D^{(m,n)}$ is bounded by Lemma $4.1$ of \cite{FU}.
	
	Further, by the dominated convergence theorem, we have
	\begin{align}
		\int_{0}^{\infty}\Big[(z+m[ns]-[mnv])^{H-1/2}-(z+mns-[mnv])^{H-1/2}\Big]\cdot \Big[z^{H-1/2}-(z+1)^{H-1/2}\Big]\d z\nonumber\to 0,
	\end{align}
	as $n\to\infty$ for $v<s$, because $(z+m[ns]-[mnv])^{H-1/2}-(z+mns-[mnv])^{H-1/2}\to 0-0=0$.
\end{proof}
\begin{lem}\label{esti of E}
	For $v\leq \frac{[ns]}{n},s\in [0,T]$ and $m\in\mathbb{N}/\{0\}$, let
	$$E^{(m,n)}(v,s)=n^{2H}\int_{0}^{\frac{[nv]}{n}}\Big((s-u)^{H-1/2}-(\frac{[mns]}{mn}-u)^{H-1/2}\Big)\Big((v-u)^{H-1/2}-(\frac{[nv]}{n}-u)^{H-1/2}\Big)\d u.$$  
	Then $\sup_{0\leq v\leq s\leq T}\sup_{n}|E^{(m,n)}(v,s)|<\infty$ and for $v<s$, $\lim_{n\to\infty} E^{(m,n)}(v,s)=0.$
\end{lem}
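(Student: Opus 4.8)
The plan is to mirror the proof of Lemma \ref{esti of D} (and ultimately Lemma $4.1$ of \cite{FU}), since $E^{(m,n)}$ has exactly the same two-factor structure, the only change being which discretized kernels appear in each factor. First I would perform the change of variable $z=[nv]-nu$, so that $u=\frac{[nv]-z}{n}$ and $\d u=-\frac{1}{n}\d z$, with $u=0$ corresponding to $z=[nv]$ and $u=\frac{[nv]}{n}$ to $z=0$. Under this substitution each factor $(\,\cdot\,-u)^{H-1/2}$ carries out a factor $n^{-(H-1/2)}$, so the two factors together produce $n^{-2(H-1/2)}$; combined with the external $n^{2H}$ and the Jacobian $n^{-1}$ this yields the clean prefactor $n^{2H}\cdot n^{-2H+1}\cdot n^{-1}=1$, and I obtain
\[
E^{(m,n)}(v,s)=\int_{0}^{[nv]}\Big[(z+ns-[nv])^{H-1/2}-(z+\tfrac{[mns]}{m}-[nv])^{H-1/2}\Big]\Big[(z+nv-[nv])^{H-1/2}-z^{H-1/2}\Big]\d z .
\]
Because $H-1/2\le 0$ the map $x\mapsto x^{H-1/2}$ is nonincreasing; since $ns\ge\frac{[mns]}{m}$ and $nv\ge[nv]$, both brackets are nonpositive, so their product is nonnegative and $E^{(m,n)}(v,s)\ge 0$, matching the sign convention of the analogous lemmas.

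Next I would establish the uniform bound by dominating the (rewritten, nonnegative) integrand. The second bracket $z^{H-1/2}-(z+nv-[nv])^{H-1/2}$ is nondecreasing in the shift $nv-[nv]\in[0,1)$, hence is bounded above by $z^{H-1/2}-(z+1)^{H-1/2}$. For the first bracket I would use the hypothesis $v\le\frac{[ns]}{n}$, which gives $nv\le[ns]$ and therefore $m[nv]\le m[ns]\le[mns]$, i.e.\ $\frac{[mns]}{m}-[nv]\ge 0$; consequently $(z+\frac{[mns]}{m}-[nv])^{H-1/2}\le z^{H-1/2}$ and the first bracket is at most $z^{H-1/2}$. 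The product is therefore dominated by $z^{H-1/2}\big(z^{H-1/2}-(z+1)^{H-1/2}\big)$, whose integral over $(0,\infty)$ is finite exactly as in Lemma $4.1$ of \cite{FU}. This yields $\sup_{0\le v\le s\le T}\sup_n|E^{(m,n)}(v,s)|<\infty$.

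Finally, for the convergence $E^{(m,n)}(v,s)\to 0$ when $v<s$, I would apply the dominated convergence theorem with the integrable majorant just found. For each fixed $z$ the two arguments $z+ns-[nv]$ and $z+\frac{[mns]}{m}-[nv]$ differ by $ns-\frac{[mns]}{m}\in[0,\tfrac1m)$ and both tend to infinity, since $ns-[nv]\ge n(s-v)\to\infty$ for $v<s$ fixed; as $H-1/2<0$ each of the two terms tends to $0$, so the first bracket converges pointwise to $0$ and hence so does the integrand. When $H=1/2$ the kernel is constant and the integrand vanishes identically, so the claim is trivial. The main point requiring care is the sign and ordering bookkeeping---specifically deriving $\frac{[mns]}{m}\ge[nv]$ from the constraint $v\le\frac{[ns]}{n}$ so that the domination by $z^{H-1/2}$ is legitimate; once this is in place the estimate reduces to the integrability statement already recorded in Lemma $4.1$ of \cite{FU}.
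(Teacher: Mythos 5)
Your proof is correct and takes essentially the same route as the paper's: the substitution $z=[nv]-nu$, domination of the second bracket by $z^{H-1/2}-(z+1)^{H-1/2}$ and of the first by $z^{H-1/2}$ (via $\frac{[mns]}{m}\ge [nv]$, deduced from $v\le \frac{[ns]}{n}$), integrability of the majorant by Lemma 4.1 of \cite{FU}, and dominated convergence for the limit as both shifted arguments tend to infinity. Your bookkeeping is in fact slightly more careful than the paper's, whose stated upper limit $[mnv]-m[ns]$ after the substitution is a typo for your correct $[nv]$.
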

\begin{proof}
	By the change of variable $z=[nv]-nu$, we have
	\begin{align}
		0&\leq E^{(m,n)}(v,s)\nonumber\\&=\int_{0}^{[mnv]-m[ns]}\Big[(z+ns-[nv])^{H-1/2}-(z+\frac{[mns]}{m}-[nv])^{H-1/2}\Big]\nonumber\\&\cdot \Big[(z+nv-[nv])^{H-1/2}-z^{H-1/2}\Big]\d z\nonumber\\&\leq C \int_{0}^{\infty}\Big[(z+\frac{[mns]}{m}-[nv])^{H-1/2}-(z+ns-[nv])^{H-1/2}\Big]\nonumber\\&\cdot \Big[z^{H-1/2}-(z+1)^{H-1/2}\Big]\d z\nonumber\\&\leq C\int_{0}^{\infty}z^{H-1/2}\Big[z^{H-1/2}-(z+1)^{H-1/2}\Big]\d z\nonumber,
	\end{align}
	for all $z\in (0,\infty)$. So $E^{(m,n)}$ is bounded by Lemma $4.1$ of \cite{FU}.
	
	Moreover, by the dominated convergence theorem, we have
	\begin{align}
		\int_{0}^{\infty}\Big[(z+\frac{[mns]}{m}-[nv])^{H-1/2}-(z+ns-[nv])^{H-1/2}\Big]\cdot \Big[z^{H-1/2}-(z+1)^{H-1/2}\Big]\d z\nonumber\to 0,
	\end{align}
	as $n\to\infty$ for $v<s$, because $(z+m[ns]-[mnv])^{H-1/2}-(z+mns-[mnv])^{H-1/2}\to 0-0=0$.
\end{proof}
\section{Limit theorems for fractional integral}\label{limit theorems}
\begin{lem}\label{limit distribution-0}\cite{FU}
	Assume that $g(\cdot)\in L^2(0,1)$. Let $H^{(n)}$ and $H$ be stochastic process on $[0,T]$  such that
	$$\E\Big[\int_{0}^{t}\Big|H_s^{(n)}-H_s\Big|^2\d s\Big]\to 0$$
	with $H$ being almost surely continuous. Then, for all $t\in[0,T]$,
	$$\int_{0}^{t}H^{(n)}_s g(ns-[ns]) \d s\xrightarrow[\text { in } L^2]{n \rightarrow \infty} \int_{0}^{1}g(r)\d r \int_0^t H_s \mathrm{d} s.$$
\end{lem}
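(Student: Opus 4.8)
The plan is to decouple the two difficulties present in the statement: the approximation $H^{(n)}\to H$ and the fast oscillation of $s\mapsto g(ns-[ns])$. First I would dispose of the replacement of $H^{(n)}$ by its limit, and then carry out the averaging argument for the fixed, a.s.\ continuous process $H$, upgrading the resulting almost-sure limit to an $L^2$ limit by domination.

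\textbf{Step 1 (reduction to $H$).} Writing
$$\int_0^t H_s^{(n)} g(ns-[ns])\,\d s - \int_0^t H_s g(ns-[ns])\,\d s = \int_0^t (H_s^{(n)}-H_s)\, g(ns-[ns])\,\d s,$$
I would apply the Cauchy--Schwarz inequality in the $s$-variable together with the observation that $g(ns-[ns])$ is $1/n$-periodic, so that, since the exact mean of $g^2$ over each cell $[k/n,(k+1)/n]$ equals $\frac1n\int_0^1 g^2$,
$$\int_0^t g(ns-[ns])^2\,\d s \le \Big(t+\tfrac1n\Big)\int_0^1 g(r)^2\,\d r$$
is bounded uniformly in $n$. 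Taking expectations and invoking the hypothesis $\E[\int_0^t |H_s^{(n)}-H_s|^2\,\d s]\to 0$ then shows this difference vanishes in $L^2$.

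\textbf{Step 2 (averaging for $H$).} It remains to prove $\int_0^t H_s g(ns-[ns])\,\d s \to \bar g\int_0^t H_s\,\d s$ (a.s.), where $\bar g := \int_0^1 g(r)\,\d r$. The exact identity $\int_{k/n}^{(k+1)/n} g(ns-k)\,\d s = \bar g/n$ is the crux. Decomposing the integral along the grid $\{k/n\}$ and replacing $H_s$ by $H_{k/n}$ on each cell, the main term becomes $\bar g\cdot\frac1n\sum_k H_{k/n}$, a Riemann sum converging a.s.\ to $\bar g\int_0^t H_s\,\d s$, while the error is bounded by $\omega_H(1/n)\sum_k\int_{k/n}^{(k+1)/n}|g(ns-k)|\,\d s \le C\,\omega_H(1/n)$, where $\omega_H$ is the modulus of continuity of $H$ on $[0,T]$ and $g\in L^1(0,1)$. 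Since $H$ is a.s.\ continuous, hence uniformly continuous on the compact $[0,T]$, one has $\omega_H(1/n)\to 0$ a.s.; the incomplete cell abutting $t$ contributes at most $\frac1n\|H\|_\infty\int_0^1|g|$ and is negligible. Crucially, only integrability of $g$ is needed for the per-period mean to be exact, so no continuity of $g$ is required.

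\textbf{Step 3 (upgrade to $L^2$) and the main obstacle.} Finally I would promote the a.s.\ limit to an $L^2$ limit by domination: Cauchy--Schwarz and the uniform bound from Step~1 give
$$\Big|\int_0^t H_s g(ns-[ns])\,\d s\Big|^2 \le C\int_0^T H_s^2\,\d s,$$
whose right-hand side lies in $L^1(\Omega)$ because the hypothesis forces $H\in L^2(\Omega\times[0,t])$; the dominated convergence theorem then yields the $L^2$ convergence, and combining with Step~1 finishes the proof. I expect the genuine obstacle to be Step~2, namely making the averaging rigorous while $g$ is merely $L^2$; the resolution is precisely the exactness of the per-period mean $\int_{k/n}^{(k+1)/n}g(ns-k)\,\d s=\bar g/n$, which cancels the oscillation against the slow variation of $H$ through its modulus of continuity and thereby bypasses any smoothing of $g$.
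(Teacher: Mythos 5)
Your proof is correct, but it takes a genuinely different route from the one the paper relies on. The paper does not actually reprove this statement (it cites Fukasawa--Ugai), yet its own proofs of the direct analogues (Lemmas \ref{limit distribution-0_m}, \ref{limit distribution-1}, \ref{limit distribution-2}) share your outer skeleton --- Minkowski/Cauchy--Schwarz to dispose of $H^{(n)}-H$, then an almost-sure limit for the continuous limiting process upgraded to $L^2$ by domination --- while the core averaging step is handled probabilistically: an auxiliary random variable with density proportional to $g$ evaluated along the grid is shown, via characteristic functions, to converge in law to the uniform distribution on $[0,t]$, which yields $\int_0^t k(s)\,g(ns-[ns])\,\d s\to\int_0^1 g(r)\,\d r\int_0^t k(s)\,\d s$ for continuous deterministic $k$, and this is then applied pathwise to $k=H_\cdot(\omega)$. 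Your Step 2 replaces that machinery with the elementary observation that the per-period mean is exact, $\int_{k/n}^{(k+1)/n}g(ns-k)\,\d s=\bar g/n$, so the oscillation cancels against the modulus of continuity of $H$ and a Riemann sum emerges; this is shorter, needs only $g\in L^1(0,1)$ in that step, avoids the sign restriction ($g$ nonnegative or nonpositive) that the paper's density construction imposes in its auxiliary lemmas, and would in fact also cover the $m$-dependent kernels $g(\frac{[mns]}{m}-[ns])$ treated later, since those are likewise $1/n$-periodic with computable cell means. One caveat you should state explicitly in Step 3: the domination $|Z_n|^2\le C\int_0^t H_s^2\,\d s$ requires $\E\int_0^t H_s^2\,\d s<\infty$, and this is \emph{not} literally forced by the hypothesis, which controls only the difference $H^{(n)}-H$ (both processes could fail to be square integrable while their difference is). This integrability is an implicit standing assumption --- the paper's proof of Lemma \ref{limit distribution-1} likewise invokes boundedness of $\E\int_0^t|H_s^{(m)}|^2\,\d s$ without deriving it --- so assume it rather than claim it follows from the hypothesis.
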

The above lemma can be directly extended to
\begin{lem}\label{limit distribution-0_m}
	Assume that $g(\cdot)\in L^2(0,1), m\in\mathbb{N}$. Let $H^{(m,n)}$ and $H$ be stochastic process on $[0,T]$  such that
	$$\E\Big[\int_{0}^{t}\Big|H_s^{(m,n)}-H^{(m)}_s\Big|^2\d s\Big]\to 0$$
	with $H$ being almost surely continuous. Then, for all $t\in[0,T]$,
	$$\int_{0}^{t}H^{(m,n)}_s g(ns-[ns]) \d s\xrightarrow[\text { in } L^2]{n \rightarrow \infty} \int_{0}^{1}g(r)\d r \int_0^t H^{(m)}_s \mathrm{d} s.$$
\end{lem}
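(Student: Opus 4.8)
The plan is to notice that the integer $m$ enters only as a fixed parameter, so that this statement is structurally identical to Lemma~\ref{limit distribution-0}. Fixing $m\in\mathbb{N}$ and relabeling $H^{(n)}:=H^{(m,n)}$ and $H:=H^{(m)}$, the hypothesis $\E[\int_0^t|H_s^{(m,n)}-H_s^{(m)}|^2\d s]\to 0$ together with the almost sure continuity of the limit $H^{(m)}$ are precisely the assumptions of Lemma~\ref{limit distribution-0}, and the claimed limit is precisely its conclusion. Hence the shortest route is a direct invocation of Lemma~\ref{limit distribution-0} after this substitution, and no genuinely new argument is needed.

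For transparency I would nonetheless record the mechanism, via the split
$$\int_0^t H_s^{(m,n)}g(ns-[ns])\d s=\int_0^t\big(H_s^{(m,n)}-H_s^{(m)}\big)g(ns-[ns])\d s+\int_0^t H_s^{(m)}g(ns-[ns])\d s.$$
For the first (error) term I would apply the Cauchy--Schwarz inequality in $s$, using that $\int_0^t|g(ns-[ns])|^2\d s$ is deterministic; the exact averaging identity $\int_{k/n}^{(k+1)/n}g(ns-[ns])^2\,\d s=\frac1n\int_0^1 g(r)^2\,\d r$ on each mesh interval shows this quantity equals $\frac{[nt]}{n}\|g\|_{L^2(0,1)}^2$ up to a vanishing boundary piece, hence is bounded uniformly in $n$. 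Taking expectations then controls the $L^2$-norm of the error term by $C\,\E[\int_0^t|H_s^{(m,n)}-H_s^{(m)}|^2\d s]^{1/2}\to 0$.

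For the second (main) term I would freeze the almost surely continuous process $H^{(m)}$ on each mesh interval and use the companion averaging identity $\int_{k/n}^{(k+1)/n}g(ns-[ns])\,\d s=\frac1n\int_0^1 g(r)\,\d r$; a Riemann-sum estimate then yields the pathwise limit $\int_0^1 g(r)\,\d r\int_0^t H_s^{(m)}\,\d s$, upgraded to $L^2$ by a standard uniform-integrability bound --- this being exactly the content already established in Lemma~\ref{limit distribution-0}. The only mildly delicate point, and thus the main (very small) obstacle, is the uniform-in-$n$ bound on $\int_0^t|g(ns-[ns])|^2\d s$ for a merely square-integrable $g$ rather than a bounded one; it is resolved precisely by the periodicity of $s\mapsto g(ns-[ns])$ on the mesh and the exact averaging identities above, so that no pointwise control of $g$ is required.
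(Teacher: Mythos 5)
Your proposal is correct and coincides with the paper's treatment: the paper gives no separate proof, saying only that Lemma~\ref{limit distribution-0} ``can be directly extended'' to this statement, which is exactly your direct invocation after fixing $m$ and relabeling $H^{(n)}:=H^{(m,n)}$, $H:=H^{(m)}$. Your supplementary sketch (Cauchy--Schwarz on the error term via the exact averaging identity for $\int_0^t|g(ns-[ns])|^2\,\d s$, and the Riemann-sum argument for the frozen continuous limit) is just the standard mechanism behind the cited Lemma~\ref{limit distribution-0} and is sound.
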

\subsection{Limit theorem related to $g(\frac{[mn s]}{m}-[n s])$}
\begin{lem}
	Let $g(\cdot)\in L^1(0,1)$ be either nonnegative or nonpositive, for any $m\in\mathbb{N}\backslash \{0,1\} $, let $\{Y^{(m,n)}\}_{n\in\mathbb{N}}$ be
	a sequence of random variables on $[0,t]$
	whose density functions are each
	$$f_{Y^{(m,n)}}(s)=C_{m,n,t}g(\frac{[mn s]}{m}-[n s]), \quad 0<s<t,$$
	where $g(\frac{[mn \cdot]}{m}-[n\cdot])\in L^1(0,1)$ and
	$$C_{m,n,t}=\Big(g_m[\frac{[mnt]-1}{m}]/n+\int_{\frac{[mnt]}{mn}}^{t}g(\frac{[mn s]}{n}-[n s])\d s\Big)^{-1}$$
	is the normalizing constant with 
	$$g_m=\Big[g(0)+g(\frac{1}{m})+\cdots+g(\frac{m-1}{m})\Big]\frac{1}{m}<\infty.$$
	Then $Y^{(m,n)}$ cconverges in law to the uniform distribution on $[0,t]$ as $n$ goes to infinity.
\end{lem}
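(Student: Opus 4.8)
The plan is to reduce the statement to the one–scale averaging principle already recorded in Lemma \ref{limit distribution-0}. The conceptual heart of the argument is the elementary identity
$$
\frac{[mns]}{m}-[ns]=\frac{[m(ns-[ns])]}{m},
$$
which follows from writing $mns=m[ns]+m\{ns\}$ with $\{ns\}:=ns-[ns]\in[0,1)$ and using $[N+x]=N+[x]$ for the integer $N=m[ns]$, so that $[mns]=m[ns]+[m\{ns\}]$. I would therefore introduce the one–period profile $\tilde g(r):=g\big([mr]/m\big)$ on $(0,1)$, which recasts the mixed two–scale kernel as a single fast periodic oscillation:
$$
g\Big(\frac{[mns]}{m}-[ns]\Big)=\tilde g(ns-[ns]),\qquad 0<s<t.
$$
The function $\tilde g$ is a step function taking the finitely many values $g(0),g(1/m),\dots,g((m-1)/m)$, each on a subinterval of length $1/m$; since $g$ is of constant sign and $g_m<\infty$, these values are finite, so $\tilde g\in L^{\infty}(0,1)\subset L^2(0,1)$ and $\int_0^1\tilde g(r)\,\mathrm{d}r=\tfrac1m\sum_{j=0}^{m-1}g(j/m)=g_m$.

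Next I would characterize convergence in law through bounded continuous test functions. Fixing $\phi\in C_b([0,t])$ and applying Lemma \ref{limit distribution-0} in the degenerate deterministic case $H^{(n)}_s=H_s=\phi(s)$ (which is continuous and trivially satisfies the required $L^2$ convergence), with $\tilde g$ in the role of the kernel, yields
$$
\int_0^t\phi(s)\,\tilde g(ns-[ns])\,\mathrm{d}s\xrightarrow{n\to\infty} g_m\int_0^t\phi(s)\,\mathrm{d}s.
$$
Taking $\phi\equiv1$ identifies the normalizing constant, since $C_{m,n,t}^{-1}=\int_0^t g\big(\tfrac{[mns]}{m}-[ns]\big)\,\mathrm{d}s\to g_m\,t$, hence $C_{m,n,t}\to 1/(g_m t)$. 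Combining the two displays,
$$
\mathbb{E}\big[\phi(Y^{(m,n)})\big]=C_{m,n,t}\int_0^t\phi(s)\,\tilde g(ns-[ns])\,\mathrm{d}s\xrightarrow{n\to\infty}\frac{1}{g_m t}\cdot g_m\int_0^t\phi(s)\,\mathrm{d}s=\frac1t\int_0^t\phi(s)\,\mathrm{d}s,
$$
and the right–hand side is exactly $\mathbb{E}[\phi(\mathcal U)]$ for $\mathcal U\sim\mathrm{Unif}[0,t]$. As this holds for every $\phi\in C_b([0,t])$, the sequence $Y^{(m,n)}$ converges in law to the uniform distribution on $[0,t]$.

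The only genuinely delicate points I would treat carefully are the verification that the rescaled profile $\tilde g$ meets the integrability hypothesis of Lemma \ref{limit distribution-0}, and that $g_m\neq 0$ so that the limiting constant $1/(g_m t)$ is finite and positive; both are guaranteed by the standing assumption that $g$ has a constant sign together with $g_m<\infty$ (which also makes the candidate density nonnegative after normalization, irrespective of the sign of $g$). Everything else is a direct transfer of the averaging lemma, the key step being the identity $\tfrac{[mns]}{m}-[ns]=\tfrac{[m\{ns\}]}{m}$, after which the two–scale kernel collapses to a single fast oscillation and the one–scale result applies verbatim.
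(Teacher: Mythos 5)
Your proposal is correct, and it takes a genuinely different route from the paper. Your key move is the algebraic identity $[mns]=m[ns]+[m\{ns\}]$ (with $\{ns\}=ns-[ns]$), which collapses the two-scale kernel into a single fast oscillation, $g\big(\tfrac{[mns]}{m}-[ns]\big)=\tilde g(ns-[ns])$ with the step function $\tilde g(r)=g([mr]/m)$ satisfying $\tilde g\in L^\infty(0,1)$ and $\int_0^1\tilde g(r)\,\d r=g_m$; after that, everything follows by applying Lemma \ref{limit distribution-0} to deterministic continuous test functions and testing weak convergence against $C_b([0,t])$. The paper instead gives a self-contained argument: it verifies the normalization by splitting the integral over the grid $[\tfrac{j}{mn},\tfrac{j+1}{mn}]$ and exploiting the $m$-periodicity of the values, then proves convergence of characteristic functions (splitting into a main Riemann-sum term $F_1$ and a boundary term $F_2$, with estimates like $|e^{ixu}-e^{ixv}|\le C|u-v|$) and concludes via L\'evy's continuity theorem. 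What your approach buys is brevity and modularity — in particular, with your identity the subsequent stochastic version (Lemma \ref{limit distribution-1}) becomes an immediate corollary of Lemma \ref{limit distribution-0}, whereas the paper proves it separately; what the paper's computation buys is independence from the earlier lemma and an explicit verification that the stated formula for $C_{m,n,t}$ really is the reciprocal of the total mass, which you take for granted. One small inaccuracy to fix: $g_m\neq 0$ is \emph{not} guaranteed by constant sign plus $g_m<\infty$ (the composite kernel only samples $g$ at the points $j/m$, so $g$ could vanish there while being nonzero elsewhere); rather, $g_m\neq 0$ is implicitly forced by the hypothesis that the densities $f_{Y^{(m,n)}}$ exist at all, since otherwise the kernel is identically zero on $[0,\tfrac{[mnt]}{mn})$ — an implicit assumption the paper shares when it writes $C_{m,n,t}\to(g_m t)^{-1}$.
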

\begin{proof}
	Firstly, we confirm that for any fixed $m\in\mathbb{N}\backslash \{0,1\} $, $f_{Y^{(m,n)}}(s)$ is certainly a probability density function. 
	\begin{align}\label{lm-1}
		\int_{0}^{t}g(\frac{[mn s]}{m}-[n s])\d s&=\sum_{j=0}^{[mnt]-1}\int_{\frac{j}{mn}}^{\frac{j+1}{mn}}g(\frac{j}{m}-[\frac{j}{m}])\d s+\int_{\frac{[mnt]}{mn}}^{t}g(\frac{[mn s]}{n}-[n s])\d s\nonumber\\&=\sum_{j=0}^{[mnt]-1}g(\frac{j}{m}-[\frac{j}{m}])\frac{1}{mn}+\int_{\frac{[mnt]}{mn}}^{t}g(\frac{[mn s]}{n}-[n s])\d s\nonumber\\&=\sum_{k=0}^{[\frac{[mnt]-1}{m}]}\sum_{km\leq j<(k+1)m}g(\frac{j}{m}-k)\frac{1}{mn}+\int_{\frac{[mnt]}{mn}}^{t}g(\frac{[mn s]}{n}-[n s])\d s.
	\end{align}
	Note that for $km\leq j<(k+1)m$ we can expand $\sum_{km\leq j<(k+1)m}g(\frac{j}{m}-k)$ as
	\begin{align}\label{lm-2}
		\sum_{km\leq j<(k+1)m}g(\frac{j}{m}-k)=g(0)+g(\frac{1}{m})+\cdots+g(\frac{m-1}{m}),
	\end{align}
which is independent of  the choice of $k$. 
	
	Define
	\begin{align}\label{ def of gm}
		g_m=\Big[g(0)+g(\frac{1}{m})+\cdots+g(\frac{m-1}{m})\Big]\frac{1}{m}.	
	\end{align}
	
	Inserting \eqref{lm-2} into \eqref{lm-1}, we obtain that
	\begin{align}\label{lm-3}
		\int_{0}^{t}g(\frac{[mn s]}{m}-[n s])\d s&=\sum_{k=0}^{[\frac{[mnt]-1}{m}]}g_m\frac{1}{n}+\int_{\frac{[mnt]}{mn}}^{t}g(\frac{[mn s]}{n}-[n s])\d s\nonumber\\&=g_m[\frac{[mnt]-1}{m}]/n+\int_{\frac{[mnt]}{mn}}^{t}g(\frac{[mn s]}{n}-[n s])\d s\nonumber\\&:=\big(C_{m,n,t}\big)^{-1}.
	\end{align}
	
	We now show the convergence of the characteristic function of $Y^{(m,n)}$. For all $x\in\mathbb{R}$ and $i=\sqrt{-1}$, by
	\eqref{lm-1} and \eqref{lm-2} we have
	\begin{align}\label{part of F}
		\int_{0}^{t}e^{ixs}f_{Y^{(m,n)}}(s)\d s&=\int_{0}^{t}e^{ixs}C_{m,n,t}g(\frac{[mn s]}{n}-[n s])\d s\nonumber\\&=C_{m,n,t}\sum_{j=0}^{[mnt]-1}\int_{\frac{j}{mn}}^{\frac{j+1}{mn}}e^{ixs}g(\frac{j}{m}-[\frac{j}{m}])\d s+C_{m,n,t}\int_{\frac{[mnt]}{mn}}^{t}e^{ixs}g(\frac{[mn s]}{n}-[n s])\d s\nonumber\\&=C_{m,n,t}\sum_{j=0}^{[mnt]-1}g(\frac{j}{m}-[\frac{j}{m}])\int_{\frac{j}{mn}}^{\frac{j+1}{mn}}e^{ixs}\d s+C_{m,n,t}\int_{\frac{[mnt]}{mn}}^{t}e^{ixs}g(\frac{[mn s]}{n}-[n s])\d s\nonumber\\&=C_{m,n,t}\sum_{k=0}^{[\frac{[mnt]-1}{m}]}\sum_{km\leq j<(k+1)m}g(\frac{j}{m}-k)\int_{\frac{j}{mn}}^{\frac{j+1}{mn}}e^{ixs}\d s\nonumber\\&\quad+C_{m,n,t}\int_{\frac{[mnt]}{mn}}^{t}e^{ixs}g(\frac{[mn s]}{n}-[n s])\d s=F_1+F_2.
	\end{align}
	In order to characterize  the convergence of the above expression we first study the convergence of $C_{m,n,t}$ as $n$ tends to infinity. By \eqref{lm-2} we have
	\begin{align}
		C_{m,n,t}&=\Big(g_m[\frac{[mnt]-1}{m}]/n+\int_{\frac{[mnt]}{mn}}^{t}g(\frac{[mn s]}{n}-[n s])\d s\Big)^{-1}\stackrel{n\to\infty}{\rightarrow }\Big(g_m t\Big)^{-1}.\nonumber
	\end{align}
	So the convergence of the term $F_2$ is clear:
		\begin{align}\label{cov of F 2}
		|F_2|\leq C_{m,n,t}\int_{0}^{T}\mathbb{I}_{(\frac{[mnt]}{mn},t)}(s)\Big|g(\frac{[mn s]}{n}-[n s])\Big|\d s\to 0.
	\end{align}
	For the term $F_1$, by the change of variable $r=mns-j$, we have
	\begin{align}
		F_1&=C_{m,n,t}\sum_{k=0}^{[\frac{[mnt]-1}{m}]}\sum_{km\leq j<(k+1)m}g(\frac{j}{m}-k)\int_{\frac{j}{mn}}^{\frac{j+1}{mn}}e^{ixs}\d s\nonumber\\&=\frac{C_{m,n,t}}{mn}\sum_{k=0}^{[\frac{[mnt]-1}{m}]}\sum_{km\leq j<(k+1)m}g(\frac{j}{m}-k)\int_{0}^{1}e^{ix(r+j)/mn}\d r\nonumber\\&=C_{m,n,t}\int_{0}^{1}e^{ixr/mn}\d r\frac{1}{mn}\sum_{k=0}^{[\frac{[mnt]-1}{m}]}\sum_{km\leq j<(k+1)m}g(\frac{j}{m}-k)e^{ixj/mn}.\nonumber
	\end{align}
	Note that
	\begin{align}
		&\quad\frac{1}{mn}\sum_{k=0}^{[\frac{[mnt]-1}{m}]}\sum_{km\leq j<(k+1)m}g(\frac{j}{m}-k)e^{ix j/mn}\nonumber\\&=g(0)\frac{1}{mn}\sum_{k=0}^{[\frac{[mnt]-1}{m}]}e^{ix\frac{k}{n}}+g(\frac{1}{m})\frac{1}{mn}\sum_{k=0}^{[\frac{[mnt]-1}{m}]}e^{ix(\frac{k}{n}+\frac{1}{mn})}\nonumber\\&\quad+\cdots+g(\frac{m-1}{m})\frac{1}{mn}\sum_{k=0}^{[\frac{[mnt]-1}{m}]}e^{ix(\frac{k}{n}+\frac{m-1}{mn})}\,. \nonumber
	\end{align}
Togather with $k=[\frac{j}{m}]$,  we can rewrite the term $F_1$ as
	\begin{align}
		F_1&=C_{m,n,t}\int_{0}^{1}e^{ixr/mn}\d r\Big[g(0)\frac{1}{mn}\sum_{k=0}^{[\frac{[mnt]-1}{m}]}e^{ix\frac{k}{n}}+g(\frac{1}{m})\frac{1}{mn}\sum_{k=0}^{[\frac{[mnt]-1}{m}]}e^{ix(\frac{k}{n}+\frac{1}{mn})}\nonumber\\&\quad+\cdots+g(\frac{m-1}{m})\frac{1}{mn}\sum_{k=0}^{[\frac{[mnt]-1}{m}]}e^{ix(\frac{k}{n}+\frac{m-1}{mn})}\Big]\nonumber\\&=C_{m,n,t}\int_{0}^{1}e^{ixr/mn}\d r\Big[g(0)\frac{1}{mn}\sum_{j=0}^{[mnt]-1}e^{ix\frac{1}{n}[\frac{j}{m}]}+g(\frac{1}{m})\frac{1}{mn}\sum_{j=0}^{[mnt]-1}e^{ix(\frac{1}{n}[\frac{j}{m}]+\frac{1}{mn})}\nonumber\\&\quad+\cdots+g(\frac{m-1}{m})\frac{1}{mn}\sum_{j=0}^{[mnt]-1}e^{ix(\frac{1}{n}[\frac{j}{m}]+\frac{m-1}{mn})}\Big]\nonumber\\&=C_{m,n,t}\int_{0}^{1}e^{ixr/mn}\d r\sum_{l=0}^{\frac{m-1}{m}}g(l)\frac{1}{mn}\sum_{j=0}^{[mnt]-1}e^{ix(\frac{1}{n}[\frac{j}{m}]+\frac{l}{n})}.\nonumber
	\end{align}
	By the triangle inequality, for every $l$ it holds that
	\begin{align}
		&\Big|\frac{1}{mn}\sum_{j=0}^{[mnt]-1}e^{ix(\frac{1}{n}[\frac{j}{m}]+\frac{l}{n})}-\int_{0}^{t}e^{ixs}\d s\Big|\nonumber\\&\leq \Big|\frac{1}{mn}\sum_{j=0}^{[mnt]-1}e^{ix(\frac{1}{n}[\frac{j}{m}]+\frac{l}{n})}-\int_{0}^{\frac{[mnt]}{mn}}e^{ixs}\d s\Big|+\int_{\frac{[mnt]}{mn}}^{t}e^{ixs}\d s.\nonumber
	\end{align}
	The last term on the right-hand side vanishes as $n$ goes to infinity. The remainder term on the right-hand side is evaluated as
	\begin{align}
		&\Big|\frac{1}{mn}\sum_{j=0}^{[mnt]-1}e^{ix(\frac{1}{n}[\frac{j}{m}]+\frac{l}{n})}-\int_{0}^{\frac{[mnt]}{mn}}e^{ixs}\d s\Big|\nonumber\\&=\Big|\frac{1}{mn}\sum_{j=0}^{[mnt]-1}\Big[e^{ix(\frac{1}{n}[\frac{j}{m}]+\frac{l}{n})}-mn\int_{\frac{j}{mn}}^{\frac{j+1}{mn}}e^{ixs}\d s\Big]\Big|\nonumber\\&=\Big|\frac{1}{mn}\sum_{j=0}^{[mnt]-1}\Big[e^{ix(\frac{1}{n}[\frac{j}{m}]+\frac{l}{n})}-\int_{0}^{1}e^{ix(r+j)/mn}\d r\Big]\Big|\nonumber\\&\leq \frac{1}{mn}\Big|\sum_{j=0}^{[mnt]-1}\Big[e^{ix(\frac{1}{n}[\frac{j}{m}]+\frac{l}{n})}-e^{ix(\frac{j}{mn}+\frac{l}{n})}\Big]\Big|\nonumber\\&\quad+\Big|\frac{1}{mn}\sum_{j=0}^{[mnt]-1}\Big[e^{ix(\frac{j}{mn}+\frac{l}{n})}-e^{ix\frac{j}{mn}}\int_{0}^{1}e^{ixr/mn}\d r\Big]\Big|\nonumber\\&\leq C\frac{1}{mn}\sum_{j=0}^{[mnt]-1}\Big|\frac{1}{n}[\frac{j}{m}]-\frac{1}{n}\frac{j}{m}\Big|
		+\frac{1}{mn}\sum_{j=0}^{[mnt]-1}\int_{0}^{1}\Big|e^{ix\frac{l}{n}}-e^{ixr/mn}\Big|\d r\Big]\nonumber\\&\leq C\frac{[mnt]-1}{mn^2}+C\frac{[mnt]-1}{mn}\big|\frac{l}{n}-\frac{r}{mn}\big|\to 0,\nonumber
	\end{align}
	where $r=mns-j\in(0,1),l\in\{0,\frac{1}{m},\cdots,\frac{m-1}{m}\}$ and for every fixed $x\in\mathbb{R}$ $\Big|e^{ixu}-e^{ixv}\Big|\leq C|u-v|$ is used.
	As a consequence, we have
	\begin{align}
		\lim_{n\to\infty}	\frac{1}{mn}\sum_{j=0}^{[mnt]-1}e^{ix(\frac{1}{n}[\frac{j}{m}]+\frac{l}{n})}=\int_{0}^{t}e^{ixs}\d s.\nonumber
	\end{align}
	Then, by the dominated convergence theorem, Fubini's theorem and \eqref{ def of gm} we have
	\begin{align}
		\lim_{n\to\infty}F_1&=\lim_{n\to\infty}\Big(C_{m,n,t}\int_{0}^{1}e^{ixr/mn}\d r\sum_{l=0}^{\frac{m-1}{m}}g(l)\frac{1}{mn}\sum_{j=0}^{[mnt]-1}e^{ix(\frac{1}{n}[\frac{j}{m}]+\frac{l}{n})}\Big)\nonumber\\&=\lim_{n\to\infty}C_{m,n,t}\sum_{l=0}^{\frac{m-1}{m}}\Big(\int_{0}^{1}e^{ixr/mn}g(l)\Big(\frac{1}{mn}\sum_{j=0}^{[mnt]-1}e^{ix(\frac{1}{n}[\frac{j}{m}]+\frac{l}{n})}\Big)\d r\Big)\nonumber\\&=\Big(g_m t\Big)^{-1}\sum_{l=0}^{\frac{m-1}{m}}g(l)\int_{0}^{t}e^{ix s}\d s=\int_{0}^{t}\frac{1}{t}e^{ix s}\d s.\nonumber
	\end{align}
	Indeed, a dominating function is derived as
	\begin{align}
		\Big|e^{ixr/mn}g(l)\Big(\frac{1}{mn}\sum_{j=0}^{[mnt]-1}e^{ix(\frac{1}{n}[\frac{j}{m}]+\frac{l}{n})}\Big)\Big|\leq \frac{[mnt]-1}{mn}g(l)\leq T|g(l)|.\nonumber
	\end{align}
This yields 
	$$\int_{0}^{t}e^{ixs}f_{Y^{(m,n)}}(s)\d s\to\int_{0}^{t}\frac{1}{t}e^{ix s}\d s\,. $$
	  Since the function $s\mapsto \frac{1}{t}$ is the density function of the uniform distribution on $[0,t]$, this means the convergence of the characteristic functions,   concluding the proof.
\end{proof}
By this lemma, for any continuous function $k$, by the property of convergence in law of $Y^{(m,n)}$ we have
\begin{align}\label{example of k}
	\int_{0}^{t}k(s)g(\frac{[mn s]}{m}-[n s])\d s&=\big(C_{m,n,t}\big)^{-1}\int_{0}^{t}k(s)f_{Y^{(m,n)}}(s)\d s\nonumber\\&\stackrel{n\to\infty}{\rightarrow }g_m t\int_{0}^{t}k(s)\frac{1}{t}\d s=g_m\int_{0}^{t}k(s)\d s.
\end{align}
We will apply this result to stochastic processes.
\begin{lem}\label{limit distribution-1}
	Assume that $g(\cdot),g(\frac{[mn \cdot]}{m}-[n \cdot])\in L^2(0,1)$ and $|g_m|^2<\infty$. For any fixed $m\in\mathbb{N}\backslash \{0,1\} $, let $H^{(m,n)}$ and $H^{(m)}$ be stochastic processes  on $[0,T]$  satisfying 
	$$\E\Big[\int_{0}^{t}\Big|H_s^{(m,n)}-H^{(m)}_s\Big|^2\d s\Big]\to 0$$
	with $H$ being almost surely continuous. Then, for all $t\in[0,T]$,
	$$\int_{0}^{t}H^{(m,n)}_s g(\frac{[mn s]}{m}-[n s]) \d s\xrightarrow[\text { in } L^2]{n \rightarrow \infty} g_m \int_0^t H^{(m)}_s \mathrm{d} s.$$
\end{lem}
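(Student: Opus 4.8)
The plan is to compare the random integral with its claimed limit by peeling off the error due to replacing $H^{(m,n)}$ by $H^{(m)}$, and then importing the averaging already established for continuous integrands in \eqref{example of k}. Concretely, I would write
\[
\int_0^t H_s^{(m,n)} g\Big(\frac{[mn s]}{m}-[n s]\Big)\,\d s - g_m\int_0^t H_s^{(m)}\,\d s = I_n + J_n,
\]
where
\[
I_n = \int_0^t \big(H_s^{(m,n)} - H_s^{(m)}\big)\, g\Big(\frac{[mn s]}{m}-[n s]\Big)\,\d s,\qquad
J_n = \int_0^t H_s^{(m)}\, g\Big(\frac{[mn s]}{m}-[n s]\Big)\,\d s - g_m \int_0^t H_s^{(m)}\,\d s .
\]
It then suffices to prove $\E[|I_n|^2]\to 0$ and $\E[|J_n|^2]\to 0$ separately.

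First I would record a uniform-in-$n$ bound $\sup_n \int_0^t g\big(\frac{[mn s]}{m}-[n s]\big)^2\,\d s \le C$. This follows by running the step-function computation \eqref{lm-1}--\eqref{lm-3} with $g$ replaced by $g^2\in L^1(0,1)$ (legitimate since $g\in L^2(0,1)$ and $(g^2)_m<\infty$ under the standing hypotheses): the integral equals $(g^2)_m\big[\frac{[mnt]-1}{m}\big]/n + \int_{[mnt]/mn}^t g(\cdots)^2\,\d s$, which converges to $(g^2)_m\,t$ and is therefore bounded. With this in hand, the Cauchy--Schwarz inequality controls $I_n$ at once:
\[
\E[|I_n|^2] \le \E\Big[\int_0^t \big|H_s^{(m,n)}-H_s^{(m)}\big|^2\,\d s \cdot \int_0^t g\Big(\frac{[mn s]}{m}-[n s]\Big)^2\,\d s\Big] \le C\,\E\Big[\int_0^t \big|H_s^{(m,n)}-H_s^{(m)}\big|^2\,\d s\Big],
\]
which tends to $0$ by hypothesis.

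The main work is the term $J_n$, and this is precisely where a direct norm estimate fails, because $g\big(\frac{[mn s]}{m}-[n s]\big)-g_m$ does not tend to zero but only averages out; the cancellation is genuinely oscillatory and must be imported from the deterministic lemma. I would therefore argue pathwise: since $H^{(m)}_\cdot$ is almost surely continuous on $[0,t]$, the averaging identity \eqref{example of k} applies to $s\mapsto H_s^{(m)}(\omega)$ for almost every $\omega$, giving $J_n(\omega)\to 0$ almost surely. To upgrade this to $L^2$ convergence I would dominate: by Cauchy--Schwarz together with the uniform bound above, $|J_n|^2 \le C\int_0^t \big|H_s^{(m)}\big|^2\,\d s$, a single integrable random variable independent of $n$ (integrable because the $L^2$ hypothesis forces $\int_0^t \E|H_s^{(m)}|^2\,\d s<\infty$). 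The dominated convergence theorem then yields $\E[|J_n|^2]\to 0$. Combining the two estimates via $\E[|I_n+J_n|^2]\le 2\E[|I_n|^2]+2\E[|J_n|^2]$ completes the proof, the only delicate point being the passage from the deterministic averaging \eqref{example of k} to $L^2$ convergence of the random quantity, handled by the $n$-uniform domination.
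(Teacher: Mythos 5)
Your proof is correct and follows essentially the same route as the paper: the identical splitting into $I_n$ (handled by Cauchy--Schwarz with the uniform bound on $\int_0^t g(\frac{[mns]}{m}-[ns])^2\,\d s$) and $J_n$ (handled by pathwise convergence via \eqref{example of k} plus dominated convergence in $\PP$ with the Cauchy--Schwarz bound as dominating variable). The only addition is that you explicitly justify the $n$-uniform bound on the squared-kernel integral by rerunning \eqref{lm-1}--\eqref{lm-3} with $g^2$, a detail the paper asserts without proof.
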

\begin{proof}
	It follows from Minkowski's inequality that
	\begin{align}
		&\Big\|\int_{0}^{t}H_s^{(m,n)}g(\frac{[mn s]}{m}-[n s])-g_m\int_{0}^{t}H_s\d s\Big\|_{L_2}\nonumber\\&=\Big\|\int_{0}^{t}\big(H_s^{(m,n)}-H_s\big)g(\frac{[mn s]}{m}-[n s])\d s\Big\|_{L_2}+\Big\|\int_{0}^{t}H_s\big(g(\frac{[mn s]}{m}-[n s])-g_m\big)\d s\Big\|_{L_2}.\nonumber
	\end{align}
	For the first term on the right-hand side, by the Cauchy-Schwarz inequality and $g(\frac{[mn s]}{m}-[n s])\in L^2(0,1)$ we have
	\begin{align}
		&\E\Big[\Big|\big(H_s^{(m,n)}-H^{(m)}_s\big)g(\frac{[mn s]}{m}-[n s])\Big|^2\Big]\nonumber\\&\leq \E\Big[\int_{0}^{t}\Big|H_s^{(m,n)}-H^{(m)}_s\Big|^2\d s\Big]\cdot\int_{0}^{t}|g(\frac{[mn s]}{m}-[n s])|^2\d s\nonumber\\&\leq C\E\Big[\int_{0}^{t}\Big|H_s^{(m,n)}-H^{(m)}_s\Big|^2\d s\Big]\to 0.\nonumber
	\end{align}
	For the remainder term, since $H$ is continuous, according to \eqref{example of k} we have
	\begin{align}
		\Big|\int_{0}^{t}H^{(m)}_s\Big(g(\frac{[mn s]}{m}-[n s])-g_m\Big)\d s\Big|^2\to 0\nonumber
	\end{align}
	holds almost surely. By the Cauchy-Schwarz inequality again we have
	\begin{align}
		\Big|\int_{0}^{t}H^{(m)}_s\Big(g(\frac{[mn s]}{m}-[n s])-g_m\Big)\d s\Big|^2\leq \int_{0}^{t}|H^{(m)}_s|^2\d s\int_{0}^{t}\Big(g(\frac{[mn s]}{m}-[n s])-g_m\Big)^2\d s.\nonumber
	\end{align}
	Since $\E\Big[\int_{0}^{t}|H^{(m)}_s|^2\d s\Big], \int_{0}^{t}|g(\frac{[mn s]}{m}-[n s])|^2\d s$, and $\int_{0}^{t}|g_m|^2\d s$ are bounded, so we have
	\begin{align}
		&\E\Big[\int_{0}^{t}|H^{(m)}_s|^2\d s\int_{0}^{t}\Big(g(\frac{[mn s]}{m}-[n s])-g_m\Big)^2\d s\Big]\nonumber\\&=\E\Big[\int_{0}^{t}|H^{(m)}_s|^2\d s\Big]\int_{0}^{t}\Big(g(\frac{[mn s]}{m}-[n s])-g_m\Big)^2\d s\nonumber\\&\leq C\int_{0}^{t}|g(\frac{[mn s]}{m}-[n s])|^2\d s+CT|g_m|^2<\infty\nonumber.
	\end{align}
	Finally, the DCT with respect to $\mathbb{P}$ gives that 
	\begin{align}
		\E\Big[\int_{0}^{t}H_s\big(g(\frac{[mn s]}{m}-[n s])-g_m\big)\d s\Big]\to 0,\nonumber
	\end{align}
	which concludes the proof.
\end{proof}

\subsection{Limit theorem related to $g(ns-\frac{[mn s]}{m})$}
\begin{lem}
	For any fixed $m\in\mathbb{N}\backslash \{0,1\} $, let $g\in L^1(0,1)$ be either nonnegative or nonpositive and $\int_{0}^{1}g(\frac{r}{m})\d r<\infty$,  let $\{Z^{(m,n)}\}_{n\in\mathbb{N}}$ be
	a sequence of random variables on $[0,t]$
	whose density functions are  
	$$f_{Z^{(m,n)}}(s)=\tilde{C}_{m,n,t}g(ns-\frac{[mn s]}{m}), \quad 0<s<t,$$
	where 
	$$\tilde{C}_{m,n,t}=\Big(\frac{[mnt]}{mn}\int_{0}^{1}g(\frac{r}{m})\d r+\frac{1}{n}\int_{0}^{nt-\frac{[mnt]}{m}}g(u)\d u\Big)^{-1}$$
	is the normalizing constant.
	Then $Z^{(m,n)}$ cconverges in law to the uniform distribution on $[0,t]$ as $n$ goes to infinity.
\end{lem}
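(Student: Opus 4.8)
The plan is to mirror the proof of the preceding lemma (the one for $g(\frac{[mns]}{m}-[ns])$): first verify that $f_{Z^{(m,n)}}$ is a genuine probability density, and then show that the characteristic function of $Z^{(m,n)}$ converges to that of the uniform law on $[0,t]$, namely $x\mapsto \frac1t\int_0^t e^{ixs}\,\d s$, after which L\'evy's continuity theorem yields the claim. The structural simplification relative to the previous lemma is that here the argument $ns-\frac{[mns]}{m}$ sweeps continuously through $(0,1/m)$ on each cell $(\frac{j}{mn},\frac{j+1}{mn})$ (since $[mns]=j$ there), so I may use a genuine change of variables rather than resumming over residues mod $m$.

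First I would confirm that $f_{Z^{(m,n)}}$ integrates to one. Splitting $\int_0^t=\sum_{j=0}^{[mnt]-1}\int_{j/(mn)}^{(j+1)/(mn)}+\int_{[mnt]/(mn)}^t$ and substituting $u=ns-\frac{[mns]}{m}$ (so $\d s=\d u/n$) on each piece, each full cell contributes $\frac1n\int_0^{1/m}g(u)\,\d u=\frac1{mn}\int_0^1 g(r/m)\,\d r$, while the terminal cell contributes $\frac1n\int_0^{\,nt-[mnt]/m}g(u)\,\d u$; summing over the $[mnt]$ full cells reproduces exactly $(\tilde C_{m,n,t})^{-1}$. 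The hypothesis that $g$ is either nonnegative or nonpositive guarantees $f_{Z^{(m,n)}}\ge 0$, because $\tilde C_{m,n,t}$ then carries the same sign as $g$.

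Next I would compute the limit of the normalizing constant. Since $\frac{[mnt]}{mn}\to t$ and $0\le nt-\frac{[mnt]}{m}<\frac1m$, the terminal integral stays bounded (as $g\in L^1(0,1)$) while its prefactor $\frac1n\to 0$, so $\tilde C_{m,n,t}\to\big(t\int_0^1 g(r/m)\,\d r\big)^{-1}$. I would then write $\E[e^{ixZ^{(m,n)}}]=\tilde C_{m,n,t}\int_0^t e^{ixs}g(ns-\tfrac{[mns]}{m})\,\d s=F_1+F_2$, with $F_2$ the contribution of the terminal cell, bounded by $\tilde C_{m,n,t}\cdot\frac1n\int_0^{\,nt-[mnt]/m}|g|\to 0$. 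For $F_1$, the same substitution factors each full-cell integral, giving $F_1=\tilde C_{m,n,t}\big(\int_0^{1/m}e^{ixu/n}g(u)\,\d u\big)\cdot\frac1n\sum_{j=0}^{[mnt]-1}e^{ixj/(mn)}$. Here $\int_0^{1/m}e^{ixu/n}g(u)\,\d u\to\int_0^{1/m}g(u)\,\d u=\frac1m\int_0^1 g(r/m)\,\d r$ by dominated convergence (dominated by $|g|$, since $|e^{ixu/n}|=1$), and $\frac1n\sum_{j=0}^{[mnt]-1}e^{ixj/(mn)}=m\cdot\frac1{mn}\sum_{j=0}^{[mnt]-1}e^{ixj/(mn)}\to m\int_0^t e^{ixs}\,\d s$ as a left-endpoint Riemann sum of mesh $1/(mn)$ over $[0,t]$. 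Multiplying the three limits, the factor $\int_0^1 g(r/m)\,\d r$ and the powers of $m$ cancel, leaving $F_1\to\frac1t\int_0^t e^{ixs}\,\d s$.

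The only delicate point is the Riemann-sum step: one must keep track of the factor $m$ between $\frac1n\sum$ and the natural mesh-$\frac1{mn}$ sum, and ensure the three convergent factors combine legitimately (each is bounded, and the limit of the constant is finite and nonzero by assumption). Continuity of $s\mapsto e^{ixs}$ together with $\frac{[mnt]}{mn}\to t$ makes the Riemann-sum convergence routine. Since the resulting limit $\frac1t\int_0^t e^{ixs}\,\d s$ is precisely the characteristic function of the uniform distribution on $[0,t]$, L\'evy's continuity theorem gives $Z^{(m,n)}\Rightarrow\mathrm{Unif}[0,t]$, completing the proof.
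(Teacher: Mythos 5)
Your proposal is correct and follows essentially the same route as the paper's own proof: verify the normalization by the cell-by-cell change of variables, decompose the characteristic function into the full-cell part plus the terminal-cell remainder, factor the full-cell part into $\tilde C_{m,n,t}$ times a Riemann sum times an integral handled by dominated convergence, and conclude via L\'evy continuity. The only difference is the cosmetic choice of substitution ($u\in(0,1/m)$ rather than the paper's $r=mns-j\in(0,1)$), which accounts exactly for the factor of $m$ you track and cancels in the end just as in the paper.
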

\begin{proof}
	Firstly, we confirm that for any fixed $m\in\mathbb{N}\backslash \{0,1\} $, $f_{Z^{(m,n)}}(s)$ is certainly a probability density function. Let $r=mns-j$ and $u=ns-\frac{j}{m}$, we have
	\begin{align}\label{zm-1}
		&\int_{0}^{t}g(ns-\frac{[mn s]}{m})\d s\nonumber\\&=\sum_{j=0}^{[mnt]-1}\int_{\frac{j}{mn}}^{\frac{j+1}{mn}}g(ns-\frac{j}{m})\d s+\int_{\frac{[mnt]}{mn}}^{t}g(ns-\frac{[mn t]}{m})\d s\nonumber\\&=\frac{1}{mn}\sum_{j=0}^{[mnt]-1}\int_{0}^{1}g(\frac{r}{m})\d r+\frac{1}{n}\int_{0}^{nt-\frac{[mnt]}{m}}g(u)\d u\nonumber\\&=\frac{[mnt]}{mn}\int_{0}^{1}g(\frac{r}{m})\d r+\frac{1}{n}\int_{0}^{nt-\frac{[mnt]}{m}}g(u)\d u:=\big(\tilde{C}_{m,n,t}\big)^{-1}.            
	\end{align}
	We now show the convergence of the characteristic function of $Z^{(m,n)}$. For all $x\in\mathbb{R}$ and $i=\sqrt{-1}$, we have
	\begin{align}\label{part of Z}
		&\int_{0}^{t}e^{ixs}f_{Z^{(m,n)}}(s)\d s\nonumber\\&=\int_{0}^{t}e^{ixs}\tilde{C}_{m,n,t}g(ns-\frac{[mn s]}{m})\d s\nonumber\\&=\tilde{C}_{m,n,t}\sum_{j=0}^{[mnt]-1}\int_{\frac{j}{mn}}^{\frac{j+1}{mn}}e^{ixs}g(ns-\frac{j}{m})\d s+\tilde{C}_{m,n,t}\int_{\frac{[mnt]}{mn}}^{t}e^{ixs}g(ns-\frac{[mnt]}{m})\d s\nonumber\\&=\frac{\tilde{C}_{m,n,t}}{mn}\sum_{j=0}^{[mnt]-1}\int_{0}^{1}e^{ix\frac{r+j}{mn}}g(\frac{r}{m})\d r+\frac{\tilde{C}_{m,n,t}}{n}\int_{0}^{nt-\frac{[mnt]}{m}}e^{ix(u+\frac{[mnt]}{m})\frac{1}{n}}g(u)\d u\nonumber\\&:=G_1+G_2.
	\end{align}
	In order to character the convergence of above expression   we first study the convergence of $\tilde{C}_{m,n,t}$ as $n$ tends to infinity. Notice that obviously, 
	\begin{align}
		\tilde{C}_{m,n,t}&=\Big(\frac{[mnt]}{mn}\int_{0}^{1}g(\frac{r}{m})\d r+\frac{1}{n}\int_{0}^{nt-\frac{[mnt]}{m}}g(u)\d u\Big)^{-1}\stackrel{n\to\infty}{\rightarrow }\Big(t\int_{0}^{1}g(\frac{r}{m})\d r\Big)^{-1}.\nonumber
	\end{align}
	So we have the convergence for  the term $G_2$  
	\begin{align}\label{cov of G 2}
		|G_2|\leq \frac{\tilde{C}_{m,n,t}}{n}\int_{0}^{nt-\frac{[mnt]}{m}}|g(u)|\d u\to 0.
	\end{align}
	For the term $G_1$, by the triangle inequality, we have
	\begin{align}
		&\Big|\frac{1}{mn}\sum_{j=0}^{[mnt]-1}e^{ix\frac{j}{mn}}-\int_{0}^{t}e^{ixs}\d s\Big|\nonumber\\&\leq \Big|\frac{1}{mn}\sum_{j=0}^{[mnt]-1}e^{ix\frac{j}{mn}}-\int_{0}^{\frac{[mnt]}{mn}}e^{ixs}\d s\Big|+\int_{\frac{[mnt]}{mn}}^{t}e^{ixs}\d s.\nonumber
	\end{align}
	The last term on the right-hand side vanishes as $n$ goes to infinity. The other term on the right-hand side is evaluated as
	\begin{align}
		&\Big|\frac{1}{mn}\sum_{j=0}^{[mnt]-1}e^{ix\frac{j}{mn}}-\int_{0}^{\frac{[mnt]}{mn}}e^{ixs}\d s\Big|\nonumber\\&=\Big|\frac{1}{mn}\sum_{j=0}^{[mnt]-1}\Big[e^{ix\frac{j}{mn}}-mn\int_{\frac{j}{mn}}^{\frac{j+1}{mn}}e^{ixs}\d s\Big]\Big|\nonumber\\&=\Big|\frac{1}{mn}\sum_{j=0}^{[mnt]-1}\Big[e^{ix\frac{j}{mn}}-\int_{0}^{1}e^{ix(r+j)/mn}\d r\Big]\Big|\nonumber\\&\leq \Big|1-\int_{0}^{1}e^{ix\frac{r}{mn}}\d r\Big|\cdot\frac{1}{mn}\sum_{j=0}^{[mnt]-1}e^{ix\frac{j}{mn}}\nonumber\\&\leq \int_{0}^{1}\Big|1-e^{ix \frac{r}{mn}}\Big|\d r \frac{[mnt]}{mn}\to 0.\nonumber
	\end{align}
	As a consequence, we have
	\begin{align}
		\lim_{n\to\infty}	\frac{1}{mn}\sum_{j=0}^{[mnt]-1}e^{ix\frac{j}{mn}}=\int_{0}^{t}e^{ixs}\d s.\nonumber
	\end{align}
	Then, by the dominated convergence theorem, Fubini's theorem we have
	\begin{align}
		\lim_{n\to\infty}G_1&=\lim_{n\to\infty}\tilde{C}_{m,n,t}\Big(\frac{1}{mn}\sum_{j=0}^{[mnt]-1}e^{ix\frac{j}{mn}}\Big)\int_{0}^{1}e^{ix\frac{r}{mn}}g(\frac{r}{m})\d r\nonumber\\&=\Big(t\int_{0}^{1}g(\frac{r}{m})\d r\Big)^{-1}\int_{0}^{t}e^{ixs}\d s\cdot \int_{0}^{1}g(\frac{r}{m})\d r=\int_{0}^{t}\frac{1}{t}e^{ix s}\d s.\nonumber
	\end{align}
	Indeed, a dominating function is derived as
	\begin{align}
		\Big|\Big(\frac{1}{mn}\sum_{j=0}^{[mnt]-1}e^{ix\frac{j}{mn}}\Big)e^{ix\frac{r}{mn}}g(\frac{r}{m})\Big|\leq \frac{[mnt]-1}{mn}g(\frac{r}{m})\leq T|g(\frac{r}{m})|.\nonumber
	\end{align}
	Thus,
	$$\int_{0}^{t}e^{ixs}f_{Z^{(m,n)}}(s)\d s\to\int_{0}^{t}\frac{1}{t}e^{ix s}\d s\,. $$
	  Since the function $s\mapsto \frac{1}{t}$ is the density function of the uniform distribution on $[0,t]$, this means the convergence of the characteristic functions, which concludes the proof.
\end{proof}
By this lemma, for any continuous function $k$, by the property of convergence in law of $Z^{(m,n)}$ we have
\begin{align}\label{example of k-2}
	\int_{0}^{t}k(s)g(ns-\frac{[mn s]}{m})\d s&=\big(\tilde{C}_{m,n,t}\big)^{-1}\int_{0}^{t}k(s)f_{Z^{(m,n)}}(s)\d s\nonumber\\&\stackrel{n\to\infty}{\rightarrow }t\int_{0}^{1}g(\frac{r}{m})\d r\int_{0}^{t}k(s)\frac{1}{t}\d s=\int_{0}^{1}g(\frac{r}{m})\d r\int_{0}^{t}k(s)\d s.
\end{align}
Similar to the proof of Lemma \ref{limit distribution-1}, we can obtain the following 
lemma, whose proof is omitted.
\begin{lem}\label{limit distribution-2}
	Fix  $m\in\mathbb{N}\backslash \{0,1\} $.  Assume that $h(\cdot)\in L^2(0,1)$ and $\int_{0}^{1}|g(\frac{r}{m})|^2\d r<\infty$. Let $H^{(m,n)}$ and $H^{m}$ be stochastic processes  on $[0,T]$  such that
	$$\E\Big[\int_{0}^{t}\Big|H_s^{(m,n)}-H^{(m)}_s\Big|^2\d s\Big]\to 0$$
	with $H$ being almost surely continuous. Then, for all $t\in[0,T]$,
	$$\int_{0}^{t}H^{(m,n)}_s g(ns-\frac{[mn s]}{m}) \d s\xrightarrow[\text { in } L^2]{n \rightarrow \infty} \int_{0}^{1}g(\frac{r}{m})\d r \int_0^t H^{(m)}_s \mathrm{d} s.$$
\end{lem}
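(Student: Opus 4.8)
The plan is to follow verbatim the scheme used for Lemma \ref{limit distribution-1}, with the averaging constant $g_m$ replaced by $\int_0^1 g(\tfrac{r}{m})\,\d r$ and the sampling sequence $g\big(\tfrac{[mns]}{m}-[ns]\big)$ replaced by $g\big(ns-\tfrac{[mns]}{m}\big)$, the pathwise Riemann-sum limit being now supplied by \eqref{example of k-2} in place of \eqref{example of k}. First I would apply Minkowski's inequality to split the quantity to be controlled as
\begin{align*}
&\Big\|\int_0^t H^{(m,n)}_s\, g\big(ns-\tfrac{[mns]}{m}\big)\,\d s-\Big(\int_0^1 g(\tfrac{r}{m})\,\d r\Big)\int_0^t H^{(m)}_s\,\d s\Big\|_{L^2}\\
&\quad\le \Big\|\int_0^t\big(H^{(m,n)}_s-H^{(m)}_s\big)\,g\big(ns-\tfrac{[mns]}{m}\big)\,\d s\Big\|_{L^2}
+\Big\|\int_0^t H^{(m)}_s\Big(g\big(ns-\tfrac{[mns]}{m}\big)-\int_0^1 g(\tfrac{r}{m})\,\d r\Big)\,\d s\Big\|_{L^2},
\end{align*}
and then treat the two summands separately.

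For the first summand I would invoke the Cauchy--Schwarz inequality to bound it by $\big(\E\int_0^t|H^{(m,n)}_s-H^{(m)}_s|^2\,\d s\big)^{1/2}\big(\int_0^t|g(ns-\frac{[mns]}{m})|^2\,\d s\big)^{1/2}$. The second factor is uniformly bounded in $n$: the same change of variables $u=ns-\frac{j}{m}$ (and $r=mu$) already performed in the proof of the density lemma for $Z^{(m,n)}$ gives
\[
\int_0^t\Big|g\big(ns-\tfrac{[mns]}{m}\big)\Big|^2\,\d s=\frac{[mnt]}{mn}\int_0^1\big|g(\tfrac{r}{m})\big|^2\,\d r+\frac1n\int_0^{nt-\frac{[mnt]}{m}}|g(u)|^2\,\d u\le C\int_0^1\big|g(\tfrac{r}{m})\big|^2\,\d r<\infty,
\]
using the hypothesis $\int_0^1|g(\tfrac{r}{m})|^2\,\d r<\infty$. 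Combined with the assumption $\E\int_0^t|H^{(m,n)}_s-H^{(m)}_s|^2\,\d s\to0$, this forces the first summand to $0$.

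For the second summand I would argue exactly as in Lemma \ref{limit distribution-1}: since $H^{(m)}$ is almost surely continuous, relation \eqref{example of k-2} applied pathwise yields $\int_0^t H^{(m)}_s\big(g(ns-\frac{[mns]}{m})-\int_0^1 g(\frac{r}{m})\,\d r\big)\,\d s\to0$ $\mathbb{P}$-almost surely. To upgrade this a.s.\ convergence to convergence of the second $L^2$-norm I would exhibit an integrable dominating function: by Cauchy--Schwarz the squared integrand is at most $\int_0^t|H^{(m)}_s|^2\,\d s\cdot\int_0^t\big(g(ns-\frac{[mns]}{m})-\int_0^1 g(\frac{r}{m})\,\d r\big)^2\,\d s$, where the deterministic factor is bounded uniformly in $n$ by the display above together with $\int_0^1|g(\frac{r}{m})|^2\,\d r<\infty$, while $\E\int_0^t|H^{(m)}_s|^2\,\d s<\infty$; the dominated convergence theorem with respect to $\mathbb{P}$ then sends the expectation of the squared quantity to $0$. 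Adding the two estimates completes the argument.

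The step I expect to require the most care is the verification that $\int_0^t|g(ns-\frac{[mns]}{m})|^2\,\d s$ stays bounded as $n\to\infty$ together with the identification of the pathwise limit of the averaging term; both rest on the $L^2$-integrability condition $\int_0^1|g(\frac{r}{m})|^2\,\d r<\infty$ and on the Riemann-sum computation underlying \eqref{example of k-2}. Once these are in hand the remainder is the routine Cauchy--Schwarz and dominated-convergence bookkeeping transcribed from Lemma \ref{limit distribution-1}.
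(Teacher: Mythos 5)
Your proposal is correct and is essentially the paper's intended argument: the paper omits the proof of this lemma, stating only that it is "similar to the proof of Lemma \ref{limit distribution-1}," and your write-up is precisely that adaptation (Minkowski split, Cauchy--Schwarz plus the uniform $L^2$ bound on $g(ns-\frac{[mns]}{m})$ obtained from the change of variables behind \eqref{zm-1}, then the pathwise limit \eqref{example of k-2} upgraded by dominated convergence). The only point worth noting is that you made explicit the uniform-in-$n$ bound $\int_0^t|g(ns-\frac{[mns]}{m})|^2\,\d s\le C\int_0^1|g(\frac{r}{m})|^2\,\d r$, which the paper leaves implicit.
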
 
\begin{remark}\label{rem 3.2}
	In the  other part of our paper,  we shall use the above lemmas for $g(x)=x^{2H}$ with $H\in(0,1/2]$. 
	It is interesting to compute $\int_{0}^{1}g(r)\d r$,  $g_m$   and  $\int_{0}^{1}g(\frac{r}{m})\d r$ for different parameters. 
	
	\renewcommand{\arraystretch}{2}
	\begin{tabular}{|c|@{\extracolsep{2em}}c|c|c|}
		\hline
		Case & $\int_{0}^{1}g(r)\d r$ & $g_m$ & $\int_{0}^{1}g(\frac{r}{m})\d r$\\ \hline
		$H\in(0,1/2),m\in\mathbb{N}\backslash \{0,1\} $ & $\frac{1}{2H+1}$ & $\sum_{j=0}^{m-1}\frac{j^{2H}}{m^{2H+1}}$ & $\frac{1}{(2H+1)m^{2H}}$\\ \hline
		$H=1/2,m\in\mathbb{N}\backslash \{0,1\} $ & $1/2$ & $\frac{m-1}{2m}$ &$\frac{1}{(2H+1)m}$\\ \hline 
		$H\in(0,1/2]$, $m=1$ & $\frac{1}{2H+1}$ & $0$ & $\frac{1}{2H+1}$\\\hline
		$H\in(0,1/2)$, $m\to\infty$ & $\frac{1}{2H+1}$ & $\frac{1}{2H+1}$ & $0$\\\hline
		$H\in(0,1/2]$, $m=1$ & $\frac{1}{2H+1}$ & $0$ & $\frac{1}{2H+1}$\\\hline
		$H=1/2$, $m\to\infty$ & $\frac{1}{2}$ & $\frac{1}{2}$ & $0$\\ \hline
	\end{tabular}
\end{remark}              

	\section*{Conflicts of interests}
	The authors declare no conflict of interests.

	{\bf Acknowledgement} This work was supported by China Scholarship Council, the NSERC discovery fund RGPIN 2024-05941 and a centennial fund of University of Alberta, the NSFC Grant Nos. 12171084, the Jiangsu Provincial Scientific Research Center of Applied Mathematics under Grant No. BK20233002 and  the fundamental Research
	Funds for the Central Universities No. RF1028623037.

	
	\bibliographystyle{amsplain}
	
	\bibliography{ref}
	
\end{document}